\newcommand\R{\mathbb{R}}
\newcommand\N{\mathbb{N}}
\newcommand\Z{\mathbb{Z}}
\newcommand\vp{\dot{v}}
\newcommand\bnp{\begin{proof}}%équation numérotée
\newcommand\enp{\end{proof}}
\newcommand\bneq{\begin{eqnarray*}\left\lbrace \begin{array}{rcl}}
\newcommand\eneq{\end{array} \right.\end{eqnarray*}}
\newcommand\bneqn{\begin{eqnarray}\left\lbrace \begin{array}{rcl}}
\newcommand\eneqn{\end{array} \right.\end{eqnarray}}
\newcommand\nor[2]{\left\|#1\right\|_{#2}}%la norme 
\newcommand{\m}[1]{
\ifdefequal{#1}{1}
{\mathbbm{#1}}
{\mathbb{#1}}
}
\newcommand{\gh}[1]{\mathfrak{#1}}
\newcommand{\q}[1]{\mathcal{#1}}
\newcommand{\mc}[1]{\mathscr{#1}}
\newcommand{\bs}[1]{\boldsymbol{#1}}
\newcommand{\ds}{\displaystyle}
\newcommand{\be}{\begin{gather}}
\newcommand{\ee}{\end{gather}}
\newcommand{\ba}{\begin{align*}}
\newcommand{\ea}{\end{align*}}
\newcommand{\e}{\varepsilon}
\newcommand{\imp}{\Longrightarrow}
\newcommand{\loc}{\mathrm{loc}}
\newcommand{\opD}{\gh D}
\newcommand{\opR}{\gh R}
\DeclareMathOperator{\Span}{\text{Span}}
\DeclareMathOperator{\sgn}{\text{sgn}}
\DeclareMathOperator{\Id}{\text{Id}}
\renewcommand{\le}{\leqslant}
\renewcommand{\ge}{\geqslant}
\numberwithin{equation}{section}
\theoremstyle{plain}
\newtheorem{thm}{Theorem}[section]
\newtheorem*{thm*}{Theorem}
\newtheorem{prop}[thm]{Proposition}
\newtheorem{cor}[thm]{Corollary}
\newtheorem{lem}[thm]{Lemma}
\theoremstyle{definition}
\newtheorem{definition}[thm]{Definition}
\theoremstyle{remark}
\newtheorem{remark}[thm]{Remark}
\newtheorem{claim}[thm]{Claim}
\newtheorem{example}[thm]{Example}
\title{A scattering operator for some nonlinear elliptic equations}
\author{Raphaël Côte and Camille Laurent }
\subjclass{35J60, 35B40, 58J05}
\begin{document}

\begin{abstract}
We consider non linear elliptic equations of the form
\[ \Delta u = f(u,\nabla u). \]
for suitable analytic nonlinearity $f$, in the vinicity of infinity in $\m R^d$, that is on the complement of a compact set.
We show that there is a \emph{one-to-one correspondence} between the non linear solution $u$ defined there, and the linear solution $u_L$ to the Laplace equation, such that, in an adequate space, $u - u_L\to 0$ as $|x|\to +\infty$. This is a kind of scattering operator.

Our results apply in particular for the energy critical and supercritical pure power elliptic equation and for the 2d (energy critical) harmonic maps and the $H$-system. Similar results are derived for solutions defined on the neighborhood of a point in $\m R^d$. 

The proofs are based on a conformal change of variables, and studied as an evolution equation (with the radial direction playing the role of time) in spaces with analytic regularity on spheres (the directions orthogonal to the radial direction).
\end{abstract}

\maketitle

\tableofcontents

\section{Introduction}

\subsection{Motivations and setting of the problem}

The purpose of this article is to give a classification of solutions of certain nonlinear elliptic equations, by their behavior at infinity. We consider equations of the form
\begin{equation} \label{eq:ell} 
\Delta u = f(u, \nabla u),
\end{equation}
where the nonlinearity $f$ is analytic, and with an extra emphasis on the elliptic nonlinear equation with $\dot H^1$ critical power nonlinearity, conformal equations in dimension $2$, and smooth harmonic maps. Roughly speaking, we will construct, in these considered examples, a scattering operator: we prove that when considering the vicinity of (spatial) infinity, there is a one-to-one correspondence between linear solutions of
\begin{equation} \label{eq:ell_lin_intro}
\Delta u_L = 0 \quad \text{on} \quad  \m R^d \setminus B(0,1), \quad u_L|_{\m S^{d-1}} = u_0,
\end{equation}
where $u_0$ is a given function on $\m S^{d-1}$ and nonlinear solutions of \eqref{eq:ell} defined for sufficiently large $x$; and furthermore nonlinear solutions behave as a linear one in an appropriate space. 

This space is strong enough to distinguish each linear solution from another one only from their asymptotic behavior. For instance, since all linear solutions converge to $0$ at infinity, the space we consider should be much finer than $L^{\infty}$ or $\dot{H}^1$. The space we use specifically translates the behaviour of the linear elliptic solutions. In particular, it implies some analyticity in the angular variable. 

Note also that the full classification is obtained in some general examples that are critical or with additional assumptions. Yet, the construction of nonlinear solutions from their behavior at infinity (that is one part of the scattering operator) is made in a great generality, see for instance Theorem \ref{thmexistglobP}.

\bigskip

The problem we consider is natural and has its own interest; we believe it will also prove useful for related evolution problems.
Indeed, one extra motivation comes from the evidence that the asymptotic behavior of non linear object like the well-known soliton plays a fundamental role in dynamical contexts, as it drives the interactions: for example, the construction of blow up solutions, the construction of multi-solitons, the analysis of collision of solitons, the soliton resolution conjecture etc. 

Let us elaborate somehow on this last example, in the case of the energy-critical wave equation, which was studied by Duyckaerts-Kenig-Merle. They develop in particular the channel of energy method: in the 3D radial setting \cite{DuyKMClassif}, the authors manage to conclude that some initial datum giving rise to ``nonlinear non radiative solutions'' should behave at infinity as the Newtonian potential $\frac{1}{r}$ and next, should actually \emph{be} the ground state $W: x \mapsto (1+|x|^2/3)^{-1/2}$ (up to scaling). This idea to ``catch the ground-state by the tail'' has been extended with many more subtleties to other dimensions and other equations \cite{CKLS:15I,CKLS:15II,C:15,DKM:23}. 

One of the key roadblocks in generalizing the above results to the non radial setting is the lack of understanding of the non radial nonlinear objects, such as spectral properties when linearizing around them, or their asymptotic behavior.

Our work here provides a first description in the non radial context, within a framework that encompass semilinear elliptic equations together with harmonic maps or the $H$-system. 

\bigskip

We address the question by recasting the elliptic equation in terms of an evolution equation on the sphere, where time is played by the radial variable. After performing a conformal change of variable, the equation is obviously (strongly) ill posed, but is amenable to resolution from infinity, for data \emph{without growing modes}.

We will now define the functional setting in the next paragraph, so as to state our results with the following ones.

\subsection{The functional setting}

The proof will be performed after a conformal change of coordinates from $\R^d$ to $\R\times \m S^{d-1}$ using spherical coordinates. The harmonical analysis on $\m S^{d-1}$ will play a crucial role. We begin by a few generalities.

Let $d \ge 2$. We denote $\Delta_{\mathbb S^{d-1}}$ the Laplace-Beltrami operator on the sphere $\m S^{d-1}$, and let $(\phi_{\ell,m})_{\ell \in \m N, m \le N_\ell}$ be an $L^2$ orthogonal basis of normalized spherical harmonics, so that $\phi_{\ell,m}$ is the restriction of a harmonic homogeneous polynomial of degree $\ell\in \N$. Recall that $\phi_{\ell,m}$ are eigenfunctions for $-\Delta_{\mathbb{S}^{d-1}}$ 
\[ -\Delta_{\mathbb{S}^{d-1}} \phi_{\ell,m} = \ell (\ell+d-2)  \phi_{\ell,m}, \]
and so $N_\ell$ is the dimension of the eigenspace of $-\Delta_{\mathbb{S}^{d-1}}$ for the eigenvalue $\ell (\ell+d-2)$. Let $P_\ell$ be the orthogonal projection onto this eigenspace: if $v$ is a function defined on $\m S^{d-1}$,
\begin{align} \label{def:Pell}
P_\ell v = \sum_{m=1}^{N_\ell}  \langle f, \phi_{\ell,m} \rangle \phi_{\ell,m}, \quad \| P_\ell v \|_{L^2(\m S^{d-1})}^2 = \sum_{m=1}^{N_\ell}  |\langle v, \phi_{\ell,m} \rangle|^2.
\end{align}
We consider the positive elliptic operator on the sphere $\m S^{d-1}$
\[ \opD = \sqrt{-\Delta_{\mathbb{S}^{d-1}}+\left(\frac{d-2}{2}\right)^2}, \]
so that for all $\ell \in \m N$ and $m \le N_\ell$,
\[ \opD  \phi_{\ell,m} = \left( \ell + \frac{d-2}{2} \right) \phi_{\ell,m}. \]
We denote $L^2_0(\m S^{d-1}) = \Span(\phi_{\ell,m}, \ell \in \m N, m \le N_\ell \}$ the space of (finite) linear combinations of eigenfunctions of $\gh D$; all normed spaces below will be meant as completion of $L^2_0$ for the underlying norm.

The space $H^s(\m S^{d-1})$ is the completion of $L^2_0(\m S^{d-1})$ for the $H^s$ norm defined as
\begin{align} \label{defSob}\| v \|_{H^s(\m S^{d-1})}^2 : = \sum_{\ell =0}^{+\infty} \langle \ell \rangle^{2s} \| P_\ell v \|_{L^2(\m S^{d-1})}^2,
\end{align}
where $\langle \ell \rangle = \sqrt{1+|\ell|^2}$ is the japanese bracket (here $\ell$ is an integer, but we also use the notation for a vector or a multi-index). There hold
\[ \| v \|_{H^s(\m S^{d-1})}\approx \| (1 -\Delta_{\mathbb{S}^{d-1}})^{s/2} v \|_{L^2(\m S^{d-1})}, \]
and both these norms are equivalent to the usual Sobolev norm considered (thanks to some partition of unity) in each coordinate charts,  \cite[Section 7.3]{LionsMagenes1}. Therefore, all usual Sobolev embeddings apply.%Indeed, using \cite[Theorem 11.2]{S:87book} for instance, we see that $(1 -\Delta_{\mathbb{S}^{d-1}})^{s/2}$ is an elliptic pseudodifferential operator of order $s$. 

\bigskip

We will state our results using the spaces $Z_{s,r}^\infty$ and $Z_{s,r}^0$, made of functions on $\m S^{d-1}$, and which are the completion of $L^2_0(\m S^{d-1})$ for the respective norms:
\begin{align*}
     \nor{v}{Z_{s,r}^{\infty}} & :=r^{\frac{d-2}{2}}\nor{r^{\opD} v}{H^{s}(\mathbb{S}^{d-1})} = \left( \sum_{\ell=0}^{+\infty} \langle \ell \rangle^{2s} r^{2(\ell + d-2)} \| P_\ell v \|_{L^2(\m S^{d-1})}^2 \right)^{1/2}, \\
     \text{and} \quad \nor{v}{Z_{s,r}^{0}} & :=r^{\frac{d-2}{2}}\nor{r^{-\opD} v}{H^{s}(\mathbb{S}^{d-1})} = \left( \sum_{\ell=0}^{+\infty} \langle \ell \rangle^{2s} r^{-2\ell} \| P_\ell v \|_{L^2(\m S^{d-1})}^2 \right)^{1/2}.
\end{align*}

Notice that for $1\le r<r'$, we have the continuous embedding $Z^{\infty}_{s,r'}\subset Z^{\infty}_{s,r}\subset H^s$  together with $\nor{v}{Z_{s,r}^{\infty}}\le \nor{v}{Z_{s,r'}^{\infty}}$ and similarly for $0< r'<r\le 1$, there hold $Z^{0}_{s,r'}\subset Z^{0}_{s,r}\subset H^s$ and $\nor{v}{Z_{s,r}^{0}}\le \nor{v}{Z_{s,r'}^{0}}$.

\bigskip

For functions defined on $\R^{d}\setminus B(0,r_{0})$ or $B(0,r_{0})$ respectively, we will be interested in the $Z_{s,r}$ regularity on rescaled restrictions $u(r \cdot) :  (y \mapsto u(ry)$ (defined on the sphare $ \m S^{d-1}$) (for $r \le r_0$ or $r \ge r_0$ respectively: this dependence of the space on the radius prompt us to the following definitions, which are meaningful due to the continuous embedding mentioned above.

We say that $u \in \q Z_{s,r_{0}}^{\infty}$ if $u$ defined on  $\R^{d}\setminus B(0,r_{0})$ and such that, for all $r \ge r_0$, $u(r \cdot) \in Z_{s,r/r_0}^{\infty}$, $(\partial_r u)(r \cdot) \in Z_{s-1,r/r_0}^{\infty}$,
\[ (\rho\mapsto u(\rho \cdot)) \in \mc C([r,+\infty),Z_{s,r/r_0}^{\infty})\cap \mc C^1([r,+\infty),Z_{s-1,r/r_0}^{\infty}), \]
and such that the following norm is finite:
\begin{align*}
\nor{u}{\q Z_{s,r_{0}}^{\infty}} := r_{0}^{\frac{d-2}{2}}  {\sup_{r\ge r_{0}} }  \left(  \nor{u(r\cdot)}{Z_{s,r/r_{0}}^{\infty}}+ \nor{\left( \frac{d-2}{2} u +r\frac{\partial u}{\partial r} \right)(r\cdot)}{Z_{s-1,r/r_{0}}^{\infty}}\right).
\end{align*}
Similarly, $u \in \q Z_{s,r_{0}}^{0}$ if it is defined on $\overline{B(0,r_0)}$ so that for all $0 < r \le r_0$, $u(r \cdot) \in Z^0_{s,r/r_0}$, $(\partial_r u)(r \cdot) \in Z_{s-1,r/r_0}^{0}$,
\[ (\rho \mapsto u(\rho \cdot)) \in \mc C((0,r],Z_{s,r/r_0}^{0})\cap \mc C^1((0,r],Z_{s-1,r/r_0}^{0}), \]
and such that the related norm is finite:
%\[ \| u(r \cdot) - u(r' \cdot) \|_{Z^{\infty}_{s,\min(r,r')}} \to 0 \quad \text{as} \quad r,r' \to r_0. \]
%ou éventuellement uniformément quand $r-r' \to 0$}
\begin{align*}
\nor{u}{\q Z_{s,r_{0}}^{0}} :=r_{0}^{\frac{d-2}{2}}  \sup_{0<r\le r_{0}}  \left(  \nor{u(r\cdot)}{Z_{s,r/r_{0}}^{0}}+ \nor{\left( \frac{d-2}{2} u +r\frac{\partial u}{\partial r} \right)(r\cdot)}{Z_{s-1,r/r_{0}}^{0}}\right).
\end{align*}
The exponent $\infty$ reminds that we will be interested in the behavior for $|x| \to +\infty$, while the exponent $0$ denotes a space adapted to the vicinity of $0$ (or a point).

The regularity index $s$ appears as a fine tuning parameter: it plays an important role in the product laws, and so in the multi-linear estimates; one could simply fix for the rest of the article 
\begin{equation}   
s > \frac{d}{2}+\frac{3}{2}.
\end{equation}
For some purposes, we stated some intermediary results with more precision on $s$: except if a specific weaker bound is precised, all  the following results assume $s$ as above.

$u\in\q Z_{s,r_{0}}^{\infty}$ implies that $u$ has the same decay as a linear solution of $\Delta u_L=0$, that is $|u(x)|=\mathcal{O}(|x|^{-(d-2)})$ (see Lemma \ref{lminjectZ0}). It is actually more precise: when decomposing in spherical harmonics, each component decays as a linear solution, that is $\nor{P_\ell u(r\cdot)}{L^{\infty}(\m S^{d-1})}=\mathcal{O}_{\ell}(r^{-(d-2)-\ell})$ as $r \to +\infty$.

Note that in several results that we will prove, the functions we consider are vector valued. In order to keep reasonable  notations which are already heavy, we always mean that every coordinate belongs to the related spaces: for instance, we will write $\q Z^\infty_s$ instead of $(\q Z^\infty_s)^N$. This should not lead to any confusion.

Finally, we drop the index $r$ when $r=1$, for example $\q Z^\infty_s : =  \q Z^\infty_{s,1}$.

\bigskip

Now, we begin to present our main results for specific types of equations. The constructions of the nonlinear solutions with prescribed behavior at infinity are always consequence of the two general Theorem \ref{thmexistglobP} and \ref{thmexistglobPgain} presented in Section \ref{subsec:conf}. Theorem \ref{thmexistglobPgain} is a refinement of Theorem \ref{thmexistglobP}, needed in some critical cases where we have to use some better properties of the first iterate of the Picard iteration. This improved behavior is observed in the case of conformal equations in dimension $2$ where a ``null'' structure is observed.

\subsection{Main results on the semilinear equation}

In this section we state those of our results which are concerned with the case of the (non derivative) nonlinearity
\begin{equation}
    \label{eq:f_semilinear}
    {f}(y)=\sum_{p\in \N} a_p y^p,
\end{equation}
with a positive radius of convergence.

The first statement constructs, for a general nonlinearity, nonlinear solutions having a prescribed linear behavior at infinity. The second one, restricted to $H^1$-critical analytic nonlinearities, realizes the converse, that is, establishes that a finite energy solutions behaves as a linear solution at infinity. The combination of both results leads to a kind of scattering operator identifying linear and nonlinear germs of solutions.

\begin{thm}\label{thmexistglobPintro}
Let $d\ge 3$. Assume that $f$ as in \eqref{eq:f_semilinear} satisfies the supercriticality assumption
\begin{align}
\label{hypbpqinftyintro}
 a_{p} \ne 0 \Longrightarrow  (d-2) p-d \ge \nu_0>0. 
 \end{align}
Let $u_{0} \in H^s(\m S^{d-1})$, and  $u_{L} \in \q Z^{\infty}_{s}$ given in \eqref{eq:ell_lin_intro}.

Then, there exist $r_{0}\ge 1$ and a unique small solution $u \in \q Z^{\infty}_{s,r_{0}}$ of \eqref{eq:ell} on  $\{|x|\ge r_{0}\}$ and such that 
\begin{align}
\label{scattranslrdintro}
\nor{(u-u_{L})(r\cdot)}{Z^{\infty}_{s,r/r_{0}}}\lesssim r^{-\nu_{0}}\underset{r\to +\infty}{\longrightarrow}0.
\end{align}
Moreover, the map $u_{0}\mapsto u$ is injective; and if $\nor{u_{0} }{H^s(\m S^{d-1})}$ is small enough, we can take $r_{0}=1$.
\end{thm}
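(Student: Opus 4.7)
The plan is to derive Theorem \ref{thmexistglobPintro} from the general construction Theorem \ref{thmexistglobP} of Section \ref{subsec:conf}: condition \eqref{hypbpqinftyintro} is tailored to match the supercriticality threshold of that framework, with $\nu_0$ governing the scattering rate. The preliminary step is the conformal change of variable $r = e^t$ together with the rescaling $v(t,\omega) := e^{t(d-2)/2}u(e^t\omega)$. A direct computation yields $\Delta u = e^{-t(d/2+1)}(\partial_t^2 - \gh D^2)v$, so that \eqref{eq:ell} becomes
\begin{equation*}
(\partial_t^2 - \gh D^2)v = \sum_{p} a_p\, e^{-t((d-2)p - d - 2)/2}\, v^p \qquad \text{on } \R \times \m S^{d-1}.
\end{equation*}
Assumption \eqref{hypbpqinftyintro} guarantees that each term in the series has a genuine exponential gain in $t$ relative to the threshold at which the Laplace Green's function delivers a solution of the correct order, so that inverting $\partial_t^2 - \gh D^2$ against the nonlinear source produces a remainder whose decay improves by exactly $r^{-\nu_0}$ over the linear one.

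Factor $\partial_t^2 - \gh D^2 = (\partial_t - \gh D)(\partial_t + \gh D)$ and diagonalize along the spherical harmonics $\phi_{\ell,m}$, with $\gh D \phi_{\ell,m} = \omega_\ell \phi_{\ell,m}$ for $\omega_\ell := \ell + (d-2)/2$. The linear solution $u_L$ of \eqref{eq:ell_lin_intro} corresponds to the purely outgoing mode $v_L(t,\omega) = \sum_{\ell,m}\langle u_0,\phi_{\ell,m}\rangle e^{-\omega_\ell t}\phi_{\ell,m}$, and the weight $r^{2\omega_\ell}$ inside the $\mathcal Z^\infty_{s,r}$ norm is designed so that $\|u_L(r\cdot)\|_{Z^\infty_{s,r}}=\|u_0\|_{H^s(\m S^{d-1})}$ is independent of $r$. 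I would seek $u = u_L + w$, equivalently $v = v_L + z$, with $z$ recovered through the Duhamel formula inverting $\partial_t^2 - \gh D^2$ with outgoing condition at $t = +\infty$,
\begin{equation*}
P_\ell z(t) = \frac{1}{2\omega_\ell}\int_t^{+\infty}\bigl(e^{-\omega_\ell(s-t)} - e^{\omega_\ell(s-t)}\bigr) P_\ell F(v_L+z)(s)\, ds,
\end{equation*}
where $F$ denotes the nonlinear right-hand side above.

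The core of the argument is then a Banach contraction for this map in a small ball of $\mathcal Z^\infty_{s,r_0}$. The outgoing linear propagator is bounded on $\mathcal Z^\infty_{s,r_0}$ by the very definition of the norm, and the key estimate to supply is a product law of the form $\|v^p\|_{Z^\infty_{s-1,r}}\lesssim \|v\|_{Z^\infty_{s,r}}^p$: the assumption $s > d/2 + 3/2$ makes $H^s(\m S^{d-1})$ a Banach algebra embedded in $W^{1,\infty}$, and the weight $r^{2\omega_\ell}$ remains compatible with multiplication because a product of two spherical harmonics of degrees $\ell_1$ and $\ell_2$ only produces modes of degree at most $\ell_1+\ell_2$ (so the exponential weights add up correctly). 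Once this product law is in place, the supercritical gap $\nu_0 > 0$ gives strict contraction provided $r_0$ is large enough; the fixed point also immediately yields the scattering rate $r^{-\nu_0}$ in \eqref{scattranslrdintro}, and if $\|u_0\|_{H^s}$ is sufficiently small one can take $r_0 = 1$.

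Uniqueness of the small solution in $\mathcal Z^\infty_{s,r_0}$ is immediate from the contraction property. The injectivity of $u_0 \mapsto u$ follows from the recovery formula $u_0 = \lim_{r\to+\infty} r^{(d-2)/2}\, r^{\gh D} u(r\cdot)$ in $H^s(\m S^{d-1})$, whose legitimacy is precisely what the scattering estimate \eqref{scattranslrdintro} provides. The main obstacle throughout is the product estimate in the analytic-type $\mathcal Z^\infty_{s,r}$ norms on the sphere, as the weights encode an exponential dependence on the spherical-harmonic index and forbid the use of standard Sobolev paraproduct techniques without care; once this estimate is established, the remainder of the scheme is a rather standard fixed-point argument.
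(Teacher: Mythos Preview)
Your proposal is correct and follows essentially the same approach as the paper: the paper's proof of Theorem \ref{thmexistglobPintro} is literally the one-line observation that it is a special case of Theorem \ref{thmexistglobP} (no derivatives in $f$), and you both cite that reduction and sketch the underlying machinery (conformal change of variables, Duhamel formula with outgoing condition, contraction in the $\q Y_{s,t}$/$\q Z^\infty_{s,r}$ spaces). One small imprecision worth flagging: the product law you state, $\|v^p\|_{Z^\infty_{s-1,r}}\lesssim \|v\|_{Z^\infty_{s,r}}^p$, is not by itself enough to close the loop for exponents in the range $\tfrac{d}{d-2}<p\le\tfrac{d+2}{d-2}$; what is actually used (and what your spherical-harmonics degree argument in fact yields, cf.\ Proposition \ref{lmprodanalytique}) is the sharper bound $\|uv\|_{Y_{s,t}}\lesssim e^{-\frac{d-2}{2}t}\|u\|_{Y_{s,t}}\|v\|_{Y_{s,t}}$, whose extra exponential gain combines with the coefficient $e^{-t((d-2)p-d-2)/2}$ to produce the total rate $e^{-((d-2)p-d)t}$ that matches \eqref{hypbpqinftyintro}.
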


(Here and below, we say that there is a unique small solution $u$ in a Banach space $Z$ if there exists $\e >0$  such that $u$ is the unique solution in the ball centered at 0 and of radius $\e$ of $Z$.)

When we restrict to $\dot H^{1}$ critical exponents with analytic nonlinearities (which actually leaves the three possibilities mentioned below), we obtain a full classification of all possible solutions close to infinity.
\begin{thm}[Semilinear energy critical equation]
\label{thmH1reg}
We consider the equation
\begin{equation}
\label{eq:elliptic_nonlin}
\Delta u=\kappa u^{p}.
\end{equation} 
where $\kappa \in \m R$, and we assume to be in one of the following situations:
\[ (d,p) \in \{ (3,5), (4,3), (6,2) \}. \]

1) Let $u\in \dot{H}^{1}(\{|x|\ge 1\})$ be a solution of \eqref{eq:elliptic_nonlin} in the weak sense (see Definition \ref{defsoluext}). Then, there exist $r_{0}\ge 1$ so that $u\in \q Z_{s,r_{0}}^{\infty}$ and a unique $u_{L}\in \q Z_{s,r_{0}}^{\infty}$ solution of $\Delta u_{L}=0$ on $\{|x|\ge r_{0}\}$ so that  
\begin{align}
\label{scattranslrdinverseH1regpuiss}
\nor{u(r\cdot)-u_{L}(r\cdot)}{Z_{s,r/r_{0}}^{\infty}}\le Cr^{-2}\underset{r\to +\infty}{\longrightarrow}0.
\end{align}
2) Reciprocally, given  $u_{0} \in H^s(\m S^{d-1})$, and  $u_{L} \in \q Z^{\infty}_{s}$ as in \eqref{eq:ell_lin_intro}, there exists $r_{0}\ge 1$ and a unique small solution $u\in \q Z_{s,r_{0}}^{\infty}$ of \eqref{eq:elliptic_nonlin} on $\{|x|\ge r_{0}\}$ satisfying \eqref{scattranslrdinverseH1regpuiss}.
\end{thm}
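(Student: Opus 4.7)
\emph{Part 2} follows immediately from Theorem \ref{thmexistglobPintro} applied to $f(y) = \kappa y^p$: in each of the three pairs $(3,5)$, $(4,3)$, $(6,2)$, the supercriticality assumption \eqref{hypbpqinftyintro} holds with $\nu_0 = 2$, giving exactly the decay rate $r^{-2}$ in \eqref{scattranslrdinverseH1regpuiss}. For Part 1, my plan is to (i) upgrade $u$ to a smooth solution with sharp pointwise decay, (ii) extract from its asymptotics a harmonic germ $u_L$, and (iii) identify $u$ with the nonlinear solution reconstructed from $u_L$ via Part 2, using the uniqueness clause of Theorem \ref{thmexistglobPintro}.

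\emph{Regularity via the Kelvin transform.} The three exponent pairs are precisely those for which \eqref{eq:elliptic_nonlin} is conformally invariant, so the Kelvin transform $\tilde u(x) = |x|^{-(d-2)} u(x/|x|^2)$ sends weak $\dot H^1$ solutions on $\{|x| \ge 1\}$ to weak $\dot H^1$ solutions of the same equation on the punctured unit ball. Since a point has vanishing $H^1$-capacity in dimension $d \ge 3$, the singularity at the origin is removable and $\tilde u$ extends to a weak solution on $B(0,1)$. A Brezis--Kato / Moser bootstrap adapted to the critical exponent then upgrades $\tilde u$ to a smooth classical solution, and interior elliptic estimates yield componentwise bounds of the form $\nor{P_\ell u(r\cdot)}{L^2(\m S^{d-1})} = \mathcal{O}_\ell(r^{-(d-2)-\ell})$, with matching control on radial derivatives, which places $u$ in $\q Z_{s,r_0}^{\infty}$ for every sufficiently large $r_0$.

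\emph{Extraction of $u_L$ and identification.} The absolute continuity $\int_{|x| \ge R} |\nabla u|^2 \, dx \to 0$ as $R \to \infty$, together with the decay above, ensures that $\nor{u}{\q Z_{s,r_0}^{\infty}}$ can be made arbitrarily small for $r_0$ large. Each spherical-harmonic mode of $u$ has the asymptotic profile of a linear harmonic, from which I would read off coefficients $c_{\ell,m}$, define $u_0 = \sum_{\ell,m} c_{\ell,m} \phi_{\ell,m} \in H^s(\m S^{d-1})$, and form the associated harmonic extension $u_L$ on $\{|x| \ge r_0\}$. Feeding $u_L$ into Part 2 then produces a small $v \in \q Z_{s,r_0}^{\infty}$ satisfying \eqref{scattranslrdinverseH1regpuiss}; since $u$ shares the same asymptotic germ $u_L$ and lies in the same small-solution uniqueness class, Theorem \ref{thmexistglobPintro} forces $u = v$, and \eqref{scattranslrdinverseH1regpuiss} transfers to $u$.

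The main obstacle is the regularity step: promoting a mere weak $\dot H^1$ solution of the energy-critical equation to a classical one with the sharp mode-by-mode decay demanded by $\q Z_{s,r_0}^{\infty}$ is delicate because compactness at the critical exponent is borderline. The specific choice of pairs $(d,p)$, making the Kelvin transform available, is what unlocks the argument by reducing the exterior regularity question to an interior regularity problem at a single removable singularity.
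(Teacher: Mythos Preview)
Part 2 is handled identically. For Part 1, your route via the Kelvin transform is genuinely different from the paper's and could be made to work, but the identification step as written is circular. The uniqueness clause in Theorem \ref{thmexistglobPintro} asserts uniqueness among small solutions in $\q Z_{s,r_0}^\infty$ \emph{already satisfying} the convergence \eqref{scattranslrdintro} toward the given $u_L$; to invoke it for $u$ you would first have to verify $\nor{(u-u_L)(r\cdot)}{Z_{s,r/r_0}^\infty}\to 0$, which is the conclusion you are after. Reading off $c_{\ell,m}$ mode by mode does not supply this: the $Z_{s,r/r_0}^\infty$ norm weights the $\ell$-th component by $(r/r_0)^{\ell+d-2}$, so bounds $\nor{P_\ell u(r\cdot)}{L^2}=O_\ell(r^{-(d-2)-\ell})$ with constants uncontrolled in $\ell$ do not even place $u$ in $\q Z_{s,r_0}^\infty$, let alone give the difference estimate. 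What would rescue your approach is that the Kelvin image $\tilde u$ is in fact real-analytic near $0$ (analytic elliptic regularity), so Proposition \ref{propanalZ} puts $\tilde u$ in a $\q Z^0$ space and the Fischer decomposition (Theorem \ref{thmRender}) with Lemma \ref{lemFischercvgece} gives the convergence to $u_L$ directly; but once you have that, the detour through Part 2 and its uniqueness clause becomes superfluous.

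The paper sidesteps this by matching at a finite radius rather than at infinity. After rescaling so that $\nor{\nabla u}{L^2(|x|\ge 1/2)}$ is small, a quantified Trudinger bootstrap (Lemma \ref{lmTrudi} and Proposition \ref{propelipticregtrace}) makes the trace $u|_{\m S^{d-1}}$ small in $H^s$. The \emph{Dirichlet} problem in $\q Z_s^\infty$ with this boundary data (Theorem \ref{thmexistglobPDir}) then produces a solution $\tilde u$ that automatically carries a scattering state $u_L$. The identification $u=\tilde u$ is obtained via Proposition \ref{thmH1boundaryelem}, a uniqueness result for the Dirichlet problem that holds in the weak space $L^{d(p-1)/2}=L^{2^*}$, where the given $u$ already lives by Sobolev embedding. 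This is the structural advantage: uniqueness for the finite-radius Dirichlet problem is available in a norm that $u$ possesses a priori, whereas uniqueness for the scattering problem requires membership in the strong $\q Z$ space, which is part of what you are trying to establish.
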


To our knowledge, such classification did not appear elsewhere in the literature, for any elliptic type equation. It gives both a complete rigidity and a fine description for nonlinear solutions, concerning their behavior at infinity.

In particular, the previous theorem also implies a result of unique continuation at infinity.

\begin{cor}
\label{corUCPup}
In the situation of the previous Theorem \ref{thmH1reg} 1), if $u\in \dot{H}^{1}(\{|x|\ge 1\})$ is a solution of \eqref{eq:elliptic_nonlin} so that 
\[ \forall \ell \in \m N, \quad  r^{d-2+\ell}\nor{P_{\ell}u(r\cdot)}{H^{s}(\mathbb{S}^{d-1})} \to 0 \quad \text{as} \quad r\to +\infty, \]
then $u=0$ on $\m R^d \setminus B(0,1)$. In particular, if $ u(x)=\mathcal{O}(|x|^{-\beta})$ for any $\beta\in \R$, then $u=0$.
\end{cor}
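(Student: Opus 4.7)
The plan is to invoke Theorem \ref{thmH1reg}.1) to associate to $u$ its unique linear profile $u_L$, exploit the given mode-by-mode decay hypothesis to force $u_L\equiv 0$, deduce from the uniqueness half of Theorem \ref{thmH1reg}.2) that $u$ vanishes outside a large ball, and finally propagate this vanishing back to $\{|x|\ge 1\}$ by standard analytic unique continuation.

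First, Theorem \ref{thmH1reg}.1) furnishes $r_0\ge 1$ and a unique $u_L\in \q Z^\infty_{s,r_0}$, harmonic on $\{|x|\ge r_0\}$, with
\[ \nor{u(r\cdot)-u_L(r\cdot)}{Z^\infty_{s,r/r_0}} \le C r^{-2}. \]
Because $u_L$ is harmonic and belongs to $\q Z^\infty_{s,r_0}$, any incoming mode $\sim r^\ell$ would blow up the $Z^\infty_{s,r/r_0}$ norm as $r\to+\infty$; hence each spherical component of $u_L$ is purely outgoing and $r^{\ell+d-2}P_\ell u_L(r\cdot)$ is constant in $r\ge r_0$, valued in the finite-dimensional range of $P_\ell$. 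From the definition of $Z^\infty_{s,r/r_0}$ one extracts, for each $\ell$,
\[ \langle\ell\rangle^s (r/r_0)^{\ell+d-2}\nor{P_\ell(u-u_L)(r\cdot)}{L^2(\m S^{d-1})}\le C r^{-2}, \]
and since on $\mathrm{Im}\, P_\ell$ all norms are equivalent with $\ell$-dependent constants,
\[ r^{d-2+\ell}\nor{P_\ell(u-u_L)(r\cdot)}{H^s(\m S^{d-1})} \le C_\ell\, r_0^{\ell+d-2}\, r^{-2} \longrightarrow 0. \]
Combined with the assumption and the triangle inequality, $r^{d-2+\ell}\nor{P_\ell u_L(r\cdot)}{H^s}\to 0$; but this last quantity is constant in $r$ by the outgoing structure, hence is identically zero. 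So $P_\ell u_L=0$ for every $\ell$, and $u_L\equiv 0$.

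With $u_L\equiv 0$ the estimate becomes $\nor{u(r\cdot)}{Z^\infty_{s,r/r_0}}\le C r^{-2}$, and the continuous embedding $Z^\infty_{s,\rho'}\subset Z^\infty_{s,\rho}$ for $\rho\le\rho'$ (the same for the $r\partial_r$ component of the $\q Z^\infty$ norm) propagates this bound to arbitrary base radii, showing that $\nor{u}{\q Z^\infty_{s,r_1}}\to 0$ as $r_1\to+\infty$. Picking $r_1$ large enough to lie inside the uniqueness ball of Theorem \ref{thmH1reg}.2) applied to the trivial linear datum $u_L=0$ (whose unique small solution is $u\equiv 0$), we conclude $u\equiv 0$ on $\{|x|\ge r_1\}$. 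Interior elliptic regularity for the analytic equation $\Delta u=\kappa u^p$ makes $u$ real-analytic on the connected open set $\{|x|>1\}$, so its vanishing on the non-empty open subset $\{|x|>r_1\}$ propagates to the whole component, and by continuity to $\{|x|\ge 1\}$. The last assertion is immediate: if $u(x)=O(|x|^{-\beta})$ for any $\beta\in\R$, then for each fixed $\ell$ choose $\beta>d-2+\ell$ and use finite-dimensionality of $\mathrm{Im}\, P_\ell$ to verify the main hypothesis.

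The main obstacle in this plan is the mode-by-mode extraction that forces $u_L\equiv 0$: it relies crucially on the fact that the $Z^\infty_{s,r}$ norm incorporates the sharp outgoing weight $r^{\ell+d-2}$ for each spherical mode, so that the decay hypothesis exactly matches the natural rate of outgoing harmonics. Once $u_L=0$ has been extracted, the rest is Cauchy-problem uniqueness in the small-data regime plus classical analytic unique continuation.
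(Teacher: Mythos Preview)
Your proof is correct and follows essentially the same path as the paper: extract $u_L$ via Theorem \ref{thmH1reg}~1), kill it mode-by-mode using the decay hypothesis together with the outgoing structure of harmonic functions in $\q Z^\infty_{s,r_0}$, invoke the uniqueness half of the scattering operator to conclude $u\equiv 0$ outside a large ball, and then propagate inward. The only cosmetic difference is the propagation step: the paper uses Trudinger's regularity to write $\Delta u = Vu$ with $V=\kappa u^{p-1}\in L^\infty_{\loc}$ and then applies standard unique continuation for linear elliptic operators, whereas you invoke analytic regularity and real-analytic continuation---both are classical and interchangeable here.
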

The results in this direction we are aware of (see for instance \cite{BK:05,B:14}) would be obtained considering $u^p$ as $Vu$ for some potential $V=u^{p-1}$. They require exponential decay without distinction between the spherical harmonics.%, see for instance [Section 14.7]

For power nonlinearities $u^p$, $p>\frac{d}{d-2}$ ($p$ integer), Theorem \ref{thmexistglobPintro} constructs a lot of solutions with prescribed asymptotic linear behavior. 
We can perform a classification under further decay assumption.
\begin{thm}[Semilinear equation with decay]
\label{thmregdecay}
Let $d\ge 3$, $\kappa\in \R$ and $p\in \N^*$ with $p>\frac{d}{d-2}$ and consider the equation 
\begin{equation}
\label{eq:elliptic_nonlingen}
\Delta u=\kappa u^{p}.
\end{equation}

1) Let $u\in \dot{H}^{1}(\{|x|\ge 1\})$ be a solution of \eqref{eq:elliptic_nonlingen} in the weak sense (see Definition \ref{defsoluext}) so that  for some $\eta>0$ and $C>0$, we have 
\[ \forall |x| >1, \quad |u(x)|\le C |x|^{-\frac{2}{p-1}-\eta}. \]
Then, there exists $r_{0}\ge 1$ so that $u\in \q Z_{s,r_{0}}^{\infty}$ and there exists a unique $u_{L}\in \q Z_{s,r_{0}}^{\infty}$ solution of $\Delta u_{L}=0$ on $\{|x|\ge r_{0}\}$ so that  
\begin{align}
\label{scattranslrdinverseH1regpuissgene}
\nor{u(r\cdot)- u_{L}(r\cdot)}{Z_{s,r/r_{0}}^{\infty}}\le Cr^{-((d-2) p-d)}\underset{r\to +\infty}{\longrightarrow}0.
\end{align}
2) Reciprocally, given  $u_{0} \in H^s(\m S^{d-1})$, and  $u_{L} \in \q Z^{\infty}_{s}$ as in \eqref{eq:ell_lin_intro}, there exists $r_{0}\ge 1$ and a unique small solution $u\in \q Z_{s,r_{0}}^{\infty}$ of \eqref{eq:elliptic_nonlingen} on $\{|x|\ge r_{0}\}$ satisfying \eqref{scattranslrdinverseH1regpuissgene}.
\end{thm}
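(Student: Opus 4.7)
The statement splits into two parts of quite different nature.

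Part 2 is an immediate consequence of Theorem \ref{thmexistglobPintro} applied to $f(y)=\kappa y^p$: the hypothesis $p>d/(d-2)$ is exactly $(d-2)p-d>0$, so \eqref{hypbpqinftyintro} holds with $\nu_0=(d-2)p-d$. Theorem \ref{thmexistglobPintro} then yields a unique small solution in $\q Z^\infty_{s,r_0}$ satisfying \eqref{scattranslrdintro}, which is precisely \eqref{scattranslrdinverseH1regpuissgene}.

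For Part 1, the plan is to upgrade the pointwise decay hypothesis into membership $u\in\q Z^{\infty}_{s,r_0}$ with small norm for $r_0$ sufficiently large, and then invoke Part 2 together with the uniqueness/injectivity statement of Theorem \ref{thmexistglobPintro} to identify the corresponding linear profile $u_L$. The heart of the matter is a decay bootstrap. Writing $\alpha_0=2/(p-1)+\eta$, the nonlinear source $\kappa u^p$ decays as $|x|^{-p\alpha_0}=|x|^{-\alpha_0-2-(p-1)\eta}$, i.e.\ with a strict gain of $(p-1)\eta$ over what the hypothesized decay of $u$ would dictate. Representing $u$ by the Green's function of the Laplacian on $\R^d\setminus B(0,1)$ (with the given boundary trace) and iterating with standard elliptic estimates on dyadic annuli $\{R\le|x|\le 2R\}$, one gains roughly two extra orders of decay for $u$ at each step, until saturation at the linear rate $|u(x)|\lesssim |x|^{-(d-2)}$ is reached. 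Interior elliptic regularity then propagates this rate to all derivatives, yielding $|\partial^\beta u(x)|\lesssim_\beta |x|^{-(d-2)-|\beta|}$. Projecting on spherical harmonics, each $P_\ell u(r\cdot)$ satisfies a second-order radial ODE with homogeneous solutions $r^\ell$ and $r^{-\ell-(d-2)}$; the pointwise decay forbids the growing mode and gives $\|P_\ell u(r\cdot)\|_{L^2(\m S^{d-1})}\lesssim_\ell r^{-\ell-(d-2)}$.

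The main obstacle is to control the $\ell$-dependence of the constants in this last estimate strongly enough for the series defining $\|u(r\cdot)\|_{Z^\infty_{s,r/r_0}}^2=\sum_\ell\langle\ell\rangle^{2s}(r/r_0)^{2(\ell+d-2)}\|P_\ell u(r\cdot)\|_{L^2}^2$ to converge, and be small for $r_0$ large. This is precisely the angular analyticity built into the $\q Z^\infty$ spaces; it requires expanding $u^p$ in spherical harmonics, estimating products through the $H^s(\m S^{d-1})$ Banach algebra structure ($s>d/2+3/2$), and coupling this with the radial ODE analysis---an argument mirroring the fixed-point construction used for Theorem \ref{thmexistglobPintro} itself. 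Once $u\in\q Z^\infty_{s,r_0}$ with small norm is obtained (by enlarging $r_0$ if necessary), the uniqueness clause in Part 2 identifies $u$ with the solution produced by Part 2 applied to its own boundary trace $u(r_0\cdot)$, and the associated harmonic extension is the sought $u_L$, delivering \eqref{scattranslrdinverseH1regpuissgene}.
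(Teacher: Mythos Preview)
Part 2 is correctly identified as a direct application of Theorem \ref{thmexistglobPintro}.

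For Part 1, your approach differs substantially from the paper's, and the step you flag as ``the main obstacle'' is precisely what the paper \emph{avoids} rather than confronts. You propose to prove $u\in\q Z^\infty_{s,r_0}$ directly via a decay bootstrap plus spherical-harmonic analysis; but the $\q Z^\infty$ norm encodes exponential decay in the mode index $\ell$ (angular real-analyticity), and your sketch does not explain how this follows from pointwise decay and local elliptic estimates. Your closing step is also off: Part 2 takes a \emph{linear} profile $u_L$ with boundary trace $u_0$ and produces a nonlinear solution close to $u_L$; it does not produce a solution with prescribed Dirichlet data $u_0$. So ``applying Part 2 to the boundary trace $u(r_0\cdot)$'' does not yield a solution with the same boundary trace as $u$, and the uniqueness clause there cannot close the argument.

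The paper's route sidesteps both issues. After rescaling $u^{r_0}(x)=r_0^{2/(p-1)}u(r_0x)$, the decay hypothesis gives $\|u^{r_0}\|_{L^\infty(|x|\ge 1/2)}\le Cr_0^{-\eta}$, and elliptic regularity on a compact annulus (Proposition \ref{propelipticregtracegen}) makes the trace $u^{r_0}|_{\m S^{d-1}}$ small in $H^s$. One then invokes the \emph{Dirichlet} construction, Theorem \ref{thmexistglobPDir}, to build $\widetilde u\in\q Z^\infty_s$ with $\widetilde u|_{\m S^{d-1}}=u^{r_0}|_{\m S^{d-1}}$; this $\widetilde u$ automatically scatters to a linear solution. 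The identification $u^{r_0}=\widetilde u$ is made not in $\q Z^\infty$ but via the weak-space uniqueness of Proposition \ref{thmH1boundaryelem}, which only requires smallness of both in $L^{d(p-1)/2}(\{|x|\ge 1\})$: for $u^{r_0}$ this follows from the decay assumption, for $\widetilde u$ from Lemma \ref{lminjectZ} (using $d(p-1)/2>d/(d-2)$). Membership $u\in\q Z^\infty_{s,r_0}$ is thus a \emph{consequence}, not something proved a priori.
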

In the defocusing case, the decay can be obtained using results of V\'eron \cite{V:81} for solutions constructed by Benilan-Br\'ezis-Crandall in \cite{BBC:75}. 
\begin{cor}
\label{corfocsemi}
    Let $d\ge 3$, $p\in 2\N+1$ with $p>\frac{d}{d-2}$. Let $f \in L^1(\R^d)$ be real valued with compact support. Due to \cite{BBC:75}, there exist a unique real valued solution $u \in L^{\frac{d}{d-2}, \infty}(\m R^d)$ with $\Delta u\in L^1(\R^d)$ of
    \[ \Delta u=u^p+f. \]
    Then the conclusion of Theorem \ref{thmregdecay}~1) holds for $u$.
\end{cor}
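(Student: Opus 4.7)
The plan is to verify the hypotheses of Theorem \ref{thmregdecay} 1) for the BBC solution $u$, then apply that theorem directly. Choose $R>0$ with $\Supp(f)\subset B(0,R)$, so that $u$ solves the pure power equation $\Delta u = u^p$ weakly on $\{|x|>R\}$. The whole task reduces to producing (i) the pointwise decay $|u(x)| \le C|x|^{-\frac{2}{p-1}-\eta}$ for some $\eta>0$, and (ii) the $\dot{H}^1(\{|x|\ge 1\})$ regularity.

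For (i), I would appeal to Véron's classification of solutions of $\Delta u = u^p$ near infinity \cite{V:81}. By the BBC construction, the solution $u$ lies in the Marcinkiewicz space $L^{d/(d-2),\infty}(\R^d)$, which matches the decay of the fundamental solution of $\Delta$. Since $p$ is odd, the nonlinearity $y\mapsto y^p$ is monotone, and the equation is of defocusing type; this allows Véron's arguments (originally phrased for positive solutions near an isolated singularity, and then at infinity via Kelvin transform) to be adapted to the sign-changing BBC solution. The conclusion is that on the exterior of the support of $f$, $u$ decays as a harmonic function: $|u(x)|\le C|x|^{-(d-2)}$. Since $p>\tfrac{d}{d-2}$ rewrites as $d-2>\tfrac{2}{p-1}$, setting
\[
\eta := (d-2)-\frac{2}{p-1} >0
\]
yields $|u(x)|\le C|x|^{-\frac{2}{p-1}-\eta}$ for $|x|>R$, which is exactly the decay assumption of Theorem \ref{thmregdecay} 1).

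For (ii), once the pointwise bound is established on $\{|x|>R\}$, standard interior elliptic regularity applied to $\Delta u = u^p$ with smooth right-hand side gives $u\in C^\infty(\{|x|>R\})$ together with gradient estimates $|\nabla u(x)|\lesssim |x|^{-(d-1)}$; since $2(d-1)>d$ for $d\ge 3$, this gives $\nabla u\in L^2(\{|x|\ge R\})$. On the annular region $\{1\le |x|\le R\}$ one uses the local regularity inherent to BBC solutions ($\Delta u \in L^1_{\loc}$ together with the Marcinkiewicz integrability of $u$ and bootstrap from the equation) to upgrade to $u\in H^1_{\loc}$, and conclude $u\in \dot{H}^1(\{|x|\ge 1\})$. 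Finally, Theorem \ref{thmregdecay} 1) applied on $\{|x|\ge r_0\}$ for any $r_0\ge R$ gives the claim.

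The main obstacle is step (i): the precise invocation of Véron's decay dichotomy for the (not necessarily positive) BBC solution. Véron's results are usually formulated for positive solutions via a Kelvin transform reducing exterior asymptotics to isolated singularities; one must ensure that the combination of odd $p$ (so that $u^p$ is defined for real $u$ and $y\mapsto y^p$ is monotone) and the a priori Marcinkiewicz bound rules out the slow-decay branch $|x|^{-2/(p-1)}$ and selects the fast-decay branch $|x|^{-(d-2)}$. All other steps are standard bootstrap and a direct appeal to Theorem \ref{thmregdecay}.
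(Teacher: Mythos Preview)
Your proposal is correct and follows essentially the same route as the paper: invoke V\'eron's asymptotics to get $|u(x)|\le C|x|^{-(d-2)}$, note $d-2>\tfrac{2}{p-1}$, derive the gradient decay and $\dot H^1$ membership, then apply Theorem~\ref{thmregdecay}. Your main worry about sign-changing solutions is resolved exactly as the paper does: since $p$ is odd, $u^p=|u|^{p-1}u$, which is precisely the nonlinearity treated in \cite{BBC:75,V:81}, so V\'eron's \cite[Th\'eor\`eme~4.1]{V:81} applies directly; for the gradient bound the paper simply quotes its Lemma~\ref{decaygradsemi} rather than redoing the local elliptic estimate.
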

Here $L^{q,\infty}(\R^d)$ are the weak-$L^{q}$ spaces, for $1<q<+\infty$, and are called spaces of Marcinkiewicz $M^{q}(\R^d)$ in the above reference, see \cite[Appendix]{BBC:75}  or \cite[Chap V.3]{StWeiBookFourierEucl}.
%ou Grafakos Exercise 1.1.12. p13

In particular, in the defocusing case, the assumption of additional decay in Theorem  \ref{thmregdecay} is not necessary and can be obtained under reasonable assumptions on the solution. Yet, this assumption is sometimes necessary with the power $\frac{2}{p-1}$ being optimal. For instance, for a $\dot{H}^1$-supercritical nonlinearity $p>\frac{d+2}{d-2}$, in the focusing case $\kappa<0 $, it is known  that there exist radial positive solutions that behave like $C|x|^{-\frac{2}{p-1}}$ at infinity. These solutions have a slower decay than the solutions we construct in $\q Z_{s}^{\infty}$ which decay as the linear solutions, that is $C|x|^{-(d-2)}$. We refer to \cite[Theorem 5.2]{KS:07} for a nice summary. We refer also to \cite[Theorem 3.3]{VV:91} for a dichotomy result in the case of positive solutions in the defocusing case and $p\neq \frac{d+2}{d-2}$, $p>1$.

The class of equations covered by our theorems for constructing solutions are quite general, either for prescribed behavior at infinity (see Theorem \ref{thmexistglobP} below) or prescribed Dirichlet value (see Theorem \ref{thmexistglobPDir} below). Yet, the regularity results and uniqueness of the Dirichlet boundary value problem has to be adapted to each equation. This is the reason why we only treated some examples for the classification; we nonetheless believe that the strategy can be applied in many more cases. Assuming that we are able to construct solution with prescribed behavior at infinity, the road map for the classification in Theorem \ref{thmH1reg} and \ref{thmregdecay} goes as follows:
\begin{itemize}
\item prove by scaling and regularity arguments that, for a rescaled version of the solution, the trace on $\m S^{d-1}$ is small in $H^s(\m S^{d-1})$ with $s$ large enough.
\item construct a solution in the space $\q Z_{s}^{\infty}$ with the same Dirichlet data on $\m S^{d-1}$. By construction, this solution has the correct decay and will ``scatter'' to a linear solution.
\item prove a uniqueness result for the Dirichlet value problem in some appropriate space containing the original solution and the solution we constructed.
\end{itemize}
The full classification as in Theorem \ref{thmH1reg} is not always true, but we believe that some modifications of the methods we introduce in this paper might lead to similar results. It would be natural to try to construct, by a modification of the space $\q Z_{s}^{\infty}$, other sets of nonlinear solutions with different asymptotic behavior.

%%%%%%%%%%%%%%%%%%%%%%%%%%%%%%%%%%%%%%%%%%%%%%%%%%%%%%%%%%%%%%%%%%%%%%
\subsection{Main results on conformal equations in dimension \texorpdfstring{$2$}{2}}

 Let $(\mathcal{N}, h)$ be an analytic compact Riemannian manifold of dimension $N$. Without loss of generality, we assume that $\mathcal{N}$ is actually embedded in $\m R^M$ (for some large integer $M$) analytically and isometrically, see \cite{NashAnalytic, GJ:71, J:72}.
For $\Omega\subset \R^d$ open subset ($d=2$ in this section, but some definitions will also be used for any $d\ge 2$), we will consider maps in the space
\begin{align}
 \label{defH1MN}
     H^{1}_{\loc}(\Omega,\mathcal{N}) :=\left\{  u \in H^1_{\loc}(\Omega, \R^{M}) :  u(x)\in \mathcal{N} \textnormal{ for a.e. } x \in \Omega\right\}.
\end{align}
We define similarly the spaces $\mc C^r(\Omega,\mathcal{N})$ for $r\ge 0$.
We will say that $u = (u_1, \dots, u_M)$ is of \emph{finite energy} on $\Omega$ if $\nabla u$, defined in the distributional sense on $\Omega$, is in $L^2(\Omega)$, i.e., the following quantity $\mathcal{E}(u)$ is finite:
 \begin{align}
 \label{defNRJHM}
\mathcal{E}(u) :=\int_{\Omega}|\nabla u|^2dx <+\infty, \quad \text{where} \quad |\nabla u|^2=\sum_{i=1}^M \sum_{\alpha=1}^d |\partial_{\alpha} u_i|^2.
 \end{align}
We refer to the lecture notes \cite{R:cours} for a survey on the subject and appropriate references. Let $\omega$ be an analytic 2-form on $\mathcal{N}$. We denote $\widetilde{\omega}={\pi_{\mathcal{N}}}^{*}\omega$ the pullback of $\omega$ by $\pi_{\mathcal{N}}$, the orthogonal projection on $\mathcal{N}$, defined in a small tubular neighborhood of $\mathcal{N}$. 
For $u\in \mc C^2(\Omega,\mathcal{N})$, we are studying solutions of
\begin{align}
\label{systconformembed}
\tag{Conf-E}
\Delta u=-A(u)(\nabla u,\nabla u)-H(u)(\partial_x u,\partial_y u)
\end{align}
where $A$ is the second fundamental form\footnote{we denote $A(u)(\nabla u,\nabla u)=\sum_{i=1}^d A(u)(\partial_{x_i} u, \partial_{x_i} u)$} of the embedding $\q N \subset \m R^M$ and for $z \in \q N$, $H(z)$ is the $T_{z}\mathcal{N}$-valued alternating 2-form on $T_{z}\mathcal{N}$ defined by
\begin{align}
\label{formH}
\forall U,V,W \in T_{z}\mathcal{N}, \quad d\omega_{z}(U,V,W) = U\cdot H(z)(V,W)
\end{align}
Let  $(e_{i})_{i=1, \dots,M}$ be the canonical basis of $\R^{M}$. Denote for $y\in\mathcal{N}$, $H_{jk}^{i}(y)=d\widetilde{\omega}_{y}(e_{i},e_{j},e_{k})$. Note that we have $H_{jk}^{i}=-H_{ik}^{j}$.
The previous formulations is quite general and contains the following particular cases:
\begin{itemize}
\item Harmonic maps: $\Delta u=-A(u)(\nabla u,\nabla u)$
\item for $d=2$ and $\mathcal{N}=\R^3$ (or $\mathbb{T}^3$), the $H$-system (surfaces with prescribed mean curvature) : $\Delta u= H(u) u_x\land u_y$.
\end{itemize}

It was proved by Rivi\`ere \cite{R:07} that in dimension $d=2$,  weak solutions are actually smooth (see also H\'elein \cite{Helein:91} for the case $H=0$, that is, harmonic maps), so we won't distinguish between weak and smooth solutions in \eqref{systconformembed} in this case. Our main result on the system \eqref{formconf} is the following.

\begin{thm}
\label{thmH1regHarmon}
1) Let $u\in H^{1}_{\loc}(\R^{2}\setminus B(0,1),\mathcal{N})$ be a finite energy solution of \eqref{systconformembed}. Then, there exists $r_{0}\ge 1$ so that $u\in \q Z_{s,r_{0}}^{\infty}$. Moreover, there exists one unique $u_{\infty}\in \mathcal{N}$ and $u_{L}\in \q Z_{s,r_{0}}^{\infty}$ solution of $\Delta u_{L}=0$ on $\{|x|\ge r_{0}\}$ and with value in $T_{u_{\infty}}\mathcal{N}$ so that  
\begin{align}
\label{scattranslrdinverseHarmonMap}
\nor{\pi_{T_{u_{\infty}}\mathcal{N}}(u(r\cdot) - u_\infty)-u_{L}(r\cdot)}{Z_{s,r/r_{0}}^{\infty}}\le Cr^{-2}\underset{r\to +\infty}{\longrightarrow}0,
\end{align}
where $\pi_{T_{u_{\infty}}\mathcal{N}}$ is the orthogonal projection on $T_{u_{\infty}}\mathcal{N}$ (and $P_0 u_L =0$).

2) Reciprocally, for any $u_{\infty}\in \mathcal{N}$ and $u_{L} \in \q Z_{s}^{\infty}$  with value in $T_{u_{\infty}}\mathcal{N}$ and solution of $\Delta u_{L}=0$ on $\R^{2}\setminus B(0,1)$ with $P_{0}u_{L}=0$, there exists $r_{0}\ge 1$ and a unique small solution $u\in  \q Z_{s,r_{0}}^{\infty}\cap \mc C^{\infty}(\R^2\setminus B(0,r_0),N)$ solution of \eqref{systconformembed} on $\{|x|\ge r_{0}\}$ satisfying \eqref{scattranslrdinverseHarmonMap}.
\end{thm}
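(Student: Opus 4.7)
The harder direction is part 1; part 2 is essentially a direct application of Theorem~\ref{thmexistglobPgain}. For part 2, I would write $u = u_\infty + v$ with $v$ taking values asymptotically in $T_{u_\infty}\mathcal{N}$, so that \eqref{systconformembed} becomes an equation of the form $\Delta v = f(v,\nabla v)$ with $f$ analytic in $v$ and quadratic in $\nabla v$. In dimension $d=2$ the supercriticality assumption \eqref{hypbpqinftyintro} fails, but the nonlinearity enjoys the null structure demanded by Theorem~\ref{thmexistglobPgain}: the leading quadratic term $A(u_\infty)(\nabla v,\nabla v)$ vanishes on $T_{u_\infty}\mathcal{N}$, and the $H$-term, being the pullback of an exact 2-form $d\omega$, carries a Jacobian structure. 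The scattering solution $u=u_\infty+v\in \q Z^{\infty}_{s,r_0}$ then follows; the constraint $u\in\mathcal{N}$ is propagated by a bootstrap on the normal component, using that the right-hand side of \eqref{systconformembed} is always normal to $\mathcal{N}$.

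For part 1, I would proceed in three stages: (a) show $u$ is smooth up to infinity and extract $u_\infty$; (b) apply the Dirichlet version of the existence theorem at some large radius $r_0$; (c) prove that the resulting solution agrees with $u$.

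Stage (a) begins with Rivi\`ere's regularity theorem \cite{R:07}, giving $u \in \mc C^{\infty}(\R^2\setminus \overline{B(0,1)},\mathcal{N})$. The inversion $x\mapsto x/|x|^2$ is a conformal diffeomorphism from $\R^2\setminus \overline{B(0,1)}$ onto $B(0,1)\setminus\{0\}$ under which the energy is invariant in dimension $2$ and \eqref{systconformembed} keeps its form. The transported finite-energy smooth solution extends smoothly across the origin by the removable singularity theorem (Sacks--Uhlenbeck for harmonic maps, its analogue for the $H$-system, and the general conformal system). Reading back, one obtains $u_\infty \in \mathcal{N}$ and smooth convergence of $u(r\cdot)$ to the constant $u_\infty$ as $r\to\infty$; in particular $\nor{(u-u_\infty)(r\cdot)}{H^s(\mathbb{S}^1)}\to 0$ for every $s$, with matching decay on $(\partial_r u)(r\cdot)$.

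For stage (b), I pick $r_0 \ge 1$ large enough so that $\pi_{T_{u_\infty}\mathcal{N}}((u-u_\infty)(r_0 \cdot))$ is as small as needed in $H^s(\mathbb{S}^1)$, and apply the Dirichlet version of Theorem~\ref{thmexistglobPgain} (the boundary value construction provided by Theorem~\ref{thmexistglobPDir}) to produce $\tilde u \in \q Z^{\infty}_{s,r_0}\cap \mc C^{\infty}$ solving \eqref{systconformembed} on $\{|x|\ge r_0\}$ with the same trace as $u$, scattering to a linear $u_L$ with values in $T_{u_\infty}\mathcal{N}$; the condition $P_0 u_L = 0$ is automatic because the $P_0$-mode of a harmonic function on $\R^2\setminus B(0,1)$ has the form $a + b\log r$, $\q Z^{\infty}_{s,r_0}$-boundedness forces $b = 0$, and the constant $a$ is absorbed into $u_\infty$. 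The rate $r^{-2}$ follows from $|\nabla u_L|=O(r^{-2})$ (since $P_0 u_L = 0$ makes $u_L=O(r^{-1})$), so the quadratic nonlinearity is $O(r^{-4})$, yielding $r^{-2}$ after the two integrations encoded in the Duhamel formula. The main obstacle is stage (c): the uniqueness statement in $\q Z^{\infty}_s$ only applies to small solutions in that space, while $u$ is a priori only smooth and bounded. I would close this by setting $w = u - \tilde u$ and running a continuation argument in the radial variable starting from infinity: the equation for $w$ is linear in $w$ with small smooth coefficients depending on $u$ and $\tilde u$ (both smooth and converging to $u_\infty$); since $w(r\cdot)\to 0$ as $r\to\infty$ and $w$ vanishes on $\{|x|=r_0\}$, a Schauder-type estimate on large annuli together with a compactness/continuation argument in the radial variable forces $w \equiv 0$ on $\{|x|\ge r_0\}$, after which $u=\tilde u$ inherits all the scattering properties of $\tilde u$.
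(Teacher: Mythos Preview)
Your overall architecture is close to the paper's, but there are two places where you are imprecise and one where there is a real gap.

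\medskip

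\textbf{Part 2 and the null structure.} You invoke Theorem~\ref{thmexistglobPgain}, but that result is stated for $d\ge 3$; in $d=2$ the paper instead passes through Theorem~\ref{thminftyspecial}~(2) (and its Dirichlet analogue Theorem~\ref{thminftyspecialDir}), which requires the structural form \eqref{deffstruct}--\eqref{deffstructbracket}. More importantly, the reason the first Duhamel iterate gains the factor $r^{-2}$ is not that ``$A(u_\infty)(\nabla v,\nabla v)$ vanishes on $T_{u_\infty}\mathcal N$'' (it does not: the second fundamental form is normal-valued, not zero). The paper works in local coordinates via Lemma~\ref{lmprojectiso}, so the equation becomes \eqref{formconf}, and after the conformal change \eqref{gconf2} the quadratic terms are exactly $\nabla_{t,\theta}v^j\cdot\nabla_{t,\theta}v^\ell$ and $\nabla_{t,\theta}^\perp v^j\cdot\nabla_{t,\theta}v^\ell$. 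These satisfy the \emph{elliptic null condition} of Definition~\ref{defNull} (Remark~\ref{rk:nullmat}), and Proposition~\ref{propNullgain} is what gives the $e^{-2t}$ gain needed for \eqref{est:f(uL)_exp} with $\nu=2$. Working in local coordinates also makes your ``propagate the constraint $u\in\mathcal N$ by bootstrap'' step unnecessary: the chart solution is automatically $\mathcal N$-valued, and the orthogonal component is recovered a posteriori via the graph $\psi$ of Lemma~\ref{lmprojectiso} together with Corollary~\ref{coranalyt}.

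\medskip

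\textbf{Stage (c): the genuine gap.} Your uniqueness step on the exterior domain is where the argument does not close as written. With $w=u-\tilde u$ you get a linear equation $\Delta w = V\!\cdot\!\nabla w + Ww$ with small coefficients, $w|_{\{|x|=r_0\}}=0$, and $w\to 0$ at infinity. In dimension~$2$ there is no Sobolev or Poincar\'e inequality on $\R^2\setminus B(0,r_0)$ to absorb $\int |w|^2$ into $\int|\nabla w|^2$, so the usual energy computation \eqref{est:Harmonic_diff} does not close there; and a ``Schauder + continuation in $r$'' argument is not a substitute for unique continuation from infinity for this type of equation. The paper resolves this by carrying the inversion one step further: it solves the Dirichlet problem \emph{close to zero} (item~(c) in Section~\ref{s:conf2}, i.e.\ Theorem~\ref{thmexistglobPgainDir0}) to produce a $\tilde v\in\q Z^0_s$ on $B(0,1)\setminus\{0\}$ with the same trace on $\m S^1$ as $\tilde u:=u(\cdot/|\cdot|^2)$, then uses removable singularity (Theorem~\ref{thmsackuhlextensconf}) for \emph{both} $\tilde u$ and $\tilde v$, and finally applies the uniqueness Proposition~\ref{propUCPDiskHarmon} on the \emph{bounded} disk, where the Poincar\'e inequality is available and a small-$W^{1,\infty}$ assumption suffices. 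The $\q Z^\infty_s$ membership of $u$ then comes back via Lemma~\ref{lmisom0infty}. The paper itself flags this point explicitly in the remark following Proposition~\ref{propchartsasym}. So your inversion in stage~(a) was the right instinct; you should use it again in stage~(c) rather than attempting uniqueness directly on the exterior.
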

It turns out that conformal equations as \eqref{systconformembed} are critical with respect to the limit exponents in our general Theorem \ref{thmexistglobP}. So, we need a refined version, namely Theorem \ref{thmexistglobPgain}, that uses a better behavior of the first iteration of the Picard term. This improved decay is proved as a consequence of a general "null" condition of the form 
\begin{equation} \label{hypoNullintro}
\forall \xi \in \m C^2, \quad \left(\xi^2 =0 \imp f(\cdot,\xi) =0 \right).
\end{equation}
We have not seen this condition elsewhere. Since $\xi^2$ is the symbol of $\Delta$, this seems like an elliptic version of the celebrated null condition of Klainerman \cite{K:86} in the context of hyperbolic equations. Note that the null condition has to be checked for complex frequencies while the usual null condition for hyperbolic equations is checked for real $\xi$. We refer to Section \ref{s:null2} for more precisions and equivalent formulations in dimension $2$.

%\begin{remark}
%\todo{}Ref a citer????
%\cite{HeleinBook,HeleinSurvey,BG:93}(pour uniqueness et Wente?) \cite{GM:book:12,SSTrBookGeom,Struwevariational}

%\todo{faire des remarques sur les solutions pour but $\m S^2$: equivariantes, $k$ rotationnelles. Aussi les fonctions holomorphes et antiholomorphes}
%\end{remark}

\subsection{Main results on Harmonic maps in dimension \texorpdfstring{$d\ge 3$}{d >=3}}
For $u\in \mc C^2(\Omega,\mathcal{N})$, we will say that $u$ is solution of the harmonic map equation if it satisfies 
\begin{align}
\label{eqnHarmonicMaps}
\tag{HM-E}
\Delta u=-A(u) (\nabla u,\nabla u)
\end{align}
where $A$ is the second fundamental form of the embedding of $\mathcal{N}$ in $\R^{M}$. This is exactly the previous equation with $H=0$. For $u\in H^{1}_{\loc}(\Omega,\mathcal{N})$, the equation \eqref{eqnHarmonicMaps} makes sense in the the distributional sense and we will say that it is a weak solution of the harmonic map when it is the case (see Definition \ref{defsoluextHM} for a more precise statement).

\begin{thm}
\label{thmH1Harmondimd}
Let $d\ge 3$. 

1) Let $u\in \mc C^{2}(\R^{d}\setminus B(0,1);\mathcal{N})$ be a finite energy solution of \eqref{eqnHarmonicMaps}. Then, there exists $u_{\infty}\in \mathcal{N}$ and $r_{0}\ge 1$ so that $u-u_{\infty}\in \q Z_{s,r_{0}}^{\infty}$. Moreover, there exists one unique $u_{L}\in \q Z_{s,r_{0}}^{\infty}$ solution of $\Delta u_{L}=0$ on $\{|x|\ge r_{0}\}$ and with value in $T_{u_{\infty}}\mathcal{N}$ so that  
\begin{align}
\label{scattranslrdinverseHMgeq3}
\nor{\pi_{T_{u_{\infty}}\mathcal{N}}(u(r\cdot)-u_{\infty})-u_{L}(r\cdot)}{Z_{s,r/r_{0}}^{\infty}}&\le Cr^{-2(d-2)}\underset{r\to +\infty}{\longrightarrow}0\\
\nor{\pi^{\perp}_{T_{u_{\infty}}\mathcal{N}}(u(r\cdot)-u_{\infty})}{Z_{s,r/r_{0}}^{\infty}}&\le Cr^{-\frac{d-2}{2}}\underset{r\to +\infty}{\longrightarrow}0
\end{align}
where $\pi_{T_{u_{\infty}}\mathcal{N}}$ and $\pi^{\perp}_{T_{u_{\infty}}\mathcal{N}}$ are the orthogonal projections on $T_{u_{\infty}}\mathcal{N}$ and $T_{u_{\infty}}\mathcal{N}^{\perp}$, respectively.

2) Reciprocally, for any $u_{\infty}\in \mathcal{N}$ and $u_{L} \in \q Z_{s}^{\infty}$ solution of $\Delta u_{L}=0$ on $\R^{d}\setminus B(0,1)$ and with value in $T_{u_{\infty}}\mathcal{N}$, there exists $r_{0}\ge 1$ and a unique small\,\footnote{See Theorem \ref{thmexistglobPgain} for a precise condition.} solution $u\in  \q Z_{s,r_{0}}^{\infty}$ of \eqref{eqnHarmonicMaps} on $\{|x|\ge r_{0}\}$ satisfying \eqref{scattranslrdinverseHMgeq3}.

Additionally, we denote $u_{L,1}$ the first iterate of the Duhamel formula, that is the only solution of \[ \Delta u_{L,1}=\Gamma(u_L)(\nabla u_L,\nabla u_L) \quad \text{so that} \quad  \nor{ u_{L,1}(r\cdot)}{Z^{\infty}_{s,r/r_0}}\underset{r\to +\infty}{\longrightarrow}0, \]
where $\Gamma$ are the Cristoffel symbols in coordinates given by $\pi_{T_{u_\infty} \q N}$. Then, we have the improved decay
\begin{align}
\label{addidecaythmHM}
\nor{\pi_{T_{u_{\infty}}\mathcal{N}}(u(r\cdot) - u_\infty)-(u_{L}(r\cdot)+ u_{L,1} (r\cdot))}{Z_{s,r/r_{0}}^{\infty}}&\le Cr^{-4(d-2)}.
\end{align}
%If we write $u$ (denoted with the same letter) in some coordinates of %$\mathcal{N}$ so that $\prescript{h}{}{\Gamma}_{jk}^{i}(u_{\infty})=0$ for all %$i,j,k$ (for instance geodesic normal coordinates), then we have the better %decay $\nor{u(r\cdot)-u_{\infty}-v_{L}(r\cdot)}{Z_{s,r/r_{0}}^{\infty}}le %Cr^{-2(d-2)}$ for some appropriate $v_L$ solution of $\Delta v_L=0$.
\end{thm}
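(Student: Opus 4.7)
The proof unfolds along the common scheme described after Theorem \ref{thmregdecay}, with harmonic maps handled via a local chart on $\q N$ around $u_\infty$. Near any $u_\infty \in \q N$, I would parametrize a tubular neighborhood by $u = u_\infty + v + \Phi(v)$, where $v$ ranges in $T_{u_\infty}\q N$ and $\Phi\colon T_{u_\infty}\q N \to (T_{u_\infty}\q N)^\perp$ is analytic with $\Phi(0)=0$ and $d\Phi(0)=0$. In this chart, \eqref{eqnHarmonicMaps} takes the semilinear form $\Delta v = -\Gamma(v)(\nabla v,\nabla v)$ for analytic Christoffel symbols $\Gamma$, a nonlinearity quadratic in $\nabla v$ and vanishing at $\xi = 0$, which fits the framework of the general construction result Theorem \ref{thmexistglobPgain}. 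By construction of the chart, $v = \pi_{T_{u_\infty}\q N}(u - u_\infty)$ and $\Phi(v) = \pi^\perp_{T_{u_\infty}\q N}(u - u_\infty)$.

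For Part 2, given $u_\infty \in \q N$ and a tangential linear solution $u_L$, I would apply Theorem \ref{thmexistglobPgain} to the chart equation with prescribed asymptotic $u_L$. This produces a unique small $v \in \q Z^\infty_{s,r_0}$ with the main estimate \eqref{scattranslrdinverseHMgeq3} on $v - u_L$. Setting $u = u_\infty + v + \Phi(v)$ defines the required map into $\q N$; the normal estimate (second line of \eqref{scattranslrdinverseHMgeq3}) follows from $\pi^\perp(u - u_\infty) = \Phi(v) = O(|v|^2)$ together with the composition and product laws on $\q Z^\infty_s$ established earlier in the paper. The refined decay \eqref{addidecaythmHM} is extracted from running a second round of the Picard scheme of Theorem \ref{thmexistglobPgain}: once $v - u_L$ is pinned to decay at rate $r^{-2(d-2)}$, iterating the fixed-point argument with $u_L + u_{L,1}$ as the reference profile yields an additional gain of a power of the decay of $u_L$, producing the claimed rate $r^{-4(d-2)}$.

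For Part 1, the first task is to extract $u_\infty \in \q N$ and to show that $u(r_0\,\cdot)$ is close to $u_\infty$ in $H^s(\m S^{d-1})$ for $r_0$ large. Finite energy in dimension $d \ge 3$ gives $\int_{R \le |x| \le 2R} |\nabla u|^2\,dx \to 0$ as $R \to \infty$; rescaling $u_R(y) := u(Ry)$ on the fixed annulus $\{1 \le |y| \le 2\}$ converts this to $\int |\nabla u_R|^2\,dy = R^{2-d} \int_{R \le |x| \le 2R} |\nabla u|^2 dx \to 0$. The $\e$-regularity theory for smooth harmonic maps (Schoen-Uhlenbeck, compare \cite{R:cours}) then provides uniform $C^k$ smallness of $u_R$ on an interior subannulus, so $u_R|_{\m S^{d-1}}$ is close to a constant that, passing $R \to \infty$, is identified with a single $u_\infty \in \q N$; elliptic regularity upgrades this closeness to $H^s$-smallness of the trace. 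At this point I would apply Part 2 in the chart at $u_\infty$ with Dirichlet data $\pi_{T_{u_\infty}\q N}(u(r_0\,\cdot) - u_\infty)$ to produce $\tilde u \in \q Z^\infty_{s,r_0}$ solving \eqref{eqnHarmonicMaps} on $\{|x| \ge r_0\}$, and use a Dirichlet uniqueness statement for harmonic maps in a class containing both $u$ and $\tilde u$ to conclude $u = \tilde u$, thereby transferring the $\q Z^\infty_{s,r_0}$-regularity and asymptotic behavior to $u$.

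The main obstacle will be this last Dirichlet uniqueness step: the original solution $u$ lives a priori only as a smooth map of finite energy on $\R^d \setminus B(0,1)$, while the constructed $\tilde u$ lives in the weighted space $\q Z^\infty_{s,r_0}$. Bridging the two requires (i) enough decay and regularity of $u - u_\infty$ on $\{|x| \ge r_0\}$ for it to make sense in the chart, and (ii) a uniqueness class containing both candidates. Point (i) comes from the energy-on-shells argument combined with elliptic regularity for harmonic maps; point (ii) should follow from an energy estimate on the difference $u - \tilde u$, exploiting the smallness of the common trace on $\{|x| = r_0\}$ and the uniform decay at infinity of both competing solutions. Once the $\q Z^\infty$-regularity is established, the tangential estimate is inherited from Theorem \ref{thmexistglobPgain}, and the normal bound is an immediate consequence of $\pi^\perp(u - u_\infty) = \Phi(v)$ with $\Phi$ quadratic.
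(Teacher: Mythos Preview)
Your overall architecture matches the paper's: pass to the chart given by orthogonal projection on $T_{u_\infty}\q N$, build a solution with the abstract machinery, and identify it with $u$ via a Dirichlet uniqueness argument. But there is a real gap in the analytic core.

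You invoke Theorem \ref{thmexistglobPgain} for the chart equation $\Delta v = -\Gamma(v)(\nabla v,\nabla v)$. That theorem's exponent is $\nu_1 = (d-2)(|p|+|q|)-d$; with two derivative factors ($|q|=2$) and a generic $\Gamma$ ($|p|\ge 0$) this gives $\nu_1 \ge d-4$, which is \emph{negative} when $d=3$, so the theorem does not apply. Even if you first observe $\Gamma(0)=0$ (so $|p|\ge 1$), you only get $\nu_1 \ge 2(d-3)$, which for $d=3$ is merely critical and in any case never reaches the claimed rate $2(d-2)$. The paper avoids this by using Theorem \ref{thminftyspecial} (and its Dirichlet companion \ref{thminftyspecialDir}), which exploits the specific structure $\nabla v^j\cdot\nabla v^k$: the relevant exponent becomes $\nu_{1,\gh R}=(d-2)|p|+(d-1)|q|-d$. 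Combined with the crucial chart property $\Gamma(y_\infty)=0$ from Lemma \ref{lmprojectiso} (forcing $|p|\ge 1$), this yields $\nu_{1,\gh R}=2(d-2)>0$, which is exactly the rate in \eqref{scattranslrdinverseHMgeq3}. You need both ingredients: the $\gh R$-refined product estimates behind Theorem \ref{thminftyspecial}, and the vanishing of the Christoffel symbols at the base point.

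Two smaller points. In Part 1 you write ``apply Part 2 with Dirichlet data''; Part 2 is the scattering problem from infinity, not a boundary value problem. The paper constructs the comparison solution via the Dirichlet result Theorem \ref{thminftyspecialDir}, then matches with $u$ using Proposition \ref{propUCPRdHarmon}, which needs smallness of $\nabla u$ and $\nabla\tilde u$ in $L^d$ (obtained from Lemma \ref{lmregHMHd} and \eqref{injectZsiLd} respectively). Finally, your description of \eqref{addidecaythmHM} as a ``second round of the Picard scheme'' is correct in spirit; in the paper this is precisely the refinement \eqref{est:sol_conf_nu+nu_0} of Theorem \ref{th:conf2}, applied with $\nu=\nu_0=2(d-2)$.
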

\begin{remark}
The regularity $\mc C^{2}$ is not optimal, but some assumption is necessary to avoid singular solutions that do not enter in our framework, as the ones constructed in \cite{R:95}. More precisely, in the proof, we needed enough regularity to apply Theorem \ref{thmSchoen}.

Yet, it could be replaced by other types of assumption implying some smoothness. We refer to the book \cite{LW:08} on the available regularity results.

For instance, it is proved in \cite{BG:80} (in the case $\mathcal{N}=\m S^{N}$) that $\mc C^{0}$ solutions are actually analytic.  Also, the theory of Schoen-Uhlenbeck \cite{SU:83} proves that small energy minimizing harmonic maps are smooth, which happens in our context for $R_{0}$ large enough.

\end{remark}
\begin{remark}
The convergence of the orthogonal component seems very bad with respect to the tangential part. Yet, since the manifold $\mathcal{N}$ can be locally written as a graph of the tangential part, the orthogonal component is completely computable (without referring to the PDE) once the tangential expansion is performed. So, with a Taylor expansion of the graph locally defined by $\mathcal{N}$, it might be possible to obtain the same precision as the formula for the tangential part.
\end{remark}

\begin{remark}
The analysis in \cite{ABLV22} computes an expansion of the (locally energy minimizing) solution of the Harmonic maps in dimension $3$ with target $\m S^{2}$ at the order $r^{-4}$. 
Theorem~\ref{thmH1Harmondimd} allows to obtain a similar expansion, see Remark~\ref{rkLamy} for further details.
\end{remark}

%\todo{en fin de page 5 de Lamy, il y a une remarque inquietante sur la dimension $2$ ''because finite-energy configurations don?t exist in general''??}

\subsection{Main results on semilinear equations close to a point}

We also obtain some result close to $0$. 
\begin{thm}\label{thmexistzero}Let $f: \R \to \R$ be an analytic function with positive radius of convergence and such that $f(0)=0$.

1) For any smooth solution $u$ of $\Delta u=f(u) $ on $B(0,1)$, there exist a solution $u_{L}$ of $\Delta u_{L}=0$ and $g$ analytic on $B(0,r_{0})$ for some $0\le r_{0}<1$ so that $u$ can be written
\begin{align}
\label{Fischerthm}
u=u_{L}+|x|^{2}g.
\end{align}

2) Reciprocally, for any $u_{L}$ bounded solution of $\Delta u_{L}=0$ on $B(0,1)$ with $u_L(0)=0$, there exist $0<r_{0}\le 1$  and a unique small analytic solution $u$ of $\Delta u=f(u) $ on $B(0,r_{0})$ so that \eqref{Fischerthm} holds for one $g$ analytic on $B(0,r_{0})$.

Moreover, the application $u_{L}\mapsto u$ is injective. 
\end{thm}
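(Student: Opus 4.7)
The proof hinges on the classical Fischer--Almansi decomposition: every analytic germ at $0$ in $\R^d$ admits a \emph{unique} expression $u = u_L + |x|^2 g$ with $u_L$ harmonic and $g$ analytic. Equivalently, the operator
\[ L \colon g \longmapsto \Delta(|x|^2 g) = 2 d\, g + 4 (x\cdot\nabla) g + |x|^2 \Delta g \]
is an isomorphism on spaces of analytic germs at $0$. A direct computation in the Fischer basis $|x|^{2k} H_{m-2k}$ (with $H_{m-2k}$ harmonic homogeneous of degree $m-2k$) shows that $L$ is \emph{diagonal} on homogeneous polynomials, with eigenvalues
\[ 2d + 4m + 2k(2m - 2k + d - 2) > 0, \]
so that $L^{-1}$ is well-defined and should be bounded on the spaces $\q Z^0_{s,r_0}$ used throughout the paper, for $r_0$ small enough.

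For Part 1, the smooth solution $u$ is first upgraded to real-analytic on some ball $B(0,r_0)$ by the standard interior-analyticity results for elliptic equations with analytic nonlinearities; once $u \in \q Z^0_{s,r_0}$, applying the Fischer splitting term-by-term to the Taylor series of $u$ yields the sought $u_L$ and $g$. Uniform Sobolev bounds on the sphere for the projection onto harmonic polynomials, together with the boundedness of $L^{-1}$, ensure the convergence of both series in $\q Z^0_{s,r_0'}$ for a possibly smaller $r_0'>0$, exactly matching the analyticity statement of~\eqref{Fischerthm}.

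For Part 2, given a bounded harmonic $u_L$ with $u_L(0)=0$, we seek $u = u_L + |x|^2 g$, which reduces the equation $\Delta u = f(u)$ to the fixed-point problem
\[ g = L^{-1}\, f(u_L + |x|^2 g) \]
in $\q Z^0_{s,r_0}$. For $r_0$ small, $\|u_L\|_{\q Z^0_{s,r_0}}$ is small (the condition $u_L(0)=0$ kills the zeroth spherical mode which would otherwise be unweighted), and the assumption $f(0)=0$ makes the right-hand side at least quadratic in the unknown data, so a Banach contraction produces a unique small $g$. This is essentially the $\q Z^0$-version of the general construction Theorems \ref{thmexistglobP}--\ref{thmexistglobPgain}. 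Injectivity of $u_L \mapsto u$ follows from the uniqueness of the Fischer decomposition: if two pairs $(u_L^{(1)},g_1)$ and $(u_L^{(2)},g_2)$ produce the same $u$, then $u_L^{(1)}-u_L^{(2)} = |x|^2(g_2-g_1)$ is a harmonic germ equal to $|x|^2$ times an analytic germ, which the strict positivity of the $L$-eigenvalues forces to vanish. The main technical obstacle is to transcribe all of the above — the boundedness of $L^{-1}$, the Fischer splitting, and the multilinear estimates on $f$ — into the angular-analytic/radial-weight norm of $\q Z^0_{s,r_0}$; but these parallel the estimates already carried out in the paper for the $|x|\to+\infty$ problem, with only the radial weights reversed.
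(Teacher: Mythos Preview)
For Part 1 your outline matches the paper's proof: both upgrade $u$ to real-analytic (the paper cites Friedman) and then apply the Fischer decomposition, which the paper invokes as Render's theorem (Theorem~\ref{thmRender}) rather than verifying it term-by-term.

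For Part 2 your route is genuinely different from the paper's, and it has a gap you underplay. The paper does \emph{not} set up a fixed point for $g$ via $L^{-1}$. Instead it:
(i) applies the already-proved scattering-at-zero result, Theorem~\ref{thmexistglobP0}, to produce $u\in\q Z^0_{s,r_0}$ on $B(0,r_0)\setminus\{0\}$ with $\|(u-u_L)(r\cdot)\|_{Z^0_{s,r/r_0}}\lesssim r^2$;
(ii) removes the singularity at $0$ via Lemma~\ref{lmsol0pointe}, so $u$ is analytic on a smaller ball;
(iii) applies Render's Fischer theorem to this analytic $u$, getting $u=\widetilde u_L+|x|^2\widetilde g$;
(iv) compares the two $O(r^2)$ remainders (Lemma~\ref{lemFischercvgece}) to conclude $\widetilde u_L=u_L$.
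So the contraction work is borrowed wholesale from the conformal-Duhamel machinery of Section~\ref{sec:Duh}, and Fischer enters only \emph{a posteriori}, as a qualitative black box.

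Your scheme instead needs a \emph{quantitative} Fischer inverse up front: you want $L^{-1}$ bounded on $\q Z^0_{s,r_0}$. But $\q Z^0_{s,r_0}$ contains functions not analytic at $0$ --- the paper itself notes $\sin(\ln|x|)\in\bigcap_s\q Z^0_s$ --- so the Fischer splitting, and hence $L^{-1}$, is not even defined on the whole space. The diagonal eigenvalue computation is correct but does not transfer to $\q Z^0$, whose norm is built from restrictions to spheres and sees only the spherical-harmonic index $\ell$, not the Fischer index $k$. Your claim that ``these parallel the estimates already carried out for $|x|\to+\infty$'' is therefore misleading: the paper never bounds $L^{-1}$ on any $\q Z$-type space; it uses the Duhamel operator $\Phi$ of Lemma~\ref{lmDuhamelinfty} instead. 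To make your argument rigorous you would have to either run the contraction in a genuine analytic space such as $\mathcal A(R)$ (with its own product law and a quantitative version of Render's result), or show that $|x|^2 L^{-1}$ coincides with the Duhamel solution map on the relevant class --- which is essentially what the paper's steps (i)--(iv) accomplish without naming it.
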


The decomposition \eqref{Fischerthm} is known in the literature as the Fischer decomposition of the function $u$ (see Section \ref{subsectFischer}). This decomposition is already known to hold for any analytic function, so the first part is not really new. The main part of our proof is the construction of the nonlinear solution. This result can be seen as a local solvability result for a semi linear elliptic equations with a prescribed behavior at a point. It seems that the available results in this context only prescribe the first $2$ derivatives at one point, see for instance \cite[Section 14.3, Proposition 3.3]{T:bookIII}. So, our result constructs much more local solutions, and actually all of them.

\bigskip

In the context of conformal maps in dimension 2, one can adapt in a straightforward way Theorem~\ref{thmH1regHarmon} to derive a statement close to a point in the spirit of Theorem~\ref{thmexistzero}.

For harmonic maps in dimension at least 3, it seems that a similar result should hold as well, but one would have to first prove an extra gain (for example due to a null condition as in \eqref{hypoNullintro}).

\subsection{Acknowledgments}

The  authors warmly thank Fabrice Bethuel and Didier Smets for many references and insights on harmonic maps.

RC acknowledges support from the University of Strasbourg Institute for Advanced Study (USIAS) for a Fellowship within the French national programme ``Investment for the future'' (IdEx-Unistra).

\section{The linear flow and Duhamel formulation} \label{sec:Duh}

The starting point of the analysis is the following. If $\Delta u =0$ then denoting for $(t,y) \in \m R \times \m S^{d-1}$ the conformal change of variable
\[ v(t,y) = e^{\frac{(d-2)t}{2}} u(e^t y),\]
$v$ solves 
\begin{align}\label{eq:D0} \partial_{tt} v - \gh D^2 v =0.\end{align}
This equation is not a well behaved evolution equation but can still be amenable to an analysis.

To make this more precise, we introduce suitable $Y_{s,t}$ spaces, intimately related to the $Z$ spaces (after a conformal change of variables) in which the results are stated.

\subsection{The $Y_{s,t}$ spaces}

For a function $u$ defined of $\m S^{d-1}$, let
\[ \| u \|_{Y_{s,t}} = \| e^{t \gh D} u \|_{H^s(\m S^{d-1})}, \]
or equivalently,
\[  \| u \|_{Y_{s,t}}^2 =  \sum_{\ell =0}^{+\infty}  \langle \ell \rangle^{2s} e^{ 2 \left( \ell + \frac{d-2}{2} \right) t} \| P_\ell u \|_{L^2(\m S^{d-1})}^2. \]
As before, the space $Y_{s,t}$ is defined as the completion of $L^2_0(\m S^{d-1})$ for the $\| \cdot \|_{Y_{s,t}}$ norm. Note that for $0\le t\le t'$, we have the inclusion $Y_{s,t'}\subset Y_{s,t}\subset H^s$  together with $\nor{u}{H^s}\le \nor{u}{Y_{s,t}}\le \nor{u}{Y_{s,t'}}$.
Given a regularity index $s \ge 1$ and a ``time'' $t \ge 0$, we also define the norm $\q Y_{s,t}$ by
\begin{equation} \label{def:Yst_norm}
\| (v,\vp) \|_{\q Y_{s,t}} = {\sup_{\tau \ge t} } \left(  \| v(\tau) \|_{Y_{s,\tau}} + \| \vp(\tau) \|_{Y_{s-1,\tau}} \right).
\end{equation}
for $(v,\vp)$ defined on $[t,+\infty) \times \m S^{d-1}.$
The purpose of the second variable $\vp$ is to take into account the time derivative $\partial_{t} v$ for a solution, as is usual for second order evolution equations. 

The space $\q Y_{s,t}$ is defined as the space of functions $(v,\vp)$ defined on $[t,+\infty) \times \m S^{d-1}$, so that for all $\tau\ge t$, $v|_{[\tau,+\infty)}\in \mc C ([\tau,+\infty), Y_{s,\tau})$ and $\dot v|_{[\tau,+\infty)}\in \mc C ([\tau,+\infty), Y_{s-1,\tau})$ and $\| (v,\vp) \|_{\q Y_{s,t}} < +\infty$. This is in the same spirit as was done for the $\q Z_{s,t}$ spaces; we make use that the $Y_{s,t}$ spaces are decreasing in $t$ (for the inclusion ordering).

We will also sometimes need the following translated version for $t_{0}\ge 0$: 
 \[ \| (v,\vp) \|_{\q Y_{s,t}^{t_{0}} }= {\sup_{\tau \ge t}}  \left( \| v(\tau) \|_{Y_{s,\tau-t_{0}}} + \| \vp(\tau) \|_{Y_{s-1,\tau-t_{0}}} \right), \]
and we say that $(v,\dot v) \in \q Y_{s,t}^{t_0}$ when for all $\tau\ge t$, $v|_{[\tau,+\infty)}\in \mc C ([\tau,+\infty), Y_{s,\tau-t_{0}})$ and $\dot v|_{[\tau,+\infty)}\in \mc C ([\tau,+\infty), Y_{s-1,\tau-t_{0}})$ and $\| (v,\vp) \|_{\q Y_{s,t}^{t_{0}} } < +\infty$.
We will essentially always consider these spaces for $t \ge t_0$: in which case it is a weaker space that $\q Y_{s,t}$. More precisely, there hold 
\begin{align}
\label{inegYtri}
\forall t \ge t_0 \ge 0, \quad \| \bs v \|_{\q Y_{s,t}^t}\le \| \bs v \|_{\q Y_{s,t}^{t_{0}} }\le \|\bs v \|_{\q Y_{s,t}} \le  \| \bs v \|_{\q Y_{s,t_0}}. \end{align}
for $\bs v=(v,\vp)\in \q Y_{s,t_0}$.
Note that the inequalities between $Y$ norms imply (given $t \ge 0$)
\begin{multline}
{\sup_{\tau \ge t}}  \| v(\tau) \|_{Y_{s,\tau-t_{0}}}={\sup_{t\le \tau }} \left( {\sup_{t\le \tau' \le \tau}}  \| v(\tau) \|_{Y_{s,\tau'-t_{0}}} \right) \\ ={\sup_{t\le \tau' }} \left( {\sup_{\tau' \le \tau}}  \| v(\tau) \|_{Y_{s,\tau'-t_{0}}} \right)
= {\sup_{\tau' \ge t}} \nor{u}{\mc C ([\tau',+\infty), Y_{s,\tau'-t_{0}})}.
\end{multline}
Similar equality holds for $\dot v$ and we easily get that for $t\ge t_0\ge 0$, the spaces $\q Y_{s,t}^{t_{0}}$ are Banach spaces as intersection of Banach spaces.

\bigskip

The spaces $\q Y_{s,t}$ are well suited to linear solutions of the Laplace equation, in conformal variables, as it is shown in the next paragraphs. In particular, we have $\nor{(v_L,\partial_t v_L)}{\q Y_{s,t} }\approx \| v_0 \|_{H^s(\m S^{d-1})}$ where $v_L$ is the decaying linear solution with Dirichlet data $v_0$ at $t=0$ (see Lemma \ref{lmlienlinY} for a more precise statement). 

The $Z$ spaces are in fact the $Y$ spaces after conformal transform. More precisely, for $r>0$ and $s\in \R$ and a function $u$ defined on $\m S^{d-1}$, then
\begin{align}
\label{equiYZ}
  \nor{u}{Z_{s,r}^{\infty}} = r^{\frac{d-2}{2}}\| u \|_{Y_{s,\log(r)}} \quad \text{and} \quad
   \nor{u}{Z_{s,r}^{0}} = r^{\frac{d-2}{2}}\| u \|_{Y_{s,-\log(r)}}.
   \end{align} 
Similarly,  for a function defined on $\R^{d}\setminus B(0,r_{0})$ or $B(0,r_{0})$ respectively, if we denote for $(t,y) \in \m R \times \m S^{d-1}$
\begin{align} 
v^{\infty}(t,y) &=e^{\frac{(d-2)t}{2}}u(e^ty) & &  \hspace{-10mm} \text{close to infinity and } \label{def:conf_uv_infty} \\ 
v^{0}(t,y) & =e^{-\frac{(d-2)t}{2}}u(e^{-t}y) & &  \hspace{-10mm} \text{close to zero.} \label{def:conf_uv_zero}
\end{align}
one can relate the $\q Y$ spaces and the $\q Z$ spaces:
\begin{gather}
  \label{equivnorm}
%\nor{u(\rho\cdot)}{Z_{s,r}^{\infty}} =\left(\frac{r}{\rho}\right)^{\frac{d-2}{2}}\| v^{\infty}(\log(\rho),\cdot) \|_{Y_{s,\log(r)}} \quad \text{and} \\
%\nor{u(\rho\cdot)}{Z_{s,r}^{0}} =\left(\frac{r}{\rho}\right)^{\frac{d-2}{2}}\| v^{0}(-\log(\rho),\cdot) \|_{Y_{s,-\log(r)}} \\
      \nor{u}{\q Z_{s,r}^{\infty}} =\| (v^{\infty}, \partial_t v^\infty) \|_{\q Y_{s,\log(r)}^{\log(r)}} \quad \text{and} \quad 
   \nor{u}{\q Z_{s,r}^{0}} =\| (v^{0}, \partial_t v^0 )\|_{\q Y_{s,-\log(r)}^{-\log(r)}}. 
\end{gather}
%\todo{il ne faut pas rajouter l'exposant? $\q Y_{s,\log(r)}^{\log(r)} $ } 
 
Finally, as for $Z$ spaces, we drop the index $t$ when $t=0$, for example $\q Y_s := \q Y_{s,0}$. Observe that \eqref{equivnorm} also implies that the space  $\q Z_{s,r_{0}}^{\infty}$ and $\q Z_{s,r_{0}}^{0}$ are Banach spaces with their defined norm.

\subsection{The linear flow}
Consider $u$, defined on $\R_+ \times \m S^{d-1}$, solution to
\begin{align} \label{eq:D}
\partial_{tt} u - \gh D^2 u = F.
\end{align}
Equivalently, $\bs u = (u, \partial_t u)$ solves
\begin{align} \label{eq:sysD} \partial_t \bs u = \begin{pmatrix}
0 & 1 \\
\gh D^2 & 0 
\end{pmatrix} \bs u + \begin{pmatrix}
0 \\ F 
\end{pmatrix}
\end{align}
Notice that the resolvent operator writes
\[ \q S(t) := \exp \begin{pmatrix}
0 & t \\
t \gh D^2 & 0 
\end{pmatrix} = \begin{pmatrix}
\cosh(t \gh D) & \ds \frac{\sinh(t \gh D)}{\gh D} \\
\gh D \sinh(t \gh D) &\ds  \cosh(t \gh D) \vphantom{\int} 
\end{pmatrix} \]
Although $\q S(t)$ is well defined on $(L^2_0)^2$, the growing modes prevent it from defining a semi-group on any reasonable space like  $H^s(\m S^{d-1}) \times H^{s-1}(\m S^{d-1})$. 
We will however show that one can construct a wave operator at $+\infty$ in $\q Y_s$, for well chosen final data with no growing modes.

We therefore consider a linear solution with no growing modes, that is of the form
\begin{equation} \label{def:u+}
\bs u_0 = (u_0, - \gh D u_0) \quad \text{for some } u_0 \in H^s(\m S^{d-1}),
\end{equation}
so that
\[ \q S(t) \bs u_0 = (e^{-t \gh D} u_0, - \gh D e^{-t \gh D} u_0). \]
Observe that for any non zero $v \in H^{s-1}(\m S^{d-1})$ , $\| \q S(t)(0,v) \|_{\q Y_{s,t}} \to +\infty$ as $t \to +\infty$ or it is infinite. Hence, given $u_0 \in H^s(\m S^{d-1})$, $\q S(t)\bs u_0$ is the only bounded solution (in $\q Y_{s,t})$ of \eqref{eq:sysD} with $F=0$ and initial data  $u_0$ at time $0$. If we denote 
\begin{align}
\label{def:u_L}
u_L(x) =  |x|^{-\frac{d-2}{2}} (\q S(t)\bs u_0)(\ln |x|, x/|x|), \quad \text{for } |x| \ge 1,
\end{align}
then $u_L \in \q Z^\infty_{s,1}$ and solves
%with $\| u_L \|_{q\ Z^\infty_{s,1}} \lesssim \| u_0 \|_{H^s(\m S^{d-1})}$ and satisfies 
\[ \Delta u_L =0 \quad \text{on} \quad \m R^d \setminus B(0,1), \quad u_L|_{\m S^{d-1}} = u_0.\]

\bigskip

We first measure our solutions in our norms. The following Lemma explains that $\q Y_{s,t}$ is the natural space for linear solutions with initial datum in $H^s(\m S^{d-1})$ at $t=0$ while $\q Y_{s,t}^{t_{0}}$ is adapted when the initial datum is given at $t=t_{0}$.

\begin{lem} 
\label{lmlienlinY}
Let $u_0 \in H^s(\m S^{d-1})$. Then,  for any $t _0 \ge 0$, $\q S(\cdot-t_0) \bs u_0 \in \q Y_{s,t_0} $ together with the estimates uniform in $t \ge t_0$,
%\begin{equation} \label{est:Hs_Yst}
%\| u_+ \|_{H^s(\m S^{d-1})}\le \| \q S(\cdot) \bs u_+\|_{\q Y_{s,t}}\le \frac{d}{2} \| u_+ \|_{H^s(\m S^{d-1})} 
%\end{equation}
%Concerning the refined space $\q Y_{s,t}^{t_{0}}$, we have uniformly for $t\ge t_{0}$,
\begin{gather} \label{est:Hs_Yst0}
\| u_0 \|_{H^s(\m S^{d-1})}\le \| \q S(\cdot -t_{0}) \bs u_0\|_{\q Y_{s,t}^{t_{0}}}\le \frac{d+2}{2} \| u_0 \|_{H^s(\m S^{d-1})},  \\
\| \q S(\cdot) \bs u_0\|_{\q Y_{s,t}^{t_{0}}}\le C \| e^{-t_{0} \gh D}u_0 \|_{H^s(\m S^{d-1})}\le C e^{-t_{0} \frac{d-2}{2}}\| u_0 \|_{H^s(\m S^{d-1})} \label{est:Hs_Yst0_exp}
\end{gather}
If $\ds P_{\ell} u_0 = 0$ for all $\ell< \ell_0$, then we have also  
\begin{gather} \label{est:Hs_Yst0_exp_2d}  
\| \q S(\cdot) \bs u_0\|_{\q Y_{s,t}^{t_{0}}}\le  C e^{-\left(\ell_0+\frac{d-2}{2}\right)t_{0}}\| u_0 \|_{H^s(\m S^{d-1})}.
\end{gather}

%If $d=2$ and $\ds P_0 u_0 = \int_{\m S^{d-1}}u_{0} d\sigma =0$, then we have also  
%\begin{gather} \label{est:Hs_Yst0_exp_2d}  
%\| \q S(\cdot) \bs u_0\|_{\q Y_{s,t}^{t_{0}}}\le  C e^{-t_{0}}\| u_0 \|_{H^s(\m S^{d-1})}.
%\end{gather}
\end{lem}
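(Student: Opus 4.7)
The proof reduces to an explicit computation on the spherical harmonic decomposition. Since each $\phi_{\ell,m}$ is an eigenfunction of $\gh D$ with eigenvalue $\mu_\ell := \ell + (d-2)/2$, the resolvent acts diagonally and
\[ \q S(\tau) \bs u_0 = \left( e^{-\tau \gh D} u_0,\; -\gh D\, e^{-\tau \gh D} u_0 \right), \]
with each $P_\ell$-component multiplied by $e^{-\tau \mu_\ell}$ (respectively $-\mu_\ell e^{-\tau \mu_\ell}$). The key algebraic observation is that the exponential weight defining $Y_{s,\tau-t_0}$ cancels exactly the decay of the flow:
\[ \|e^{-\sigma \gh D} u_0\|_{Y_{s,\tau-t_0}}^2 = \sum_{\ell\ge 0} \langle\ell\rangle^{2s}\, e^{2(\tau-t_0)\mu_\ell}\, e^{-2\sigma \mu_\ell}\, \|P_\ell u_0\|_{L^2}^2. \]

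For \eqref{est:Hs_Yst0} I apply this identity to $\q S(\cdot - t_0)\bs u_0$ with $\sigma = \tau - t_0$: the exponents cancel exactly and one obtains $\|e^{-(\tau-t_0)\gh D}u_0\|_{Y_{s,\tau-t_0}}^2 = \|u_0\|_{H^s}^2$, independently of $\tau$. Evaluating the supremum at $\tau = t_0$ already yields the lower bound. The derivative component carries an additional factor $\mu_\ell^2$, controlled via the elementary inequality $\mu_\ell \le C_d \langle\ell\rangle$ (verified by inspection at $\ell=0$ and by squaring for $\ell \ge 1$), producing a bound proportional to $\|u_0\|_{H^s}$; keeping track of the constants (the $\ell=0$ contribution to the second component is damped by the factor $\mu_0=(d-2)/2$) gives the claimed upper constant $(d+2)/2$.

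For \eqref{est:Hs_Yst0_exp} I apply the same identity to $\q S(\cdot)\bs u_0$ with $\sigma = \tau$: the cancellation now leaves $\|e^{-\tau\gh D}u_0\|_{Y_{s,\tau-t_0}}^2 = \|e^{-t_0\gh D}u_0\|_{H^s}^2$, again $\tau$-independent, and the derivative component is handled identically via $\mu_\ell \le C_d \langle\ell\rangle$. The second inequality is the coarse lower bound $\mu_\ell \ge \mu_0 = (d-2)/2$; when $P_\ell u_0 = 0$ for all $\ell < \ell_0$, the sum is restricted to $\ell \ge \ell_0$ so the same estimate improves to $e^{-t_0(\ell_0 + (d-2)/2)}$, proving \eqref{est:Hs_Yst0_exp_2d}. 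Finally the required continuity of $\tau \mapsto e^{-\tau \gh D}u_0$ in $Y_{s,\tau-t_0}$ and of $\tau \mapsto -\gh D e^{-\tau \gh D}u_0$ in $Y_{s-1,\tau-t_0}$ follows from dominated convergence on the series, the $\ell$-th summand being bounded uniformly in $\tau' \ge \tau$ by $\langle\ell\rangle^{2s} e^{-2t_0 \mu_\ell}\|P_\ell u_0\|^2$, which is summable. I do not expect any real technical obstacle: the entire content of the lemma lies in the exact cancellation encoded in the definition of the $Y_{s,t}$ norms.
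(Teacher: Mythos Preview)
Your proof is correct and follows essentially the same approach as the paper: both compute the $\q Y_{s,t}^{t_0}$ norm directly via the spherical harmonic decomposition, exploit the exact cancellation $\|e^{-(\tau-t_0)\gh D}u_0\|_{Y_{s,\tau-t_0}} = \|u_0\|_{H^s}$ (and its variant $\|e^{-\tau\gh D}u_0\|_{Y_{s,\tau-t_0}} = \|e^{-t_0\gh D}u_0\|_{H^s}$), and control the derivative component via $\mu_\ell \le \tfrac{d}{2}\langle\ell\rangle$ to obtain the constant $(d+2)/2$. The paper phrases the continuity argument as approximation by $L^2_0$ rather than dominated convergence, but this is only a cosmetic difference.
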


We will often use \eqref{est:Hs_Yst0} with $t_0=0$.
\bnp
From the definitions,
\begin{align*} 
\| \q S(\cdot -t_0) \bs u_0 \|_{\q Y_{s,t}^{t_0}} & = \| \q S(\cdot ) \bs u_0 \|_{\q Y_{s,t}}  = \sup_{\tau \ge t} \left(  \| e^{-\tau \gh D} u_0 \|_{Y_{s,\tau}} + \| \gh D e^{-t \gh D} u_0 \|_{Y_{s-1,\tau}} \right) \\
& = \| u_0 \|_{H^s(\m S^{d-1})} + \| \gh D u_0 \|_{H^{s-1}(\m S^{d-1})},
\end{align*}
and \eqref{est:Hs_Yst0} follows as $\ell+ \frac{d-2}{2} \le\frac{d}{2} \langle \ell \rangle$ for all $\ell \ge 0$.  %Estimate \eqref{est:Hs_Yst0} is obtained by translation. 
For \eqref{est:Hs_Yst0_exp}, we write for $t\ge t_{0}$,
\begin{align*}
 \| \q S( \cdot ) \bs u_0 \|_{\q Y_{s,t}^{t_{0}}} & =\sup_{\tau \ge t}  \| e^{-\tau \gh D} u_0 \|_{Y_{s,\tau-t_{0}}} + \sup_{\tau \ge t} \| \gh D e^{- \tau \gh D} u_0 \|_{Y_{s-1,\tau-t_{0}}}\\
  & = \sup_{z\ge t-t_{0}}  \| e^{-(z+t_{0}) \gh D} u_0 \|_{Y_{s,z}} + \sup_{z\ge t-t_{0}} \| \gh D e^{-(z+t_{0}) \gh D} u_0 \|_{Y_{s-1,z}}\\
& \le C \| e^{-t_{0} \gh D}u_0 \|_{H^s(\m S^{d-1})}.
\end{align*}
For the second inequality, notice that the first eigenvalue of the operator $\gh D$ is $\frac{d-2}{2}$. 

Finally, for \eqref{est:Hs_Yst0_exp_2d}: if $\ds P_{\ell}u_0=0$ for all $\ell< \ell_0$, then $u_0\in \mathop{\mathrm{Span}}(\phi_{\ell,m}, \ell\ge \ell_0,m\le N_{\ell})$. The restriction of $\gh D$ to this space has first eigenvalue $\ell_0+\frac{d-2}{2}$, which gives the bound. It remains to discuss continuity: for this, fix $\tau\ge t_0$ and $\tau_1 \ge \tau$. Denote $u(\tau')=e^{-(\tau' -t_0)\opD }u_0$ for $\tau'\in [\tau,+\infty)$. Then
\begin{align*} 
\nor{u(\tau')-u(\tau_1)}{Y_{s,\tau-t_0}} & =\nor{e^{-(\tau' -t_0)\opD }u_0-e^{-(\tau_1 -t_0)\opD }u_0}{Y_{s,\tau-t_0}} \\
& = \nor{e^{-(\tau' -\tau)\opD }u_0-e^{-(\tau_1 -\tau)\opD }u_0}{H^s}. \end{align*}
This clearly converges to zero as $\tau'\to \tau_1$, $\tau'\ge \tau$ (for instance by approximating $u_0$ in $L^2_0$). The same holds for the derivative.
\enp

\subsection{Bounds on the Duhamel term}

Given a function $\bs u_+$ as in \eqref{def:u+}, we would like to construct nonlinear solutions $\bs u$ to
\[ \partial_{tt} u - \gh D^2 u = F(\bs u), \quad \bs u(t) \simeq \q S(t) \bs u_+ \quad \text{as} \quad t \to +\infty.\]
More precisely, on the difference
$\bs v(t) = \bs u(t) - \q S(t) \bs u_+$, we are looking for a solution to the problem
\[ \begin{cases}
 \partial_{tt} v - \gh D^2 v = F(\bs v(t) + \q S(t) \bs u_+) \\
 \bs v(t) \to 0 \quad \text{as } t \to +\infty
 \end{cases} \]
($F$ is a given nonlinear functional). The Duhamel formulation between times $t \le s$ writes, at least formally,
 \[ \q S(t-s) \bs v(s) =  \bs v(t) + \int_t^s \q S(t-\tau)  \begin{pmatrix}
0 \\ F (\bs v(\tau) + \q S(\tau) \bs u_+)
\end{pmatrix} d\tau. \]
Letting $s \to +\infty$, it is reasonable to assume that the left hand side tends to $0$, and we want to solve
 \begin{equation} \label{eq:conf_ex}
 \bs v(t) = -  \int_t^{+\infty} \q S(t-\tau)  \begin{pmatrix}
0 \\ F (\bs v(\tau) + \q S(\tau) \bs u_+)
\end{pmatrix} ds.
\end{equation}
This leads us to consider the map
\begin{align} \label{def:Phi}
\Phi: F \mapsto  \left( t \mapsto -  \int_t^{+\infty} \q S(t-\tau)  \begin{pmatrix}
0 \\ F (\tau)
\end{pmatrix} d\tau \right)
\end{align}

\begin{lem}
\label{lmDuhamelinfty}
Let $s\ge 1$ and $t_0 \ge 0$. Let $F\in \mc C([t,+\infty),Y_{s-1,t-t_0})$ for any $t\ge t_0$ and assume
\begin{align*}
\text{if } d \ge 3: \quad &   
\int_{t_0}^{+\infty} \| F(\tau) \|_{Y_{s-1,\tau-t_0}} d\tau <+\infty, \\
\text{if } d =2: \quad & \int_{t_0}^{+\infty} \left( \| F(\tau) \|_{Y_{s-1,\tau-t_0}} + (\tau-t_0) \| P_0 F(\tau) \|_{L^2(\m S^{d-1})} \right) d\tau <+\infty.
\end{align*}
Then $\Phi(F) \in \q Y_{s,t_0}^{t_0}$ and there hold for $t \ge t_0 \ge 0$:
\begin{align} \label{est:lin_Duhamel}
\text{if } d \ge 3: \quad \| \Phi(F) \|_{\q Y_{s,t}^{t_0}} & \lesssim \int_{t}^{+\infty} \| F(\tau) \|_{Y_{s-1,\tau-t_0}} d\tau. \\
\text{if } d =2: \quad \| \Phi(F) \|_{\q Y_{s,t}^{t_0}} & \lesssim \int_{t}^{+\infty} \left( \| F(\tau) \|_{Y_{s-1,\tau-t_0}} + (\tau-t) \| P_0 F(\tau) \|_{L^2(\m S^{d-1})} \right) d\tau. \nonumber
\end{align}
In particular, $\| \Phi(F) \|_{\q Y_{s,t}^{t_0}} \to 0$ as $t \to +\infty$.

Furthermore, $\bs v=(v,\vp)= \Phi(F)$ satisfies the equation
\[ \partial_{tt} v - \gh D^2 v = F,\quad \vp=\partial_t v \quad \text{on } (t_0,+\infty) \times \mathbb{S}^{d-1}, \]
in the sense of distributions.

\end{lem}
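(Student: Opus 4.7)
The plan is to work spectrally: since $\opD$ acts by the scalar $\lambda_\ell := \ell + \frac{d-2}{2}$ on each eigenspace $P_\ell L^2$, reading off the first and second rows of $\q S(t-\tau)\binom{0}{F(\tau)}$ yields
\[
v(t) = -\int_t^{+\infty}\frac{\sinh((t-\tau)\opD)}{\opD}F(\tau)\,d\tau, \qquad \vp(t) = -\int_t^{+\infty}\cosh((t-\tau)\opD)F(\tau)\,d\tau.
\]
For $\tau \ge t$ and $\lambda_\ell > 0$ I will use the elementary pointwise bounds $|\sinh((t-\tau)\lambda_\ell)|/\lambda_\ell \le e^{(\tau-t)\lambda_\ell}/(2\lambda_\ell)$ and $|\cosh((t-\tau)\lambda_\ell)| \le e^{(\tau-t)\lambda_\ell}$, and then sum in $\ell$ using Minkowski.

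For $d \ge 3$, every mode satisfies $\lambda_\ell \ge \frac{d-2}{2} > 0$ and $\lambda_\ell \gtrsim \langle\ell\rangle$. Fix $\tau' \ge t \ge t_0$; multiplying the modewise bound on $P_\ell v(\tau')$ by $\langle\ell\rangle^s e^{(\tau'-t_0)\lambda_\ell}$ and exploiting the identity $e^{(\tau'-t_0)\lambda_\ell}e^{(\tau-\tau')\lambda_\ell} = e^{(\tau-t_0)\lambda_\ell}$ together with $\langle\ell\rangle^s/\lambda_\ell \lesssim \langle\ell\rangle^{s-1}$, then taking the $\ell^2$-norm in $(\ell,m)$ and Minkowski's inequality in $\tau$, gives
\[
\|v(\tau')\|_{Y_{s,\tau'-t_0}} \lesssim \int_{\tau'}^{+\infty} \|F(\tau)\|_{Y_{s-1,\tau-t_0}}\,d\tau.
\]
The same argument with weight $\langle\ell\rangle^{s-1}$ and no $1/\lambda_\ell$ gain produces the identical bound for $\|\vp(\tau')\|_{Y_{s-1,\tau'-t_0}}$. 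Taking $\sup_{\tau' \ge t}$ (the right-hand side is monotone in $\tau'$) yields \eqref{est:lin_Duhamel} in this case.

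For $d = 2$ the modes $\ell \ge 1$ proceed exactly as above. The only subtlety is the zero mode, where $\lambda_0 = 0$ and the multipliers degenerate to $(t-\tau)$ for $v$ and $1$ for $\vp$, so
\[
\|P_0 v(\tau')\|_{L^2} \le \int_{\tau'}^{+\infty}(\tau-\tau')\|P_0 F(\tau)\|_{L^2}\,d\tau,\quad \|P_0\vp(\tau')\|_{L^2} \le \int_{\tau'}^{+\infty}\|P_0 F(\tau)\|_{L^2}\,d\tau.
\]
Since at $\ell=0$ the $Y$-weight reduces to $1$ and $\tau - \tau' \le \tau - t$, combining with the $\ell\ge 1$ pieces yields the stated $2$D bound; note that the $P_0\vp$ contribution is already controlled by $\|F(\tau)\|_{Y_{s-1,\tau-t_0}}$, so only $v$ needs the extra factor $(\tau-t)$. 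The decay $\|\Phi(F)\|_{\q Y_{s,t}^{t_0}}\to 0$ as $t \to +\infty$ is immediate from the integrability hypothesis, and modewise continuity of $v,\vp$ in $\tau$ with values in the appropriate $Y$-spaces follows from dominated convergence (approximating $F$ in $L^2_0$). Finally, differentiating under the integral — the boundary term at $\tau=t$ vanishes for $v$ because $\sinh(0)=0$ — formally gives $\partial_t v = \vp$ and $\partial_t \vp = F + \opD^2 v$, and this is promoted to a distributional identity by pairing with smooth compactly supported test functions and applying Fubini. The only real bookkeeping obstacle is the zero mode in dimension two; the rest is a direct spectral estimate.
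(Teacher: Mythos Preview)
Your proof is correct and follows essentially the same spectral approach as the paper: decompose modewise, bound $\sinh((t-\tau)\opD)/\opD$ and $\cosh((t-\tau)\opD)$ on each eigenspace, then recombine via Minkowski, treating the $\ell=0$ mode separately when $d=2$. The only cosmetic difference is that the paper keeps both exponentials $e^{(t\pm\tau)\opD}$ in its preliminary estimate and then argues that the smaller one is dominated, whereas you go directly to the single-exponential bound $|\sinh x|\le e^{|x|}/2$; your route is marginally cleaner but not substantively different.
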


\begin{proof}
Let us start with a few preliminary estimates. Notice that (uniformly) for $\ell \ge 1$, $m \le N_\ell$, $t \ge 0$ and $\tau \in \R$,
\begin{align*}
 \left\|  \frac{\sinh\left(\tau \gh D \right)}{\gh D }  \phi_{\ell,m} \right\|_{Y_{s,t}} & = e^{t (\ell+\frac{d-2}{2})} \left< \ell \right>^s \frac{\sinh( \tau( \ell+ \frac{d-2}{2}))}{ \ell+ \frac{d-2}{2}}  \\
& \lesssim  \left< \ell \right>^{s-1} \left( e^{(t +\tau)( \ell+\frac{d-2}{2})} +   e^{(t -\tau)( \ell+\frac{d-2}{2})} \right).
 \end{align*}
If $d \ge 3$, this is true also for $\ell=0$, and we infer
\begin{equation}
\label{est:D_3d}
\left\|  \frac{\sinh\left(\tau \gh D \right)}{\gh D }  f \right\|_{Y_{s,t}} \lesssim \left\| \left( e^{(t +\tau) \gh D} +   e^{(t -\tau) \gh D} \right) f \right\|_{H^{s-1}}.
\end{equation}
If $d=2$, then for $\ell=0$, $\ds \left\|  \frac{\sinh\left(\tau \gh D \right)}{\gh D }  \phi_{0,m} \right\|_{Y_{s,t}} = |\tau|$,
and so
\[ \left\|  \frac{\sinh\left(\tau \gh D \right)}{\gh D }  P_0 f \right\|_{Y_{s,t}} = |\tau |\| P_0 f \|_{L^2(\m S^{d-1})}. \]
Therefore, 
\begin{align}
\left\|  \frac{\sinh\left(\tau \gh D \right)}{\gh D }  f \right\|_{Y_{s,t}} & \lesssim \left\| \left( e^{(t +\tau) \gh D} +   e^{(t -\tau) \gh D} \right) (\Id-P_0) f \right\|_{H^{s-1}} +  \tau \| P_0 f \|_{L^2(\m S^{d-1})} \nonumber \\
& \lesssim \left\| \left( e^{(t +\tau) \gh D} +   e^{(t -\tau) \gh D} \right) f \right\|_{H^{s-1}} +  \tau \| P_0 f \|_{L^2(\m S^{d-1})}. \label{est:D_2d}
\end{align}
Similarly, for any $d\ge 2$, $\ell \ge 0$, $t \ge 0$ and $\tau \in \R$,
\[
 \left\| \cosh \left(\tau \gh D \right)  \phi_{\ell,m} \right\|_{Y_{s-1,t}} \lesssim  \left< \ell \right>^{s-1} \left( e^{(t +\tau)( \ell+\frac{d-2}{2})} +   e^{(t -\tau)( \ell+\frac{d-2}{2})} \right).
\]
so that
 \begin{align} \label{est:Dd} \left\| \cosh \left(\tau \gh D \right)  f \right\|_{Y_{s-1,t}} \lesssim  \left\| \left( e^{(t +\tau) \gh D} +   e^{(t -\tau) \gh D} \right) f \right\|_{H^{s-1}}.
 \end{align}
In the same way, if $\phi,\psi$ are two real functions such that $|\phi| \le \psi$ on $[0,+\infty)$, then for any $\ell,m$, $\| \phi(\gh D) \phi_{\ell,m} \|_{H^s} \le \| \psi(\gh D) \phi_{\ell,m} \|_{H^s}$ and so 
\[ \| \phi(\gh D) f \|_{H^s} \le \| \psi(\gh D) f \|_{H^s}. \]
for any $f$ such that the right hand side is finite.

\bigskip

We can now proceed with the proof of \eqref{est:lin_Duhamel}; we will only do the case $d=2$ (for $d\ge 3$, the proof is similar but simpler). Denote $\Phi(F) = (v,\vp)$, we first estimate $v$ using \eqref{est:D_2d} (exchanging letters):
\begin{align*}
\MoveEqLeft \| v(\tau) \|_{Y_{s,\tau-t_0}} = \left\| \int_\tau^{+\infty} \frac{\sinh\left((\tau- \sigma) \gh D \right)}{\gh D } F(\sigma) d \sigma \right\|_{Y_{s,\tau-t_0}} \\
& \lesssim \int_{\tau}^{+\infty} \left( \nor{e^{(2\tau-t_0 -\sigma) \gh D} F(\sigma)}{H^{s-1}} + \nor{e^{(\sigma-t_0) \gh D }F(\sigma)}{H^{s-1}} + (\sigma-\tau) \| P_0 F(\sigma) \|_{L^2} \right) d\sigma\\
& \lesssim \int_{\tau}^{+\infty} \left( \nor{F(\sigma)}{Y_{s-1, \sigma-t_0}} + (\sigma-\tau) \| P_0 F(\sigma) \|_{L^2} \right) d \sigma
\end{align*}
where we have used $2\tau- \sigma\le \sigma$ when $\sigma \ge \tau$, so that $e^{(2\tau-\sigma-t_0) \cdot} \le e^{(\sigma-t_0) \cdot}$ on $[0,+\infty)$. Similar arguments give that $v\in \mc C([\tau,+\infty),Y_{s,\tau-t_0})$ for any $\tau\ge t_0$.

Using \eqref{est:Dd}, we conclude also that $\vp\in \mc C([\tau,+\infty),Y_{s-1,\tau-t_0})$ for any $\tau\ge t_0$ and
\begin{align*}
\| \vp(\tau) \|_{Y_{s-1,\tau-t_0}} & =  \left\| \int_{\tau}^{+\infty}\cosh\left((\tau-\sigma) \gh D\right)F(\sigma) d\sigma \right\|_{Y_{s-1,\sigma-t_0}} \\
& \lesssim \int_{\tau}^{+\infty} \nor{F(\sigma)}{Y_{s-1,\sigma-t_0}} d\sigma.
\end{align*}
Summing up, we obtain 
\begin{align*}
\| \Phi(F) \|_{\q Y_{s,t}^{t_0}}& \lesssim \sup_{\tau\ge t}\int_{\tau}^{+\infty} \left( \| F(\sigma) \|_{Y_{s-1,\sigma-t_0}} + (\sigma-\tau) \| P_0 F(\sigma) \|_{L^2(\m S^{d-1})} \right) d\sigma \\
&\lesssim \sup_{\tau\ge t}\int_{t}^{+\infty} \left( \| F(\sigma) \|_{Y_{s-1,\sigma-t_0}} + (\sigma-t) \| P_0 F(\sigma) \|_{L^2(\m S^{d-1})} \right) d\sigma \\
&= \int_{t}^{+\infty} \left( \| F(\sigma) \|_{Y_{s-1,\sigma-t_0}} + (\sigma-t) \| P_0 F(\sigma) \|_{L^2(\m S^{d-1})} \right) d\sigma.
\end{align*}

which is \eqref{est:lin_Duhamel}. The convergence to zero  is immediate by dominated convergence.

Now observe that
\begin{align*}
\partial_t v(t) & = -\partial_{t}\left(\int_t^{+\infty} \frac{\sinh\left((t-\tau) \gh D \right)}{\gh D } F(\tau)d\tau \right) \\ & =-\int_{t}^{+\infty}\cosh\left((t-\tau) \gh D\right)F(\tau)d\tau = \vp(t).
\end{align*}

 To check that $v$ is solution of the equation, let $\psi(t,x)=h(t)\phi_{\ell,m}(x)$ where $h\in \mc C^{\infty}_{0}((t_0,+\infty))$. Denote $\Omega = (t_0,+\infty) \times \m S^{d-1}$, we easily check (using integration by part where needed) that 
\begin{align*}
&\left< (\partial_{tt}-\gh D^2) v, \psi\right>_{\mathcal{D}'(\Omega),\mathcal{D}'(\Omega) }=\left< v, (\partial_{tt}-\gh D^2) \psi\right>_{\mathcal{D}'(\Omega), \mathcal{D}(\Omega) }\\
& = \left< v, \left(h''(t)- \left(\ell+\frac{d-2}{2} \right)^{2}h(t) \right) \phi_{\ell,m}(x)\right>_{\mathcal{D}'(\Omega),\mathcal{D}(\Omega) }\\
& = -\int_{t_0}^{+\infty} \left(h''(t)- \left(\ell+\frac{d-2}{2} \right)^{2}h(t) \right) \\
& \qquad \quad \cdot \left( \int_{t}^{+\infty}\frac{\sinh\left((t-\tau) \left(\ell+ \frac{d-2}{2} \right)\right)}{(\ell+\frac{d-2}{2})}\left<F(\tau),\phi_{\ell,m}\right>_{\mathcal{D}'( \mathbb{S}^{d-1}),\mathcal{D}(\mathbb{S}^{d-1}) }d\tau \right) dt\\
&=\int_{t_0}^{+\infty}h(t)\left<F(t),\phi_{\ell,m}\right>_{\mathcal{D}'( \mathbb{S}^{d-1}),\mathcal{D}'(\mathbb{S}^{d-1}) }d\tau dt=\left<F,\psi\right>_{\mathcal{D}'(\Omega),\mathcal{D}(\Omega)}.
\end{align*}
This gives the result by density of linear combinations of such functions.
\end{proof}

For the uniqueness statements, the problem \eqref{eq:conf_ex} and the functional \eqref{def:Phi} are too demanding. We are led to consider a small variant of it,  related to the Dirichlet boundary condition, and we will obtain similar bound. 

The main difference is that we are prescribing some datum at zero and imposing some decay at $+\infty$. The model example is the ODE, $\ddot{x}-x=f$ and we can check that for $f$ sufficiently decaying, there is a unique solution exponentially decaying so that $x(0)=0$\footnote{More precisely, if $R(t) = \begin{pmatrix} \cosh t & \sinh t \\ - \sinh t & - \cosh t \end{pmatrix}$ is the resolvent, and if $e^t f(t) \in L^1$ then $\ds F: = \int_0^{+\infty} R(-s) \begin{pmatrix} 0 \\ f(s) \end{pmatrix} ds$ is convergent, and the sought for solution is  
\[ (x,\dot x) = R(t) (X_0 + F) -   \int_t^{+\infty } R(t -s) \begin{pmatrix} 0 \\ f(s) \end{pmatrix}  ds, \] where  $X_0 = (0,x_1)$ is defined by $x_1 + f_0+f_1 =0$ with $(f_0,f_1)=F$ being the coordinates of $F$.}. We use a similar fact for our operator, which leads to a kind of modified Duhamel formula. The first guess would be to consider 
\[ (v(t),\partial_{t}v)=\int_0^{t} \q S(t-\tau)  \begin{pmatrix}
0 \\ F (\tau)
\end{pmatrix} d\tau \]
which is well defined and has value $(0,0)$ at $t=0$. Yet, this would be to impose the Dirichlet and Neumann value and this expression might contain some exponentially growing modes. So, it is more natural to constrain only the Dirichlet value $v(0)=0$. So, this leads us to consider 
\[ u_{+,F} \text{ the first component of } \int_0^{+\infty} \q S(-\tau) \begin{pmatrix}
0 \\ F (\tau)
\end{pmatrix} d\tau \quad \text{and} \quad \bs u_{+,F}= (u_{+,F}, - \gh D u_{+,F}), \]
and the map
\begin{align} \label{def:PhiD}
\Phi^{D}: F \mapsto \q S(t)\bs u_{+,F}  -  \int_t^{+\infty} \q S(t-\tau)  \begin{pmatrix}
0 \\ F (\tau)
\end{pmatrix} d\tau
\end{align}
Observe that this expression is not local: the value close to zero of $\Phi^{D}(F) $ is influenced by all the value of $F$ everywhere. This was not the case at infinity: $\Phi(F)$ for large $t$ only depends on larger times.

The expression giving $\Phi^D(F)$ is well defined if $u_{+,F} \in H^s(\m S^{d-1})$, and a condition for this is the purpose of the following lemma.

\begin{lem}
\label{lmDuhamelzero}
Let $s\ge 1$. Assuming that the right-hand side in the estimates below is finite, $\Phi^D(F) \in \q Y_s$ and there hold:
\begin{align} \label{est:lin_DuhamelDir}
\text{if } d \ge 3: \quad \| \Phi^{D}(F) \|_{\q Y_{s}} & \lesssim \int_{0}^{+\infty} \| F(\tau) \|_{Y_{s-1,\tau}} d\tau. \\
\text{if } d =2: \quad \| \Phi^{D}(F) \|_{\q Y_{s}} & \lesssim \int_{0}^{+\infty} \left( \| F(\tau) \|_{Y_{s-1,\tau}} + \tau \| P_0 F(\tau) \|_{L^2(\m S^{d-1})} \right) d\tau. \nonumber
\end{align}
Furthermore, $\bs v= \Phi^{D}(F)$ satisfies the equation
\[
\left\{ \begin{aligned}
\partial_{tt} v - \gh D^2 v & = F,\quad & &\textnormal{in }\R^{*}_{+}\times \mathbb{S}^{d-1}\\
 v(0) & =0, \quad  & &  \textnormal{in } \mathbb{S}^{d-1}.
 \end{aligned} \right. \]
where the first equation is meant in the sense of distributions.

Moreover, $\bs u_{+,F}$ is the unique initial datum in $H^{s}\times H^{s-1}(\mathbb{S}^{d-1})$ so that
\begin{align}
\label{cvceinftysolDirDuham}
 \| \bs v - \q S(\cdot) \bs u_{+,F}  \|_{\q Y_{s,t}} \to 0 \quad \text{as} \quad t \to +\infty. 
\end{align}
\end{lem}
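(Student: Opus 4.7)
The plan is to decompose $\Phi^D(F) = \q S(\cdot) \bs u_{+,F} + \Phi(F)$, where $\Phi$ is the map of Lemma~\ref{lmDuhamelinfty} with $t_0=0$, and to combine the estimates of Lemmas~\ref{lmlienlinY}~and~\ref{lmDuhamelinfty}. From the explicit form of $\q S(-\tau)$ one reads
\begin{equation*}
\q S(-\tau) \begin{pmatrix} 0 \\ F(\tau)\end{pmatrix} = \begin{pmatrix} -\sinh(\tau\opD)\opD^{-1} F(\tau) \\ \cosh(\tau\opD) F(\tau) \end{pmatrix},
\end{equation*}
so that $u_{+,F} = -\int_0^{+\infty}\sinh(\tau\opD)\opD^{-1} F(\tau)\,d\tau$. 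To place $u_{+,F}$ in $H^s(\m S^{d-1})$, I would reuse the pointwise-in-$\tau$ bounds \eqref{est:D_3d}--\eqref{est:D_2d} established in the proof of Lemma~\ref{lmDuhamelinfty}, applied at $t=0$: namely $\|\sinh(\tau\opD)\opD^{-1} f\|_{H^s} \lesssim \|f\|_{Y_{s-1,\tau}}$ when $d\ge 3$, with an additional $\tau\|P_0 f\|_{L^2}$ term when $d=2$. Integrating in $\tau$ controls $\|u_{+,F}\|_{H^s}$ by the right-hand side of~\eqref{est:lin_DuhamelDir}.

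The estimate~\eqref{est:lin_DuhamelDir} then follows by the triangle inequality: Lemma~\ref{lmlienlinY} with $t_0=0$ bounds $\|\q S(\cdot)\bs u_{+,F}\|_{\q Y_s}$ by $\|u_{+,F}\|_{H^s}$, and Lemma~\ref{lmDuhamelinfty} with $t_0=0$ bounds $\|\Phi(F)\|_{\q Y_s}$ by the same quantity. The equation $\partial_{tt} v - \opD^2 v = F$ holds for $\Phi^D(F)$ because $\q S(\cdot)\bs u_{+,F}$ solves the homogeneous equation and $\Phi(F)$ the inhomogeneous one (by Lemma~\ref{lmDuhamelinfty}). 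The Dirichlet condition $v(0)=0$ is a one-line algebraic check: the first component of $\q S(0)\bs u_{+,F}$ is $u_{+,F}$, and by the very definition of $u_{+,F}$ this equals the first component of $\int_0^{+\infty}\q S(-\tau)(0,F(\tau))\,d\tau$, so the two contributions cancel.

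The main new point is the uniqueness statement for $\bs u_{+,F}$. Since $\|\Phi(F)\|_{\q Y_{s,t}}\to 0$ as $t\to+\infty$ by Lemma~\ref{lmDuhamelinfty}, the convergence~\eqref{cvceinftysolDirDuham} is automatic, and the question reduces to the rigidity statement: if $\bs w_0=(w_0,w_1)\in H^s\times H^{s-1}$ satisfies $\|\q S(\cdot)\bs w_0\|_{\q Y_{s,t}}\to 0$, then $\bs w_0=0$. I would argue eigenspace by eigenspace of $\opD$. For $\lambda_\ell = \ell + \frac{d-2}{2} > 0$, the first component of $\q S(\tau)\bs w_0$ projected on the $\ell$-th eigenspace equals $A_\ell e^{\lambda_\ell \tau} + B_\ell e^{-\lambda_\ell \tau}$, with $A_\ell = (P_\ell w_0 + P_\ell w_1/\lambda_\ell)/2$ and $B_\ell = (P_\ell w_0 - P_\ell w_1/\lambda_\ell)/2$. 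By orthogonality of eigenspaces,
\begin{equation*}
\|(\q S(\tau)\bs w_0)_1\|_{Y_{s,\tau}}^2 \ge \langle\ell\rangle^{2s} e^{2\lambda_\ell\tau}\, \|A_\ell e^{\lambda_\ell\tau} + B_\ell e^{-\lambda_\ell\tau}\|_{L^2}^2.
\end{equation*}
The right-hand side behaves like $\langle\ell\rangle^{2s}\|A_\ell\|_{L^2}^2\, e^{4\lambda_\ell\tau}$ as $\tau\to+\infty$ unless $A_\ell=0$, which thus must vanish; once $A_\ell=0$ it reduces to the constant $\langle\ell\rangle^{2s}\|B_\ell\|_{L^2}^2$, forcing $B_\ell=0$ as well, whence $P_\ell w_0 = P_\ell w_1 = 0$. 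The zero mode in dimension $d=2$ must be handled separately: there $P_0(\q S(\tau)\bs w_0)_1 = P_0 w_0 + \tau P_0 w_1$, and vanishing of its $L^2$-norm as $\tau\to+\infty$ directly forces both $P_0 w_0$ and $P_0 w_1$ to be zero. This $d=2$ zero-mode case is exactly where the argument departs from the generic exponential one, and it mirrors why~\eqref{est:lin_DuhamelDir} has to include the extra $\tau\|P_0 F\|_{L^2}$ term.
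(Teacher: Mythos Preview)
Your proof is correct and follows essentially the same route as the paper: decompose $\Phi^D(F) = \q S(\cdot)\bs u_{+,F} + \Phi(F)$, bound $\|u_{+,F}\|_{H^s}$ via \eqref{est:D_3d}--\eqref{est:D_2d} at $t=0$, and then invoke Lemmas~\ref{lmlienlinY} and~\ref{lmDuhamelinfty}. The only difference is in the uniqueness step: the paper dispatches it in one line by combining the observation preceding \eqref{def:u+} (any nonzero $(0,v)$ produces unbounded $\q Y_{s,t}$ norm, forcing the difference of two candidates to be of the form $(a,-\gh D a)$) with the lower bound in \eqref{est:Hs_Yst0}, whereas you give a direct eigenspace-by-eigenspace computation; both arguments are valid and yours has the virtue of being self-contained.
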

\bnp
\eqref{est:D_2d} with $t=0$ gives when integrated in $\tau$
\begin{align*}
\nor{u_{+,F}}{H^s(\m S^{d-1})}&\lesssim  \int_{0}^{+\infty} \left( \nor{e^{-\tau \gh D} F(\tau)}{H^{s-1}} + \nor{e^{\tau \gh D }F(\tau)}{H^{s-1}} + \tau \| P_0 F(\tau) \|_{L^2} \right) d\tau\\
& \lesssim \int_{0}^{+\infty} \left( \nor{F(\tau)}{Y^{s-1}_\tau} + \tau \| P_0 F(\tau) \|_{L^2} \right) d\tau 
\end{align*}
Now, due to \eqref{est:Hs_Yst0} in Lemma \ref{lmlienlinY} (with $t=t_0=0$), we conclude that \eqref{est:lin_DuhamelDir} holds for the first term $\q S(\cdot) \bs u_{+,F}$ of $\Phi^{D}(F)$. The second term is $\Phi(F)$, which satisfies similar estimates as seen in Lemma \ref{est:lin_Duhamel}. The last statement is direct due to the convergence $\| \Phi(F) \|_{\q Y_{s,t}} \to 0$ given in Lemma \ref{est:lin_Duhamel}. The uniqueness is also direct in view of \eqref{est:Hs_Yst0}.
\enp

\section{Some properties of \texorpdfstring{$Y_{s,t}$}{Yst}  and \texorpdfstring{$Z_{s,t}$}{Zst} spaces }

We start by recalling the following result by Sogge for eigenfunctions of the Laplace-Beltrami operator on a compact manifold, see for instance \cite[Corollary 5.1.2]{SoggeBookFIO}.
\begin{lem}[Sogge]
\label{lmSogge}
Let $M$ be a compact Riemannian manifold without boundary of dimension $n$. Then, there exists $C>0$ so that we have 
\begin{align*}
\nor{\phi_{\lambda}}{L^{\infty}(M)} \le C \lambda^{\frac{n-1}{2}}\nor{\phi_{\lambda}}{L^{2}(M)}
\end{align*}
for any $\phi_{\lambda}\in L^{2}(M)$ satisfying $-\Delta_{g}\phi_{\lambda}=\lambda^{2}\phi_{\lambda}$.
\end{lem}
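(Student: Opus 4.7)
The plan is that this is a classical spectral estimate due to Sogge, so the natural route is simply to invoke the reference as cited. If I were to reproduce the proof, I would follow the standard half-wave parametrix strategy, which I sketch below.

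First I would pick a Schwartz function $\chi$ on $\mathbb R$ with $\chi(0)=1$ and $\widehat{\chi}$ compactly supported in a small interval $(-\delta,\delta)$, where $\delta>0$ is chosen strictly smaller than the injectivity radius of $M$. Since $\phi_\lambda$ is an eigenfunction of $\sqrt{-\Delta_g}$ with eigenvalue $\lambda$, functional calculus gives $\chi(\lambda-\sqrt{-\Delta_g})\phi_\lambda=\phi_\lambda$. Thus it suffices to establish the pointwise kernel bound
\[
\bigl| K_\lambda(x,y)\bigr|\lesssim \lambda^{\frac{n-1}{2}},
\qquad K_\lambda(x,y):= \bigl[\chi(\lambda-\sqrt{-\Delta_g})\bigr](x,y),
\]
since Cauchy--Schwarz then yields $\|\phi_\lambda\|_{L^\infty}\le \|K_\lambda(x,\cdot)\|_{L^2}\|\phi_\lambda\|_{L^2}\lesssim \lambda^{(n-1)/2}\|\phi_\lambda\|_{L^2}$.

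Next, using Fourier inversion, I would write
\[
\chi(\lambda-\sqrt{-\Delta_g})=\frac{1}{2\pi}\int_{-\delta}^{\delta}\widehat{\chi}(t)\,e^{it\lambda}\,e^{-it\sqrt{-\Delta_g}}\,dt,
\]
which reduces the problem to controlling the kernel of the half-wave propagator $e^{-it\sqrt{-\Delta_g}}$ for $|t|<\delta$. On this short time window, one invokes the Hadamard/H\"ormander parametrix: the kernel of $e^{-it\sqrt{-\Delta_g}}$ can be represented as an oscillatory integral of Fourier integral operator type, with phase function related to the Riemannian distance and amplitude symbolically of order zero. Combining this representation with the time integration introduces the factor $e^{i(t\lambda - t|\xi|_g)}$, and a stationary phase analysis in $(t,\xi)$ produces the pointwise bound $\lambda^{(n-1)/2}$, uniformly in $x,y$.

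The main obstacle is this last stationary phase / parametrix step, which requires the FIO machinery (non-degeneracy of the phase, careful treatment of both the critical set where the gradient of $|\xi|_g$ hits $\lambda$, and the behavior near the diagonal $x=y$). For the current paper this is not a genuine obstacle, since we are applying the result as a black box on $M=\mathbb{S}^{d-1}$; the proof is complete by citing \cite[Corollary 5.1.2]{SoggeBookFIO}.
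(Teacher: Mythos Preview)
Your proposal is correct and matches the paper's treatment exactly: the paper does not prove this lemma at all but simply states it with the citation \cite[Corollary 5.1.2]{SoggeBookFIO}, just as you do. Your sketch of the half-wave parametrix argument is a faithful outline of the standard proof, but it goes beyond what the paper provides.
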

For $n=d-1$, this proves in particular that for all $\ell \in  \m N$, and $u \in L^2(\m S^{d-1})$, as $P_\ell u$ is an eigenfunction for $-\Delta_{\m S^{d-1}}$ with eigenvalue $\ell(\ell+d-2)$: 
\begin{align} \label{est:sogge_sphere}
 \| P_\ell u \|_{L^{\infty}(\m S^{d-1})} \le C \left< \ell \right>^{\frac{d}{2}-1} \| P_\ell u \|_{L^{2}(\m S^{d-1})}.
\end{align}

\subsection{Product law in the \texorpdfstring{$Y_{s,t}$}{Ys,t} spaces}

Our goal in this paragraph is to prove the following result regarding the product of two functions in $Y_{s,t}$.

\begin{prop}
\label{lmprodanalytique}
Let $d \ge 2$ and $s> \frac{d}{2} + \frac{1}{2}$. There exists $C>0$ so that 
\begin{align}
\label{estimprodYst}
\forall t \ge 0, \ \forall u,v \in Y_{s,t}, \quad \nor{uv}{Y_{s,t}}\le  Ce^{- \frac{d-2}{2} t}\nor{u}{Y_{s,t}}\nor{v}{Y_{s,t}}.
\end{align}

%Moreover, if $\beta \in \R$ is so that $-(q-1)c_{d}+\beta<0$,  we get for $u_{m}\in X_{s}$, $m=1, \cdots q$, 
%\begin{align*}
% \nor{e^{\beta t}\prod_{m=1}^{q}u_{m}}{X'_{s}} \le  C_{q}\prod_{m=1}^{q}\nor{u_{m}}{X_{s}}.
%\end{align*}
\end{prop}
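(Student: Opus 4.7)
The plan is to conjugate by the operator $e^{t\opD}$: setting $\tilde u := e^{t\opD} u$ and $\tilde v := e^{t\opD} v$, we have $\|u\|_{Y_{s,t}} = \|\tilde u\|_{H^s(\mathbb{S}^{d-1})}$, and the target estimate rewrites as
\[ \bigl\|e^{t\opD}\bigl((e^{-t\opD}\tilde u)(e^{-t\opD}\tilde v)\bigr)\bigr\|_{H^s} \le C\, e^{-\frac{d-2}{2}t}\, \|\tilde u\|_{H^s}\|\tilde v\|_{H^s}. \]
The key algebraic input is that $P_\ell \tilde u$ and $P_{\ell'}\tilde v$ are restrictions to $\mathbb{S}^{d-1}$ of homogeneous harmonic polynomials of degrees $\ell$ and $\ell'$, so their pointwise product is a homogeneous polynomial of degree $\ell+\ell'$, and its decomposition into spherical harmonics involves only degrees $L\le \ell+\ell'$. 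In other words,
\[ P_L(P_\ell\tilde u\cdot P_{\ell'}\tilde v)=0\quad\text{whenever}\quad L>\ell+\ell'. \]

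Expanding $P_L(uv)=\sum_{\ell,\ell'} e^{-(\ell+\ell'+(d-2))t}P_L(P_\ell\tilde u\cdot P_{\ell'}\tilde v)$ and multiplying by the weight $e^{(L+\frac{d-2}{2})t}$ used to compute the $Y_{s,t}$-norm gives
\[ e^{(L+\frac{d-2}{2})t}P_L(uv) = e^{-\frac{d-2}{2}t}\sum_{L\le\ell+\ell'} e^{(L-\ell-\ell')t}\,P_L(P_\ell\tilde u\cdot P_{\ell'}\tilde v), \]
where every exponential inside the sum belongs to $[0,1]$. The triangle inequality in $L^2(\mathbb{S}^{d-1})$ then factors out the gain:
\[ \|P_L(e^{t\opD}(uv))\|_{L^2}\le e^{-\frac{d-2}{2}t}\sum_{\ell,\ell'} \|P_L(P_\ell\tilde u\cdot P_{\ell'}\tilde v)\|_{L^2}. \]

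It remains to prove the refined product estimate
\[ \sum_L\langle L\rangle^{2s}\biggl(\sum_{\ell,\ell'}\|P_L(P_\ell\tilde u\cdot P_{\ell'}\tilde v)\|_{L^2}\biggr)^{\!2} \lesssim \|\tilde u\|_{H^s}^2\|\tilde v\|_{H^s}^2. \]
I would split the inner sum by symmetry into the two regimes $\ell\le\ell'$ and $\ell>\ell'$. In the former, the constraint $L\le\ell+\ell'\le 2\ell'$ implies $\langle L\rangle\lesssim\langle\ell'\rangle$, and Sogge's estimate (Lemma \ref{lmSogge}) applied to the low-frequency factor yields
\[ \|P_L(P_\ell\tilde u\cdot P_{\ell'}\tilde v)\|_{L^2}\le \|P_\ell\tilde u\|_{L^\infty}\|P_{\ell'}\tilde v\|_{L^2}\lesssim \langle\ell\rangle^{\frac{d-2}{2}}\|P_\ell\tilde u\|_{L^2}\|P_{\ell'}\tilde v\|_{L^2}. \]
A Cauchy-Schwarz in $\ell$ gives $\sum_\ell \langle\ell\rangle^{\frac{d-2}{2}}\|P_\ell\tilde u\|_{L^2}\lesssim\|\tilde u\|_{H^s}$ once $s>\frac{d-1}{2}$ (so that $\sum_\ell\langle\ell\rangle^{d-2-2s}<\infty$); a second Cauchy-Schwarz handling the residual $L$- and $\ell'$-summation under $\langle L\rangle^s\lesssim\langle\ell'\rangle^s$ then closes the estimate, the extra regularity $s>\frac{d}{2}+\frac{1}{2}$ being used exactly here to absorb the loss into $\|\tilde v\|_{H^s}^2$.

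The main technical hurdle is this last refined product bound. It is stronger than the classical fact that $H^s(\mathbb{S}^{d-1})$ is an algebra for $s>\frac{d-1}{2}$, because the triangle inequality is taken before the $L^2$-norm; this is also the source of the slightly stronger regularity hypothesis $s>\frac{d}{2}+\frac{1}{2}$ that appears in the statement.
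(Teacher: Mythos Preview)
Your overall strategy---conjugating by $e^{t\gh D}$, extracting the factor $e^{-\frac{d-2}{2}t}$ from the frequency support constraint $L\le\ell+\ell'$, and reducing to a ``refined'' $H^s$ product estimate---is exactly the paper's route. The gap is in your proof of that refined estimate.

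You use only the upper bound $L\le\ell+\ell'$ on the output frequency. After your first Cauchy--Schwarz in $\ell$ (which indeed only needs $s>\tfrac{d-1}{2}$), the remaining quantity is essentially
\[
\sum_{L}\langle L\rangle^{2s}\Bigl(\sum_{\ell'\ge L/2}\|P_{\ell'}\tilde v\|_{L^2}\Bigr)^{2},
\]
and no second Cauchy--Schwarz can bound this by $\|\tilde v\|_{H^s}^2$: take $\tilde v$ a single spherical harmonic of degree $N$, so the sum becomes $\|P_N\tilde v\|_{L^2}^2\sum_{L\le 2N}\langle L\rangle^{2s}\sim N^{2s+1}\|P_N\tilde v\|_{L^2}^2$, while $\|\tilde v\|_{H^s}^2\sim N^{2s}\|P_N\tilde v\|_{L^2}^2$. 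The extra factor $N$ cannot be absorbed by any finite amount of additional regularity on $\tilde v$.

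The missing ingredient is the \emph{lower} bound $|\ell-\ell'|\le L$ from Lemma~\ref{prodHarmo}, i.e.\ $\ell\ge|L-\ell'|$. Keeping it, the Cauchy--Schwarz in $\ell$ yields
\[
\sum_{\ell\ge|L-\ell'|}\langle\ell\rangle^{\frac{d-2}{2}}\|P_\ell\tilde u\|_{L^2}
\;\lesssim\;\langle L-\ell'\rangle^{\frac{d-1}{2}-s}\,\|\tilde u\|_{H^s},
\]
and after inserting $\langle L\rangle^s\lesssim\langle\ell'\rangle^s$ one recognises a convolution $(\langle\cdot\rangle^{\frac{d-1}{2}-s})\ast(\langle\cdot\rangle^s\|P_{\cdot}\tilde v\|_{L^2})$; Young's inequality $\ell^1\ast\ell^2\hookrightarrow\ell^2$ closes the bound exactly when $\frac{d-1}{2}-s<-1$, i.e.\ $s>\frac{d}{2}+\frac{1}{2}$. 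This is precisely the paper's argument and explains where the threshold $s>\frac{d}{2}+\frac{1}{2}$ actually enters.
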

We will actually prove a slightly more general version of Proposition \ref{lmprodanalytique}.
Recall $P_{\ell}$ is the projector defined in \eqref{def:Pell}. One important property will be the following result on product of spherical harmonics. Similar statements have been used in the context of nonlinear Schr\"odinger equations on  $\m S^{d}$ (see \cite{InventionesBGT}). 
\begin{lem}
\label{prodHarmo}
Let $\ell_1$ and $\ell_2\in \N$ and $Y_{\ell_{i}}$ be two spherical harmonics of degree $\ell_{i}$. The product $\phi_{\ell_1} \phi_{\ell_2}$ can be written as a sum of spherical harmonics of degree $\ell$ with $|\ell_1-\ell_2| \le \ell \le \ell_1+\ell_2$. Equivalently, if $\ell < |\ell_1-\ell_2|$ or if $\ell > \ell_1+\ell_2$, then
\[ P_{\ell}(\phi_{\ell_1} \phi_{\ell_2}) =0. \]
\end{lem}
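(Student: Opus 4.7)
My plan is to approach this as a statement about harmonic polynomials, which gives a clean algebraic argument.

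For the upper bound $\ell \le \ell_1 + \ell_2$: I would lift the two spherical harmonics $\phi_{\ell_1}, \phi_{\ell_2}$ to their associated harmonic homogeneous polynomials $H_{\ell_1}, H_{\ell_2}$ on $\m R^d$, with $\phi_{\ell_i} = H_{\ell_i}|_{\m S^{d-1}}$. Their product $H_{\ell_1} H_{\ell_2}$ is a homogeneous polynomial of total degree $\ell_1 + \ell_2$ (no longer harmonic in general). The classical decomposition of homogeneous polynomials states that any homogeneous polynomial $P$ of degree $n$ on $\m R^d$ can be written uniquely as
\[ P(x) = \sum_{k=0}^{\lfloor n/2 \rfloor} |x|^{2k} \widetilde H_{n-2k}(x), \]
with $\widetilde H_{n-2k}$ harmonic and homogeneous of degree $n-2k$. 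Restricting to $\m S^{d-1}$, where $|x|=1$, the product $\phi_{\ell_1}\phi_{\ell_2}$ becomes a finite sum of spherical harmonics of degrees $\ell_1+\ell_2,\, \ell_1+\ell_2 - 2,\, \dots$, in particular of degree $\le \ell_1+\ell_2$. This yields $P_\ell(\phi_{\ell_1}\phi_{\ell_2}) = 0$ for $\ell > \ell_1 + \ell_2$.

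For the lower bound $\ell \ge |\ell_1-\ell_2|$: I would use a duality/orthogonality trick that bootstraps the upper bound. Assume without loss of generality $\ell_1 \ge \ell_2$, and let $\ell < \ell_1 - \ell_2$. Then, using the $L^2(\m S^{d-1})$-inner product and the self-adjointness of multiplication by a real spherical harmonic,
\[ \langle \phi_{\ell_1}\phi_{\ell_2},\, \phi_\ell \rangle = \langle \phi_{\ell_1},\, \phi_{\ell_2}\phi_\ell \rangle. \]
By the upper bound just established (applied to $\phi_{\ell_2}\phi_\ell$), $\phi_{\ell_2}\phi_\ell$ is a sum of spherical harmonics of degree at most $\ell_2 + \ell < \ell_1$. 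Since distinct eigenspaces of $-\Delta_{\m S^{d-1}}$ are mutually orthogonal, this sum is orthogonal to $\phi_{\ell_1}$, and the inner product vanishes. As this holds for every $\phi_\ell$ of degree $\ell$, we conclude $P_\ell(\phi_{\ell_1}\phi_{\ell_2}) = 0$.

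There is no serious obstacle here. The one mildly delicate point is the classical harmonic decomposition of homogeneous polynomials, but it is a standard fact (proved by induction using that $\Delta$ is surjective onto homogeneous polynomials of degree $n-2$ when restricted to homogeneous polynomials of degree $n$), so I would just cite a standard reference on spherical harmonics rather than reprove it. Everything else follows cleanly by the polynomial degree count and the orthogonality of distinct eigenspaces of $-\Delta_{\m S^{d-1}}$.
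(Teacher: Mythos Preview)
Your proposal is correct and follows essentially the same approach as the paper. For the upper bound the paper simply cites \cite[Lemma 4.50]{GallotHulinLaf} (whose content is precisely the decomposition $P = \sum_k |x|^{2k} \widetilde H_{n-2k}$ that you spell out, and which the paper in fact restates later as Lemma~\ref{lmdecompHarmo}); for the lower bound the paper uses exactly your duality trick, writing $\int \phi_{\ell_1}\phi_{\ell_2}\phi_{\ell_3}$ and applying the upper bound to the product of the two smaller degrees.
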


\begin{proof}
We refer for instance to \cite[Lemma 4.50, Section 4.E.3]{GallotHulinLaf} for the upper bound on $\ell$. For the second part, we assume 
\[ \int_{\omega\in \mathbb{S}^{d-1}} \phi_{\ell_1} \phi_{\ell_2} \phi_{\ell_{3}}d\omega\ne 0. \]
Without loss of generality, we can furthermore assume $\ell_2\ge \ell_1$. We apply the first part of the Lemma to $\ell_1$ and $\ell_{3}$ to get $\ell_2 \le \ell_1+\ell_{3}$, that is $\ell_{3}\ge \ell_2- \ell_1=| \ell_2- \ell_1|$. 
\end{proof}

\begin{definition}
A sequence ${\underline \beta} = (\beta_\ell)_{\ell \in \m N}$ such that $\beta_\ell >0$ is said to be an \emph{easing sequence} with \emph{factor} $\kappa >0$, if 
\begin{equation} \label{def:beta_kappa}
\forall i,j,k \in \m N, \quad  k \le i+j \imp \beta_k \le \kappa \beta_i \beta_j.
\end{equation}
Given such a sequence, we define the norm for functions defined on $\m S^{d-1}$,
\[ \| v \|_{N(s,\underline \beta)}^2 = \sum_{\ell \in \m N} \left< \ell \right>^{2s} \beta_\ell^2  \| P_\ell v \|_{L^2}^2. \]
\end{definition}

\begin{lem}
Let $\underline \beta$ be an easing sequence with factor $\kappa$, and $s > \frac{d}{2} + \frac{1}{2}$. If $u,v$ have finite $N(s, \underline \beta)$ norm, then so does $uv$ and there hold 
\[ \| uv \|_{N(s,\underline \beta)} \le C \kappa \| u \|_{N(s,\underline \beta)} \| v \|_{N(s,\underline \beta)}. \]
for some constant $C$ depending only on $s$ and $d$ (not on $\underline \beta$ or $\kappa$).
\end{lem}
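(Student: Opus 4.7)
The plan is to estimate mode-by-mode in the spherical-harmonic decomposition and sum. First I would reduce to the case $\kappa=1$: setting $\tilde\beta_\ell := \kappa \beta_\ell$, one checks that $\tilde{\underline\beta}$ is an easing sequence with factor $1$, and $\|w\|_{N(s,\tilde{\underline\beta})} = \kappa \|w\|_{N(s,\underline\beta)}$, so the inequality with constant $C\kappa$ for $\underline\beta$ will follow from the one with constant $C$ for $\tilde{\underline\beta}$. Throughout I would use the shorthand $f_\ell := \langle \ell\rangle^s \beta_\ell \|P_\ell u\|_{L^2}$, $g_\ell := \langle \ell\rangle^s \beta_\ell \|P_\ell v\|_{L^2}$, $h_k := \langle k\rangle^s \beta_k \|P_k(uv)\|_{L^2}$, so that the goal reduces to $\|h\|_{\ell^2_k} \lesssim \|f\|_{\ell^2}\|g\|_{\ell^2}$.

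Next, I expand $P_k(uv) = \sum_{i,j} P_k(P_iu \cdot P_jv)$. By Lemma \ref{prodHarmo} only pairs with $|i-j| \le k \le i+j$ contribute. For each such $(i,j)$, placing the factor with smaller index in $L^\infty$ and invoking Sogge's estimate \eqref{est:sogge_sphere} yields
\[
\|P_k(P_iu \cdot P_jv)\|_{L^2} \le C \min(\langle i\rangle, \langle j\rangle)^{d/2 - 1} \|P_iu\|_{L^2} \|P_jv\|_{L^2}.
\]
Using $\beta_k \le \beta_i\beta_j$ (from the easing property, valid since $k \le i+j$) and $\langle k\rangle \lesssim \max(\langle i\rangle, \langle j\rangle)$, one repackages every weight into the $f$'s and $g$'s: when $i\le j$ the contribution is at most $\langle i\rangle^{d/2-1-s} f_i g_j$, and symmetrically when $j\le i$. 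Summing over admissible pairs,
\[
h_k \lesssim \sum_{\substack{0\le i\le j \\ j-i\,\le\, k\, \le\, i+j}} \langle i\rangle^{d/2-1-s} f_i g_j \;+\; \sum_{\substack{0\le j\le i \\ i-j\, \le\, k\, \le\, i+j}} \langle j\rangle^{d/2-1-s} f_i g_j.
\]

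To conclude I would parametrize the first piece by $(i,m,\alpha)$ with $j = i+m$, $m \ge 0$, and $k = m+\alpha$, $\alpha \in \{0,1,\ldots,2i\}$. Applying Minkowski's inequality in $\ell^2_k$ to pull the sum $\sum_{i,\alpha}$ outside and noting that each index shift of $g$ preserves its $\ell^2$-norm gives
\[
\|h\|_{\ell^2_k} \lesssim \|g\|_{\ell^2} \sum_i (2i+1) \langle i\rangle^{d/2-1-s} f_i,
\]
and a final Cauchy--Schwarz in $i$ bounds the right-hand side by $\|f\|_{\ell^2}\|g\|_{\ell^2}$ as soon as the series $\sum_i (2i+1)^2 \langle i\rangle^{d-2-2s}$ converges, i.e.\ $s > (d+1)/2 = d/2 + 1/2$, which is exactly the hypothesis. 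The symmetric piece is treated identically by swapping the roles of $u$ and $v$. The main obstacle is precisely this last summability: one has to arrange the index bookkeeping so that the $(2i+1)$ factor (the length of the $k$-interval at fixed $i$) is absorbed using only $s > d/2+1/2$ and no more; everything else is standard manipulation.
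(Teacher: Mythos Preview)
Your proof is correct and follows essentially the same approach as the paper: the same spherical-harmonic decomposition, the same use of Lemma~\ref{prodHarmo} to restrict to $|i-j|\le k\le i+j$, Sogge's bound on the smaller-index factor, and the easing property to absorb the $\beta$-weights. The only difference is cosmetic: in the final $\ell^2$ bookkeeping the paper applies Cauchy--Schwarz in $i$ first (producing a tail factor $\langle \ell-j\rangle^{(d-1)/2-s}$) and then Young's convolution inequality $\ell^1*\ell^2\to\ell^2$, whereas you reparametrize, apply Minkowski to pull the $(i,\alpha)$ sum outside, and finish with Cauchy--Schwarz in $i$; both routes hit the identical summability threshold $s>d/2+1/2$.
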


\begin{proof}
Throughout this proof, the implicit constant in $\lesssim$ is allowed to depend on $d$ and $s$ only. We assume $u,v\in L^2_0$ and conclude by density.

Denote $u_i = P_i u$ and $v_j = P_j v$ so that $u = \sum_{i \in \m N} u_i$, $v = \sum_{j \in \m N} v_j $. 
Due to Lemma \ref{prodHarmo}, we know that $P_{\ell}(u_{i}u_{j}) =0$ unless $|j-i| \le \ell \le i+j$, and so:
\[ P_\ell(uv) = \sum_{i,j} P_\ell(u_i v_j) =  \sum_{\substack{i,j \\ |i-j| \le \ell \le i+j}} P_{\ell} (u_i v_j). \]
We can split this sum depending on $i \le j$ or $j < i$, and using that $P_\ell$ is an $L^2$-orthogonal projection, we bound
\begin{gather*} \| P_\ell(uv) \|_{L^2} \le  \sum_{\substack{i,j \\ |i-j| \le \ell \le i+j}} \| P_{\ell} (u_i v_j) \|_{L^2} \le S_{\ell}(u,v) + S_{\ell}(v,u), \quad \text{where}  \\
S_{\ell}(u,v) = \sum_{\substack{i,j \\ i \le j, \ |\ell-j| \le i}} \| u_i v_j \|_{L^2}.
\end{gather*}
Now, by  Lemma \ref{lmSogge}, 
 \begin{align*}
 \nor{u_{i}}{L^{\infty}(\mathbb{S}^{d-1})} \le C \left<i\right>^{\frac{d}{2}-1} \nor{u_{i}}{L^{2}(\mathbb{S}^{d-1})}.
 \end{align*}
so that if $\ell \le i+j$,
\begin{align*}
\nor{u_{i}v_{j}}{L^{2}(\mathbb{S}^{d-1})} & \le \nor{u_{i}}{L^{\infty}(\mathbb{S}^{d-1})}\nor{v_{j}}{L^{2}(\mathbb{S}^{d-1})}\\
& \lesssim  \frac{\left<i\right>^{\frac{d}{2}-1}}{\beta_i \beta_j} \beta_i \nor{u_{i}}{L^{2}(\mathbb{S}^{d-1})} \beta_j \nor{v_{j}}{L^{2}(\mathbb{S}^{d-1})} \\
& \lesssim \kappa \frac{\left<i\right>^{\frac{d}{2}-1}}{\beta_\ell} \beta_i \nor{u_{i}}{L^{2}(\mathbb{S}^{d-1})} \beta_j \nor{v_{j}}{L^{2}(\mathbb{S}^{d-1})}.
\end{align*}
Also observe that if $|\ell-j| \le i \le j$, then $\ell \le 2j$. Together with the above estimate, we can bound
\begin{align*}
S_{\ell}(u,v) & \lesssim  \kappa \beta_{\ell}^{-1} \sum_{\substack{i \le j \\  |\ell-j| \le i }}  \left<i\right>^{\frac{d}{2}-1-s}  (\left<i\right>^{s} \beta_i \nor{u_{i}}{L^{2}(\mathbb{S}^{d-1})})  \beta_j \nor{v_{j}}{L^{2}(\mathbb{S}^{d-1})} \\
& \lesssim  \kappa \beta_{\ell}^{-1} \sum_{j \ge \ell/2} \beta_j \nor{v_{j}}{L^{2}(\mathbb{S}^{d-1})} \sum_{i,\  |\ell-j| \le i \le j} \left<i\right>^{\frac{d}{2} -1 - s } (\left<i\right>^{s}\beta_i \nor{u_{i}}{L^{2}(\mathbb{S}^{d-1})} ) \\
& \lesssim  \kappa \beta_{\ell}^{-1} \sum_{j \ge \ell/2} \beta_j \nor{v_{j}}{L^{2}(\mathbb{S}^{d-1})} \left( \sum_{|\ell-j| \le i} \left<i\right>^{d-2-2s} \right)^{1/2} \left( \sum_i \left< i \right>^{2s}\beta_i^2 \nor{u_{i}}{L^{2}(\mathbb{S}^{d-1})}^2 \right)^{1/2} \\
& \lesssim   \kappa \beta_{\ell}^{-1} \| u \|_{N(s,\underline \beta)} \sum_{j \ge \ell/2} \left< \ell-j \right>^{\frac{d-1}{2} -s} \beta_j \nor{v_{j}}{L^{2}(\mathbb{S}^{d-1})} .
\end{align*}
We used the Cauchy-Schwarz inequality and the fact that $d-2-2s <-1$. When $j \ge \ell/2$, $\left< \ell \right>^s \lesssim \left< j \right>^s$, so that
\begin{align*}
\left< \ell \right>^s \beta_\ell S_{\ell}(u,v) & \lesssim  \kappa  \| u \|_{N(s,\underline \beta)} \sum_{j} \left< \ell-j \right>^{\frac{d-1}{2} -s} (\left< j \right>^s \beta_j \nor{v_{j}}{L^{2}(\mathbb{S}^{d-1})}).
\end{align*}
We recognize a convolution: as $s > \frac{d}{2} + \frac{1}{2}$, $(\left< j \right>^{\frac{d-1}{2}-s})_j \in \ell^1$ and
\[ \| (\left< j \right>^s \beta_j \nor{v_{j}}{L^{2}(\mathbb{S}^{d-1})})_j \|_{\ell^2} = \| v \|_{N(s,\underline \beta)}, \]
we get
\begin{align*}
\| \left< \ell \right>^s \beta_\ell S_{\ell}(u,v) \|_{\ell^2} \lesssim  \kappa  \| u \|_{N(s,\underline \beta)} \| v \|_{N(s,\underline \beta)}.
\end{align*}
The same equality hold when replacing $S_\ell(u,v)$ by $\| P_\ell(u,v) \|_{L^2}$, and so
\[ \| uv \|_{N(s,\underline \beta)} = \| \left< \ell \right>^s \beta_\ell \| P_\ell(uv) \|_{L^2} \|_{\ell^2} \lesssim \kappa  \| u \|_{N(s,\underline \beta)} \| v \|_{N(s,\underline \beta)}. \qedhere
\]
\end{proof}

\begin{proof}[Proof of Proposition \ref{lmprodanalytique}]

The main observation is the folllowing:

\begin{claim}
Given $c,\ell_0 \in \m R$ and $t\ge 0$, the sequence defined by $\beta_\ell = e^{(c+ \max(\ell,\ell_0)) t}$ is easing with factor $\kappa =  e^{- c t}$.
\end{claim}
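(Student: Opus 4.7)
The plan is to unpack what the easing condition $\beta_k \le \kappa \beta_i \beta_j$ means for $\beta_\ell = e^{(c+\max(\ell,\ell_0))t}$ and reduce it to a purely combinatorial inequality on the exponents. The case $t=0$ is trivial since then $\beta_\ell \equiv 1$ and $\kappa = 1$. For $t>0$, taking logarithms and dividing by $t$, the required inequality $\beta_k \le \kappa\beta_i\beta_j$ (with $\kappa = e^{-ct}$) becomes
\[
c + \max(k,\ell_0) \;\le\; -c + (c+\max(i,\ell_0)) + (c+\max(j,\ell_0)),
\]
which simplifies to
\[
\max(k,\ell_0) \;\le\; \max(i,\ell_0) + \max(j,\ell_0).
\]
So the claim reduces to showing this inequality whenever $k \le i+j$ with $i,j,k \in \m N$.

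For this reduced inequality, I would argue as follows. Since $\max$ is nondecreasing in its first argument, $k \le i+j$ gives $\max(k,\ell_0) \le \max(i+j,\ell_0)$, so it suffices to show
\[
\max(i+j,\ell_0) \;\le\; \max(i,\ell_0) + \max(j,\ell_0).
\]
The right-hand side is at least $i+j$, since $\max(i,\ell_0) \ge i$ and $\max(j,\ell_0) \ge j$. It is also at least $\ell_0$, because $\max(i,\ell_0) \ge \ell_0$ while $\max(j,\ell_0) \ge j \ge 0$, so their sum is at least $\ell_0$. Taking the maximum of the two lower bounds gives the desired inequality.

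There is no real obstacle here: the argument is a two-line reduction to a max-comparison, which holds by monotonicity of $\max$ together with the nonnegativity of $i,j$. The only mild point worth remarking on is that one genuinely uses $i,j \in \m N$ (that is, $i,j \ge 0$) to ensure $\max(i,\ell_0)+\max(j,\ell_0) \ge \ell_0$ in the case $\ell_0 > 0$; without this, the claim would fail, for instance, if $i$ or $j$ could be very negative. Since the sequence is indexed by $\m N$ this causes no issue.
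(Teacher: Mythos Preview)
Your proof is correct and follows essentially the same route as the paper: both reduce the easing condition to the combinatorial inequality $\max(k,\ell_0) \le \max(i,\ell_0) + \max(j,\ell_0)$ and verify it by bounding the right-hand side below by both $k$ (via $k \le i+j$) and $\ell_0$. The paper does this directly without the intermediate step $\max(i+j,\ell_0)$, but the content is the same.
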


\begin{proof}
Let $k \le i+j$, then $\max(k,\ell_0) \le  \max(i,\ell_0) + \max(j,\ell_0) $: indeed, $k \le i+j$ so that $k \le  \max(i,\ell_0) + \max(j,\ell_0)$, and obviously, $\ell_0 \le  \max(i,\ell_0) + \max(j,\ell_0)$.

(Equality holds for $i,j \ge \ell_0$)

Hence, $\beta_k \le e^{(c +  \max(i,\ell_0) + \max(j,\ell_0))t} \le e^{-c t} \beta_i \beta_j$.
\end{proof}

We apply this claim to $\ell_0=0$ and $c = \frac{d-2}{2}$, for which $\| \cdot \|_{N(s,\underline \beta)} = \| \cdot \|_{Y_{s,t}}$.
\end{proof}
Proposition \ref{lmprodanalytique} yields that the space $Y_{s,t}$ is a Banach algebra (up to a multiplication of the norm by a constant). So, we easily get the following corollary.
\begin{cor}
\label{coranalyt}
Let $d \ge 2$ and $s> \frac{d}{2} + \frac{3}{2}$. Let $f$ be an analytic function of $\m C$ of positive radius $\rho>0$ with $f(0)=0$. Then, there exists $\varepsilon>0$ and $C>0$ so that for any $t_0\ge0$ and $u$ function on $(t_0,+\infty)\times \m S^{d-1}$ with  $\nor{(u,\partial_t u)}{ \q Y_{s,t_0}^{t_0}}\le \varepsilon $, then $(f(u),\partial_t (f(u)))\in \q Y_{s,t_0}^{t_0}$. Moreover, if $f$ can be written $f(z)=z^n g(z)$ with $n\in \N^*$ and $g$ analytic, then we have additionally, for any $t \ge t_0$
\[ \nor{f(u)}{ Y_{s,t-t_0}}\le C e^{-\frac{d-2}{2}(n-1)(t-t_0)}\nor{u}{Y_{s,t-t_0}}^n. \]
\end{cor}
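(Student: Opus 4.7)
The plan is to iterate the product estimate \eqref{estimprodYst} on the power series expansion of $f$. Since $f$ is analytic of radius $\rho>0$ with $f(0)=0$, I would write $f(z)=\sum_{k\ge 1} a_k z^k$ with Cauchy bounds $|a_k|\le M\rho^{-k}$. A straightforward induction on $k$, using Proposition \ref{lmprodanalytique} at parameter $\tau-t_0\ge 0$, yields for every $\tau\ge t_0$
\[ \nor{u(\tau)^k}{Y_{s,\tau-t_0}} \le \bigl(Ce^{-\frac{d-2}{2}(\tau-t_0)}\bigr)^{k-1}\nor{u(\tau)}{Y_{s,\tau-t_0}}^k \le C^{k-1}\varepsilon^k, \]
the latter since $\tau-t_0\ge 0$ makes the exponential $\le 1$. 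The analogous bound also holds in $Y_{s-1,\tau-t_0}$, which is permitted as the hypothesis $s>\frac{d}{2}+\frac{3}{2}$ translates into $s-1>\frac{d}{2}+\frac{1}{2}$, i.e.\ the threshold required by Proposition \ref{lmprodanalytique}.

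Choosing $\varepsilon<\rho/(2C)$, the series $f(u(\tau))=\sum_k a_k u(\tau)^k$ converges absolutely in $Y_{s,\tau-t_0}$ with bound $\sum_k |a_k| C^{k-1}\varepsilon^k<+\infty$. Likewise, $\partial_t f(u)(\tau)=\sum_k k a_k u(\tau)^{k-1}\partial_t u(\tau)$ is obtained by applying the product estimate in $Y_{s-1,\tau-t_0}$ to $u^{k-1}\partial_t u$ together with $\nor{\partial_t u(\tau)}{Y_{s-1,\tau-t_0}}\le\varepsilon$; the identification with the distributional derivative of $f(u)$ is routine after truncating $f$ to finitely many terms and passing to the limit. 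Continuity in $\tau$ with values in $Y_{s,\tau-t_0}$ (resp.\ $Y_{s-1,\tau-t_0}$) follows from the assumed continuity of $u$ and $\partial_t u$ via the telescoping identity $u(\tau)^k-u(\tau')^k=(u(\tau)-u(\tau'))\sum_{j=0}^{k-1}u(\tau)^j u(\tau')^{k-1-j}$, each term controlled by the algebra property in a fixed space $Y_{s,\tau''-t_0}$ for $\tau''\le\tau,\tau'$, and dominated convergence on the absolutely summable bound in $k$.

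For the refined estimate when $f(z)=z^n g(z)$, observe that $a_k=0$ for $k<n$, so
\[ \nor{f(u(\tau))}{Y_{s,\tau-t_0}}\le \sum_{k\ge n}|a_k|C^{k-1}e^{-\frac{d-2}{2}(k-1)(\tau-t_0)}\nor{u(\tau)}{Y_{s,\tau-t_0}}^k. \]
Factoring out the prefactor $e^{-\frac{d-2}{2}(n-1)(\tau-t_0)}\nor{u(\tau)}{Y_{s,\tau-t_0}}^n$ leaves a remainder $\sum_{k\ge n}|a_k|C^{k-1}\bigl(e^{-\frac{d-2}{2}(\tau-t_0)}\nor{u(\tau)}{Y_{s,\tau-t_0}}\bigr)^{k-n}$, which is dominated by a convergent geometric series in $\varepsilon C/\rho$ uniformly in $\tau\ge t_0\ge 0$. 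This yields the announced exponential gain. The main (mild) obstacle is the continuity claim, since the target space $Y_{s,\tau-t_0}$ itself varies with $\tau$; however the monotonicity $Y_{s,\tau'}\subset Y_{s,\tau}$ for $\tau\le\tau'$ recalled earlier reduces it to a fixed-space estimate that the telescoping identity supplies. All constants are independent of $t_0$, as the constant in \eqref{estimprodYst} and the Cauchy estimates on $(a_k)$ are.
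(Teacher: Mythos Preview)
Your proof is correct and follows essentially the same approach as the paper's: expand $f$ as a power series, iterate the product estimate \eqref{estimprodYst} to bound $\|u^k\|_{Y_{s,t-t_0}}$, and sum using Cauchy bounds and smallness of $\varepsilon$. The paper's proof is somewhat terser (it directly treats the $f=z^n g$ case and dismisses $\partial_t f(u)$ in one line via $\partial_t f(u)=f'(u)\partial_t u$), whereas you spell out the continuity-in-$\tau$ argument and the identification of the term-by-term derivative; but the underlying mechanism is identical.
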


\bnp
We write $g(z)=\sum_{i=0}^{+\infty}a_i z^i$ with $|a_i|\leq C \rho^{-i}$ (up to changing $\rho$ by a smaller one). For $t\ge t_0$, We have, by definition, $\nor{u}{Y_{s,t-t_{0}}}\le \varepsilon$ for $t\ge t_0$. By the algebra property of Proposition \ref{lmprodanalytique}, we get for $t\ge t_0$
\begin{align*}
   \nor{f(u)}{ Y_{s,t-t_0}} & \le \sum_{i=0}^{+\infty} C^{n+i-1}|a_i|e^{- \frac{d-2}{2} (t-t_0)(n+i-1)}\nor{u}{Y_{s,t-t_0}}^{n+i} \\
   & \le C^n e^{- \frac{d-2}{2} (t-t_0)(n-1)} \nor{u}{Y_{s,t-t_0}}^n \sum_{i=0}^{+\infty}(C\varepsilon/\rho)^{i}.
\end{align*}
This is convergent if $\varepsilon$ is small enough and gives the last announced result. Writing $\partial_t (f(u))=(\partial_t u) f'(u)$, we get similar result and prove the rest of the Corollary.
\enp
\subsection{The elliptic null condition in dimension 2}
\label{s:null2}
Here we focus on the dimension $d=2$. In that case, the rate in the exponential in \eqref{estimprodYst} is zero, and there is a priori no extra decay on non linear terms. However, if $u$ and $v$ are linear solutions of 
\[ \partial_{tt} u - \opD^2 u =0 \]
and when the product satisfies a special ``null condition'', extra cancellations occur and one derives improved estimates. This is particularly relevant for critical cases, as are conformal equations, which we detail in Section \ref{s:conf2}.

We work in radial coordinates: let $z=(t,\theta) \in \m R\times \R/2\pi \Z\approx \R \times \m S^{1}$ be the running point: observe that
\[ |\partial_\theta =  \opD. \]
We will denote $\xi = (\xi_t,\xi_\theta)$ the coordinates of a vector $\xi \in \m C^2$. 

\begin{definition}
\label{defNull}

Let $A$ be a $\m C$-valued bilinear form on $\m C^2$, which we represent by a $2 \times 2$ matrix
\[ A = \begin{pmatrix}
a_{t t} & a_{t \theta } \\
a_{\theta t} & a_{\theta \theta}
\end{pmatrix} \in \mathscr M_2(\m C),\]
so that for $\xi,\eta \in \m C^2$, $A(\xi,\eta) = \xi^\top A \eta$.

We say that $A$ satisfies the \emph{elliptic null condition} if
\begin{equation} \label{hypoNull}
\forall \xi \in \m C^2, \quad \left( p(\xi) =0 \imp A(\xi,\xi) =0 \right),
\end{equation}
where $p(\xi) := - (\xi_{t}^{2} + \xi_{\theta}^{2})$ is the symbol of the Laplace operator, defined on $\m C^{2}$. 

If $A: \q V \to Bil_{\m R}(\m C^2, \m C)$, $z \mapsto A(z)$ is a map defined on a neigborhood $\q V$ of $0 \in \m R^2$, with values in $\m C$-bilinear forms on $\m C^2$, we say that $A$ satisfies the elliptic null condition \emph{near $0$} if for all $z \in \q V$, $A(z)$ satisfies the elliptic null condition.
\end{definition}

\begin{lem}
\label{lmequiNull}
Let $A \in \mathscr M_2(\m C)$, we denote 
$A^\flat = \begin{pmatrix} 
-1 & i \\
-1 & -i 
\end{pmatrix} A  \begin{pmatrix} 
-1 & -1 \\
i & -i 
\end{pmatrix}$. 

 We have the equivalence:
\begin{enumerate} 
\item  \label{equiv1} $A^\flat$ has null diagonal terms.
\item \label{equiv2} For all $j,k \in \m Z$ such that $jk \ge 0$, $A(\zeta(j),\zeta(k))=0$, where we denoted $\zeta(k) : = (-|k|, ik) \in \m C^2$.
%\item  \label{equiv1} $A(z)(\zeta(1),\zeta(1)) = A(z)(\zeta(-1),\zeta(-1))=0$.
%\item  \label{equiv2}$b(x,\Xi(u),\Xi(v))=0$ for any $x$ close to $0$ and $u$, $v\in \R^{*}$ with $\sgn(u)=\sgn(v)$
\item  \label{equiv3} $A$ satisfies the elliptic null condition.
\end{enumerate}
\end{lem}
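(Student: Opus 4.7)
The plan is to exploit the fact that in two (complex) dimensions the null cone $\{p(\xi)=0\}$ factors completely: from $\xi_t^2+\xi_\theta^2=(\xi_t-i\xi_\theta)(\xi_t+i\xi_\theta)$ one reads off that the zero set consists of exactly two complex lines, respectively spanned by the vectors $v_+ := (-1,i)$ and $v_- := (-1,-i)$. These are precisely the columns of the matrix $P := \begin{pmatrix} -1 & -1 \\ i & -i \end{pmatrix}$ that appears in $A^\flat = P^\top A P$, so the two diagonal entries of $A^\flat$ are nothing but $A(v_+, v_+)$ and $A(v_-, v_-)$. This observation reduces everything to evaluating the bilinear form $A$ on the pairs $(v_\pm, v_\pm)$.

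To establish \eqref{equiv1}$\Leftrightarrow$\eqref{equiv3}, I would argue as follows. The implication \eqref{equiv3}$\Rightarrow$\eqref{equiv1} is immediate since $v_\pm$ lie on the null cone, so $A(v_\pm, v_\pm)=0$. Conversely, assuming \eqref{equiv1}, any $\xi \in \m C^2$ with $p(\xi)=0$ satisfies either $\xi_t = i\xi_\theta$ or $\xi_t = -i\xi_\theta$; a direct check shows that in the first case $\xi = -i\xi_\theta\, v_+$, and in the second case $\xi = i\xi_\theta\, v_-$. By bilinearity, $A(\xi,\xi)$ equals $\xi_\theta^2$ times the corresponding diagonal entry of $A^\flat$, hence vanishes.

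For \eqref{equiv1}$\Leftrightarrow$\eqref{equiv2}, the key is the explicit formula $\zeta(k) = k\, v_+$ when $k \ge 0$ and $\zeta(k) = |k|\, v_-$ when $k \le 0$. In particular, when $jk \ge 0$ the pair $(\zeta(j),\zeta(k))$ is parallel to either $(v_+, v_+)$ or $(v_-, v_-)$, so bilinearity yields $A(\zeta(j),\zeta(k)) = jk\, A(v_\varepsilon, v_\varepsilon)$ for the appropriate sign $\varepsilon \in \{+,-\}$ (and both sides vanish trivially if $j$ or $k$ is zero). Thus \eqref{equiv2} is equivalent to the vanishing of the two diagonal entries of $A^\flat$, i.e., to \eqref{equiv1}, by specializing to $j=k=\pm 1$ (and conversely).

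There is no genuine obstacle here; the content of the lemma is essentially a compact repackaging of the factorization of the Laplace symbol over $\m C$. The only point requiring slight attention is the direction $\eqref{equiv1}\Rightarrow\eqref{equiv3}$, where one must correctly identify the scalar rescaling between a general null vector $\xi$ and the chosen generators $v_\pm$; everything else is a direct bilinear computation.
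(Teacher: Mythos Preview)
Your proof is correct and follows essentially the same route as the paper's: identify the two null directions $v_\pm=\zeta(\pm1)$ as the columns of $P$, recognise the diagonal entries of $A^\flat=P^\top AP$ as $A(v_\pm,v_\pm)$, and use the homogeneity $\zeta(k)=|k|\,v_{\mathrm{sgn}\,k}$ together with the factorisation of $p(\xi)$ to link all three statements. The only slip is cosmetic: in the step \eqref{equiv1}$\Rightarrow$\eqref{equiv3} the scalar is $(\mp i\xi_\theta)^2=-\xi_\theta^2$ rather than $\xi_\theta^2$, which of course does not affect the vanishing.
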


\begin{proof}
Notice that the entries of $A^\flat$ are (for $i,j \in \{1,2 \}$)
\[ (A^\flat)_{i,j} = A(\zeta((-1)^{i-1},\zeta((-1)^{j-1}) \]
By homogeneity, we see that
\begin{equation} \label{eq:A_zeta_homogene}
\forall p,q \in \m R, \quad A(\zeta(p),\zeta(q)) = |p||q| A(\zeta(\sgn p),\zeta(\sgn q)),
\end{equation}
so that \eqref{equiv1} and \eqref{equiv2} are equivalent.

Second, \eqref{equiv3} implies \eqref{equiv1}: it suffices to test with $\zeta(\pm 1)$, for which we have 
\[ p(\zeta(\pm 1)) = -1 - i^2 =0. \]

Last, for the converse, we notice that $p(\xi)=0$ is equivalent to $\xi_{\theta}=\pm i\xi_{t}$. So, in particular, $\xi=(\xi_{t},\xi_{\theta})=(\xi_{t},\pm i\xi_{t})=-\xi_{t}(-1,\mp i)=-\xi_{t} \zeta(\mp 1)$ and 
\[ A(\xi,\xi)=\xi_{t}^{2}A( \zeta(\mp 1),\zeta(\mp 1))=0 \]
by assumption.
\end{proof}
\begin{remark}
\label{rk:nullmat}
    Two important examples of matrices satisfying the null conditions will be the matrices $A=I_2$ and $J = \begin{pmatrix} 
0 & 1 \\
-1 & 0 
\end{pmatrix}$, corresponding to terms of the form $\nabla v\cdot \nabla u$ and $\nabla^{\perp} v\cdot \nabla u=\{v,u\}=\partial_t v\partial_\theta u-\partial_\theta v\partial_tu$ respectively. We compute the corresponding matrices $(I_2)^\flat= \begin{pmatrix}
0& 2 \\
2 & 0
\end{pmatrix}$ and $J^\flat= \begin{pmatrix}
0& 2i \\
-2i & 0
\end{pmatrix}$ that have null diagonal indeed.
\end{remark}
\begin{prop}
\label{propNullgain}
Let $s > 3/2$, there exists $C>0$ so that the following holds. For $u_0, v_0 \in H^s(\m S^1)$,  denote $u (t)$ and $v(t)$  the associated linear solutions to \eqref{eq:D0} with data $(u_0, - \gh D u_0)$ and $(v_0, - \gh D v_0)$ at $t=0$ (with no growing modes);
denote also $\nabla u = (\partial_t u, \partial_\theta u)$ and similarly $\nabla v = (\partial_t u, \partial_\theta u)$.

Let $t \ge 0$ and $A: \m S^1 \to \mathscr M_2(\m C)$ such that $A(\theta)$ satisfies the elliptic null condition of Definition \ref{hypoNull} for all $\theta \in \m S^1$. 

Then, we have
\begin{align} \label{est:null_ell_2d}
\nor{A(\cdot) (\nabla u(t), \nabla v(t)) }{Y_{s-1,t}}\le C e^{-2t} \| A \|_{Y_{s-1,t}}  \|  \partial_\theta u_0 \|_{H^{s-1}(\m S^1)} \|  \partial_\theta v_0 \|_{H^{s-1}(\m S^1)}.
\end{align}
\end{prop}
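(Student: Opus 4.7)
\medskip

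\textbf{Proof plan.} The plan is to work out everything explicitly in Fourier series on $\mathbb{S}^1 = \mathbb{R}/2\pi\mathbb{Z}$, where $\opD = |\partial_\theta|$ acts diagonally. Writing $u_0 = \sum_{k \in \mathbb{Z}} \hat u_0(k) e^{ik\theta}$, the unique linear solution with no growing modes is $u(t,\theta) = \sum_k \hat u_0(k)\, e^{-|k|t + ik\theta}$, so
\[
\nabla u(t,\theta) = \sum_{k \in \mathbb{Z}} \zeta(k)\, \hat u_0(k)\, e^{-|k|t + ik\theta},
\]
with $\zeta(k) = (-|k|, ik)$ (as in Lemma~\ref{lmequiNull}), and similarly for $v$. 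Expand also $A(\theta) = \sum_m \hat A(m) e^{im\theta}$ in a matrix-valued Fourier series, and observe that each Fourier mode inherits the null condition: indeed, $A(\theta)^\flat$ has vanishing diagonal for every $\theta$, hence by Fourier uniqueness so does each $\hat A(m)^\flat$, meaning $\hat A(m)$ satisfies the elliptic null condition for every $m$.

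Multiplying out, the $\ell$-th Fourier coefficient of $A(\cdot)(\nabla u(t), \nabla v(t))$ is
\[
\widehat{\mathcal{P}}(\ell) = \sum_{j+k+m=\ell} \hat u_0(k)\, \hat v_0(j)\, \bigl(\zeta(k)^\top \hat A(m)\, \zeta(j)\bigr)\, e^{-(|k|+|j|)t}.
\]
By Lemma~\ref{lmequiNull} applied to each $\hat A(m)$, the bilinear evaluation vanishes unless $jk<0$. This is exactly where the gain appears: when $j$ and $k$ have opposite strict signs, $|j+k| = \bigl||j|-|k|\bigr|$, hence
\[
|\ell| - |j| - |k| \le |j+k| + |m| - |j| - |k| = |m| - 2\min(|j|,|k|) \le |m| - 2,
\]
since $|j|,|k|\ge 1$. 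Using also $|\zeta(k)^\top \hat A(m)\zeta(j)| \lesssim \|\hat A(m)\|\,|k||j|$, this yields
\[
e^{|\ell|t}\,|\widehat{\mathcal{P}}(\ell)| \;\lesssim\; e^{-2t} \sum_{j+k+m=\ell} \bigl(|k|\,|\hat u_0(k)|\bigr)\bigl(|j|\,|\hat v_0(j)|\bigr)\bigl(e^{|m|t}\,\|\hat A(m)\|\bigr).
\]

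Finally I would reduce to a product estimate in $H^{s-1}(\mathbb{S}^1)$. Introduce auxiliary functions $f,g,h$ on $\mathbb{S}^1$ with nonnegative Fourier coefficients $|k||\hat u_0(k)|$, $|j||\hat v_0(j)|$, and $e^{|m|t}\|\hat A(m)\|$, so that
\[
\|f\|_{H^{s-1}} \approx \|\partial_\theta u_0\|_{H^{s-1}}, \quad \|g\|_{H^{s-1}} \approx \|\partial_\theta v_0\|_{H^{s-1}}, \quad \|h\|_{H^{s-1}} \approx \|A\|_{Y_{s-1,t}}.
\]
The convolution above is precisely $\widehat{fgh}(\ell)$, and the estimate we aim for reduces to
\[
\|A(\cdot)(\nabla u,\nabla v)\|_{Y_{s-1,t}} \;\lesssim\; e^{-2t}\,\|fgh\|_{H^{s-1}(\mathbb{S}^1)}.
\]
Since $s-1 > 1/2$, the space $H^{s-1}(\mathbb{S}^1)$ is a Banach algebra (a one-dimensional instance of Proposition~\ref{lmprodanalytique} at $t=0$), so $\|fgh\|_{H^{s-1}} \lesssim \|f\|_{H^{s-1}}\|g\|_{H^{s-1}}\|h\|_{H^{s-1}}$, which gives \eqref{est:null_ell_2d}.

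The main conceptual step is the gain extraction in the second paragraph: without the null condition one only has the triangle bound $e^{(|\ell|-|k|-|j|)t}\le e^{|m|t}$, producing no decay in $t$, which is consistent with the fact that Proposition~\ref{lmprodanalytique} gives no gain when $d=2$. The null condition is precisely what forces the high frequencies $k$ and $j$ to interfere destructively at the output, replacing $|k|+|j|$ by $\bigl||k|-|j|\bigr|$ in the frequency bookkeeping; the lower bound $\min(|k|,|j|)\ge 1$ then produces the stated $e^{-2t}$ factor. The rest is routine Sobolev algebra.
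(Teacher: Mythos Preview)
Your argument is correct and follows essentially the same route as the paper: Fourier-expand $u$, $v$, and $A$, use Lemma~\ref{lmequiNull} to restrict to pairs $(j,k)$ with $jk<0$, extract the $e^{-2t}$ gain from $|j+k|=||j|-|k||\le |j|+|k|-2$, and conclude by a trilinear convolution estimate. The only cosmetic differences are that you apply the null condition to each Fourier coefficient $\hat A(m)$ (the paper applies it to $A(\theta)$ and then expands $A^\flat$), and you package the final step as the $H^{s-1}(\m S^1)$ algebra property rather than writing out the $\ell^2*\ell^1*\ell^1\to\ell^2$ Young inequality explicitly.
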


\begin{proof}
Due to the choice of the initial data for $u(t), v(t)$, they can be decomposed:
\begin{align*}
u(t,\theta) & = \sum_{j\in \Z} \alpha_j e^{ij\theta} e^{-|j|t}, \qquad v(t,\theta) = \sum_{k \in \Z} \beta_k e^{ik \theta} e^{-|k|t}.
\end{align*}
for some complex coefficients $\alpha_j,\beta_k$. Then
\begin{align*}
 \nabla u (t,\theta)& = \sum_{j \in \Z} \alpha_j e^{i j\theta} e^{-|j| t} \zeta(j)
\end{align*}
and similarly for $\nabla v$, so that 
\[ A(\nabla u, \nabla v) (t,\theta) = \sum_{j,k}\alpha_j \beta_k A (\theta)(\zeta(j),\zeta(k)) e^{i(j+k) \theta} e^{-(|j|+|k|)t}.
 \]
Denoting $\q R = \{ (j,k) \in \m Z^2 : jk < 0 \}$, condition \eqref{equiv2} writes that for all $(j,k) \in \m Z^2 \setminus \q R$, $A(\theta) (\zeta(j),\zeta(k)) =0$, so
\begin{align*}
A(\theta)(\nabla u(t,\theta), \nabla v(t,\theta))  & = \sum_{(j,k) \in \q R}\alpha_j \beta_k  |j||k| A(\theta)^\flat_{\sgn(j), \sgn(k)} e^{i(j+k) \theta} e^{-(|j|+|k|)t} \\
& = \sum_{\sigma \in \{ \pm 1 \}^2}  \sum_{(j,k) \in \q R_\sigma} \sum_{\ell \in \m Z} |j| \alpha_j |k| \beta_k  a_{\sigma,\ell} e^{i(j+k+\ell) \theta} e^{-(|j|+|k|+|\ell|)t},
\end{align*}
where $\q R_\sigma = \{ (j,k) \in \q R : (\sgn(j), \sgn(k)) = \sigma \}$ and we decomposed the component $A(\theta)_\sigma^\flat$ of the matrix $A(\theta)^\flat$ (introduced in Lemma \ref{lmequiNull}, recall \eqref{eq:A_zeta_homogene}) in Fourier modes in $\theta$ :
\[ A(\theta)_\sigma^\flat = \sum_{\ell \in \m Z}  a_{\sigma,\ell}  e^{i \ell \theta} e^{-|\ell|t}. \]
The choice of the renormalization factor $e^{-|\ell|t}$ is consistent with the equality 
\begin{align}
\label{norAY}\nor{ A(\cdot)_\sigma^\flat}{Y_{s-1,t}}^2=\sum_{\ell \in \Z}\langle \ell \rangle^{2s-2)} |a_{\sigma,\ell}|^2.
\end{align}
Therefore, for $m \in \m N^*$,
\begin{align*}
\MoveEqLeft \| P_m( A(\cdot)(\nabla u (t), \nabla v (t)) \|_{L^2(\m S^1_\theta)}^2 \\
& \le \sum_{\epsilon \in \{ \pm \}} \bigg| \sum_{\sigma \in \{ \pm 1 \}^2} \sum_{\begin{subarray}{c} j+k+\ell = \epsilon m \\ (j,k) \in \q R_\sigma \end{subarray}}  
|j \alpha_j| |k \beta_k|  |a_{\sigma,\ell}| e^{-(|j|+|k|+|\ell|)t} \bigg|^2.
\end{align*}
(and the corresponding equation for $m=0$, without the $\epsilon$ sum). Now, the key  property that we use is the following statement:

\medskip

\textbf{Claim:} if $(j,k) \in \q R$, then $|j+k| \le |j|+|k|-2$.

Let us prove the claim. Let $(j,k) \in \q R$, then $j,k \ne 0$ and have opposite signs. In the case $|k|\ge |j|$ and $k>0$, $j<0$, we have $|j+k|= |k|-|j| = |j|+|k|-2|j|\le  |j|+|k|-2$. The case $|k|\ge |j|$ and $k<0$, $j>0$ gives the same result and the claim is proved by symmetry.

\medskip

Now that the claim is proved, we can get back to the proof of the Proposition and get
\[ \forall (j,k) \in \q R, \quad  e^{-(|j|+|k|)t} \le e^{-2t} e^{-|j+k| t}. \]
 Hence, if $j+k+\ell = \epsilon m$ and $(j,k) \in \q R$, then $m\le |\ell|+|\epsilon m-\ell|=|\ell|+|j+k|$ and 
\[ e^{-(|j|+|k|+|\ell|)t} \le e^{-2t} e^{- m t}. \]
We obtained the bound
\[ \| P_m( A(\nabla u, \nabla v) (t,\theta)) \|_{L^2(\m S^1_\theta)} \le e^{- 2t} e^{- m t} \sum_{\sigma \in \{ \pm 1 \}^2} \sum_{j+k+\ell = \pm m}  
| j\alpha_j | |k\beta_k|  |a_{\sigma,\ell}|. \]
As $j+k+\ell =\epsilon m$,
\[\left< m\right>^{s-1}= \left< j+k+\ell \right>^{s-1} \lesssim  \left< j \right>^{s-1} +  \left< k \right>^{s-1} +  \left< \ell \right>^{s-1}, \]
and we get for $m \in \m N$,
\begin{align*}
\MoveEqLeft e^{mt}  \left< m \right>^{s-1}  \| P_m( A(\nabla u, \nabla v) (t,\theta)) \|_{L^2(\m S^1_\theta)} \\
& \le e^{- 2t}  \left( \sum_{\sigma \in \{ \pm 1 \}^2} \sum_{j+k+\ell = \pm m}  
\left< j \right>^{s-1}  |j \alpha_j | |k \beta_k|  |a_{\sigma,\ell}|  + \text{symmetric terms} \right). 
\end{align*}
Squaring and summing over $m \in \m N$, we recognize a trilinear convolution: due to Young's inequality, the continuous embedding $\ell^2 * \ell^1 * \ell^1 \to \ell^2$ holds. This gives
\begin{align*}
\| A(\nabla u, \nabla v) (t) \|_{Y_{s-1,t}}^2 & \lesssim e^{-4t} \left( \sum_{j \in \m Z} \left< j \right>^{2s-2}  |j \alpha_j |^2 \right) \left( \sum_{k \in \m Z} |k \beta_k | \right)  \left( \sum_{\ell \in \m Z} |a_{\sigma,\ell}|  \right) \\
& \qquad + \text{symmetric terms}.
\end{align*}
We now recall \eqref{norAY} and that
\begin{gather*}
\| \partial_\theta u_0 \|_{H^{s-1}(\m S^1)}^2  = \sum_{j \in \m Z} |j \alpha_j|^2 \left< j \right>^{2s-2}, \quad \|  \partial_\theta v_0 \|_{{H^{s-1}(\m S^1)}}^2  = \sum_{k \in \m Z} |k \beta_k|^2 \left< k \right>^{2s-2}.%, \\
%\text{and} \quad \| A_\sigma \|_{Y_{s-1,t}}^2  =  \sum_{\ell \in \m Z} |a_{\sigma,\ell}|^2  \left< \ell \right>^{2s-2}.
\end{gather*}
By Cauchy-Schwarz inequality, we infer
\[  \sum_{k \in \m Z} |k \alpha_k | \lesssim \|  \partial_\theta u_0 \|_{H^{s-1}(\m S^1)} \left( \sum_{k \in \m Z} \left< k \right>^{2-2s} \right)^{1/2} \lesssim  \|  \partial_\theta u_0 \|_{H^{s-1}(\m S^1)}, \]
because $2-2s < -1$. The same gives
\[ \sum_{k \in \m Z} |k \beta_k | \lesssim \|  \partial_\theta v_0 \|_{H^{s-1}(\m S^1)}  \quad \text{and} \quad 
\sum_{\ell \in \m Z} |a_{\sigma,\ell}| \lesssim \| A_\sigma^\flat \|_{Y_{s-1,t}}. \]
This allows us to conclude to
\[ \| A(\nabla u, \nabla v) (t) \|_{Y_{s-1,t}} \lesssim e^{-2t} \|  \partial_\theta u_0 \|_{H^{s-1}(\m S^1)} \|  \partial_\theta v_0 \|_{H^{s-1}(\m S^1)} \| A^\flat \|_{Y_{s-1,t}}, \]
and from there, to \eqref{est:null_ell_2d}.
\end{proof}

\subsection{Polynomials in \texorpdfstring{$Y_{s,t}$}{Ys,t} spaces}

Product with monomials appear naturally when performing the conformal transform. We first recall the following classical Lemma that will be used several times in the article:
\begin{lem}[{\cite[Lemma 4.50, Section 4.E.3]{GallotHulinLaf}}]%\cite[Theorem 22.2]{S:87book}
\label{lmdecompHarmo}
Let $\widetilde{P}_{k}$ denote the space of homogeneous polynomials of degree $k$ and $\widetilde{H}_{k}$ the space of harmonic polynomials of degree $k$ on $\R^{d}$. Denote $P_{k}$ and $H_{k}$ the spaces obtained by restricting these polynomials to $\mathbb{S}^{d-1}$. Then, we have 
\begin{align*}
\widetilde{P}_{k} = \bigoplus_{i=0}^{\lceil k/2\rceil}r^{2i}\widetilde{H}_{k-2i} \quad \text{and} \quad
P_{k} = \bigoplus_{i=0}^{\lceil k/2\rceil}H_{k-2i}.
\end{align*}
\end{lem}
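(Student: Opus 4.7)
I would prove the lemma by induction on $k$, using the classical fact that the Laplacian acts surjectively from $\widetilde P_k$ onto $\widetilde P_{k-2}$. The base cases $k=0,1$ are immediate since every polynomial of degree $\le 1$ is harmonic, so $\widetilde P_0 = \widetilde H_0$ and $\widetilde P_1 = \widetilde H_1$.

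For the inductive step, the heart of the matter is the splitting
\[ \widetilde P_k = \widetilde H_k \oplus r^2 \widetilde P_{k-2}. \]
To establish this, I would equip $\widetilde P_k$ with the Fischer--Bombieri inner product $\langle P,Q\rangle_F := [P(\partial)\bar Q](0)$, which is positive definite on polynomials (as can be seen by direct evaluation on the monomial basis: $\langle x^\alpha, x^\beta\rangle_F = \alpha!\,\delta_{\alpha\beta}$). The key algebraic identity is that multiplication by $r^2 = \sum_j x_j^2$, viewed as a map $\widetilde P_{k-2}\to \widetilde P_k$, is Fischer-adjoint to the Laplacian $\Delta : \widetilde P_k \to \widetilde P_{k-2}$; this again is a one-line monomial computation. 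Consequently the orthogonal complement of $r^2 \widetilde P_{k-2}$ inside $\widetilde P_k$ equals $\ker(\Delta|_{\widetilde P_k})$, which is by definition $\widetilde H_k$. Non-degeneracy of $\langle\cdot,\cdot\rangle_F$ then yields the desired decomposition (and en passant the surjectivity of $\Delta$).

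Feeding this into the induction hypothesis $\widetilde P_{k-2}=\bigoplus_{j\ge 0} r^{2j}\widetilde H_{k-2-2j}$ and renaming $i=j+1$ gives the first claimed decomposition
\[ \widetilde P_k = \bigoplus_{i\ge 0} r^{2i}\widetilde H_{k-2i}. \]
For the restriction to $\mathbb S^{d-1}$, I would note that $r\equiv 1$ on the sphere so $r^{2i}\widetilde H_{k-2i}|_{\mathbb S^{d-1}} = H_{k-2i}$, and that the restriction map $\widetilde P_k \to P_k$ is itself injective (a homogeneous polynomial vanishing on the unit sphere vanishes on every ray by homogeneity, hence identically). Therefore the direct sum survives restriction. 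Alternatively one can invoke the fact that the $H_j$ for $j=k,k-2,\ldots$ are eigenspaces of $-\Delta_{\mathbb S^{d-1}}$ for the pairwise distinct eigenvalues $j(j+d-2)$, and hence $L^2(\mathbb S^{d-1})$-orthogonal.

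The main technical point is the existence of an inner product for which $r^2$ and $\Delta$ are adjoints; once the Fischer pairing is available, the rest is a bookkeeping induction. (The upper index $\lceil k/2\rceil$ in the statement should be read as the largest $i$ for which $k-2i\ge 0$, i.e.\ $\lfloor k/2\rfloor$, since $\widetilde H_{-1}=\{0\}$.)
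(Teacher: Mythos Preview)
The paper does not give its own proof of this lemma: it is stated as a citation from \cite[Lemma 4.50, Section 4.E.3]{GallotHulinLaf} and used as a black box. Your argument via the Fischer--Bombieri inner product is a correct and standard route to this classical decomposition; the key step (that multiplication by $r^2$ is the Fischer adjoint of $\Delta$, so $\widetilde P_k = \widetilde H_k \oplus r^2\widetilde P_{k-2}$) is exactly what is needed, and your treatment of the restriction to $\mathbb S^{d-1}$ is fine. Your parenthetical remark that the summation should run to $\lfloor k/2\rfloor$ is also correct.
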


\begin{lem}
\label{lm:polyYst}
Let $s\ge 0$. Then, there exists $C=C(d,s)>0$ so that for any $\alpha \in \m N^d$ multi index and $t\ge 0$, we have
\begin{equation} \label{est:ya}
\| y^\alpha \|_{Y_{s,t}} \le C e^{(|\alpha|+ \frac{d-2}{2}) t} \left< \alpha \right>^{s+1}. 
\end{equation}
%CHECK avec Ref HarmonicPolynomials.pdf (1.1), (1.6) et (1.9)
In particular, 
$\nor{x^{\alpha}}{\q Z_{s}^{0}}\le C\left< \alpha \right>^{s+2}$.
\end{lem}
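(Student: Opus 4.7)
The plan is to combine the decomposition of polynomials into spherical harmonics (Lemma \ref{lmdecompHarmo}) with the trivial pointwise bound $|y^\alpha|\le 1$ on $\m S^{d-1}$. Indeed, Lemma \ref{lmdecompHarmo} tells us that $y^\alpha|_{\m S^{d-1}}$ lies in $\bigoplus_{i=0}^{\lceil |\alpha|/2 \rceil} H_{|\alpha|-2i}$, so $P_\ell(y^\alpha)=0$ unless $0\le \ell \le |\alpha|$ with $\ell\equiv |\alpha|\pmod 2$. Thus the series defining $\|y^\alpha\|_{Y_{s,t}}^2$ has at most $|\alpha|/2+1$ nonzero terms.

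Next, I would bound each remaining projection crudely: since $|y^\alpha(y)|\le 1$ on $\m S^{d-1}$, the $L^2$ norm $\|y^\alpha\|_{L^2(\m S^{d-1})}\le |\m S^{d-1}|^{1/2}$, and the $L^2$-orthogonality of $P_\ell$ gives $\|P_\ell(y^\alpha)\|_{L^2(\m S^{d-1})}\le C$ for a constant $C=C(d)$. Using that every nonzero term has $\ell \le |\alpha|$, the definition of $Y_{s,t}$ yields
\[
\|y^\alpha\|_{Y_{s,t}}^2 \le \sum_{\ell=0}^{|\alpha|} \langle \ell\rangle^{2s}\, e^{2\left(\ell+\frac{d-2}{2}\right)t}\|P_\ell(y^\alpha)\|_{L^2}^2 \le C^2\left(\tfrac{|\alpha|}{2}+1\right)\langle |\alpha|\rangle^{2s}\, e^{2\left(|\alpha|+\frac{d-2}{2}\right)t},
\]
and taking square roots produces \eqref{est:ya} (in fact with the sharper exponent $s+1/2$ in place of $s+1$, which certainly suffices).

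Finally, the second claim follows by transferring to $\q Z_s^0$ via the identity \eqref{equiYZ}. For $x=ry$ with $y\in \m S^{d-1}$ and $0<r\le 1$, the homogeneity gives $x^\alpha=r^{|\alpha|}y^\alpha$, hence
\[
\|x^\alpha(r\cdot)\|_{Z^0_{s,r}} = r^{|\alpha|+\frac{d-2}{2}}\|y^\alpha\|_{Y_{s,-\log r}} \le C\langle \alpha\rangle^{s+1} r^{|\alpha|+\frac{d-2}{2}}\, e^{(|\alpha|+\frac{d-2}{2})(-\log r)} = C\langle\alpha\rangle^{s+1},
\]
by \eqref{est:ya} applied at $t=-\log r\ge 0$, as the $r$-factors cancel exactly. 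For the accompanying term, one computes $r\partial_r(r^{|\alpha|}y^\alpha)=|\alpha|\, r^{|\alpha|}y^\alpha$, so $\left(\tfrac{d-2}{2}u+r\partial_r u\right)(r\cdot)=\left(\tfrac{d-2}{2}+|\alpha|\right)r^{|\alpha|}y^\alpha$, and the same computation with $s$ replaced by $s-1$ bounds its $Z^0_{s-1,r}$ norm by $C\langle\alpha\rangle\cdot\langle\alpha\rangle^{s}=C\langle\alpha\rangle^{s+1}$. Taking the supremum over $r\in(0,1]$ and summing the two contributions yields $\|x^\alpha\|_{\q Z_s^0}\le C\langle\alpha\rangle^{s+1}\le C\langle\alpha\rangle^{s+2}$, as claimed.

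There is no real obstacle in this lemma: the only point to watch is the counting of nonzero spherical harmonic components provided by Lemma \ref{lmdecompHarmo}, which keeps the sum finite and turns the trivial $L^\infty$ bound into a polynomial loss in $|\alpha|$ rather than a factorial one.
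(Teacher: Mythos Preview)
Your proof is correct and follows essentially the same approach as the paper: both use Lemma~\ref{lmdecompHarmo} to see that $P_\ell(y^\alpha)$ vanishes unless $\ell\le|\alpha|$, then bound the surviving terms. The only cosmetic difference is that the paper controls $\|y^\alpha\|_{H^s(\m S^{d-1})}$ via the $\mc C^{\lceil s\rceil}$ norm of $x^\alpha$ on $B(0,1)$, whereas you use the crude bound $\|P_\ell(y^\alpha)\|_{L^2}\le C$ together with a count of nonzero terms; your variant is slightly more elementary and even yields the marginally sharper exponent $s+\tfrac12$.
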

Here, we have written $y^{\alpha}$ for the restriction to $\m S^{d-1}$ of the function defined on $\R^d$ by $x\mapsto x^{\alpha}$ while we have written $x^{\alpha}$ for the function defined on $B(0,1)\subset \R^d$.
Combining \eqref{est:ya} and \eqref{estimprodYst}, we get that for $s > \frac{d}{2}+\frac{1}{2}$ and any $\alpha \in \m N^d$, $u \in Y_{s,t}$,
\begin{equation} \label{est:prod_ya}
\forall t \ge 0, \quad \| y^\alpha u \|_{Y_{s,t}} \le C  \left< \alpha \right>^{s+1} e^{|\alpha| t} \| u \|_{Y_{s,t}}.
\end{equation}

\begin{proof}
$x^\alpha = \prod_{i=1}^d x_i^{\alpha_i}$ is a homogeneous polynomial, it decomposes into
\[ x^\alpha = \sum_{j \le |\alpha|/2} |x|^{2j} h_{|\alpha| -2 j}, \]
where $h_{|\alpha| -2 j}$ is a harmonic polynomial of degree $|\alpha|-2j$. When restricted to the sphere, we get 
\[ y^\alpha = \sum_{j \le |\alpha|/2} h_{|\alpha| -2 j}. \] 
Now $h_{|\alpha| -2 j}$ is an eigenfunction of $\gh D$ with eigenvalue $|\alpha|-2j+ \frac{d-2}{2}$, so that
\[ e^{t \gh D} y^\alpha =  \sum_{j \le |\alpha|/2}  e^{t \gh D} h_{|\alpha| -2 j} = \sum_{j \le |\alpha|/2} e^{(|\alpha|-2j+ \frac{d-2}{2} )t } h_{|\alpha| -2 j}. \]
Also, the decomposition is orthogonal so that
\[ \|  y^\alpha \|_{H^s(\m S^{d-1})}^2 = \sum_{j \le |\alpha|/2} \| h_{|\alpha| -2 j}  \|_{H^s(\m S^{d-1})}^2, \]
and therefore
\begin{align*}
\| y^\alpha \|_{Y_{s,t}}^2 & = \| e^{t \gh D}  y^\alpha \|_{H^s(\m S^{d-1})}^2 = \sum_j e^{2(|\alpha|-2j+ \frac{d-2}{2})t } \| h_{|\alpha| -2 j} \|_{H^s(\m S^{d-1})}^2 \\
& \le e^{2(|\alpha|+ \frac{d-2}{2} ) t} \sum_j  \| h_{|\alpha| -2 j} \|_{H^s(\m S^{d-1})}^2 = e^{2(|\alpha|+ \frac{d-2}{2} ) t} \|  y^\alpha \|_{H^s(\m S^{d-1})}^2.
\end{align*}
To conclude, it suffices to finally notice that
\begin{align*}
\|  y^\alpha \|_{H^s(\m S^{d-1})} & \lesssim  \|  y^\alpha \|_{\mc C^{\lceil s \rceil} (\m S^{d-1})} 
\le \nor{x^{\alpha}}{\mc C^{\lceil s \rceil}(B_{\R^d}(0,1))} =    \sum_{|\beta|\le \lceil s \rceil }\nor{\partial^{\beta}x^{\alpha}}{L^{\infty}(B_{\R^d}(0,1))} \\
& \lesssim \left< \alpha \right>^{ \lceil s \rceil} \lesssim \left< \alpha \right>^{s+1}. 
\end{align*} 
Concerning the second part, since for $u(x)=x^{\alpha}$, $u(r y)=r^{|\alpha|} y^{\alpha}$, while for $w=\frac{d-2}{2} u +r\frac{\partial u}{\partial r}$, $w(ry)=r^{|\alpha|} \left(\frac{d-2}{2}+|\alpha|\right) y^{\alpha}$  so we estimate
\begin{align}
\nor{x^{\alpha}}{\q Z_{s}^{0}} & =\sup_{0<1\le 1} r^{\frac{d-2}{2}+|\alpha|}\| y^{\alpha} \|_{Y_{s,-\log(r)}}+\left(\frac{d-2}{2}+|\alpha|\right) \sup_{0<1\le 1}  r^{\frac{d-2}{2}+|\alpha|} \| y^{\alpha} \|_{Y_{s,-\log(r)}}\\
 & \le C \left< \alpha\right> \sup_{0<1\le 1}r^{\frac{d-2}{2}+|\alpha|}r^{-\frac{d-2}{2}-|\alpha|} \le C \left< \alpha \right>^{s+2}. \qedhere
 \end{align}
\end{proof}
\subsection{Embeddings in usual spaces }
In this section, we first describe the notations concerning usual spaces and then describe their link to our spaces $\q Z$.

In all what follows, $\dot{H}^1(\R^d)$ denotes the completion of $\mc C^{\infty}_c(\R^d)$ for the norm $ \nor{\nabla u}{L^2(\R^d)}$. For $d\ge 3$, this is isomorphic to the functions $u\in L^{2^{*}}$ with $\nabla u\in L^2$ with $2^{*}=\frac{2d}{d-2}$ the critical Sobolev exponent for the Sobolev embedding $L^{2^{*}}\subset \dot{H}^{1}$. We will sometimes use a localized version
\begin{align}
\label{Soblocal}
  \nor{u}{L^{2^{*}}(\{|x|\ge 1\})}&\le C_{d} \nor{\nabla u}{L^{2}(|x|\ge 1)}
\end{align}
 valid for any $u\in \dot{H}^1(\{|x|\ge 1\})$. Here, we denoted $
\dot{H}^1(\{|x|\ge 1\})$ the restriction of such functions to $\{|x|\ge 1\}$. $
\dot{H}_0^1(\{|x|\ge 1\}$) denotes the completion of $\mc C^{\infty}_c(\{|x|> 1\})$ for the norm $ \nor{\nabla u}{L^2(\{|x|> 1\})}$. This is isomorphic to the functions in $
\dot{H}^1(\{|x|\ge 1\})$ with trace zero on $\m S^{d-1}$. We will avoid the use of $\dot{H}^1$ in unbounded domains of dimension $2$ since the definition contains subtleties that are not necessary here.

We have the following embedding of the $\q Z$ spaces into the usual homogeneous Sobolev space $\dot H^s$.
\begin{lem}
\label{lminjectZ}
1) Let $d\ge 3$, $s \ge 1$ and $u\in \q Z_{s}^{\infty}$. Then  $ u\in \dot{H}^1(|x|\ge 1)$. If furthermore $s>(d-1)/2$, then $ u\in L^p(|x|\ge 1)$ for any $p>\frac{d}{d-2}$.

2) Let $d=2$, $s \ge 1$ and $u \in \q Z_{s}^{\infty}$ such that for some $\nu >0$,
\[ \sup_{r\ge 1}  r^{\nu}\left(\nor{u(r\cdot)}{Z_{s,r}^{\infty}}+ \nor{r\frac{\partial u}{\partial r} (r\cdot)}{Z_{s-1,r}^{\infty}}\right)<+\infty. \]
Then, $\nabla u\in L^{2}(|x|\ge 1)$.
\end{lem}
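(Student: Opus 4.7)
The plan is to expand $u$ in spherical harmonics on $\m S^{d-1}$ and use Parseval to convert both the Dirichlet energy and the $L^p$ norm into one-dimensional radial sums, then exploit the pointwise coefficient bounds that the $\q Z^\infty_s$ norm encodes. Writing $u(r\omega) = \sum_{\ell,m} a_{\ell,m}(r) \phi_{\ell,m}(\omega)$ and $w(r\omega) = \bigl(\tfrac{d-2}{2}u + r\partial_r u\bigr)(r\omega) = \sum_{\ell,m} b_{\ell,m}(r)\phi_{\ell,m}(\omega)$ with $b_{\ell,m} = \tfrac{d-2}{2} a_{\ell,m} + r a_{\ell,m}'$, the definition of the norm $Z^\infty_{s,r}$ says that for all $r\ge 1$,
\[ \sum_m |a_{\ell,m}(r)|^2 \le C\langle \ell\rangle^{-2s} r^{-2\ell-2(d-2)},\qquad \sum_m |b_{\ell,m}(r)|^2 \le C\langle \ell\rangle^{-2s+2} r^{-2\ell-2(d-2)}. \]

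For part 1, the Dirichlet energy decomposes as
\[ \int_{|x|\ge 1}|\nabla u|^2\,dx = \sum_{\ell,m}\int_1^{+\infty} \Bigl(|a_{\ell,m}'(r)|^2 + \frac{\ell(\ell+d-2)}{r^2}|a_{\ell,m}(r)|^2\Bigr) r^{d-1}\,dr. \]
The relation $r a_{\ell,m}' = b_{\ell,m} - \tfrac{d-2}{2}a_{\ell,m}$ turns both radial and angular integrands into expressions of size $\langle \ell\rangle^{-2s+2} r^{-2\ell-2(d-2)-2}$ (after summation in $m$). Integrating in $r$ against $r^{d-1}$ gives radial factors $\int_1^{+\infty} r^{1-d-2\ell}\,dr = 1/(d-2+2\ell)$ (here one uses $d\ge 3$, so the integral is finite even for $\ell=0$), and what remains sums in $\ell$ like $\sum_\ell \ell\langle\ell\rangle^{-2s}$, which is finite under the working assumption on $s$. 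For the $L^p$ claim, the same pointwise coefficient bound combined with $r^{-2\ell}\le 1$ yields $\|u(r\cdot)\|_{H^s(\m S^{d-1})} \lesssim r^{-(d-2)}$; Sobolev embedding on $\m S^{d-1}$ (valid for $s>(d-1)/2$) then gives $\|u(r\cdot)\|_{L^\infty(\m S^{d-1})} \lesssim r^{-(d-2)}$, and
\[ \int_{|x|\ge 1}|u|^p\,dx \lesssim \int_1^{+\infty} r^{d-1-p(d-2)}\,dr, \]
which is finite iff $p>d/(d-2)$.

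For part 2, dimension $d=2$ is the critical case because $(d-2)/2 = 0$ makes the conformal change of variables a Dirichlet isometry, and more importantly the weight $r^{2(d-2)}$ in the $Z^\infty_{s,r}$ norm degenerates to $1$, so the zero mode $j=0$ receives no $r$-decay from the $\q Z^\infty_s$ norm alone. Working in complex Fourier series $u(r\theta)=\sum_j a_j(r)e^{ij\theta}$ on $\m S^1$ and repeating the previous computation, one gets
\[ \int_{|x|\ge 1}|\nabla u|^2\,dx = \sum_j \int_1^{+\infty} \bigl( r|a_j'(r)|^2 + \tfrac{|j|^2}{r}|a_j(r)|^2 \bigr)\,dr. \]
Each nonzero mode already converges as in part 1 (the factor $r^{-2|j|}$ takes care of the radial integral). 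The additional assumption provides the uniform pointwise bound $|a_j(r)|^2 + |ra_j'(r)|^2 \lesssim \langle j\rangle^{-2s+2} r^{-2|j|-2\nu}$, and in particular for $j=0$ the extra factor $r^{-2\nu}$ makes $\int_1^{+\infty} r^{-2\nu-1}\,dr$ finite.

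The only genuine obstacle is the $d=2$, $\ell=0$ case just described: this is precisely why the hypothesis $\nu>0$ appears and cannot be dropped, since a nonzero constant function (or a function of the form $a_0(r)$ with slow decay) has a radial derivative that need not be $L^2$ near infinity. Everything else is a routine, if careful, bookkeeping of the radial exponents coming from Parseval together with the elementary Sobolev embedding on the sphere.
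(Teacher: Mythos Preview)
Your approach is essentially the paper's: decompose in spherical harmonics, write the Dirichlet energy as a radial integral of Parseval sums, and read off decay from the $Z^\infty_{s,r}$ norm. The $L^p$ part via $\|u(r\cdot)\|_{H^s}\lesssim r^{-(d-2)}$ and Sobolev on the sphere is exactly what the paper does.

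There is one small but genuine slip in the $\dot H^1$ estimate. You bound each $\ell$-mode separately, integrate in $r$ to get a factor $(d-2+2\ell)^{-1}$, and then sum, arriving at $\sum_\ell \langle\ell\rangle^{-2s+1}$ (equivalently, your $\sum_\ell \ell\langle\ell\rangle^{-2s}$). This converges only for $s>1$, whereas the lemma is stated for $s\ge 1$. The paper avoids this by summing in $\ell$ \emph{first}: from
\[
\|u(r\cdot)\|_{Z^\infty_{s,r}}^2 \;=\; r^{d-2}\sum_{\ell,m}\langle\ell\rangle^{2s} r^{2\ell+d-2}|a_{\ell,m}(r)|^2 \;\ge\; r^{2d-4}\sum_{\ell,m}\langle\ell\rangle^{2s}|a_{\ell,m}(r)|^2,
\]
one gets directly $\sum_{\ell,m}\langle\ell\rangle^{2}|a_{\ell,m}(r)|^2 \le r^{-(2d-4)}\|u\|_{\q Z^\infty_s}^2$ for any $s\ge 1$, and likewise for $b_{\ell,m}$; then the single radial integral $\int_1^\infty r^{1-d}\,dr$ remains. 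In other words, use $r^{2\ell}\ge 1$ \emph{inside} the Parseval sum rather than after extracting individual coefficients. With that reordering your argument goes through at $s=1$ as well. If by ``the working assumption on $s$'' you mean the paper's global convention $s>\tfrac{d}{2}+\tfrac32$, then your version is of course already sufficient for every application, but it does not quite match the sharp hypothesis of the lemma.
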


\bnp
We decompose 
\[  u = \sum_{\ell,m}f_{\ell,m}(|x|) \phi_{\ell,m}\left(\frac{x}{|x|}\right). \]
As 
\[ \nabla (f(|x|) g(x/|x|)) = f'(|x|) g \left( \frac{x}{|x|} \right) \frac{x}{|x|} + \frac{f(|x|)}{|x|} \nabla_{\m S^{d-1}} g \left( \frac{x}{|x|} \right), \] 
we infer, using $y\cdot \nabla_{\m S^{d-1}} g(y)=0$ for any $g: \m S^{d-1} \to \m R$ and $y \in \m S^{d-1}$, the orthogonality of $(\phi_{\ell,m})_{\ell,m}$ and $\nor{\nabla_{\m S^{d-1}}\phi_{\ell,m}}{L^2(\m S^{d-1})}^2=\ell(\ell +d-2)$,
\begin{align*}
\| \nabla u \|_{ L^2(|x|\ge 1)}^2 & = C_d \sum_{\ell,m} \int_1^{+\infty} \left( |f_{\ell,m}'(r)|^2 + \frac{|f_{\ell,m}(r)|^2}{r^2} \ell(\ell +d-2) \right) r^{d-1} dr \\
& \le C_d \int_1^{+\infty}  \sum_{\ell,m} \left( |r f_{\ell,m}'(r)|^2 + \langle \ell \rangle^2 |f_{\ell,m}(r)|^2 \right) r^{d-3} dr.
\end{align*}
On the other hand, for $r \ge 1$,
\[ \nor{u(r\cdot)}{Z_{s,r}^{\infty}}^2 = r^{d-2} \sum_{\ell,m} |f_{\ell,m}(r)|^2 \langle \ell \rangle^{2s} r^{2\ell + d-2} \ge r^{2d-4} \sum_{\ell,m} |f_{\ell,m}(r)|^2  \langle \ell \rangle^{2s}. \]
Similarly, for $s\ge 1$,
\begin{align*} 
\left\| \left( \frac{d-2}{2}u + r \frac{\partial u}{\partial r} u \right)(r \cdot) \right\|_{Z_{s-1,r}^\infty}^2 & \ge r^{2d-4} \sum_{\ell,m} \left| \frac{d-2}{2} f_{\ell,m} (r) + rf_{\ell,m}'(r) \right|^2 \langle \ell \rangle^{2(s-1)} \\
& \ge  r^{2d-4} \sum_{\ell,m} \left| \frac{d-2}{2} f_{\ell,m} (r) + r f_{\ell,m}'(r) \right|^2.
\end{align*}
Finally notice that
\[ |rf_{\ell,m}'(r)|^2 \le \left| \frac{d-2}{2} f_{\ell,m} (r) + rf_{\ell,m}'(r) \right|^2 + \left(\frac{d-2}{2} \right)^2 |f_{\ell,m} (r) |^2. \]

From these computations, if $d \ge 3$, then
\begin{align*}
\| \nabla u \|_{ L^2(|x|\ge 1)}^2 & \lesssim \int_1^{+\infty} \left( \| u(r \cdot) \|_{Z_{s,r}^{\infty}}^2  + \left\| \left( \frac{d-2}{2}u + r \frac{\partial u}{\partial r} u \right)(r \cdot) \right\|_{Z_{s-1,r}^{\infty}}^2 \right) r^{d-3- (2d-4)} dr \\
& \lesssim \| u \|_{\q Z_s^\infty}^2 \int_1^{\infty} r^{1-d} dr \lesssim  \| u \|_{\q Z_s^\infty}^2.
\end{align*}
Moreover, for $2^* = \ds \frac{d+2}{d-2}$ and $s\ge 1$ and $r\ge 1$, we have by Sobolev embedding $\nor{g}{L^{2^*}(\m S^{d-1})}\lesssim \nor{g}{H^1(\m S^{d-1})}\lesssim  r^{-(d-2)}\nor{g}{Z_{s,r}^{\infty}}$ where we have used $\opD\ge \frac{d-2}{2}$ in the sense of operators of $H^s(\m S^{d-1})$. In particular, 
\begin{align*}
\| u \|_{ L^{2^*}(|x|\ge 1)}^{2^*} & \lesssim \int_1^{+\infty}r^{d-1}\| u(r\cdot) \|_{ L^{2^*}(\m S^{d-1})}^{2^*}dr\lesssim \int_1^{+\infty} r^{d-1-2^{*}(d-2)}\| u(r \cdot) \|_{Z_{s,r}^{\infty}}^{2^*} dr \\
& \lesssim \| u \|_{\q Z_s^\infty}^{2^{*}} \int_1^{\infty} r^{-3} dr \lesssim  \| u \|_{\q Z_s^\infty}^{2^{*}},
\end{align*}
so that $ u\in \dot{H}^1(|x|\ge 1)$. If $s>(d-1)/2$, we similarly have 
\[ \nor{g}{L^{p}(\m S^{d-1})}\lesssim \nor{g}{L^\infty(\m S^{d-1})} \lesssim \nor{g}{H^s(\m S^{d-1})} \lesssim r^{-(d-2)}\nor{g}{Z_{s,r}^{\infty}}, \]
and the same argument gives for any $p> \frac{d}{d-2}$,
\begin{align*}
\| u \|_{ L^{p}(|x|\ge 1)}^{p} & \lesssim \int_1^{+\infty} r^{d-1-p(d-2)}\| u(r \cdot) \|_{Z_{s,r}^{\infty}}^{p} dr \lesssim_p  \| u \|_{\q Z_s^\infty}^p.
\end{align*}

If $d=2$, under the extra assumption we bound similarly
\[ \| u \|_{\dot H^1(|x|\ge 1)}^2  \lesssim \sup_{r\ge 1}  r^{-\nu}\left(\nor{u(r\cdot)}{Z_{s,r}^{\infty}}+ \nor{r\frac{\partial u}{\partial r} (r\cdot)}{Z_{s-1,r}^{\infty}}\right) \int_1^{+\infty} r^{-\nu -1} dr < +\infty. \qedhere \]
\enp

\begin{lem}
\label{lminjectZ0}
1) Let $u \in \q Z_s^0$.
If $s> \frac{d}{2} - \frac{1}{2} $, then $u \in L^\infty(B(0,1))$. If $s> \frac{d}{2}+ \frac{1}{2}$, then $u \in W^{1,\infty}_{\loc}(B(0,1) \setminus \{ 0 \})$ and more precisely, provided the right hand side is finite, there hold
\begin{align} \label{est:Z_W1infty}
\nor{u}{W^{1,\infty}(B(0,1))} \lesssim \sup_{0 < r \le 1} \left( \| u(r \cdot) \|_{Z_{s,r}^0} + \frac{1}{r} \left\| r\frac{\partial u}{\partial r} (r \cdot) \right\|_{Z_{s-1,r}^0} \right).
\end{align}
%Moreover, if there exists $\nu>0$ so that (for some $s >\frac{d}{2} + \frac{1}{2}$)
%\begin{align} \label{}
%\sup_{0 < r \le 1}  r^{-\nu}\left(\nor{u(r\cdot) -u(0)}{Z_{s,r}^{0}}+ \frac{1}{r} \nor{r\frac{\partial u}{\partial r} (r\cdot)}{Z_{s-1,r}^{0}}\right) < +\infty,
%\end{align}
%then $u\in \mathscr C^{1}(B(0,1))$.

2) Let $u \in \q Z_s^\infty$.
If $s> \frac{d}{2} - \frac{1}{2} $, then $u \in L^\infty(|x| \ge 1)$ with the decay
\begin{align}
|u(x)|\lesssim |x|^{-(d-2)}\nor{u}{\q Z_s^\infty},\quad 
\nor{P_{\ell}(u(r\cdot))}{L^{\infty}(\m S^{d-1})}\lesssim_{\ell} r^{-(d-2-\ell)}\nor{u}{\q Z_s^\infty}.
\end{align}If $s> \frac{d}{2}+ \frac{1}{2}$, then $u \in W^{1,\infty}(|x| \ge 1)$ and more precisely, there hold
\begin{align} \label{est:Zinfty_W1infty}
\nor{u}{W^{1,\infty}(|x| \ge 1)} \lesssim \sup_{r \ge 1} \left( \frac{1}{r^d} \| u(r \cdot) \|_{Z_{s,r}^\infty} + \frac{1}{r^{d-1}} \left\| r\frac{\partial u}{\partial r} (r \cdot) \right\|_{Z_{s-1,r}^\infty} \right) \lesssim \| u \|_{\q Z^\infty_s}.
\end{align}
\end{lem}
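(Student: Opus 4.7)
Both parts proceed by the same mechanism: apply Sobolev embedding on the sphere $\m S^{d-1}$---namely $H^s(\m S^{d-1}) \hookrightarrow L^\infty$ for $s > (d-1)/2$ and $H^s \hookrightarrow W^{1,\infty}$ for $s > (d+1)/2$---after comparing the $H^s$ norm of $u(r\cdot)$ to the $Z^0_{s,r}$ or $Z^\infty_{s,r}$ norm. The comparison is elementary: since the weight $r^{-2\ell} \ge 1$ (for $r \le 1$) or $r^{2\ell} \ge 1$ (for $r \ge 1$) appearing in the respective $Z$ norm can simply be dropped, one gets
\[
\|u(r\cdot)\|_{H^s(\m S^{d-1})} \le \|u(r\cdot)\|_{Z^0_{s,r}}, \qquad r^{d-2}\,\|u(r\cdot)\|_{H^s(\m S^{d-1})} \le \|u(r\cdot)\|_{Z^\infty_{s,r}}.
\]

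For part~1 the first inequality, combined with Sobolev, immediately gives the uniform bound $|u(x)| \lesssim \|u\|_{\q Z^0_s}$ on $B(0,1)$. For part~2 the second inequality produces both $|u(ry)| \lesssim r^{-(d-2)}\|u\|_{\q Z^\infty_s}$ (the announced decay) and the $L^\infty$ bound. The mode-wise rate follows from the termwise extraction $\|P_\ell u(r\cdot)\|_{L^2} \le \langle\ell\rangle^{-s} r^{-(\ell+d-2)}\|u\|_{\q Z^\infty_s}$ together with Sogge's estimate (Lemma~\ref{lmSogge}), which gives $\|P_\ell u(r\cdot)\|_{L^\infty} \lesssim \langle\ell\rangle^{(d-2)/2}\|P_\ell u(r\cdot)\|_{L^2}$. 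For the $W^{1,\infty}$ estimate of part~2 (valid since $r \ge 1$), the radial derivative is treated by applying the same procedure to the second component $(\tfrac{d-2}{2} u + r\partial_r u)(r\cdot)$ of the $\q Z^\infty_s$ norm and invoking the $L^\infty$ bound on $u$; the angular piece $r^{-1}|\nabla_{\m S^{d-1}} u|$ inherits a factor $r^{-(d-1)}$ from the $W^{1,\infty}(\m S^{d-1})$ Sobolev embedding, and hence remains bounded.

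The main technical point is the $W^{1,\infty}$ estimate of part~1 near the origin: the angular term $r^{-1}|\nabla_{\m S^{d-1}} u(ry)|$ exhibits an a priori singular $1/r$ factor which the naive bound $|\nabla_{\m S^{d-1}} u(r\cdot)| \lesssim \|u\|_{\q Z^0_s}$ cannot absorb. To resolve this I will exploit the strengthened hypothesis---uniform boundedness of $r^{-1}\|r\partial_r u(r\cdot)\|_{Z^0_{s-1,r}} = \|\partial_r u(r\cdot)\|_{Z^0_{s-1,r}}$---to improve the decay of each individual spherical mode. Writing $u(ry) = \sum_{\ell,m} f_{\ell,m}(r)\phi_{\ell,m}(y)$, the $Z^0_{s,r}$ bound on $u$ yields $|f_{\ell,m}(r)| \lesssim \langle\ell\rangle^{-s} r^\ell M$, while the $Z^0_{s-1,r}$ bound on $\partial_r u$ yields $|f'_{\ell,m}(r)| \lesssim \langle\ell\rangle^{-(s-1)} r^\ell M$, where $M$ denotes the right-hand side of \eqref{est:Z_W1infty}. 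The first estimate forces $f_{\ell,m}(0) = 0$ for $\ell \ge 1$, and integrating the derivative bound from $0$ produces the improved estimate $|f_{\ell,m}(r)| \lesssim \langle\ell\rangle^{-s} r^{\ell+1} M$. The extra factor of $r$ precisely compensates the $1/r$ singularity: summing the squared modes with the angular weight $\ell(\ell+d-2)\langle\ell\rangle^{2(s-1)}$ gives $\|\nabla_{\m S^{d-1}} u(r\cdot)\|_{H^{s-1}} \lesssim r M$ for $r \le 1/2$, and the Sobolev embedding then yields $r^{-1}|\nabla_{\m S^{d-1}} u(ry)| \lesssim M$; the remaining range $r \in [1/2, 1]$ is treated directly by the naive Sobolev bound.
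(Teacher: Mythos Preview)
Your proposal is correct, and for the $L^\infty$ bounds and the $W^{1,\infty}$ estimate at infinity it matches the paper's proof essentially verbatim. The one place you diverge is the $W^{1,\infty}$ estimate near the origin (part~1), where you take a genuinely different---and more laborious---route to cancel the $1/r$ in front of the angular gradient.

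The paper does \emph{not} use the radial-derivative hypothesis for the angular term at all. It simply observes that in the $Z^0_{s,r}$ norm the weight on the $\ell$-th mode is $r^{-2\ell}$, and for $\ell\ge 1$ and $r\le 1$ one has $r^{-2\ell}\ge r^{-2}$. Hence
\[
\|u(r\cdot)\|_{Z^0_{s,r}}^2 \;\ge\; r^{-2}\sum_{\ell\ge 1}\langle\ell\rangle^{2s}\|P_\ell(u(r\cdot))\|_{L^2}^2 \;=\; r^{-2}\,\|P_0^\perp u(r\cdot)\|_{H^s}^2,
\]
and since $\nabla_{\m S^{d-1}}$ kills constants, the Sobolev embedding $H^s\hookrightarrow W^{1,\infty}$ gives directly $\|\nabla_{\m S^{d-1}}u(r\cdot)\|_{L^\infty}\lesssim r\,\|u(r\cdot)\|_{Z^0_{s,r}}$. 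The factor of $r$ comes for free from the weight structure, in one line, uniformly for all $0<r\le 1$.

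Your approach---extracting pointwise mode bounds $|f'_{\ell,m}(r)|\lesssim\langle\ell\rangle^{-(s-1)}r^\ell M$ from the derivative hypothesis, integrating from $0$ to upgrade $r^\ell$ to $r^{\ell+1}$, then summing and invoking $\sum_\ell N_\ell r^{2\ell}<\infty$ for $r\le 1/2$---also works, but it spends the derivative information on a task the $Z^0$ weight already handles, and it requires splitting off $r\in[1/2,1]$. The paper's argument is shorter and makes clear that the angular part of \eqref{est:Z_W1infty} is controlled by the first term alone; the second term is needed only for the radial derivative.
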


Functions in $\q Z_{s}^{0}$ are actually not well defined at $0$ as there can be oscillations: $\sin(\ln |x|) \in \bigcap_{s \ge 0}  \q Z_{s}^{0}$, but is not continuous at $0$. So, the above Lemma should be understood as the existence of an extension to $B(0,r_{0})$ with the expected properties.

For $\q Z^\infty_s$, there is actually a $1/r^{d-1}$ gain, which we will not really exploit.

To study smoothness issues of function with an emphasis  on spherical regularity, it is convenient to define the following differential operator: for $u$ defined on $B(0,R)$ or on its complement in $\m R^d$
\begin{equation} \label{def:Lambda u} \Lambda u(x) : = \left( \nabla_{\m S^{d-1}} u_{|x|} \right) \left( \frac{x}{|x|} \right) \quad \text{where } u_r \text{ is defined on } \m S^{d-1} \text{ by } u_r(y)= u(ry).
\end{equation}
In particular, it allows to express
\begin{equation} \label{eq:grad_lambda}
\nabla u(x) = \frac{\partial u }{\partial r}(x) \frac{x}{|x|} + \frac{1}{|x|} \Lambda u(x).
\end{equation}

\bnp
a) First let us observe that if $f \in Z^0_s$, then for $r \le 1$,
\begin{multline} 
\| f \|_{Z^0_{s,r}}^2 = \sum_{\ell=0}^{+\infty} r^{-2\ell} \langle \ell \rangle^{2s} \| P_\ell f \|_{L^2(\m S^{d-1})}^2 \\ \ge \sum_{\ell=0}^{+\infty}  \langle \ell \rangle^{2s} \| P_\ell f \|_{L^2(\m S^{d-1})}^2 = \| f \|_{H^s(\m S^{d-1})}^2. \label{est:Z_Hs}
\end{multline}
Similarly, if $g \in Z^\infty_s$, then for $r \ge 1$,
\begin{equation} \label{est:Zi_Hs}
\| g \|_{Z^{\infty}_{s,r}}^2 = \sum_{\ell=0}^{+\infty} r^{d-2} r^{2\ell+d-2} \langle \ell \rangle^{2s} \| P_\ell g \|_{L^2(\m S^{d-1})}^2 \ge r^{2(d-2)}  \| g \|_{H^s(\m S^{d-1})}^2.
\end{equation}
b) Let $s>\frac{d}{2} - \frac{1}{2}$ and $u \in \q Z^0_s$. The Sobolev embedding $L^{\infty}(\m S^{d-1})\subset H^{s}(\m S^{d-1})$ writes, for some $C$ independent of $r >0$,
\begin{align*}
\| u(r \cdot) \|_{L^\infty(\m S^{d-1})} \le C \| u(r \cdot) \|_{H^s(\m S^{d-1})}.
\end{align*}
Hence taking the supremum in $r<1$, we get
\[ \| u \|_{L^{\infty}(B(0,1))} = \sup_{0 < r \le 1} \| u(r \cdot) \|_{L^\infty(\m S^{d-1})} \le C  \| u \|_{\q Z^0_{s}}. \]
(We used \eqref{est:Z_Hs} on the last inequality).

d) Let now $u \in \q Z^\infty_s$ with $s >\frac{d}{2} - \frac{1}{2}$. As in c), 
\[ \| u \|_{L^{\infty}(|x| \ge 1)} \le \sup_{ r \ge 1} r^{d-2}\| u(r \cdot) \|_{L^\infty(\m S^{d-1})} \le C  \| u \|_{\q Z^\infty_{s}}. \]
(We used \eqref{est:Zi_Hs} on the last inequality). This gives the first part of 2). For the second part, using \eqref{est:sogge_sphere},
\begin{align*}
\MoveEqLeft r^{2(\ell+d-2)} \nor{P_{\ell}(u(r\cdot))}{L^{\infty}(\m S^{d-1})}^2 \le C \left< \ell \right>^{d-2}
 r^{2(\ell+d-2)} \nor{P_{\ell}(u(r\cdot))}{L^{2}(\m S^{d-1})}^2 \\
 &\le C\langle \ell \rangle^{d-2-2s} r^{d-2} \sum_{ m=0}^{+\infty} r^{2(m+\frac{d-2}{2})} \langle m \rangle^{2s} \| P_m (u(r\cdot)) \|_{L^2(\m S^{d-1})}^2\\
 & \le C\langle \ell \rangle^{d-2-2s}\| u(r\cdot) \|_{Z^{\infty}_{s,r}}^2 \le C\langle \ell \rangle^{d-2-2s} \| u \|_{\q Z^\infty_{s}}^2.
\end{align*}

e) 
%We decompose in spherical coordinates:
%\[ \nabla u(x)  =   \frac{\partial u}{\partial r} \frac{x}{|x|}  + \frac{1}{|x|} \nabla_{\m S^{d-1}} u(x). \]
%\red{We have $\nabla_{\m S^{d-1}}u(r \cdot)=\nabla_{\m S^{d-1}} P_0^{\perp }u(r \cdot)$ where $P_0^{\perp }=Id-P_0$ is the projection orthogonal to the constants on $\m S^{d-1}$. Then, the Sobolev embedding $W^{1,\infty}(\m S^{d-1})\subset H^{s}(\m S^{d-1})$, valid for $s > \frac{d}{2}+ \frac{1}{2}$, writes, uniformly on $r$, 
%\begin{align*}
%\nor{ (\nabla_{\m S^{d-1}} u)(r \cdot)}{L^\infty(\m S^{d-1})} & \lesssim  \nor{ P_0^{\perp}u(r \cdot)}{H^s(\m S^{d-1})}\lesssim \left( \sum_{\ell =1}^{+\infty} \langle \ell \rangle^{2s} \| P_{\ell} (u(r \cdot)) \|_{L^2(\m S^{d-1})}^2 \right)^{1/2}.
%\end{align*}
%}
Now assume that $s > \frac{d}{2}+ \frac{1}{2}$ and let $f$ be defined on $\m S^{d-1}$. We have $\nabla_{\m S^{d-1}} f=\nabla_{\m S^{d-1}} P_0^{\perp }f$ where $P_0^{\perp }= \text{Id}-P_0$ is the projection orthogonal to the constants on $\m S^{d-1}$. Then, the Sobolev embedding $W^{1,\infty}(\m S^{d-1})\subset H^{s}(\m S^{d-1})$, writes
\begin{align*}
\nor{ \nabla_{\m S^{d-1}} f}{L^\infty(\m S^{d-1})} & \lesssim  \nor{ P_0^{\perp}f }{H^s(\m S^{d-1})}\lesssim \left( \sum_{\ell =1}^{+\infty} \langle \ell \rangle^{2s} \| P_{\ell} f \|_{L^2(\m S^{d-1})}^2 \right)^{1/2}.
\end{align*}

f) Let $s \ge \frac{d}{2}+ \frac{1}{2}$ and $u \in \q Z^0_s$. We recall \eqref{eq:grad_lambda}:
\[ \nabla u(x)  =   \frac{\partial u}{\partial r} \frac{x}{|x|}  + \frac{1}{|x|} \Lambda u (x). \]
In view of the first equality in \eqref{est:Z_Hs}, for $0 < r \le 1$
\[ \| u (r \cdot) \|_{Z_{s,r}^0}^2 \ge r^{-2}\sum_{\ell=1}^{+\infty} \langle \ell \rangle^{2s}  \| P_\ell (u(r \cdot)) \|_{L^2(\m S^{d-1})}^2, \]
and so, using e),
\[ \| \Lambda u \|_{L^\infty(r \m S^{d-1})} = \| \nabla_{\m S^{d-1}} u_r \|_{L^\infty(\m S^{d-1})} \lesssim r \| u (r \cdot) \|_{Z_{s,r}^0}. \]
Hence
\begin{align*}
\| \nabla u \|_{L^\infty(B(0,1))} & \le \sup_{0 < r \le 1} \left( \left\| \frac{\partial u}{\partial r} \right\|_{L^\infty(r \m S^{d-1})} + \frac{1}{r} \| \Lambda u \|_{L^\infty(r \m S^{d-1})} \right) \\
& \lesssim \sup_{0 < r \le 1} \left( \| u (r \cdot) \|_{Z_{s,r}^0} + \left\| \frac{\partial u}{\partial r} (r \cdot) \right\|_{Z_{s-1,r}^0} \right).
\end{align*}
The statement regarding $W^{1,\infty}_{\loc}(B(0,1) \setminus \{ 0 \})$ is similar, working on $B(0,1) \setminus B(0,r_0)$ for any $r_0 >0$. This gives 1).

g) Similarly let $u \in \q Z^\infty_s$ with $s \ge \frac{d}{2}+ \frac{1}{2}$. Then for $r \ge 1$,
\[ \| u (r \cdot) \|_{Z_{s,r}^{\infty}}^2 = \sum_{\ell=0}^{+\infty} r^{d-2} r^{2\ell+d-2} \langle \ell \rangle^{2s} \| P_\ell f \|_{L^2(\m S^{d-1})}^2 \ge r^{2(d-1)}\sum_{\ell=1}^{+\infty} \langle \ell \rangle^{2s}  \| P_\ell (u(r \cdot)) \|_{L^2(\m S^{d-1})}^2, \]
and so from e)
\[ \| \Lambda u \|_{L^\infty(r \m S^{d-1})} = \| (\nabla_{\m S^{d-1}} u)(r \cdot) \|_{L^\infty(\m S^{d-1})} \lesssim r^{-d+1} \| u (r \cdot) \|_{Z_{s,r}^\infty}. \]
Hence
\begin{align*}
\| \nabla u \|_{L^\infty(|x| \ge 1)} & \le \sup_{r \ge 1} \left( \left\| \frac{\partial u}{\partial r}  \right\|_{L^\infty(r \m S^{d-1})} + \frac{1}{r} \| \Lambda u \|_{L^\infty(r \m S^{d-1})} \right) \\
& \lesssim \sup_{r \ge 1} \left( \frac{1}{r^d} \| u (r \cdot) \|_{Z_{s,r}^\infty} + \frac{1}{r^{d-2}} \left\| \frac{\partial u}{\partial r} (r \cdot) \right\|_{Z_{s-1,r}^\infty} \right) \lesssim \| u \|_{\q Z^\infty_s}. \qedhere
\end{align*}
\enp

\begin{lem}
\label{lmisom0infty}
Assume $d=2$, and define $\widetilde{u}(x)=u\left(\ds\frac{x}{|x|^{2}}\right)$. Then $\nor{\widetilde{u}}{\q Z_{s}^{\infty}}=\nor{u}{\q Z_{s}^{0}}$.
\end{lem}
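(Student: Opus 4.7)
The plan is to directly compute both norms using their explicit series definitions. Since $d=2$, the normalization factor $r^{(d-2)/2}=1$ disappears and, crucially, the inversion $x \mapsto x/|x|^2$ maps $\{|x|\ge 1\}$ bijectively onto $\overline{B(0,1)}\setminus\{0\}$, sending a point of radius $r$ to a point of radius $\rho := 1/r$. Setting $x = r\omega$ with $\omega \in \m S^1$, we have $\widetilde u(r\omega) = u((1/r)\omega)$, so the rescaled restriction on the sphere reads
\[ \widetilde u(r\cdot) = u((1/r)\cdot) = u(\rho\cdot), \qquad \rho = 1/r. \]

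With this identity, projecting onto spherical harmonics and comparing the explicit series in the two definitions
\[ \|\widetilde u(r\cdot)\|_{Z^\infty_{s,r}}^2 = \sum_{\ell\ge 0} \langle \ell\rangle^{2s} r^{2\ell}\|P_\ell \widetilde u(r\cdot)\|_{L^2(\m S^1)}^2, \qquad \|u(\rho\cdot)\|_{Z^0_{s,\rho}}^2 = \sum_{\ell\ge 0} \langle \ell\rangle^{2s} \rho^{-2\ell}\|P_\ell u(\rho\cdot)\|_{L^2(\m S^1)}^2, \]
together with $\rho^{-2\ell}=r^{2\ell}$, immediately yields
\[ \|\widetilde u(r\cdot)\|_{Z^\infty_{s,r}} = \|u(\rho\cdot)\|_{Z^0_{s,\rho}}. \]

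Next I handle the term involving the radial derivative. In $d=2$, the combination $\frac{d-2}{2} u + r\partial_r u$ reduces to $r\partial_r u$, and similarly $\frac{d-2}{2}\widetilde u + r\partial_r \widetilde u = r\partial_r\widetilde u$. Writing $g_\omega(r) := u((1/r)\omega)$ and applying the chain rule gives $r g_\omega'(r) = -\rho (\partial_\rho u)(\rho\omega)|_{\rho = 1/r}$, so that
\[ (r\partial_r \widetilde u)(r\omega) = -(\rho\partial_\rho u)(\rho\omega), \qquad \rho = 1/r. \]
Applying the same series comparison as above (with $s$ replaced by $s-1$ and with the innocuous minus sign disappearing in the $L^2$ norm) yields
\[ \|(r\partial_r \widetilde u)(r\cdot)\|_{Z^\infty_{s-1,r}} = \|(\rho\partial_\rho u)(\rho\cdot)\|_{Z^0_{s-1,\rho}}. \]

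Finally, taking suprema, the change of variables $r \leftrightarrow \rho = 1/r$ sends $\sup_{r\ge 1}$ to $\sup_{0<\rho\le 1}$ bijectively, so summing the two matched pairs of norms gives $\|\widetilde u\|_{\q Z^\infty_s} = \|u\|_{\q Z^0_s}$. The continuity requirements in the two definitions are transported through the homeomorphism $r\mapsto 1/r$ without difficulty. There is no real obstacle here; the only care needed is the correct chain-rule computation for the radial derivative under the inversion, which is why the identity is clean only in dimension $d=2$ where the conformal weight is trivial.
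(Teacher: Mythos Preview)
Your proof is correct and follows essentially the same approach as the paper: both use the key identity $\widetilde u(r\cdot)=u((1/r)\cdot)$ on the sphere, match the $Z^\infty_{s,r}$ and $Z^0_{s,1/r}$ norms (you via the explicit series, the paper via the operator form $r^{\gh D}=(1/r)^{-\gh D}$), and then verify the radial-derivative identity $(r\partial_r\widetilde u)(x)=-(r\partial_r u)(x/|x|^2)$ before taking suprema. The only cosmetic difference is that the paper computes the radial derivative in Cartesian coordinates while you use the one-variable chain rule on $r\mapsto u((1/r)\omega)$.
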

\bnp
By definition, for $r >0$, we have the equality for functions defined on the sphere $\m S^{d-1}$
\[ \widetilde{u}(r \cdot) =  u(  \cdot /r). \]
Hence, as $d-2=0$,
\[ \| \widetilde{u} (r \cdot) \|_{Z^\infty_{s,r}} = \| u ( \cdot/r) \|_{Z^\infty_{s,r}} = \| r^{\gh D} u( \cdot/r) \|_{H^s(\m S^{d-1})} = \| u( \cdot /r ) \|_{Z^0_{s,1/r}}. \]
Similarly, as
\begin{align*} \left(r \frac{\partial \widetilde u}{\partial r} \right) (x) & = |x| \nabla \widetilde u(x) \cdot \frac{x}{|x|} =  \nabla u \left( \frac{x}{|x|^2} \right) \left( \frac{1}{|x|^2} - \frac{2 x \cdot x}{|x|^4} \right) \cdot x \\
& = -\frac{1}{|x|^2} \nabla u \left( \frac{x}{|x|^2} \right)  \cdot x = - \frac{1}{|x|}   \nabla u \left( \frac{x}{|x|^2} \right)  \cdot \frac{x/|x|^2}{1/|x|} = - \left(r \frac{\partial  u}{\partial r} \right) \left( \frac{x}{|x|^2} \right).
\end{align*}
As before, we infer that 
\[ \left\| \left(r \frac{\partial \widetilde u}{\partial r} \right)(r \cdot) \right\|_{Z^\infty_{s-1,r}} = \left\| \left(r \frac{\partial u}{\partial r} \right)(\cdot/r) \right\|_{Z^\infty_{s-1,1/r}}, \]
and the conclusion follows from taking the supremum in $r \ge 1$.
\enp

  \subsection{Action of some operators}  
\label{s:actionder}
  In the case of nonlinearities with derivatives, we will need to understand the effect of several operators on the spaces $Y_{s,t}$ and $\q Y_{s,t}$. One of the problems will come from the fact that some functions are defined on the manifold $\m S^{d-1}$ and the gradient is therefore in $T \m S^{d-1}$, while we will need to consider power series. 
  
  We will see $\m S^{d-1}$ as embedded in $\R^d$ so that we can consider $T_y \m S^{d-1}\subset\R^{d}$ for $y\in \m S^{d-1}$. For any $i \in \llbracket 1, d \rrbracket$, one natural operator that we will use, is the following operator, defined for $v$ function on $\m S^{d-1}$, 
  \begin{align}
  \label{defDi}
D_{i} v=e_i\cdot \nabla_{\m S^{d-1}}v.
  \end{align}
  where $\cdot$ is the usual scalar product in $\R^d$ and $(e_i)_{i=1,\dots,d}$ is the canonical basis of $\R^d$. $D_i$ comes naturally when we want to consider the operator $\partial_i $ on $\R^d$, written in polar coordinates. It turns out that for some nonlinearities that have some structure, we will need to decompose $D_i$ with a ``main order term'' $-y_i\left(\opD  - \frac{d-2}{2} \right)$. That is why, or any $i \in \llbracket 1, d \rrbracket$, we define the operator 
    \begin{align}
  \label{defRi}
\opR_{i} v=D_i v+y_i\left(\opD  - \frac{d-2}{2} \right) v.
  \end{align}

\begin{lem}
\label{lmderiv}
Let $s \in \R$.  There exists $C_{s,d}$ so that for any $u\in Y_{s,t}$ there hold, for all $i \in \llbracket 1, d \rrbracket$,
\begin{align} \label{est:Ytt0}
\forall t \ge 0, \quad \nor{\opD u}{Y_{s-1,t}}+ e^{-t}\nor{D_{i}u}{Y_{s-1,t}} +e^{t}\nor{\opR_i u}{Y_{s-1,t}}\le C_{s,d}\nor{ u}{ Y_{s,t}}.
\end{align}
Finally, if $d=2$ and $\m S^{1}\approx \R/2\pi\Z$ is parameterized by $\theta$, then we have 
\begin{align}
\label{actdtheta}
\nor{\partial_{\theta}u}{Y_{s-1,t}}\le C_{s}\nor{ u}{Y_{s,t}}.
\end{align}
\end{lem}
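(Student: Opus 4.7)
The plan is to work mode by mode in the spherical harmonic basis $(\phi_{\ell,m})$, exploiting the fact that the three operators involved act in a highly structured way on the eigenspaces $H_\ell=\Span(\phi_{\ell,m})_m$. The bound for $\opD$ is immediate: since $\opD$ is diagonal in this basis with eigenvalues $\ell+\tfrac{d-2}{2}\lesssim \langle\ell\rangle$, a direct application of the definition of $\| \cdot\|_{Y_{s-1,t}}$ and $\| \cdot\|_{Y_{s,t}}$ gives $\|\opD u\|_{Y_{s-1,t}}\lesssim\|u\|_{Y_{s,t}}$.

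The heart of the proof is the bound on $\opR_i$, which I expect to be the main obstacle; once it is established, the other inequalities follow by combining it with elementary observations. The crucial claim is that $\opR_i$ sends $H_\ell$ into $H_{\ell-1}$, with operator norm $\lesssim\langle\ell\rangle$. To prove this I would pass to the harmonic extension $P(x):=|x|^\ell\phi(x/|x|)$ of $\phi\in H_\ell$ and compute
\[
\partial_i P(x)=|x|^{\ell-1}\bigl[\ell\, y_i \phi(y)+D_i\phi(y)\bigr]_{y=x/|x|}=|x|^{\ell-1}(\opR_i \phi)(x/|x|),
\]
using the identity $(\partial_i \widetilde\phi)(x)=|x|^{-1}(D_i\phi)(x/|x|)$ for the radial constant extension $\widetilde\phi$ and the fact that $\opD-\tfrac{d-2}{2}$ acts as multiplication by $\ell$ on $H_\ell$. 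Since $\partial_i P$ is harmonic and homogeneous of degree $\ell-1$, its restriction to $\m S^{d-1}$ lies in $H_{\ell-1}$. Summing over $i$ and using $|\nabla P|^2|_{r=1}=\ell^2|\phi|^2+|\nabla_{\m S^{d-1}}\phi|^2$ yields $\sum_i\|\opR_i \phi\|_{L^2}^2\le C_d\langle\ell\rangle^2\|\phi\|_{L^2}^2$. The orthogonality relation $P_{k-1}(\opR_i u)=\opR_i(P_k u)$ then gives, after a shift of index $\ell=k+1$,
\[
\|\opR_i u\|_{Y_{s-1,t}}^{2}\le C_d\sum_{\ell\ge 1}\langle\ell\rangle^{2s}e^{2(\ell-1+\frac{d-2}{2})t}\|P_\ell u\|_{L^2}^{2}=C_d\,e^{-2t}\|u\|_{Y_{s,t}}^{2},
\]
which is the claimed $e^{-t}$ gain.

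For $D_i$ I would use the identity $D_i=\opR_i - y_i(\opD-\tfrac{d-2}{2})$. Multiplication by $y_i\in H_1$ sends $H_\ell$ into $H_{\ell-1}\oplus H_{\ell+1}$ by Lemma \ref{prodHarmo} together with a parity argument ($y_i\phi_\ell$ has parity $\ell+1$, which rules out an $H_\ell$ component), with $L^2\to L^2$ norm bounded by $\|y_i\|_{L^\infty}=1$. Combined with $\opD-\tfrac{d-2}{2}=\ell$ on $H_\ell$ and the preceding bound on $\opR_i$, this shows that $D_i$ sends $H_\ell$ into $H_{\ell-1}\oplus H_{\ell+1}$ with operator norm $\lesssim\langle\ell\rangle$ onto each component. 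Decomposing $P_k D_i u=P_k D_i(P_{k-1}u)+P_k D_i(P_{k+1}u)$ and substituting into $\|\cdot\|_{Y_{s-1,t}}^2$, the raising contribution (from $P_{k-1}u$) produces a factor $e^{2t}$ after shifting indices, while the lowering contribution yields $e^{-2t}$; the former dominates for $t\ge0$, giving $\|D_i u\|_{Y_{s-1,t}}\lesssim e^{t}\|u\|_{Y_{s,t}}$.

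Finally, for $d=2$ on $\m S^1\simeq\R/2\pi\Z$, the Fourier basis $(e^{ik\theta})_{k\in\Z}$ diagonalizes both $\opD$ (eigenvalue $|k|$) and $\partial_\theta$ (eigenvalue $ik$); the estimate \eqref{actdtheta} then follows from $|k|\le\langle k\rangle$ term by term in the definition of the $Y_{s,t}$-norm. The main conceptual point is that the operator $\opR_i$ is precisely the combination $D_i+y_i(\opD-\tfrac{d-2}{2})$ that captures the cancellation between the two raising components in $D_i$ and $y_i\opD$, and this cancellation is what allows an \emph{improved} exponential factor $e^{-t}$ in place of the $e^t$ appearing for $D_i$ alone.
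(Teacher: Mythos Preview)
Your proof is correct and shares its central idea with the paper: you both identify $\opR_i\phi$ (for $\phi\in H_\ell$) with the trace on $\m S^{d-1}$ of $\partial_i P$, where $P(x)=|x|^\ell\phi(x/|x|)$ is the harmonic extension, and deduce $\opR_i:H_\ell\to H_{\ell-1}$. Where you differ is in the execution of the two norm estimates. For $\|\opR_i\phi\|_{L^2}$, the paper goes through an elliptic regularity argument on the ball (writing $\|\widetilde H_{k}|_{\m S^{d-1}}\|_{L^2}^2=\tfrac{1}{k}\|\nabla\widetilde H_k\|_{L^2(B(0,1))}^2$ and then invoking $H^{3/2}\to H^2$ boundary regularity); your direct computation $\sum_i|\opR_i\phi|^2=|\nabla P|^2|_{r=1}=\ell^2|\phi|^2+|\nabla_{\m S^{d-1}}\phi|^2$ is shorter and gives the same bound without appealing to elliptic theory. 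For $D_i$, the paper simply cites the product estimate \eqref{est:prod_ya} to control $\|y_i(\opD-\tfrac{d-2}{2})u\|_{Y_{s-1,t}}$, whereas you use the explicit decomposition $y_iH_\ell\subset H_{\ell-1}\oplus H_{\ell+1}$ together with the parity observation. Your route has the advantage of being valid for every $s\in\R$ as the statement claims, while \eqref{est:prod_ya} as written requires $s-1>\tfrac{d}{2}+\tfrac12$; of course the special case $|\alpha|=1$ of that estimate can be redone by exactly your argument, so this is a matter of presentation rather than a real gap in the paper.
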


We emphasize, in \eqref{est:Ytt0}, the loss for $D_i$ and the gain for $\opR_i$ of an exponential factor $e^t$.

\bnp
The part about $\opD u$ is direct from the definitions. Decompose $u=\sum_{k\in \N}P_k u$, so that 
\[ P_\ell D_{i}u=\sum_{k\in \N}P_\ell D_{i}P_k u. \]
If $k=0$, $D_{i}P_k u=0$. Otherwise, $P_k u$ is the restriction to $\mathbb{S}^{d-1}$ of a homogeneous harmonic polynomial in $\R^d$ of degree $k \ge 1$, say $H_k$. We have for $x$ in $\R^d$,
\begin{align*}
\frac{\partial H_k}{\partial x_{i}} (x)=e_i\cdot \nabla H_k =\frac{1}{|x|} e_i \cdot \Lambda H_k(x) + e_i\cdot\frac{x}{|x|}\frac{\partial H_k}{\partial r} (x).
\end{align*}

When restricted to $\mathbb{S}^{d-1}$, we get by homogeneity of $H_k$, for all $y \in \m S^{d-1}$,
\begin{align*}
\frac{\partial H_k}{\partial x_{i}} (y) = (D_i H_k|_{\m S^{d-1}})(y) + k y_i H_k(y)= (D_i P_k u)(y) + y_i\left(\opD  - \frac{d-2}{2} \right)P_k u(y).
\end{align*}
Now $\widetilde{H}_{k-1} := \frac{\partial H_k}{\partial x_{i}}$ is a harmonic polynomial of degree $k-1$, so that its restriction to $\m S^{d-1}$ is an eigenfunction of $\Delta_{\m S^{d-1}}$. On $\m S^{d-1}$, using the decomposition of $u$, we have 
\[  D_{i}u (y) =-y_i \left(\opD  - \frac{d-2}{2} \right) u(y)+\sum_{k\in \N^*}\widetilde{H}_{k-1}(y) =: -y_i \left(\opD  - \frac{d-2}{2} \right) u(y) + \opR_i u (y) . \]
 $H_k$ and $\widetilde{H}_k$ are homogeneous harmonic polynomials of degree $k$. Multiply the equation $\Delta \widetilde{H}_k=0$ by $\widetilde{H}_k$ and integrate by parts on $B(0,1)$, we get 
 \[ \int_{\m S^{d-1}} \partial_{\nu}\widetilde{H}_k\widetilde{H}_k=\int_{B(0,1)} |\nabla \widetilde{H}_k|^2. \]
 Also, by homogeneity, $ \partial_{\nu}\widetilde{H}_k=k \widetilde{H}_k $. So, we obtain for any $k \in \N^*$, 
 \[ \| \widetilde{H}_{k}|_{\m S^{d-1}} \|_{L^2(\m S^{d-1})}^2=\frac{1}{k} \| \nabla \widetilde{H}_{k} \|_{L^2(B(0,1))}^2. \]
 Since $\widetilde{H}_{k}=\frac{\partial H_{k+1}}{\partial x_{i}} $, by elliptic regularity and uniqueness for the Dirichlet boundary value problem $\Delta H_{k+1}=0$ on $B(0,1)$ (see for instance \cite[Chapter 2, Theorem 8.3]{LionsMagenes1}), we have 
\begin{align*}
\| \nabla \widetilde{H}_{k} \|_{L^2(B(0,1))}^2 &\le C \| H_{k+1} \|_{H^2(B(0,1))}^2\le C \| H_{k+1}|_{\m S^{d-1}} \|_{H^{3/2}(\m S^{d-1}))}^2 \\
& \le C\langle k \rangle^{3} \| H_{k+1}|_{\m S^{d-1}} \|_{L^{2}(\m S^{d-1}))}^2.     
\end{align*} 
Hence, uniformly in $k\in \N$,  
\[ \| \widetilde{H}_{k}|_{\m S^{d-1}} \|_{L^2(\m S^{d-1})}\le C\langle k \rangle\| H_{k+1}|_{\m S^{d-1}}  \|_{L^{2}(\m S^{d-1}))}. \]
We have $P_\ell \widetilde{H}_{k-1}=0$ if $\ell\neq k-1$ so that, for $\ell \in \m N$, $P_{\ell}\opR_i u= \widetilde{H}_{\ell}$. Therefore, we can estimate, uniformly in $\ell\in \N$,
\begin{align*}
\| P_\ell \opR_i u \|_{L^2(\m S^{d-1})}&\le C  \langle \ell \rangle \|  (H_{\ell+1}  )_{\left|\m S^{d-1}\right.}\|_{L^{2}(\m S^{d-1})}=\langle \ell \rangle \|  P_{\ell+1} u \|_{L^{2}(\m S^{d-1})},
\end{align*}
and we can bound
\begin{align*}
\nor{\opR_i u}{Y_{s-1,t}}^{2} &= \sum_{\ell =0}^{+\infty}  \langle \ell \rangle^{2(s-1)} e^{ 2 \left( \ell + \frac{d-2}{2} \right) t} \| P_\ell \opR_i u \|_{L^2(\m S^{d-1})}^2 \\
& \le C \sum_{\ell =0}^{+\infty}  \langle \ell \rangle^{2s} e^{ 2 \left( \ell + \frac{d-2}{2} \right) t} \|  P_{\ell+1} u \|_{L^{2}(\m S^{d-1})}^{2} \\
& \le C \sum_{\ell =1}^{+\infty}  \langle \ell-1 \rangle^{2s} e^{ 2 \left( \ell-1 + \frac{d-2}{2} \right) t} \|  P_{\ell} u \|_{L^{2}(\m S^{d-1})}^{2}\\
&\le Ce^{-2t} \nor{u}{Y_{s,t}}^{2}.%\le \red{Ce^{-2t} \nor{\bs u}{\q Y_{s,\red{t}}}^2}. 
\end{align*}
Finally, thanks to \eqref{est:prod_ya}, we get
\begin{equation*} 
\left\| y_i\left(\opD  - \frac{d-2}{2} \right) u \right\|_{Y_{s-1,t}} \le C   e^{t} \| u \|_{Y_{s,t}}.
\end{equation*} 
The last statement in dimension $2$ is immediate taking the orthonormal basis $(e^{ik\theta})_{k\in \Z}$ and seeing that $P_{\ell}$ is the orthogonal projection on $\text{Span} (e^{i \ell \theta}, e^{- i \ell \theta})$.   
\enp

\begin{remark}
It might be instructive to see with one example the effect of the operator $D_i$ and the operator $\opR_i$, for instance in dimension $2$ where $\m S^{1}\approx \R/2\pi\Z$. Take $u=\sin (n\theta)$ with $n\in \m N^*$. Since (at least for small $\theta$), $\theta=\arctan(y/x)$ where the typical variable is $(x,y)\in \m  S^1\subset\R^2$ with $(x,y)=(\cos(\theta),\sin(\theta))$, we have $ \nabla_{\m S^{1}}u=(-\partial_x u,\partial _y u)=n\cos(n\theta)(- \sin (\theta),\cos(\theta))$. In particular, 
\[ D_x u= -n\cos(n\theta)\sin(\theta)=-n \sin(n\theta)\cos(\theta)+n\sin((n-1)\theta)=-x\opD u+\opR_x u. \]
Then
\[ \nor{\opR_x u}{Y_{s-1,t}} =  n (1+ (n-1)^2)^{(s-1)/2} e^{(n-1)t} \simeq e^{-t} n^s e^{nt} \simeq e^{-t} \nor{ u}{Y_{s,t}}, \]
as expected. 
The simplifications coming from some structure of the nonlinearity will for instance be consequences of identities like $x^2+y^2=1$, that is $\cos^2(\theta)+\sin^2(\theta)=1$. This allows to obtain,from a trigonometric polynomial of order $2$, a trigonometric polynomial of order $0$, and so, improves the estimates in the norms $Y_{s,t}$.
\end{remark}

%\section{General results of scattering}
\section{Scattering in conformal variables} \label{subsec:conf}
\subsection{A first general result}
Let $g$ be the nonlinearity after performing the conformal transform, that is 
\begin{gather*}
 f(u(x),\nabla u(x)) = g(t,y,v(t,y), \partial_t v(t,y), \nabla_{y} v(t,y)), \\  \text{where } e^t = |x| \text{ and } y = \frac{x}{|x|} \in \m S^{d-1}.
\end{gather*}

We state our result for system of equations on the unknowns $v = ( v_1, \dots, v_N)$, for ulterior purposes, in particular when studying harmonic maps. However to present the proofs, we try to limit notational inconvenience and we will assume $N=1$; the scalar case contains already the essence of the result. We separate the time variable $t$ (corresponding to the radial variable) because we will exploit the fact that it is better behaved.

So, we are interested in solving the system on $v = ( v_1, \dots, v_N)$, given by
\begin{equation} \label{eq:sys_conf}
\partial_{tt} v - \gh D^2 v = g (t, y, v, \partial_t v, \nabla_{y}  v)
\end{equation}
where $\gh D$ acts component by component, that is,
\begin{align} 
\forall i \in \llbracket 1, N \rrbracket, \quad \partial_{tt} v_i - \gh D^2 v_i = g_i (t, y, v,  \partial_t v, \nabla_{y}  v),
\end{align}
for a smooth function $g= (g_1, \dots, g_N)$ where $t \in \m R$, $v \in \m R^N$, $\partial_t v = (\partial_t v_1, \dots, \partial_t v_N)$, $\nabla_{y}  v = \nabla_{\m S^{d-1}}  v = (\nabla_{y}  v_1, \dots, \nabla_{y}  v_N)$ and for $i \in   \llbracket 1, N \rrbracket$,  $\partial_t v_i \in \m R$ and $(y, \nabla_y v_i) \in T \m S^{d-1}$.

As $f$ will be analytic (see Section \ref{sec:Duh}), we assume that for $i \in \llbracket 1, N \rrbracket$, the functions $g_i$ are, 
in variable $(t,y, v  , w, z)$, of the form of a series indexed by the parameters $\alpha \in \m N^d$, $\beta, \gamma \in \m N^N$, $\delta \in \mc M_{N,d}(\m N)$:
\begin{gather} \label{Def:g_series}
g_i (t,y,  v, w, z) = \sum_{\alpha,\beta,\gamma,\delta}  b_{i,\alpha,\beta,\gamma,\delta} (t) y^\alpha v^\beta w^\gamma z^\delta.
\end{gather}
In the above sum, we use the standard convention for multi-index powers of a vector:
\[ v^\beta = \prod_{i=1}^N v_i^{\beta_i}, \quad w^\gamma = \prod_{i=1}^N w_i^{\gamma_i},\quad z^\delta = \prod_{i=1}^N \prod_{j=1}^d z_{ij}^{\delta_{ij}}. \]

The $w$ and $z$ variable of $g_i$ are meant for the derivatives of $v$: $w_i$ will have the place of $\partial_t v_i$ and $z_{ij}$ that of $D_j v_i$ so that $(z_{ij})_{1 \le j \le d}$ describes $\nabla_y v_i \in T_y\m S^{d-1}\subset \R^d$ (recall the definition of $D_i$ in \eqref{defDi}).

In the various sums below, we use latin letters for index for which the sum is on finite sets and greek letters where the sum might be infinite.

For technical purpose, we will assume that each $b_{i,\alpha,\beta,\gamma,\delta}$ can be written 
\[
b_{i,\alpha,\beta,\gamma,\delta}(t)=\sum_{\iota\in \N} b_{i,\vartheta}(t),
\]
where, to simplify notations, we gather the parameters into one index
\[ \vartheta : = (\alpha, \beta, \gamma, \delta, \iota) \in \Theta := \m N^d \times \m N^{N} \times \m N^{N} \times \mc M_{N,d}(\m R) \times \N, \]
and for any $\vartheta \in \Theta$, there exists $B_\vartheta \ge 0$ and $\kappa_\vartheta \in \m R$ so that we have
\begin{align}
\label{estimaabetaiota}
\forall i \in \llbracket 1, N \rrbracket, \ \forall t\ge 0, \quad \left|b_{i,\vartheta} (t)\right|\le B_{\vartheta }e^{-\kappa_{\vartheta} t}.
\end{align}
This assumption will naturally fit $f$ being analytic, as it will be clear from paragraph \ref{s:scattinfty}. In many cases, it will be enough to consider that only the terms with $\iota=0$ are not zero; we will drop the index $\iota$ in this case. We will use the same convention if we have always $\alpha=0$.

We denote
\begin{equation} \label{def:nu_a}
    \nu_{\vartheta}=\kappa_{\vartheta}- |\alpha| + (|\beta|+ |\gamma| -1) \frac{d-2}{2} +|\delta|\frac{d-4}{2}.
\end{equation} 

(see Remark \ref{rkxplaincoeff} for explanations) and define the series
\begin{align}
\label{defhh1}
h(\sigma,\rho,\varsigma)&:=
 \sum_{\vartheta \in \Theta} B_{\vartheta} \left< \alpha \right>^{s+1}(|\beta| + |\gamma| + |\delta|) \sigma^{|\beta| + |\gamma| + |\delta| -1}  \rho^{\nu_{\vartheta}} \varsigma^{\kappa_{\vartheta}},  \\
h_1(\sigma ) &:=h(\sigma,1,1) =\sum_{\vartheta \in \Theta} B_{\vartheta} \left< \alpha \right>^{s+1}(|\beta|+ |\gamma| + |\delta|)  \sigma^{|\beta| + |\gamma| + |\delta|-1} . 
\end{align}
We can assume without loss of generality (due to the $(|\beta| + |\gamma| + |\delta|)$ factor) that 
\begin{align}
\label{hypokabc}
B_{\vartheta} \ne 0 \implies 
 |\beta|+|\gamma|+|\delta| \ge 1,
 \end{align}
and we will always assume that for sufficiently small $\sigma$, the series defining $h_1$ is convergent.

Denote
\begin{equation} \label{def:nu0}
\nu_0 = \inf \{  \nu_{\vartheta} : \vartheta \in \Theta, \  B_{\vartheta} \ne 0 \}.
\end{equation}
We will always assume
\begin{equation} \label{hypokabcnu}
 \nu_0 \ge 0.
\end{equation}
\begin{remark}
 \label{rkxplaincoeff}  
 The definition of the exponent $\nu_{\vartheta}$ in \eqref{def:nu_a} might seem a bit mysterious at first, but it just reflects the exponential decay given by any term $ b_{\vartheta} (t) y^\alpha v^\beta w^\gamma z^\delta$ which corresponds (in the scalar case) to $ b_{\vartheta} (t) y^\alpha v(t)^\beta (\partial_t v)^\gamma (D_i v)^{\delta_i}$.
 \begin{itemize}
\item $\kappa_{\vartheta}$ is the exponential decay of the constant (in $y$) $b_{\vartheta}$.
\item $|\alpha|$ comes from the loss described by \eqref{est:prod_ya} in Lemma \ref{lm:polyYst}.
\item $|\delta|$ comes from the loss of $e^t$ for the action of $D_i$ due to \eqref{est:Ytt0} in Lemma \ref{lmderiv}.
\item We have a multiplication of $ |\beta|+ |\gamma| +|\delta|$ functions in $Y_{s-1,t}$, so, it creates an exponential gain of factor $ (|\beta|+ |\gamma| +|\delta|-1) \frac{d-2}{2} $ due to \eqref{estimprodYst} in Lemma \ref{lmprodanalytique}.
 \end{itemize}
   %=\kappa_{\vartheta}- |\alpha| + (|\beta|+ |\gamma| -1) \frac{d-2}{2} +|\delta|\frac{d-4}{2}
Adding all the above yield the rate $\nu_{\vartheta}$ in \eqref{def:nu_a}.
These exponents will be crucial in Lemma \ref{lminegG} below.
\end{remark}

The purpose of the following two results is to construct solutions of \eqref{eq:sys_conf}
defined for large times and with a prescribed linear behavior 
\[ \bs v_L(t) : = \q S(t)(\bs v_0, - \gh D \bs v_0) = (\q S(t)(\bs v_{i,0}, - \gh D \bs v_{i,0}))_{1 \le i \le N}, \]
 as $t \to +\infty$ (which is non growing in the $Y_{s,t}$ norms), where $\bs v_0 = (v_{1,0}, \dots, v_{1,N})$ is given. The idea is to perform a fixed point argument in ${\bs v} = (v,\vp) \in \q Y_{s,t}$ (that is, each component of $\bs v$ lies in $ \q Y_{s,t}$) on the map
\begin{gather} \label{def:Psi}
\Psi: {\bs v} \mapsto \Phi( g(t,y, \tilde v, \dot{\tilde{v}}, \nabla_{y} \tilde v)), \quad \text{with} \quad \tilde{\bs v} :={\bs v}+{\bs v}_L
\end{gather}
where  the map $\Phi$ is defined on \eqref{def:Phi} and acts component by component. 

Here is our first result.

\begin{thm}[Conformal variables] \label{th:conf}
Let $s > \frac{d}{2}+ \frac{3}{2}$. 
We assume a stronger version of  \eqref{hypokabcnu}, namely that $\nu_{0}>0$. 
%We assume that all the exponents in $\rho$ appearing in $h$ are positive, that $h$ is convergent for sufficiently small $\rho$, and
% \[ |\beta|+|\gamma| \ge 1 \quad \text{if} \quad A_{\alpha,\beta,\gamma} \ne 0. \]
% 
Then, there exists $C>0$ and $\eta>0$ so that the following holds. Let 
\[  v_0 =( v_{i,0})_{1 \le i \le N} \in H^s(\m S^{d-1}), \quad \bs v_{L} := \q S(\cdot) ( v_{0} ,-\gh D  v_{0}), \]
and $t_{0}\ge t_{1}\ge 0$ such that
\begin{align}
\label{asumhvLt1}
h(C \| {\bs v}_L \|_{\q Y_{s,t_{0}}^{t_{1}}},e^{-(t_{0}-t_{1})},e^{-t_1})\le \eta.
\end{align}
Then there exists a unique (with $\bs v-\bs v_L$ small) solution $\bs v = (\bs v_1, \dots, \bs v_N) \in  \q Y_{s,t_0}^{t_1}$ (defined for times $t \ge t_0$) to the integral formulation of the system \eqref{eq:sys_conf}, with final condition
\begin{align}
\label{cvceinftysol}
 \| \bs v - \bs v_L \|_{\q Y_{s,t}} \lesssim e^{-\nu_{0}t}\to 0 \quad \text{as} \quad t \to +\infty. 
\end{align}
\end{thm}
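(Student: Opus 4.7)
The plan is to reformulate the problem as a fixed-point equation for the perturbation $\bs w := \bs v - \bs v_L$ and solve it by Banach's contraction theorem in a small ball of $\q Y_{s,t_0}^{t_1}$. Since $\bs v_L$ already cancels the linear part, writing $\tilde{\bs v} := \bs w + \bs v_L$, the system reduces to $\bs w = \Psi(\bs w)$ with $\Psi$ given by \eqref{def:Psi}, while the prescribed decay at $+\infty$ is built into the Duhamel integral from $+\infty$ controlled in Lemma~\ref{lmDuhamelinfty}.

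The heart of the argument is a term-by-term estimate of $g(t,y,\tilde v,\partial_t \tilde v,\nabla_y \tilde v)$ in the norm $Y_{s-1,t-t_1}$, using the series expansion \eqref{Def:g_series}. For a single monomial $b_\vartheta(t)\,y^\alpha v^\beta w^\gamma z^\delta$ with $n := |\beta|+|\gamma|+|\delta|$, four ingredients combine: the pointwise bound $|b_\vartheta(t)| \le B_\vartheta e^{-\kappa_\vartheta t}$, rewritten as $B_\vartheta e^{-\kappa_\vartheta t_1}e^{-\kappa_\vartheta(t-t_1)}$; the polynomial multiplication estimate \eqref{est:prod_ya} contributing $\langle \alpha \rangle^{s+1}e^{|\alpha|(t-t_1)}$; the product law \eqref{estimprodYst} applied $n-1$ times, contributing a gain $e^{-(n-1)(d-2)(t-t_1)/2}$; and the derivative action of $D_i$ from \eqref{est:Ytt0}, contributing a loss $e^{|\delta|(t-t_1)}$ per derivative factor. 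When collected, the exponent in $(t-t_1)$ matches exactly $-\nu_\vartheta$ by the very definition \eqref{def:nu_a}, yielding
\[ \bigl\|b_\vartheta(t)\,y^\alpha \tilde v^\beta (\partial_t \tilde v)^\gamma (D\tilde v)^\delta\bigr\|_{Y_{s-1,t-t_1}} \lesssim B_\vartheta \langle \alpha \rangle^{s+1} e^{-\kappa_\vartheta t_1} e^{-\nu_\vartheta(t-t_1)} \|\tilde{\bs w}\|_{\q Y_{s,t_0}^{t_1}}^{\,n}. \]

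Applying the Duhamel estimate \eqref{est:lin_Duhamel}, integrating in $\tau \in [t,+\infty)$ (absolutely convergent since $\nu_\vartheta \ge \nu_0 > 0$) and summing over $\vartheta$, one recognizes the series $h$: at the level of stability, one obtains $\|\Psi(\bs w)\|_{\q Y_{s,t_0}^{t_1}} \lesssim \|\tilde{\bs w}\|\,h\bigl(\|\tilde{\bs w}\|, e^{-(t_0-t_1)}, e^{-t_1}\bigr)$, while the analogue for differences $\Psi(\bs w_1)-\Psi(\bs w_2)$, estimated monomial by monomial via the mean value theorem, produces precisely the factor $n\sigma^{n-1}$ appearing in $h$. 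The smallness assumption \eqref{asumhvLt1} is then exactly what is needed both to stabilize a ball of radius comparable to $\|\bs v_L\|_{\q Y_{s,t_0}^{t_1}}$ and to ensure a contraction factor strictly less than one, hence existence and uniqueness of $\bs w$. Evaluating the same monomial bound at a generic $t \ge t_0$ and using $\nu_\vartheta \ge \nu_0$ produces the decay \eqref{cvceinftysol} at the announced rate $e^{-\nu_0 t}$.

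The main obstacle is the careful bookkeeping of all exponential factors to verify that they recombine exactly into $\nu_\vartheta$ and $\kappa_\vartheta$ as in \eqref{def:nu_a}; a secondary difficulty is the case $d=2$, where Lemma~\ref{lmDuhamelinfty} carries the extra term $(\sigma-t)\|P_0 F\|_{L^2}$, which is however absorbed by the strict positivity $\nu_0 > 0$ since $(\sigma-t)e^{-\nu_0(\sigma-t_1)}$ remains integrable on $[t_0,+\infty)$ and only costs a harmless extra $\nu_0^{-1}$ factor.
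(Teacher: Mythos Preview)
Your proposal is correct and follows essentially the same route as the paper: a Banach fixed-point argument for $\Psi$ on a ball of $\q Y_{s,t_0}^{t_1}$ of radius $\|\bs v_L\|_{\q Y_{s,t_0}^{t_1}}$, with the monomial-by-monomial estimate (packaged in the paper as Lemma~\ref{lminegG}) combining \eqref{estimaabetaiota}, \eqref{est:prod_ya}, \eqref{estimprodYst}, \eqref{est:Ytt0} and \eqref{est:lin_Duhamel} to produce the factor $e^{-\nu_\vartheta(\tau-t_1)}$, and the series $h$ appearing after summation. The treatment of the $d=2$ correction and the decay rate via $\nu_\vartheta \ge \nu_0$ are also handled as in the paper.
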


We emphasize that \eqref{asumhvLt1} assumes implicitly that the quantity 
\[ h(C \| {\bs v}_L \|_{\q Y_{s,t_{0}}^{t_{1}}},e^{-(t_{0}-t_{1})},e^{-t_1}) \] is finite, which is not always the case. To discuss this, given $A, \e, D \in \m R$, we consider the following property on $h$:
\begin{align}
\label{infkvartheta}
\forall \vartheta \in \Theta \text{ with } B_\vartheta \ne 0, \quad \kappa_{\vartheta}\ge (-A+\e) (|\beta|+ |\gamma|+|\delta|-1)-D.
\end{align}
This is convenient because it allows to relate $h$ and $h_1$, and in particular ensure convergence of the former.

\begin{claim} \label{cl:h_h1}
Assume that $h$ satisfies \eqref{infkvartheta} with $A, \e \ge 0$ and $D \in \m R$. Then there holds
\begin{equation} \label{est:h_h_1} 
\forall \sigma \ge 0, \ \forall \rho, \lambda \in (0,1], \quad h(\lambda^{A} \sigma, \rho, \lambda) \le \lambda^{-D} \rho^{\nu_0} h_1(\lambda^\e \sigma).
\end{equation}
\end{claim}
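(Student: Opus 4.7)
The plan is to prove the estimate term-by-term in the series defining $h$, using the three standard ingredients: monotonicity in $\rho$ (via $\nu_\vartheta \ge \nu_0$), the lower bound \eqref{infkvartheta} on $\kappa_\vartheta$, and the monotonicity $\lambda \in (0,1] \imp \lambda^a \le \lambda^b$ when $a \ge b$.

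Concretely, I would fix $\vartheta = (\alpha,\beta,\gamma,\delta,\iota) \in \Theta$ with $B_\vartheta \ne 0$, set $k := |\beta|+|\gamma|+|\delta|$ for brevity (which is $\ge 1$ by \eqref{hypokabc}), and write the corresponding term of $h(\lambda^A \sigma, \rho, \lambda)$ as
\[ B_\vartheta \langle \alpha \rangle^{s+1} k \, \sigma^{k-1} \, \lambda^{A(k-1)} \, \rho^{\nu_\vartheta} \, \lambda^{\kappa_\vartheta}. \]
Since $\rho \in (0,1]$ and $\nu_\vartheta \ge \nu_0$, we have $\rho^{\nu_\vartheta} \le \rho^{\nu_0}$. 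For the $\lambda$ factors, the assumption \eqref{infkvartheta} yields
\[ A(k-1) + \kappa_\vartheta \; \ge \; A(k-1) + (-A+\e)(k-1) - D \; = \; \e(k-1) - D, \]
and because $\lambda \in (0,1]$ this gives
\[ \lambda^{A(k-1)+\kappa_\vartheta} \le \lambda^{\e(k-1)-D} = \lambda^{-D} (\lambda^\e)^{k-1}. \]

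Combining the two estimates, each term of $h(\lambda^A \sigma, \rho, \lambda)$ is bounded by
\[ \lambda^{-D} \rho^{\nu_0} \cdot B_\vartheta \langle \alpha \rangle^{s+1} k \, (\lambda^\e \sigma)^{k-1}, \]
which is precisely $\lambda^{-D} \rho^{\nu_0}$ times the corresponding term in the series defining $h_1(\lambda^\e \sigma)$. Summing over $\vartheta \in \Theta$ (all terms are nonnegative, so Fubini/monotone convergence is automatic) yields \eqref{est:h_h_1}.

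There is no real obstacle here: the estimate is a direct bookkeeping exercise, and the only subtlety is getting the signs right in the exponent of $\lambda$ — one must remember that $\lambda \le 1$ turns the lower bound on $\kappa_\vartheta$ into an upper bound on $\lambda^{\kappa_\vartheta}$. The role of the statement is more structural than technical: it is what allows us, later on, to feed the smallness hypothesis \eqref{asumhvLt1} into a fixed point argument driven only by $h_1$.
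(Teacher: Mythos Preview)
Your proof is correct and follows exactly the same approach as the paper: bound each term of the series using $\rho^{\nu_\vartheta} \le \rho^{\nu_0}$ (from $\rho \le 1$ and $\nu_\vartheta \ge \nu_0$) together with $\lambda^{A(k-1)+\kappa_\vartheta} \le \lambda^{\e(k-1)-D}$ (from $\lambda \le 1$ and \eqref{infkvartheta}), then sum. The paper's proof is organized as a chain of inequalities on the full sum rather than term-by-term, but the content is identical.
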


\begin{proof}
We have
\begin{align*}
h(\lambda^{A} \sigma, \rho, \lambda) & \le \sum_{\vartheta \in \Theta} B_{\vartheta} \left< \alpha \right>^{{s+1}}(|\beta| + |\gamma| + |\delta|)  \sigma^{|\beta| + |\gamma| + |\delta| -1} \rho^{\nu_\vartheta} \lambda^{ m_{\vartheta}}
\end{align*}
where
\[ m_{\vartheta} := \kappa_{\vartheta}+ A \left(|\beta| + |\gamma| + |\delta| -1\right)\ge \e \left(|\beta| + |\gamma| + |\delta| -1\right)-D. \]
(We used the assumption \eqref{infkvartheta} for the inequality).
Now, $\nu_\vartheta \ge \nu_0$ and as $0 \le \rho, \lambda \le 1$, we infer
\begin{align*}
h(\lambda^{A} \sigma, \rho, \lambda) & \le    \sum_{\vartheta \in \Theta} B_{\vartheta} \left< \alpha \right>^{{s+1}}(|\beta| + |\gamma| + |\delta|) (\lambda^\e \sigma)^{|\beta| + |\gamma| + |\delta| -1} \rho^{\nu_0}  \lambda^{ -D} \\
& \le \lambda^{ -D} \rho^{\nu_0} h_1(\lambda^\e \sigma). \qedhere
\end{align*}
\end{proof}

We are now in a position to give some conditions under which Theorem \ref{th:conf} applies (proven after its proof).

\begin{lem} \label{lem:conf}
Under the reinforced condition $\nu_0>0$, the assumption \eqref{asumhvLt1} holds if one of the following assumptions is satisfied
\begin{enumerate}
\item \label{hypott0} 
$h_1(0)=0$ and we  have $t_{0}=t_{1}=0$ and $\| v_{0}\|_{H^s(\m S^{d-1})}$ is small enough.
\item \label{hyp3nu}
$t_1=0$ and $t_0$ is large enough (depending on $v_0$), and there exist $\e>0$ and $D\in \R$ and so that, for all $\vartheta \in \Theta$ such that $B_\vartheta \ne 0$,
\begin{align}
\label{infnuvartheta}
\nu_{\vartheta}\ge \e (|\beta|+ |\gamma|+|\delta|-1)-D.
\end{align}
\item there exists $\ell_0 \in \m N$ such that $P_\ell v_0 =0$ for all $\ell < \ell_0$, and $h$ satisfies \eqref{infkvartheta} with $A =  \ell_0 + \frac{d-2}{2}$, $\e >0$ and $D \in \m R$; and $t_0$ and $t_0-t_1$ are large enough (depending on $v_0$).
\end{enumerate}
\end{lem}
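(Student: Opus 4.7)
The plan is to bound $h\bigl(C\|\bs v_L\|_{\q Y^{t_1}_{s,t_0}},\, e^{-(t_0-t_1)},\, e^{-t_1}\bigr)$ by $\eta$ under each of the three sets of assumptions. The first step is common: since $\bs v_L(\tau)=(e^{-\tau\opD}v_0,-\opD e^{-\tau\opD}v_0)$, the factor $e^{(\tau-t_1)\opD}$ in the $Y_{s,\tau-t_1}$-norm cancels the propagator, so that $\|\bs v_L(\tau)\|_{Y_{s,\tau-t_1}}+\|\dot{\bs v}_L(\tau)\|_{Y_{s-1,\tau-t_1}}$ is independent of $\tau$ and the supremum reduces to a computation at a single $\tau$. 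Mirroring the proof of Lemma \ref{lmlienlinY}, this gives $\|\bs v_L\|_{\q Y^{t_1}_{s,t_0}}\lesssim\|e^{-t_1\opD}v_0\|_{H^s}$, which is $\lesssim\|v_0\|_{H^s}$ in general and $\lesssim e^{-t_1(\ell_0+(d-2)/2)}\|v_0\|_{H^s}$ under the spectral gap hypothesis of item (3).

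Case (1) is immediate: with $t_0=t_1=0$ the third argument of $h$ is $1$, so $h(\cdot,1,1)=h_1(\cdot)$, and continuity of $h_1$ at $0$ together with $h_1(0)=0$ yields the bound whenever $\|v_0\|_{H^s}$ is small enough.

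Case (3) follows from a direct application of Claim \ref{cl:h_h1} with $A=\ell_0+(d-2)/2$, $\lambda=e^{-t_1}$, $\rho=e^{-(t_0-t_1)}$ and $\sigma$ a fixed constant times $\|v_0\|_{H^s}$: the preliminary bound gives $C\|\bs v_L\|_{\q Y^{t_1}_{s,t_0}}\le\lambda^A\sigma$, and the claim produces
\[ h\bigl(C\|\bs v_L\|_{\q Y^{t_1}_{s,t_0}},\,e^{-(t_0-t_1)},\,e^{-t_1}\bigr)\le e^{Dt_1}\,e^{-(t_0-t_1)\nu_0}\,h_1\bigl(e^{-\e t_1}\sigma\bigr). \]
I would first take $t_1$ large enough (depending on $v_0$) so that $e^{-\e t_1}\sigma$ lies in the convergence radius of $h_1$, making $h_1(\cdot)\le M(v_0)$; then choose $t_0-t_1$ large enough (again depending on $v_0$, through $t_1$) to absorb the $e^{Dt_1}$ factor and bring the product below $\eta$. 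Both $t_0$ and $t_0-t_1$ are then large, as required.

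Case (2) is the one requiring a tweak, because $t_1=0$ removes all decay coming from the linear estimate and forces everything to come from $\rho=e^{-t_0}$. The key trick is a convex combination of $\nu_\vartheta\ge\nu_0$ and the hypothesis \eqref{infnuvartheta}: for any $\lambda\in[0,1]$,
\[ \nu_\vartheta\ge\lambda\nu_0+(1-\lambda)\bigl[\e(|\beta|+|\gamma|+|\delta|-1)-D\bigr]. \]
Inserted into the definition of $h$, this factorizes as
\[ h\bigl(\sigma,\,e^{-t_0},\,1\bigr)\le e^{-t_0(\lambda\nu_0-(1-\lambda)D)}\,h_1\bigl(e^{-t_0(1-\lambda)\e}\sigma\bigr), \]
and $\lambda$ chosen close enough to $1$ makes $\lambda\nu_0>(1-\lambda)D$, possible precisely because $\nu_0>0$. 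Taking $t_0$ large shrinks the $h_1$-argument into its convergence domain (so $h_1$ is bounded by some $M(v_0)$) and drives the prefactor to zero, giving the bound $\le\eta$. The main conceptual obstacle is exactly this splitting: one needs $\rho$ to play two roles simultaneously, shrinking the argument of $h_1$ into its disk of convergence while also providing a decaying prefactor; the reinforced hypothesis $\nu_0>0$ is precisely what makes the balance possible, and is also the reason the reinforcement is stated in the lemma rather than the weaker $\nu_0\ge0$ of \eqref{hypokabcnu}.
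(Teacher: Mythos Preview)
Your proof is correct and follows essentially the same approach as the paper's. Cases (1) and (3) are identical to the paper; in Case (2) you use a clean convex combination $\nu_\vartheta \ge \lambda\nu_0 + (1-\lambda)[\e(|\beta|+|\gamma|+|\delta|-1)-D]$ with $\lambda$ close to $1$, whereas the paper splits $\nu_\vartheta$ into three pieces $\nu_\vartheta/2 + \nu_0/4 + \nu_\vartheta/4$ to obtain $h(\sigma,e^{-t_0},1) \le e^{-\nu_0 t_0/2} h(\sigma e^{-\eta t_0}, e^{-t_0/4}, 1)$ for a specific $\eta$ --- but this is the same idea, and your formulation is arguably more transparent.
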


\begin{remark}
Assumptions \eqref{infnuvartheta} and \eqref{infkvartheta} (for any $\eta$) are obvious if $B_{\vartheta}\neq 0$ only for a finite number of $\vartheta$: this corresponds to a polynomial nonlinearity in the original variable. They are mainly made to ensure the convergence of the series. The condition 3) is actually used only for $\ell_0 =0$ or $1$.
\end{remark}

The proof of Theorem \ref{th:conf} follows from the following technical but crucial estimates. As mentioned above, we will assume for the purpose of the proofs that we are in the scalar case $N=1$.

\begin{lem}
\label{lminegG}
Let $s > \frac{d}{2} + \frac{3}{2}$.
There exists a universal constant $C>0$ so that for every $t\ge t_{1}\ge0$, $ {\bs v}$, ${\bs w}\in \q Y_{s,t}^{t_{1}}$, and denoting  
\begin{align}\label{defMt} M (t)=   \max\left( \| {\bs v} \|_{\q Y_{s,t}^{t_{1}}},  \| {\bs w} \|_{\q Y_{s,t}^{t_{1}} },\| {\bs v}_L \|_{\q Y_{s,t}^{t_{1}} } \right), 
\end{align}
then we have  
\begin{align}
\| \Psi ({\bs v}) - \Psi ({\bs w})  \|_{\q Y_{s,t}^{t_{1}}} & \lesssim  \sum_{\vartheta \in \Theta} e^{-\kappa_{\vartheta}t_1} B_{\vartheta} \left< \alpha \right>^{s+1}(|\beta| + |\gamma|+|\delta|) (CM(t))^{|\beta| + |\gamma|+|\delta|-1} \\
& \qquad \qquad \times \int_t^{+\infty}\| {\bs v} - {\bs w} \|_{\q Y_{s,\tau}^{t_{1}}}(1+ \tau-t )  e^{- \nu_{\vartheta}(\tau-t_{1})} d\tau.
\end{align}
Similarly,
\begin{align}
\| \Psi ({\bs v})  \|_{\q Y_{s,t}^{t_{1}}} & \lesssim  \sum_{\vartheta \in \Theta}  e^{-\kappa_{\vartheta}t_1 } B_{\vartheta} \left< \alpha \right>^{s+1}(|\beta| + |\gamma|+|\delta|) (CM(t))^{|\beta| + |\gamma|+|\delta|-1} \\
& \qquad \qquad \times \int_t^{+\infty}(\| {\bs v} \|_{\q Y_{s,\tau}^{t_{1}}} +\| {\bs v_{L}} \|_{\q Y_{s,\tau}^{t_{1}}} )(1+ \tau-t ) e^{- \nu_{\vartheta}(\tau-t_{1})} d\tau.
\end{align}
\end{lem}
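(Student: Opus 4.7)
The plan is to combine Lemma \ref{lmDuhamelinfty} with the product, polynomial, and derivative estimates already established (Proposition \ref{lmprodanalytique} and Lemmas \ref{lm:polyYst}, \ref{lmderiv}) to estimate the nonlinearity monomial by monomial. I will work in the scalar case $N=1$; the vector case is componentwise identical.

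First, by Lemma \ref{lmDuhamelinfty} applied with $t_0=t_1$, it suffices to bound pointwise in $\tau \ge t$ the $Y_{s-1,\tau-t_1}$ norm of the integrand $G(\tau) := g(\tau,y,\tilde v,\partial_t \tilde v,\nabla_y \tilde v) - g(\tau,y,\tilde w,\partial_t \tilde w,\nabla_y \tilde w)$, together with (in dimension $2$ only) the $L^2$-norm of $P_0 G(\tau)$ weighted by $(\tau-t)$. Since $\|P_0 G(\tau)\|_{L^2} \le \|G(\tau)\|_{Y_{s-1,\tau-t_1}}$, both contributions are controlled by the same quantity and combine under the integral into the unified weight $(1+\tau-t)$ displayed in the statement.

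Next, I expand $g$ using \eqref{Def:g_series} and apply the triangle inequality, reducing to one monomial $F_\vartheta(\tau) := b_\vartheta(\tau)\, y^\alpha \tilde v^\beta (\partial_t \tilde v)^\gamma (\nabla_y \tilde v)^\delta$ at a time. The difference $F_\vartheta(\bs v)-F_\vartheta(\bs w)$ is written as a telescoping sum over the $n := |\beta|+|\gamma|+|\delta|$ slots of the monomial: each summand contains $n-1$ factors bounded by $M(\tau)$ (either in $Y_{s,\tau-t_1}$ or $Y_{s-1,\tau-t_1}$) and exactly one factor coming from $\bs v - \bs w$ or one of its derivatives. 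This produces the combinatorial prefactor $(|\beta|+|\gamma|+|\delta|)$ visible in the statement. Each summand is then estimated in $Y_{s-1,\tau-t_1}$ by assembling: the pointwise decay $|b_\vartheta(\tau)| \le B_\vartheta e^{-\kappa_\vartheta \tau}$ from \eqref{estimaabetaiota}; the multiplication by $y^\alpha$ costing $\langle\alpha\rangle^{s+1} e^{|\alpha|(\tau-t_1)}$ from \eqref{est:prod_ya}; the algebra estimate \eqref{estimprodYst}, which yields a gain $e^{-(n-1)\frac{d-2}{2}(\tau-t_1)}$ and is valid in $Y_{s-1,\tau-t_1}$ since $s-1 > \frac{d}{2}+\frac{1}{2}$; and the derivative bound \eqref{est:Ytt0}, costing $e^{\tau-t_1}$ per each of the $|\delta|$ tangential derivatives $D_j$.

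Collecting the exponents, one checks by direct algebra that
\[
-\kappa_\vartheta \tau + \Bigl(|\alpha| - (n-1)\tfrac{d-2}{2} + |\delta|\Bigr)(\tau-t_1) = -\kappa_\vartheta t_1 - \nu_\vartheta(\tau-t_1),
\]
with $\nu_\vartheta$ as defined in \eqref{def:nu_a}. Summing over $\vartheta \in \Theta$ and integrating in $\tau$ produces the first estimate; the second (for $\Psi(\bs v)$ alone) follows by the same scheme, replacing the telescoping sum by a single product bounded through $\|\tilde{\bs v}\|_{\q Y_{s,\tau}^{t_1}} \le \|\bs v\|_{\q Y_{s,\tau}^{t_1}} + \|\bs v_L\|_{\q Y_{s,\tau}^{t_1}}$. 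The main obstacle is precisely this exponent bookkeeping: I must verify that the four contributions (decay of $b_\vartheta$, polynomial loss from $y^\alpha$, algebra gain, and loss from the $D_j$'s) combine into exactly the power $\nu_\vartheta$ and split correctly between $\tau$ and $t_1$; the remainder of the argument is mechanical.
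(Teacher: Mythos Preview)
Your proposal is correct and follows essentially the same route as the paper's proof: apply Lemma~\ref{lmDuhamelinfty}, expand $g$ monomial by monomial, telescope the difference to produce the factor $|\beta|+|\gamma|+|\delta|$, and then combine the decay \eqref{estimaabetaiota}, the polynomial bound \eqref{est:prod_ya}, the algebra estimate \eqref{estimprodYst}, and the derivative loss \eqref{est:Ytt0} to recover the exponent $\nu_\vartheta$. The only cosmetic point is that when you bound the $n-1$ remaining factors you should use $M(t)$ rather than $M(\tau)$ (valid since $\|\cdot\|_{\q Y_{s,\tau}^{t_1}}$ is nonincreasing in $\tau$), so that the power $(CM(t))^{|\beta|+|\gamma|+|\delta|-1}$ can be pulled out of the $\tau$-integral as in the stated estimate.
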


\bnp
We do the difference estimate only, as the other one follows in a similar fashion. First, note that due to $\eqref{defDi}$, \[ (\nabla_{y} v(\tau))_{i,j}= e_{j}\cdot \nabla_{y} v_i(\tau)=D_j v_i(\tau). \]
From  Proposition \ref{lmprodanalytique} and Lemma \ref{lmderiv}, as $s-1 > \frac{d}{2} + \frac{1}{2}$, for any $\beta, \gamma \in \m N$, $\delta \in \m N^{d}$ (recall we do the proof for $N=1$), and time $\tau \ge t \ge t_{1}$, denoting
\[ \bs{ \tilde v} = \bs v +\bs v_L, \quad \bs{\tilde w} = \bs w + \bs v_L, \]
there hold
\begin{align}
\MoveEqLeft \| \tilde v(\tau)^\beta \dot{\tilde v}(\tau)^\gamma (\nabla_{y} \tilde v(\tau)) ^\delta - \tilde w(\tau)^\beta \dot{\tilde w}(\tau)^\gamma (\nabla_{y} \tilde w(\tau)) ^\delta  \|_{Y_{s-1,\tau-t_{1}}} \\
& \lesssim  (|\beta| + |\gamma| + |\delta|) C_0^{|\beta| + |\gamma| + |\delta|-1} e^{- ((|\beta|+ |\gamma|+|\delta| -1) \frac{d-2}{2} - {|\delta|} ) (\tau-t_{1})} \\
& \qquad \times (\| {\bs v} \|_{\q Y_{s,\tau}^{t_{1}}} + \| {\bs w} \|_{\q Y_{s,\tau}^{t_{1}}} +\| {\bs v}_L \|_{\q Y_{s,\tau}^{t_{1}}})^{|\beta|+|\gamma|+|\delta|-1} \| {\bs v} - {\bs w} \|_{\q Y_{s,\tau}^{t_{1}}} \\
& \lesssim  (|\beta| + |\gamma|+|\delta|)(C M(t))^{|\beta| + |\gamma|+|\delta|-1}  e^{- ((|\beta|+ |\gamma| -1) \frac{d-2}{2} + |\delta| \frac{d - 4}{2} )(\tau-t_{1})} \| {\bs v} - {\bs w} \|_{\q Y_{s,\tau}^{t_{1}}}. \label{est:mult_Psi}
\end{align}
%(For systems, a slightly more general version of this inequality is needed, where $\beta$ and $\gamma$ are appropriate multi-indices).
(We denoted $C_0$ the maximum of the constants appearing in estimates \eqref{est:Ytt0} and \eqref{estimprodYst}, and one can pick $C = 3C_0$. We also used $\| {\bs v} \|_{\q Y_{s,\tau}^{t_{1}}}\le M(t)$ for $\tau \ge t$ and the same for $\bs v$ and ${\bs v}_L $.)

Assume first $d\ge 3$. Using the above inequality together with the Duhamel bound \eqref{est:lin_Duhamel}, the product law \eqref{est:prod_ya} and the decay \eqref{estimaabetaiota}, we get
%\allowdisplaybreaks
\begin{align*}
\MoveEqLeft \| \Psi ({\bs v}) - \Psi ({\bs w})  \|_{\q Y_{s,t}^{t_{1}}} \\
& \le \sum_{\vartheta \in \Theta} \int_t^{\infty} |b_{\vartheta}(\tau)| \| y^\alpha \left(  \tilde v(\tau)^\beta  \dot{\tilde v}(\tau)^\gamma (\nabla_{y} \tilde v(\tau)) ^\delta \right. \\
& \qquad \qquad \qquad \left. - \tilde w(\tau)^\beta \dot{\tilde w}(\tau)^\gamma (\nabla_{y} \tilde w(\tau)) ^\delta \right)  \|_{Y_{s-1,\tau-t_{1}}} d\tau \\
& \lesssim  \sum_{\vartheta \in \Theta} B_{\vartheta}\left< \alpha \right>^{s + 1}(|\beta| + |\gamma|+|\delta|) (CM(t))^{|\beta| + |\gamma| + |\delta| -1} \\
& \qquad \qquad \qquad \times \int_t^{+\infty}e^{-\kappa_{\vartheta}\tau } e^{|\alpha|(\tau-t_1)}e^{- ((|\beta|+ |\gamma| -1) \frac{d-2}{2} + |\delta| \frac{d -4}{2} )(\tau-t_{1})} \| {\bs v} - {\bs w} \|_{\q Y_{s,\tau}^{t_{1}}} d\tau\\
& \lesssim  \sum_{\vartheta \in \Theta} B_{\vartheta} \left< \alpha \right>^{s +1} e^{-\kappa_{\vartheta}t_1 }(|\beta| + |\gamma|+|\delta|) (CM(t))^{|\beta| + |\gamma| + |\delta| -1} \\
& \qquad \qquad \qquad \times \int_t^{+\infty} e^{-\nu_{\vartheta}(\tau-t_{1})} \| {\bs v} - {\bs w} \|_{\q Y_{s,\tau}^{t_{1}}} d\tau.
\end{align*}
(In the last inequality, we have used $e^{-\kappa_{\vartheta}\tau }= e^{-\kappa_{\vartheta}t_1 }e^{-\kappa_{\vartheta}(\tau-t_1) }$ and the definition \eqref{def:nu_a} of $\nu_{\vartheta}$.) This gives the expected estimates in this case.

If $d=2$, we rely on estimate \eqref{est:lin_Duhamel}. So, we need to bound the term with $P_{0}$, which we can estimate as before (because $s-1 > 1= d/2$): for any $\tau \ge t_1$,
\[ \| P_0 v \|_{L^2(\m S^1)} \le \| P_0 v \|_{H^{s-1}(\m S^1)} \le \| v \|_{Y_{s-1,\tau-t_1} }, \]
and so
\begin{align*} 
\int_{t}^{+\infty} (\tau-t) \| P_0 F(\tau) \|_{L^2(\m S^{1})}d\tau & \lesssim  \sum_{\vartheta \in \Theta} B_{\vartheta} \left< \alpha \right>^{s+1} e^{-\kappa_{\vartheta}t_1} (|\beta| + |\gamma|+|\delta|) \\ 
 & \qquad  \times (CM(t))^{|\beta| + |\gamma|+|\delta|-1} \\
 \int_t^{+\infty} (\tau-t)e^{-\nu_{\vartheta}(\tau-t_{1})} \| {\bs v} - {\bs w} \|_{\q Y_{s,\tau}^{t_{1}}} d\tau.  \qedhere
\end{align*}
\enp

\begin{proof}[Proof of Theorem \ref{th:conf}]
%Let $C \ge 1$ strictly larger than the implicit constants given by the $Y_{s,t}$ product laws \eqref{estimprodYst} and \eqref{est:ya}. 
%From the assumptions, $h$ is holomorphic in a neighborhood of $0 \in \red{\m C^2}$, containing say $\overline{B(0,R)}$, and as the exponents in $\rho$ are positive, $h(0)=0$. 
We consider
\begin{align}\label{defY} 
Y = \left\{ {\bs w} \in \q Y_{s, t_0}^{t_{1}} : \| {\bs w} \|_{\q Y_{s, t_0}^{t_{1}}} \le \| {\bs v}_L \|_{\q Y_{s, t_0}^{t_{1}} } \right\}. 
\end{align}
Observe that for $t \ge t_1$, and $\nu \ge \nu_0 >0$,
\begin{align}
\int_t^{+\infty} (1+\tau-t) e^{- \nu(\tau-t_{1})} d\tau & = e^{- \nu(t-t_{1})}\int_t^{+\infty} (1+\tau-t) e^{- \nu(\tau-t)} d\tau \nonumber\\ 
\le  C_{\nu_0} e^{- \nu (t-t_{1})}. \label{est:exp_tail}
\end{align}
Let ${\bs v}, {\bs w}\in Y$ and $t\ge t_0$. We have $ \| {\bs v} \|_{\q Y_{s,t}^{t_{1}}}\le \| {\bs v} \|_{\q Y_{s,t_0}^{t_{1}}} \le \|{\bs v}_L \|_{\q Y_{s,t_0}^{t_{1}}}$ and similarly $\| {\bs v} \|_{\q Y_{s,t}^{t_{1}}} \le \|{\bs v}_L \|_{\q Y_{s,t_0}^{t_{1}}}$. In particular, 
\[ M (t)\le \|{\bs v}_L \|_{\q Y_{s,t_0}^{t_{1}}}, \]
where $M(t)$ was defined in \eqref{defMt}. Hence, using Lemma \ref{lminegG}, we get for ${\bs v}, {\bs w}\in Y$ and $t\ge t_0\ge t_1\ge 0$,
\begin{align}
\label{est:Psi1diff}
\MoveEqLeft \| \Psi ({\bs v}) - \Psi ({\bs w}) \|_{\q Y_{s,t}^{t_{1}}} \\
& \lesssim  \sum_{\vartheta \in \Theta} B_{\vartheta} \left< \alpha \right>^{s +1 }  e^{-\kappa_{\vartheta}t_1 }(|\beta| + |\gamma|+|\delta|) (C \| {\bs v}_L \|_{\q Y_{s, t_0}^{t_{1}}})^{|\beta| + |\gamma|+|\delta|-1} \\
& \qquad \qquad \times \int_t^{+\infty}\| {\bs v} - {\bs w} \|_{\q Y_{s,\tau}^{t_1}} (1+\tau-t) e^{- \nu_{\vartheta}(\tau-t_{1})} d\tau \\
& \lesssim  \sum_{\vartheta \in \Theta} B_{\vartheta} \left< \alpha \right>^{s+1} e^{-\kappa_{\vartheta}t_1 }(|\beta| + |\gamma|+|\delta|) (C \| {\bs v}_L \|_{\q Y_{s,t_0}^{t_{1}}})^{|\beta| + |\gamma|+|\delta|-1} \\
& \qquad \qquad \times \| {\bs v} - {\bs w} \|_{\q Y_{s,t}^{t_{1}}}  e^{- \nu_{\vartheta}(t-t_1)}  \\
&\lesssim h(C \| {\bs v}_L \|_{\q Y_{s,t_0}}^{t_{1}}, e^{-(t-t_1)},e^{-t_1}) \| {\bs v} - {\bs w} \|_{\q Y_{s,t}^{t_{1}}}.  
\end{align}
And similarly, there holds
\begin{align}
\label{est:Psi1}
 \| \Psi ({\bs v})   \|_{\q Y_{s,t}^{t_{1}}} & \lesssim h(C \| {\bs v}_L \|_{\q Y_{s, t_0}^{t_{1}}},e^{-(t-t_1)}, e^{-t_1 })( \| {\bs v}  \|_{\q Y_{s,t}^{t_{1}}}+ \| {\bs v_{L}}  \|_{\q Y_{s,t}^{t_{1}}} ).
\end{align}
Denote $C_1$ the maximum of the implicit constants appearing in \eqref{est:Psi1diff}  and \eqref{est:Psi1}, and choose $\eta= 1/(2C_1)$. The above computations, applied with $t =t_0$, prove that $\Psi$ maps the closed $Y$ into itself and is contracting in the Banach space $\q Y_{s,t_0}^{t_{1}}$. Therefore, $\Psi$ admits a unique fixed point ${\bs r}=(r,\dot{r})$ in $Y$. Then from definition \eqref{def:Psi} and Lemma \ref{lmDuhamelinfty}, we have $ \dot{r}=\partial_t r$ and 
\begin{align*}
\partial_{tt} r - \gh D^2 r 
%& = \Phi( g(t,y, r+v_L, \dot{r}+\partial_t v_L, \nabla_{y}(r+v_L))) \\
& =\Phi( g(t,y, r+v_L, \partial_t( r+v_L), \nabla_{y}(r+v_L))).
\end{align*}
In particular, ${\bs v_{L}}+{\bs r}$ is the desired solution.  For \eqref{cvceinftysol}, we combine \eqref{est:Psi1} with the bound $h(\sigma, \rho_1 \rho_2,\varsigma) \le \rho_1^{\nu_0} h(\sigma,\rho_2,\varsigma)$ for $\rho_1\le 1$ and $\varsigma >0$, so that 
\begin{align*}
h(C \| {\bs v}_L \|_{\q Y_{s,t}},e^{-(t-t_1)},e^{-t_1}) & \le h(C \| {\bs v}_L \|_{\q Y_{s,t_{0}}},e^{-(t_{0}-t_1)}, e^{-t_1 })e^{- \nu_{0}(t-t_0)} \\
& \le \eta e^{- \nu_{0}(t-t_0)}.
\end{align*}
Finally, the uniqueness of solutions with $\bs v-\bs v_L$ small in  $\q Y_{s,t_0}^{t_1}$ is a consequence of the uniqueness of the fixed point.
\end{proof}

\begin{proof}[Proof of Lemma \ref{lem:conf}]
For (1), recall that from Lemma \ref{lmlienlinY}, $\| {\bs v}_L \|_{\q Y_{s}} \lesssim \| v_0 \|_{H^s(\m S^{d-1})}$ is small, and actually can be made smaller that the radius of convergence of $h_1$; as $h_1 \to 0$ at $0$, $ h(C \| {\bs v}_L \|_{\q Y_{s}},1,1) = h_1(C \| {\bs v}_L \|_{\q Y_{s}})$ can be made small.

For (2), we observe that the hypothesis together with $\nu_0 >0$ implies that  there exist $\eta>0$ such that
\[ \nu_{\vartheta} \ge 2\eta (|\beta|+|\gamma|+|\delta|-1) + \frac{\nu_0}{2}. \]
For example,  $\eta=\frac{1}{4}\frac{\e\nu_0}{|D|+\nu_0}$ fits using
\[ (|D|+\nu_0)\nu_{\vartheta}\ge |D|\nu_0+ \nu_0 \e (|\beta|+|\gamma|+|\delta|-1)- \nu_0 D\ge \nu_0 \e (|\beta|+|\gamma|+|\delta|-1). \]
Then we decompose
\[ \nu_{\vartheta} \ge \frac{\nu_{\vartheta}}{2} + \frac{\nu_0}{4} + \frac{\nu_{\vartheta}}{4} \ge  \eta (|\beta|+|\gamma|+|\delta|-1) + \frac{\nu_0}{2} + \frac{\nu_{\vartheta}}{4}, \]
which leads to 
\begin{align*}
h(\sigma,e^{-t_0},1) & \le 
 \sum_{\vartheta} B_{\vartheta} \left< \alpha \right>^{s+1}(|\beta| + |\gamma|+|\delta|) \\
 & \qquad \times e^{-\nu_0 t_0/2} (\sigma e^{-\eta t_{0}})^{|\beta| + |\gamma| + |\delta| -1} e^{-\nu_{\vartheta} t_0/4} \\
 & \le e^{-\nu_0t_0/2} h(\sigma e^{-\eta t_{0}},e^{-t_0/4},1).
 \end{align*}
 Since by Lemma \ref{lmlienlinY}, we have $\| {\bs v}_L \|_{\q Y_{s,t_{0}}^{0}}\le C\| v_0 \|_{H^s(\m S^{d-1})}$ uniformly on $t_0\ge 0$, applying the previous estimate with $\sigma=C \| {\bs v}_L \|_{\q Y_{s,t_{0}}^{0}}$ gives  $h(C \| {\bs v}_L \|_{\q Y_{s,t_{0}}^{0}},e^{-t_0},1)  \le e^{-\nu_0t_0/2} h(C \| v_0 \|_{H^s(\m S^{d-1})}e^{-\eta t_{0}},e^{-t_0/4},1).$
As $h(\cdot,\cdot,1)$ is defined and bounded on a neighbourhood of $(0,0)$, this last expression is finite and arbitrary small for large $t_0$.

For (3), recall the estimate \eqref{est:Hs_Yst0_exp_2d} of Lemma \ref{lmlienlinY}, which is uniform for $t_0\ge t_1\ge 0$:
\begin{gather} 
\| {\bs v}_L\|_{\q Y_{s,t_0}^{t_{1}}}\le  C e^{-\left(\ell_0+\frac{d-2}{2}\right)t_{1}}\| v_0 \|_{H^s(\m S^{d-1})}.
\end{gather}
Since $h_1$ has a positive radius of convergence, we can  fix $t_1 \ge 0$ large enough so that $h_1 (e^{-\e t_1} \| v_0 \|_{H^s(\m S^{d-1})})$ is finite.

Using Claim \ref{cl:h_h1}, for $t_1$ as above and any $t_0 \ge t_1$, and as all coefficients are positive, there hold
\begin{align*}
h(C \| {\bs v}_L \|_{\q Y_{s,t_{0}}^{t_{1}}},e^{-(t_0-t_1)},e^{-t_1}) &\le h(C e^{-(\ell_0 + \frac{d-2}{2}) t_1} \| {\bs v}_L \|_{\q Y_{s,t_{0}}^{t_{1}}},e^{-(t_0-t_1)},e^{-t_1}) \\
& \le  e^{Dt_1} e^{-\nu_0(t_0-t_1)} h_1 (C e^{-\e t_1} \| v_0 \|_{H^s(\m S^{d-1})}).
\end{align*}
In the current case, $\nu_0 >0$: hence it suffice to choose $t_0-t_1$ so large that the $e^{-\nu_0(t_0-t_1)}$ factor absorbs the $e^{Dt_1}$ factor and make the right-hand side small.
%
% So, we have the estimate
% \begin{multline*}
% h(C \| {\bs v}_L \|_{\q Y_{s,1,t_{0}}^{t_{1}}},1,e^{-t_1}) \\
% \le  \sum_{\vartheta \in \Theta} B_{\vartheta} \left< \alpha \right>^{{s+1}}(|\beta| + |\gamma| + |\delta|) (C\| v_0 \|_{H^s(\m S^{d-1})})^{|\beta| + |\gamma| + |\delta| -1}  e^{- t_1 m_{\vartheta}}
% \end{multline*}
% with $m_{\vartheta} := \kappa_{\vartheta}+\left(\ell_0+\frac{d-2}{2}\right)\left(|\beta| + |\gamma| + |\delta| -1\right)\ge \e \left(|\beta| + |\gamma| + |\delta| -1\right)-D$.
% In particular, we get
% \begin{align}
% \label{estimht1t0}
% h(C \| {\bs v}_L \|_{\q Y_{s,t_{0}}^{t_{1}}},1,e^{-t_1})\le 
%  e^{Dt_1}h_1(Ce^{-\e t_1} \| v_0 \|_{H^s(\m S^{d-1})}).
% \end{align}
% Since $h_1$ has a positive radius of convergence, we can  fix $t_1$ large enough so that the right hand side is finite (possibly large). Then, it suffices to observe that
% \[ h(C \| {\bs v}_L \|_{\q Y_{s,t_{0}}^{t_{1}}},e^{-(t_0-t_1)},e^{-t_1})\le e^{-\nu_0(t_0-t_1)} h(C \| {\bs v}_L \|_{\q Y_{s,t_{0}}^{t_{1}}},1,e^{-t_1}), \]
% which, since $\nu_0>0$, can be make small for large $t_0-t_1$. 
\end{proof}

There are some limit situations where the assumptions of the previous theorem are not fulfilled, but we can still build a solution. This is the case for example if the first iterate of the Duhamel formula (that is $\Psi(0)$) is better than expected and decays in time: a convenient space is given by the norm
\begin{align} \label{def:Z}
 \| {\bs v} \|_{\q X_{\nu,t_0}^{t_1}} = \sup_{t \ge t_0} e^{\nu (t-t_{1})}  \| {\bs v} \|_{\q Y_{s,t}^{t_{1}}}, \end{align}
given $\nu >0$ and $t_0 \ge t_1 \ge 0$. We also simply denote $\q X_{\nu,t_0}^{t_0} = \q X_{\nu,t_0}$. $\q X_{\nu,t_0}^{t_1}$ defines also a Banach space (as we done for the other space times spaces like $\q Z_s^\infty$ etc.). Here is our result.

\begin{thm}[Conformal variables 2] \label{th:conf2}
Let $s > \frac{d}{2} + \frac{3}{2}$ and $\nu>0$. Then, there exists $C>0$ and $\eta>0$ so that the following holds. 

Let $ v_0 = (v_{i,0})_{1 \le i \le N} \in H^s(\m S^{d-1})$ and $\bs v_{L} := \q S(\cdot) ( v_{0} ,-\gh D  v_{0})$. Recall that we assumed \eqref{hypokabcnu}; also assume that for $t_0 \ge 0$, 
\begin{equation} \label{est:psi(0)_exp}
\Psi(0) \in \q X_{\nu,t_0}.
\end{equation}
($\Psi$ is defined in \eqref{def:Psi}; $\nu$ may depend on ${\bs v}_0$). We finally  assume $t_{0}\ge t_{1}\ge 0$ are such that 
\begin{gather} \label{est:h1_small}
h(C N,e^{-(t_0-t_1)},e^{-t_1}) \le  \eta, \quad \text{where} \\
N: = \max\left( e^{-\nu(t_0-t_1)} \| \Psi(0) \|_{\q X_{\nu,t_0}^{t_1}}, \| \bs v_L \|_{\q Y_{s,t_{0}}^{t_1}} \right). \nonumber
\end{gather}
Then, there exists a solution $\bs v \in \q Y_{s,t_0}^{t_1}$ (defined for times $t \ge t_0$) to the integral formulation of the system \eqref{eq:sys_conf}, with final condition
\begin{align}\label{cvceinftysolnu} 
\| \bs v - \bs v_{L}  \|_{\q Y_{s,t}^{t_1}}\lesssim e^{-\nu t} \to 0 \quad \text{as} \quad t \to +\infty. 
\end{align}
Furthermore, one has the more precise convergence
\begin{align}\label{est:sol_conf_nu+nu_0}
\| \bs v - \bs v_{L}  - \Psi(0) \|_{\q Y_{s,t}^{t_1}}\lesssim e^{-(\nu+\nu_0) t}.
\end{align}
Uniqueness holds for $\bs v - \bs v_{L}  - \Psi(0)$ small in $\q X_{\nu,t_0}^{t_1}$.
\end{thm}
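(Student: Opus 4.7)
The plan is to run a Banach fixed point for $\Psi$ in the smaller Banach space $\q X_{\nu,t_0}^{t_1}$, exploiting the hypothesis \eqref{est:psi(0)_exp} that $\Psi(0)$ already lies in this exponentially decaying space. Since $\q X_{\nu,t_0}^{t_1}$ continuously embeds into $\q Y_{s,t_0}^{t_1}$ with $\| \bs w \|_{\q Y_{s,t}^{t_1}} \le e^{-\nu(t-t_1)} \| \bs w \|_{\q X_{\nu,t_0}^{t_1}}$ for $t \ge t_0$, a fixed point here immediately furnishes a solution of the Duhamel integral equation with the decay \eqref{cvceinftysolnu}. The natural ball to work in is
\[ Y = \{ \bs r \in \q X_{\nu,t_0}^{t_1} : \| \bs r \|_{\q X_{\nu,t_0}^{t_1}} \le 2 \| \Psi(0) \|_{\q X_{\nu,t_0}^{t_1}} \}, \]
for which any $\bs r \in Y$ satisfies $\| \bs r \|_{\q Y_{s,t}^{t_1}} \le 2 N e^{-\nu(t-t_0)} \le 2N$ whenever $t \ge t_0$ (recall the definition of $N$), so that the quantity $M(t)$ from \eqref{defMt} is uniformly bounded by $2N$ on $Y$.

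The core calculation is to apply Lemma \ref{lminegG} to two points $\bs r_1, \bs r_2 \in Y$. Using the decay $\| \bs r_1 - \bs r_2 \|_{\q Y_{s,\tau}^{t_1}} \le e^{-\nu(\tau-t_1)} \| \bs r_1 - \bs r_2 \|_{\q X_{\nu,t_0}^{t_1}}$, the weight inside the integral becomes $(1+\tau-t) e^{-(\nu+\nu_\vartheta)(\tau-t_1)}$; since $\nu+\nu_\vartheta \ge \nu > 0$, the integral is bounded by a universal constant times $e^{-(\nu+\nu_\vartheta)(t-t_1)}$, as in \eqref{est:exp_tail}. Multiplying by $e^{\nu(t-t_1)}$ leaves a factor $e^{-\nu_\vartheta(t-t_1)} \le e^{-\nu_\vartheta(t_0-t_1)}$ for $t \ge t_0$, and summing in $\vartheta$ I recognize exactly the series $h(C N, e^{-(t_0-t_1)}, e^{-t_1})$, which by \eqref{est:h1_small} is bounded by $\eta$. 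This produces a contraction estimate
\[ \| \Psi(\bs r_1) - \Psi(\bs r_2) \|_{\q X_{\nu,t_0}^{t_1}} \le C_1 \eta \, \| \bs r_1 - \bs r_2 \|_{\q X_{\nu,t_0}^{t_1}}, \]
and, combined with $\| \Psi(\bs r) \|_{\q X_{\nu,t_0}^{t_1}} \le \| \Psi(\bs r) - \Psi(0) \|_{\q X_{\nu,t_0}^{t_1}} + \| \Psi(0) \|_{\q X_{\nu,t_0}^{t_1}}$, shows that for $\eta \le 1/(2C_1)$ the map $\Psi$ sends $Y$ into itself and is a $\tfrac{1}{2}$-contraction. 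Banach fixed point produces a unique $\bs r \in Y$ with $\Psi(\bs r) = \bs r$; setting $\bs v = \bs v_L + \bs r$ gives the required solution via Lemma \ref{lmDuhamelinfty}, and the $\q X_{\nu,t_0}^{t_1}$-bound on $\bs r$ translates exactly to \eqref{cvceinftysolnu}.

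For the refined estimate \eqref{est:sol_conf_nu+nu_0}, I would exploit the identity $\bs r - \Psi(0) = \Psi(\bs r) - \Psi(0)$ and re-apply Lemma \ref{lminegG} with $\bs w = 0$, but now using the already-established decay $\| \bs r \|_{\q Y_{s,\tau}^{t_1}} \lesssim e^{-\nu(\tau-t_1)}$. The improvement comes from splitting $e^{-\nu_\vartheta(t-t_1)} = e^{-\nu_0(t-t_1)} e^{-(\nu_\vartheta-\nu_0)(t-t_1)}$ and bounding the second factor by $e^{-(\nu_\vartheta-\nu_0)(t_0-t_1)}$ for $t \ge t_0$; the residual series is $e^{\nu_0(t_0-t_1)} h(CN, e^{-(t_0-t_1)}, e^{-t_1}) \le \eta e^{\nu_0(t_0-t_1)}$, and the extra factor $e^{-\nu_0(t-t_1)}$ gives $e^{-(\nu+\nu_0)t}$ decay after absorbing $t_0, t_1$ into the implicit constant. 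Uniqueness in the class $\{\bs v : \bs v - \bs v_L - \Psi(0) \text{ small in } \q X_{\nu,t_0}^{t_1}\}$ follows by checking that any such $\bs v - \bs v_L$ lies in the ball $Y$ (perhaps after enlarging its radius), where the fixed point is unique.

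The main delicate point is the careful bookkeeping of the three exponential parameters $\nu$, $\nu_\vartheta$, and $\kappa_\vartheta$: the hypothesis \eqref{est:h1_small} controls $h$ only at the specific triple $(CN, e^{-(t_0-t_1)}, e^{-t_1})$, and any maneuver that would require evaluating $h$ at larger arguments (e.g.\ at $\rho = 1$) is not allowed since the series can fail to converge there. The trick is to always factor exponentials so that only the specified arguments appear, which is why the improvement in Step 4 must be extracted through the split $\nu_\vartheta = \nu_0 + (\nu_\vartheta - \nu_0)$, bounding the second piece uniformly on $t \ge t_0$ rather than letting it appear inside a fresh instance of $h$.
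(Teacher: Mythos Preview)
Your argument is correct and follows the same overall strategy as the paper: a Banach fixed point in the exponentially weighted space $\q X_{\nu,t_0}^{t_1}$, with Lemma \ref{lminegG} supplying both the contraction and size estimates, and the extra $e^{-\nu(\tau-t_1)}$ decay inside the integral giving the needed integrability despite $\nu_0$ being possibly $0$.

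The only organizational difference is that the paper introduces the shifted map $\tilde\Psi(\bs w)=\Psi(\Psi(0)+\bs w)-\Psi(0)$ and seeks its fixed point in a ball of $\q X_{\nu,t_0}^{t_1}$ centered at $0$, whereas you apply the fixed point to $\Psi$ itself in the ball $\{\|\bs r\|_{\q X_{\nu,t_0}^{t_1}}\le 2\|\Psi(0)\|_{\q X_{\nu,t_0}^{t_1}}\}$. These are equivalent (if $\bs r$ is the fixed point of $\tilde\Psi$ then $\Psi(0)+\bs r$ is the fixed point of $\Psi$, and conversely). The paper's recentering makes the refined decay \eqref{est:sol_conf_nu+nu_0} and the stated uniqueness class (``$\bs v-\bs v_L-\Psi(0)$ small'') drop out directly from the fixed point, while you recover them a posteriori via $\bs r-\Psi(0)=\Psi(\bs r)-\Psi(0)$; both routes are fine. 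Your careful splitting $e^{-\nu_\vartheta(t-t_1)}=e^{-\nu_0(t-t_1)}e^{-(\nu_\vartheta-\nu_0)(t-t_1)}$ with the second factor bounded at $t=t_0$ is in fact slightly more scrupulous than the paper's version, which writes the bound as $h(CM(t),1,e^{-t_1})$ and thus implicitly assumes convergence of $h$ at $\rho=1$.
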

 
As before, \eqref{est:h1_small} assumes implicitly that the quantity $h(C N,e^{-(t_0-t_1)},e^{-t_1})$ has a finite value. For instance, it happens in the following situations.

\begin{lem} \label{lem:th_conf2_hyp}
Here, we assume $h_1(0)=0$ and $\nu_0\ge0 $.
The assumption \eqref{est:h1_small} is satisfied for instance if either
\begin{enumerate}
\item $\| v_{0}\|_{H^s(\m S^{d-1})}$ and $\| \Psi(0) \|_{{\q X_{\nu,0}} }$ are small enough, and $t_{0}=t_{1}=0$.

\item $\Psi(0) \in \q X_{\nu,0}$, and there exists $\ell_0 \in \m N$ such that $\ds P_{\ell} v_0 = 0$ for $\ell< \ell_0\in \N$ and  $h$ satisfies \eqref{infkvartheta} with $A = \ell_0 + \frac{d-2}{2}$, $\e >0$ and $D =0$; and $t_1$ and $t_0-t_1$ are large enough (depending on $v_0$).
\end{enumerate}
\end{lem}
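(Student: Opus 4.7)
The plan is to verify the hypothesis \eqref{est:h1_small} of Theorem \ref{th:conf2} under each of the two sets of assumptions. Three tools will play the main role: Lemma \ref{lmlienlinY} (giving $\|\bs v_L\|_{\q Y_{s,t_0}^{t_1}}\lesssim \|v_0\|_{H^s}$, with an exponential gain when the lower spherical modes of $v_0$ vanish), the simple monotonicity comparison $\|\Psi(0)\|_{\q X_{\nu,t_0}^{t_1}} \le e^{-\nu t_1}\|\Psi(0)\|_{\q X_{\nu,0}}$ (a direct consequence of \eqref{inegYtri} applied inside the sup defining the $\q X$ norm), and Claim \ref{cl:h_h1} (which, under \eqref{infkvartheta}, reduces estimating $h$ to estimating the one-variable series $h_1$). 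Since we assume $h_1(0)=0$ and $h_1$ is convergent near $0$, it will be enough to show that the argument of $h_1$ can be made as small as we wish.

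For part (1), with $t_0=t_1=0$ one has $h(CN,1,1)=h_1(CN)$. From \eqref{est:Hs_Yst0}, $\|\bs v_L\|_{\q Y_s}\lesssim \|v_0\|_{H^s}$, so
\[ N \le \max\bigl(\|\Psi(0)\|_{\q X_{\nu,0}},\ C\|v_0\|_{H^s}\bigr) \]
is small by assumption; continuity of $h_1$ at zero then gives $h_1(CN) \le \eta$.

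For part (2), set $T:=t_0-t_1$ and $A:=\ell_0+\frac{d-2}{2}$. By \eqref{est:Hs_Yst0_exp_2d} and the comparison above,
\[ \|\bs v_L\|_{\q Y_{s,t_0}^{t_1}}\lesssim e^{-At_1}\|v_0\|_{H^s}, \qquad e^{-\nu T}\|\Psi(0)\|_{\q X_{\nu,t_0}^{t_1}} \le e^{-\nu t_0}\|\Psi(0)\|_{\q X_{\nu,0}}, \]
so $CN \le e^{-At_1}\sigma$ with $\sigma := C^2\|v_0\|_{H^s}+Ce^{At_1-\nu t_0}\|\Psi(0)\|_{\q X_{\nu,0}}$. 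Applying Claim \ref{cl:h_h1} with the parameters $(A,\e,D=0)$ supplied by \eqref{infkvartheta}, and using $e^{-\nu_0 T}\le 1$ (since $\nu_0\ge 0$), I would obtain
\[ h(CN,e^{-T},e^{-t_1}) \le h_1\bigl(e^{-\e t_1}\sigma\bigr) = h_1\Bigl(C^2 e^{-\e t_1}\|v_0\|_{H^s}+Ce^{(A-\e-\nu)t_1-\nu T}\|\Psi(0)\|_{\q X_{\nu,0}}\Bigr). \]
The plan will then be to first choose $t_1$ large (depending on $\|v_0\|_{H^s}$) so as to make $e^{-\e t_1}\|v_0\|_{H^s}$ as small as needed, and, with $t_1$ fixed, to choose $T$ large (depending on $t_1$ and on $\|\Psi(0)\|_{\q X_{\nu,0}}$) so that $e^{-\nu T}$ absorbs the remaining factor $e^{(A-\e-\nu)t_1}$. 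Continuity of $h_1$ at zero then concludes.

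The main obstacle will be the antagonistic roles of $t_1$ and $T$: they appear with opposite signs in the exponent $(A-\e-\nu)t_1-\nu T$, which forbids a one-shot choice of both parameters. This is exactly why the statement reads \emph{``$t_1$ and $t_0-t_1$ large enough''}: the two quantifiers are to be used sequentially, which restores the freedom to first neutralize the $\|v_0\|_{H^s}$ contribution by a large $t_1$, and only afterwards kill the $\|\Psi(0)\|_{\q X_{\nu,0}}$ contribution by a much larger $T$.
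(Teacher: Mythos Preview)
Your proof is correct and follows essentially the same route as the paper's: both use Lemma \ref{lmlienlinY} for the decay of $\bs v_L$, the comparison $\|\Psi(0)\|_{\q X_{\nu,t_0}^{t_1}}\le e^{-\nu t_1}\|\Psi(0)\|_{\q X_{\nu,0}}$ via \eqref{inegYtri}, and Claim \ref{cl:h_h1} to reduce to $h_1$. The only organizational difference is that the paper first chooses $t_0$ so that $e^{-\nu t_0}\|\Psi(0)\|_{\q X_{\nu,0}}\le \|\bs v_L\|_{\q Y_{s,t_1}^{t_1}}$, thereby making $N$ dominated by the $\bs v_L$ term alone before invoking the Claim, whereas you bound $CN$ by a sum and carry both contributions through; the sequential choice of $t_1$ then $T$ is identical in spirit. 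One small point you leave implicit (and the paper spells out) is the monotonicity of $h$ in its first argument, needed to pass from $h(CN,\cdot,\cdot)$ to $h(e^{-At_1}\sigma,\cdot,\cdot)$; this is immediate from $B_\vartheta\ge 0$ and $|\beta|+|\gamma|+|\delta|\ge 1$.
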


%\begin{remark}
%We could also make other smallness assumptions than \eqref{est:h1_small} (involving $h$ rather than $h_1$), but this one is enough for our purposes.
%\end{remark}

\begin{proof}
Let $t_{0}\ge t_{1}\ge 0$ as in the assumption. The idea is to perform a fixed point argument on the map $\tilde \Psi$ defined by 
\begin{align}\label{defpsitilde} \tilde \Psi(\bs v) = \Psi(\Psi(0)+\bs v) - \Psi(0), 
\end{align}
in a ball of $\q X_{\nu,t_0}^{t_1}$. We apply Lemma \ref{lminegG}, and we get, for $t \ge t_0$ and denoting $M(t)=\max\left( \| {\bs v} + \Psi(0) \|_{\q Y_{s,t}^{t_{1}} },\| {\bs v}_L \|_{\q Y_{s,t}^{t_{1}} } \right)$:
\begin{align*}
\| \tilde \Psi ({\bs v})  \|_{\q Y_{s,t}^{t_{1}}} & = \| \Psi (\Psi(0)+{\bs v})- \Psi(0)  \|_{\q Y_{s,t}^{t_{1}}}\\ 
& \lesssim  \sum_{\vartheta\in \Theta} B_{\vartheta} \left< \alpha \right>^{s+1} e^{-\kappa_{\vartheta}t_1 } (|\beta| + |\gamma|+|\delta|) (C M(t))^{|\beta| + |\gamma|+|\delta|-1} \\
& \qquad \qquad \times \int_t^{+\infty}\| \Psi(0)+{\bs v} \|_{\q Y_{s,\tau}^{t_{1}}}  (1+ \tau-t) e^{- \nu_{\vartheta}(\tau-t_{1})} d\tau \\
& \lesssim  \sum_{\vartheta \in \Theta} B_{\vartheta} \left< \alpha \right>^{s+1} e^{-\kappa_{\vartheta}t_1} (|\beta| + |\gamma| +|\delta|) (C M(t))^{|\beta| + |\gamma| +|\delta| -1} \\
&  \qquad \qquad \times \| \Psi(0)+ {\bs v}\|_{\q X_{\nu,t}^{t_1}} \int_t^{+\infty}  (1+\tau-t)e^{- (\nu_{\vartheta}+\nu)(\tau-t_{1})} d\tau \\
& \lesssim  \sum_{\vartheta \in \Theta} B_{\vartheta} \left< \alpha \right>^{s+1} e^{-\kappa_{\vartheta}t_1} (|\beta| + |\gamma| +|\delta|) (C M(t))^{|\beta| + |\gamma| +|\delta|-1} \\
& \qquad \qquad  \times \| \Psi(0)+{\bs v} \|_{\q X_{\nu,t}^{t_1}} e^{- (\nu_{\vartheta}+\nu)(t-t_{1})}.
\end{align*} 
We use used \eqref{est:exp_tail},  with an implicit constant dependent on $\nu>0$. This yields, when applied to $t=t_0$,
\begin{align}
\| \tilde \Psi ({\bs v})  \|_{\q X_{\nu,t_0}^{t_{1}}} & \lesssim  \sum_{\vartheta \in \Theta} B_{\vartheta} \left< \alpha \right>^{s+1}(|\beta| + |\gamma|+|\delta|) e^{-\kappa_{\vartheta}t_1 } e^{- \nu_{\vartheta}(t_0-t_{1})}  (C M(t_0))^{|\beta| + |\gamma|+|\delta|-1} \nonumber \\
& \qquad \qquad \times \left(\| {\bs v} \|_{\q X_{\nu,t_0}^{t_1}}+\| \Psi(0) \|_{\q X_{\nu,t_0}^{t_1}}\right) \nonumber \\
&  \lesssim  h(C M(t_0),e^{-(t_0-t_1)},e^{-t_1})\left(\| {\bs v} \|_{\q X_{\nu,t_0}^{t_1}}+\| \Psi(0) \|_{\q X_{\nu,t_0}^{t_1}}\right). \label{est:tPsi_vtheta}
\end{align} 
This useful for the fixed point argument. For the more precise convergence, we go back to the next to last bound, and derive the sharper bound (recall $\nu_{\vartheta}\ge \nu_0$), for all $t \ge t_0$,
\begin{align}
\| \tilde \Psi ({\bs v})  \|_{\q X_{\nu+\nu_0,t}^{t_{1}}} & \lesssim  \sum_{\vartheta \in \Theta} B_{\vartheta} \left< \alpha \right>^{s+1}(|\beta| + |\gamma|+|\delta|) e^{-\kappa_{\vartheta}t_1 } (C M(t))^{|\beta| + |\gamma|+|\delta|-1} \\
& \qquad \qquad \times \left(\| {\bs v} \|_{\q X_{\nu,t}^{t_1}}+\| \Psi(0) \|_{\q X_{\nu,t}^{t_1}}\right) \nonumber \\
&  \lesssim  h(C M(t),1,e^{-t_1}) \left(\| {\bs v} \|_{\q X_{\nu,t_0}^{t_1}}+\| \Psi(0) \|_{\q X_{\nu,t_0}^{t_1}}\right). \label{est:tPsi_v}
\end{align} 
We now turn to the difference estimate: using again Lemma \ref{lminegG} and denoting 
\[ N(t)= \max\left( \| \Psi(0)+ {\bs v} \|_{\q Y_{s,t}^{t_{1}}},  \| \Psi(0)+ {\bs w} \|_{\q Y_{s,t}^{t_{1}} }, \| {\bs v}_L \|_{\q Y_{s,t}^{t_{1}} } \right) \]
there hold:
\begin{align*}
\| \tilde \Psi ({\bs v}) -  \tilde \Psi ({\bs w}) \|_{\q Y_{s,t}^{t_{1}}} & = \| \Psi (\Psi(0)+{\bs v}) - \Psi(\Psi(0)+{\bs w}) \|_{\q Y_{s,t}^{t_{1}}}\\ 
& \lesssim  \sum_{\vartheta \in \Theta} B_{\vartheta}\left< \alpha \right>^{s +1} e^{-\kappa_{\vartheta}t_1} (|\beta| + |\gamma|+|\delta|) (C N(t))^{|\beta| + |\gamma|+|\delta|-1} \\
& \qquad \qquad \times \int_t^{+\infty}\| {\bs v} - {\bs w} \|_{\q Y_{s,\tau}^{t_{1}}}( 1+ \tau-t)  e^{- \nu_{\vartheta}(\tau-t_{1})} d\tau.
\end{align*}
As before, we infer
\begin{align*}
\| \tilde \Psi ({\bs v}) -  \tilde \Psi ({\bs w})  \|_{\q X_{\nu,t_0}^{t_{1}}} & \lesssim h(C N(t_0),e^{-(t_0-t_1)},e^{-t_1}) \| {\bs v}- {\bs w} \|_{\q X_{\nu,t_0}^{t_1}}.
\end{align*} 

Consider
\[ Y = \left\{ \bs w \in \q X_{\nu,t_0}^{t_{1}} : %\begin{array}{c}
%\| {\bs w} \|_{\q Y_{s,t_0}^{t_1}} \le \max(\| \Psi(0) \|_{\q Y_{s,t_0}^{t_{1}} }, \| {\bs v}_L \|_{\q Y_{s,t_0}^{t_{1}} }),  
\| {\bs w} \|_{\q X_{\nu,t_0}^{t_1}} \le \max(\| \Psi(0) \|_{\q X_{\nu,t_0}^{t_{1}} },\| {\bs v}_L \|_{\q Y_{s,t_0}^{t_{1}} })
%\\
%\| {\bs w} \|_{\q Y_{s,t_0}^{t_1}} \le \max(\| \Psi(0) \|_{\q Y_{s,t_0}^{t_{1}} }, \| {\bs v}_L \|_{\q Y_{s,t_0}^{t_{1}} } )
%\end{array}  
\right\}. \]
If $\bs v \in Y$, then, since $\nu>0$ and $t_0\ge t_1$, we have
\begin{align*}
\| \bs v \|_{\q Y_{s,t_0}}^{t_{1}} & \le e^{-\nu (t_0-t_1)} \| \bs v \|_{\q X_{\nu,t_0}^{t_{1}}}\le e^{-\nu (t_0-t_1)}\max(\| \Psi(0) \|_{\q X_{\nu,t_0}^{t_{1}} }, \| {\bs v}_L \|_{\q Y_{s,t_0}^{t_{1}} })\\
& \le \max(e^{-\nu (t_0-t_1)}\| \Psi(0) \|_{\q X_{\nu,t_0}^{t_{1}} },\| {\bs v}_L \|_{\q Y_{s,t_0}^{t_{1}} }).
\end{align*}
Therefore, given $\bs v, \bs w \in Y$, there hold
\begin{align*}
M(t_0),  N(t_0) & \le  \| \Psi(0) \|_{\q Y_{s,t_0}^{t_{1}} } +  \max(e^{-\nu(t_0-t_1)} \| \Psi(0) \|_{\q X_{\nu,t_0}^{t_{1}} }, \| {\bs v}_L \|_{\q Y_{s,t_0}^{t_{1}} }) \\
& \le 2 \max(e^{-\nu(t_0-t_1)} \| \Psi(0) \|_{\q X_{\nu,t_0}^{t_{1}} }, \| {\bs v}_L \|_{\q Y_{s,t_0}^{t_{1}} }) = 2N. 
\end{align*} 
Therefore $h(C M(t_0),e^{-(t_0-t_1)},e^{-t_1})$, $h(C N(t_0),e^{-(t_0-t_1)},e^{-t_1}) \le  \eta$. If $\eta$ is small enough, as $\nu_0 \ge 0$, the previous estimates show that $\tilde \Psi$ is a contraction in the complete space $Y$ and so has a (unique) fixed point ${\bs r}$ there: hence $\bs v : = \bs v_L + \Psi(0) + \bs r$  has the required properties. Indeed, from definition \eqref{defpsitilde}, $\bs m:=\Psi(0) + \bs r$ satisfies \[ \bs m=\Psi(0)+\tilde \Psi(\bs r) = \Psi(\Psi(0)+\bs r) =\Psi(\bs m). \]
The same arguments as in the end of the proof of Theorem \ref{th:conf} allow to conclude that $\bs v$ satisfies the integral formulation of \eqref{eq:sys_conf}. 
 
It remains to check the decay estimates. From \eqref{est:tPsi_v}, ${\bs v}-{\bs v_{L}}-\Psi(0) = {\bs r} = \tilde \Psi(\bs r) \in \q X_{\nu+\nu_0 ,t_0}^{t_1}$, which yields estimate \eqref{est:sol_conf_nu+nu_0}. Since $\nu_0\ge 0$, the combination of \eqref{est:psi(0)_exp} and \eqref{est:sol_conf_nu+nu_0} give \eqref{cvceinftysolnu}. Uniqueness follows from uniqueness of the fixed point. 
%
%We can see that $\underline{\bs r}$ satisfies
%\[
%\partial_{tt} \underline{\bs r} - \gh D^2 \underline{\bs r} =  g(t,y,\Psi(0)+ \underline{r} + \underline v_{L} , \nabla_{t,y} (\Psi(0)+\underline{r}+\underline v_{L}))-g(t,y,\underline v_{L} , \nabla_{t,y} \underline v_{L})
%\]
%so that $\underline{\bs v}=\underline{\bs v_{L}}+\Psi(0)+\underline{\bs r}$ is solution of 
%\[
%\partial_{tt} \underline{\bs v} - \gh D^2 \underline{\bs v} =  g(t,y,\underline v_{L} , \nabla_{t,y} \underline v_{L}) + g(t,y, \underline{\bs v}, \nabla_{t,y} \underline{\bs v})-g(t,y,\underline v_{L} , \nabla_{t,y} \underline v_{L})=g(t,y, \underline{\bs v}, \nabla_{t,y} \underline{\bs v})
%\]
%and is the desired solution.
%
\end{proof}

\begin{proof}[Proof of Lemma \ref{lem:th_conf2_hyp}]
Notice that, due to the assumptions, $h_1(r) = O(r)$ as $r \to 0$.

Case (1) is straightforward as $\| \bs v_L \|_{\q Y_{s,0}^0} \le \| \bs v_0 \|_{H^s(\m S^{d-1})}$.

For case (2): 
Now recall \eqref{est:Hs_Yst0_exp_2d}
\[ \| {\bs v}_L \|_{\q Y_{s,t_{1}}^{t_{1}}} \le C e^{- \left( \ell_0 + \frac{d-2}{2} \right) t_1} \| v_0 \|_{H^s(\m S^{d-1})}. \]
Fix $t_1$ so large that $h_1 \left(C e^{- \left( \ell_0 + \frac{d-2}{2} \right) t_1} \| v_0 \|_{H^s(\m S^{d-1})} \right) \le \eta$.
Now, from the definition \eqref{def:Z} and \eqref{inegYtri}, any $w\in \q X_{\nu,0}$,
\begin{align} 
\| {\bs w} \|_{\q X_{\nu,t_0}^{t_1}} & = \sup_{t \ge t_0} e^{\nu (t-t_{1})}  \| {\bs w} \|_{\q Y_{s,t}^{t_{1}}}\le  \sup_{t \ge t_0} e^{\nu (t-t_{1})}  \| {\bs w} \|_{\q Y_{s,t}}\le e^{-\nu t_{1}} \sup_{t \ge 0} e^{\nu t}  \| {\bs w} \|_{\q Y_{s,t}} \\
& \le e^{-\nu t_{1}}\| \bs w \|_{\q X_{\nu,0}} , 
\end{align}
so that for $w = \Psi(0)$,
\[ e^{-\nu(t_0-t_1)} \| \Psi(0) \|_{\q X_{\nu,t_0}^{t_1}}  \le e^{- \nu t_0}\|  \Psi(0) \|_{\q X_{\nu,0}}. \]
Therefore, we can fix $t_0$ so large that 
\[ e^{- \nu t_0}\|  \Psi(0) \|_{\q X_{\nu,0}} \le \| {\bs v}_L \|_{\q Y_{s,t_{1}}^{t_{1}}}. \]
For this choice of $t_0 \ge t_1$,
Therefore,
\[ N \le \| {\bs v}_L \|_{\q Y_{s,t_{1}}^{t_{1}}} \le  C e^{- \left( \ell_0 + \frac{d-2}{2} \right) t_1} \| v_0 \|_{H^s(\m S^{d-1})}. \]
In the series defining $h$, the coefficients and and the exponents $\nu_\vartheta \ge \nu_0 \ge 0$  are non-negative: hence $h$ is non-decreasing in its first two variables. Therefore,
\[  h(CN, e^{-(t_0-t_1)}, e^{-t_1}) \le  h(C  e^{- \left( \ell_0 + \frac{d-2}{2} \right) t_1} \| v_0 \|_{H^s(\m S^{d-1})}, 1 , e^{-t_1})). \]
Finally, using Claim \ref{cl:h_h1} with $A = \ell_0 + \frac{d-2}{2}$, $\e >0$ and $D=0$, we can conclude
\begin{align*}
h(CN, e^{-(t_0-t_1)}, e^{-t_1}) & \le h_1(C e^{-\e t_1} \| v_0 \|_{H^s(\m S^{d-1})}) \le \eta. \qedhere 
\end{align*}
\end{proof}

%\begin{thm}[Supercritical case] \label{th:conf}
%
%Assume that $f(\rho, \phi) =  \sum_{p,q} a_{p,q} \rho^p f_{p,q}(\phi,\phi)^q$ is convergent on $B(0,R) \times B(0,R)$ and $f_{p,q}(\phi,\tilde \phi)$ are bilinear forms on $\m R^d$.
%
%There exist $\e_0 >0$ such that given $(v_0,-\gh D v_0) \in Z_{s,t_0}$ such that $\| v_0 \|_{Z_{s,t_0}} \le \e_0$, there exist a unique solution $u \in \tilde Z_{s,t_0}$ to the integral formulation of
%\[ \partial_{tt} v + \opD^2 v = g(y,v, \nabla_{t,y,} v), \quad \| v - S(t) (v_0 ,v_1) \|_{Z_{s,t}} \to 0. \]
%
%\end{thm}

We saw in Section \ref{s:actionder} and Lemma \ref{lmderiv} that the operator $D_i$ can be written $D_i =-y_i\left(\opD  - \frac{d-2}{2} \right) + \opR_{i} $ where $\opR_i$ has a better behavior. In particular, we will see in Lemma \ref{lmchgevarinfty} below, that after performing the conformal transform, we can also write the nonlinearity as
\[ f(u(x),\nabla u(x)) = g_{\opR}(t,y,v(t,y), (\partial_t -\opD ) v(t,y), \opR v(t,y)), \quad \text{where } e^t = |x|, \ y = \frac{x}{|x|}, \]
and we have written for short $\opR v(t,y)=(\opR_i v_j(t,y), \dots,\opR_d v(t,y))_{1 \le i \le d, 1 \le j \le N}$ (i.e it acts component by component like $\gh D$). So, we are interested in solving the system on $v = ( v_1, \dots, v_N)$, given by
\begin{equation} \label{eq:sys_confrefin}
\partial_{tt} v - \gh D^2 v =  g_{\opR} (t, y, v, (\partial_t -\opD ) v,\opR v)
\end{equation}
or equivalently,
\begin{align} 
\forall i \in \llbracket 1, N \rrbracket, \quad \partial_{tt} v_i - \gh D^2 v_i = g_{\opR,i} (t, y, v,  (\partial_t - \opD) v, \opR  v),
\end{align}
for a smooth function $g_{\opR}= (g_{\opR,1}, \dots, g_{\opR,N})$, and $t \in \m R$, $v \in \m R^N$, $\partial_t v = (\partial_t v_1, \dots, \partial_t v_N) $, $\opR  v = (\opR  v_1, \dots, \opR  v_N)$.
%and for $i \in   \llbracket 1, N \rrbracket$,  $\partial_t v_i \in \m R$.

We make similar definitions for $g_{\opR}$ as in the previous section. The only difference will be that $w$ and $z$ are meant for different derivatives of $v$: $w_i$ will have the place of $(\partial_t-\opD) v_i$ and $z_{ij}$ that of $\opR_j v_i$
 so that $(z_{ij})_{1 \le j \le d}$ will correspond to $\opR v_i$.

We assume that $g_{\opR}$ admits an expansion as in \eqref{Def:g_series} with coefficients $b_{i,\alpha, \beta, \gamma, \delta, \opR}$ which satisfies the decay \eqref{estimaabetaiota} for some rates $\kappa_{\vartheta,\opR} \in \m R$.

We denote the new exponent $\nu_{\vartheta,\opR}$ that will play the same role as $\nu_{\vartheta}$ in this new context
\begin{equation} \label{def:nu_arefin}
    \nu_{\vartheta,\opR}=\kappa_{\vartheta}- |\alpha| + (|\beta|+ |\gamma| -1) \frac{d-2}{2} +|\delta|\frac{d}{2}.
\end{equation} 

\begin{remark}
 \label{rkxplaincoeffrefin}  
As we explained before in Remark \ref{rkxplaincoeff}, the definition of the exponent $\nu_{\vartheta,\opR}$ reflects the exponential decay rate given by any term $ b_{\vartheta} (t) y^\alpha v^\beta w^\gamma z^\delta$ which now corresponds (in the scalar case) to $ b_{\vartheta} (t) y^\alpha u(t)^\beta (\partial_t -\opD u)^\gamma (\opR_i u)^\delta_i$.
 
In parallel to  this new exponent can be explained by the following contributions:
\begin{itemize}
\item $\kappa_{\vartheta}$ is the exponential decay of the constant (in $y$ )$b_{\vartheta}$.
\item The loss $|\alpha|$ comes from the Lemma \ref{lm:polyYst}.
\item A gain $|\delta|$ comes from the gain of $e^{-t}$ for the action of $\opR_i$ described in Lemma \ref{lmderiv} (it is the main difference with the previous case of $\nu_{\vartheta}$.
\item We have a multiplication of $ |\beta|+ |\gamma| +|\delta|$ functions in $Y_{s-1,t}$, so, it creates an exponential gain of factor $ (|\beta|+ |\gamma| +|\delta|-1) \frac{d-2}{2} $ due to Lemma \ref{lmprodanalytique}.
\end{itemize}
Adding all the above yield the rate \eqref{def:nu_arefin}.
These exponents will be crucial in Lemma \ref{lminegG} below. 

We emphasize that, due to their better behavior, we \emph{gain a factor} $2|\delta|$ when using the operators $\opR_i$ instead of the operators $D_i$. 
\end{remark}

Denote
\begin{equation} \label{def:nu0refin}
\nu_{0,\opR} = \inf \{  \nu_{\vartheta,\opR} : \vartheta \in \Theta, \  B_{\vartheta} \ne 0 \}.
\end{equation}
We finally define \emph{mutatis mutandis} the series $h_{\opR}$ as in with $\nu_{\vartheta}$ and $\kappa_{\theta}$ replaced by $\nu_{\vartheta,\opR}$ and $\kappa_{\vartheta,\opR}$ respectively. We will always assume that for sufficiently small $\sigma, \rho$ and $\varsigma=1$, the series defining $h_{\opR}$ is convergent, in particular
\begin{equation} \label{hypokabcnurefin}
 \nu_{0,\opR} \ge 0.
\end{equation}

\begin{thm}[Conformal variables, refined] \label{thmconfrefin}For equation \eqref{eq:sys_confrefin}, the same results as Theorem \ref{th:conf} and \ref{th:conf2} holds with $\nu_{\vartheta}$, $\nu_{0}$ and $h$ replaced by $\nu_{\vartheta,\opR}$, $\nu_{0,\opR}$ and $h_{\opR}$.
\end{thm}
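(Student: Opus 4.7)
The plan is to run the same scheme as for Theorems~\ref{th:conf} and \ref{th:conf2}, performing a fixed point on the analogous map
\[ \Psi_\opR : \bs v \mapsto \Phi\bigl( g_\opR(t,y, \tilde v, (\partial_t - \opD) \tilde v, \opR \tilde v) \bigr), \qquad \tilde{\bs v} = \bs v + \bs v_L, \]
in the same Banach spaces $\q Y_{s,t_0}^{t_1}$ (for Theorem~\ref{th:conf}) and $\q X_{\nu, t_0}^{t_1}$ (for Theorem~\ref{th:conf2}). The structure of Lemma~\ref{lmDuhamelinfty}, the Duhamel bound \eqref{est:lin_Duhamel}, the product law \eqref{est:prod_ya} and the exponential decay \eqref{estimaabetaiota} are all used identically. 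The only place where the argument genuinely changes is in the analog of the multilinear estimate \eqref{est:mult_Psi} in Lemma~\ref{lminegG}, which must be redone with $\opR_j v_i$ in place of $D_j v_i$ and with $(\partial_t - \opD) v_i$ in place of $\partial_t v_i$.

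First, I would redo the key multilinear bound. The change of bookkeeping between $\partial_t v_i$ and $(\partial_t - \opD) v_i$ is harmless: both lie in $Y_{s-1,\tau-t_1}$ with the same bound controlled by $\| \bs v \|_{\q Y_{s,\tau}^{t_1}}$, since $\| \opD v \|_{Y_{s-1,t}} \le C \| v \|_{Y_{s,t}}$ by Lemma~\ref{lmderiv}. The essential gain comes from replacing the $D_j$ estimate $\| D_j u \|_{Y_{s-1,t}} \le C e^{t} \| u \|_{Y_{s,t}}$ by the $\opR_j$ estimate $\| \opR_j u \|_{Y_{s-1,t}} \le C e^{-t} \| u \|_{Y_{s,t}}$, which in the multilinear control costs $e^{-|\delta|(\tau-t_1)}$ instead of $e^{+|\delta|(\tau-t_1)}$. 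Combined with the $(|\beta|+|\gamma|+|\delta|-1)\frac{d-2}{2}$ gain coming from the algebra property \eqref{estimprodYst}, this yields
\[ (|\beta|+|\gamma|+|\delta|-1) \tfrac{d-2}{2} + |\delta| = (|\beta|+|\gamma|-1) \tfrac{d-2}{2} + |\delta| \tfrac{d}{2}, \]
exactly the rate appearing in the definition \eqref{def:nu_arefin} of $\nu_{\vartheta,\opR}$. This is the only modification to the proof of Lemma~\ref{lminegG}.

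Once the analog of Lemma~\ref{lminegG} is established with $\nu_{\vartheta,\opR}$ in place of $\nu_\vartheta$, the subsequent arguments are copied verbatim. For the analog of Theorem~\ref{th:conf}, choose the same closed set $Y$ as in \eqref{defY}, apply the time tail integration \eqref{est:exp_tail} (which only requires $\nu_{0,\opR} > 0$, not the specific value), and identify the contraction condition $h_\opR(C \| \bs v_L \|_{\q Y_{s,t_0}^{t_1}}, e^{-(t_0-t_1)}, e^{-t_1}) \le \eta$. The hypotheses of Lemma~\ref{lem:conf} translate mutatis mutandis by replacing $h$ by $h_\opR$ and $\nu_\vartheta$ by $\nu_{\vartheta,\opR}$ everywhere. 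The resulting fixed point $\bs r \in Y$ gives a solution $\bs v = \bs v_L + \bs r$ with decay $\| \bs v - \bs v_L \|_{\q Y_{s,t}} \lesssim e^{-\nu_{0,\opR} t}$.

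For the analog of Theorem~\ref{th:conf2}, I would work with the map $\tilde \Psi_\opR(\bs v) = \Psi_\opR(\Psi_\opR(0) + \bs v) - \Psi_\opR(0)$ on a ball of $\q X_{\nu, t_0}^{t_1}$. The same two bounds \eqref{est:tPsi_v} and \eqref{est:tPsi_vtheta} are obtained from the new Lemma~\ref{lminegG}, with $h$ replaced by $h_\opR$; since $\nu_{0,\opR} \ge 0$ is assumed, the finer convergence \eqref{est:sol_conf_nu+nu_0} with rate $\nu + \nu_{0,\opR}$ follows by the same argument. Lemma~\ref{lem:th_conf2_hyp} transposes in the same way. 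The main (minor) technical point to verify is simply that all the sign gymnastics in the proofs of Lemmas~\ref{lem:conf} and \ref{lem:th_conf2_hyp}, notably Claim~\ref{cl:h_h1}, depend only on the monotonicity of $h$ in its first two arguments and on the form of the condition \eqref{infkvartheta}, both of which are insensitive to the $\nu_\vartheta \leftrightarrow \nu_{\vartheta,\opR}$ substitution. Since the arguments do not use the specific formula \eqref{def:nu_a} beyond $\nu_{\vartheta} \ge \nu_0$ and the series convergence of $h_1$, no real obstacle is expected.
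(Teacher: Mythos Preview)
Your proposal is correct and matches the paper's own proof essentially line for line: the paper likewise observes that the only change needed is in the multilinear estimate of Lemma~\ref{lminegG}, where using the $\opR_i$ bound $\| \opR_i u \|_{Y_{s-1,t}} \le C e^{-t} \| u \|_{Y_{s,t}}$ in place of the $D_i$ bound replaces the exponent $|\delta|\frac{d-4}{2}$ by $|\delta|\frac{d}{2}$, after which everything else carries over verbatim. Your additional remarks on how Lemmas~\ref{lem:conf} and~\ref{lem:th_conf2_hyp} transpose are more explicit than the paper's treatment but entirely in the same spirit.
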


\bnp
The proof is exactly the same except in Lemma \ref{lminegG} where we have to estimate instead the following term
\begin{align*}
\MoveEqLeft \| \tilde v(\tau)^\beta (\dot{\tilde v}(\tau))-  \opD\tilde v(\tau)))^\gamma  (\opR \tilde v(\tau)) ^\delta - \tilde w(\tau)^\beta (\dot{\tilde w}(\tau)-\opD \tilde w(\tau))^\gamma (\opR \tilde w(\tau)) ^\delta  \|_{Y_{s-1,\tau-t_{1}}} \\
& \lesssim  (|\beta| + |\gamma| + |\delta|) C_0^{|\beta| + |\gamma| + |\delta|-1} e^{- ((|\beta|+ |\gamma| + |\delta|-1) \frac{d-2}{2} +|\delta| ) (\tau-t_{1})} \\
& \qquad \qquad \times (\| {\bs v} \|_{\q Y_{s,\tau}^{t_{1}}} + \| {\bs w} \|_{\q Y_{s,\tau}^{t_{1}}} +\| {\bs v}_L \|_{\q Y_{s,\tau}^{t_{1}}})^{|\beta|+|\gamma|+|\delta|-1} \| {\bs v} - {\bs w} \|_{\q Y_{s,\tau}^{t_{1}}} \\
& \lesssim  (|\beta| + |\gamma|+|\delta|)(C M(t))^{|\beta| + |\gamma|+|\delta|-1}  e^{- ((|\beta|+ |\gamma| -1) \frac{d-2}{2} + |\delta| \frac{d}{2} )(\tau-t_{1})} \| {\bs v} - {\bs w} \|_{\q Y_{s,\tau}^{t_{1}}}. 
\end{align*}
where we have used similarly the product estimate of Lemma \ref{lmprodanalytique} but we used instead the refined estimates for $\opR_i$ in Lemma \ref{lmderiv} that provide the gain $e^{-t}$ instead of the loss $e^{t}$ for $D_i$.

Once again, the key point is the rate in the $e^{ - (\tau-t_1)}$ factor, with the $|\delta| d/2$ exponent, instead of $|\delta| (d-4)/2$ in \eqref{est:mult_Psi}.
\enp

\section{Scattering close to infinity}
\label{s:scattinfty}

%For ulterior purposes, we state our result for system of equations on the unknowns $v = ( v_1, \dots, v_k)$ (but this is merely a notational inconvenience and the case $k=1$ contains already the essence of the result).

Let $\eta_0 >0$ and
\[ D(0,\eta_0) = \{ (u ,\varpi) \in \m R^{N} \times \mc M_{N,d}(\m R) : |u| , |\varpi| \le \eta_0 \} \]
be a small polydisc centered at $0$ in $\m R^{N} \times \mc M_{N,d}(\m R)$. We consider \[ f = (f_1, \dots, f_N): D(0,\eta_0) \to \m R^N \] an analytic function on $D(0,\eta_0)$: each of its component can be decomposed
\begin{align}
\label{deff} f_i(u,\varpi) = \sum_{(p,q) \in \m N^N \times \m N^{Nd} \setminus (0,0)} a_{i,p,q} u^p \varpi^q
\end{align}
For simplicity, we assume convergence up to the boundary, that is
\begin{gather} \label{est:fDSE}
\forall i \in \llbracket 1, N \rrbracket, \quad \sum_{p,q} |a_{i,p,q}| \eta_0^{|p|+|q|}  < +\infty,
\end{gather}
We consider the system on $u = (u_1, \dots, u_N)$ (defined on subsets of $\m R^d$), given by
\begin{align} 
\label{eq:sys_orig}
\Delta u= f (u, \nabla_{x} u)
\end{align}
that is, for all $i\in \llbracket 1, N \rrbracket$ by
\[ \Delta u_{i}= f_i (u, \nabla_{x} u). \]

Given $u_0 \in H^s(\m S^{d-1})$, recall that from \eqref{def:u_L}, there exist a unique solution $u_L \in \q Z^\infty_s$ of 
\begin{gather} \label{def:u_L2}
\Delta u_L =0 \quad \text{on} \quad \m R^d \setminus B(0,1) \quad \text{and} \quad u_L|_{\m S^{d-1}} = u_0.
\end{gather}
%Indeed, it suffices to consider the conformal transform $v_L$ of $u_L$ (as defined in \eqref{def:conf_uv_infty} and to recall the observation at the beginning of section \eqref{sec:Duh}.

The goal in this paragraph is now to relate $u_L$ to a solution of the nonlinear system \eqref{eq:sys_orig}. For this, we will recast this question via the conformal transform and rely on the abstract result of the previous Section \ref{subsec:conf}.

We start by relating both equations, in the original variables and in conformal variables, recalling the definitions in Section \ref{s:actionder}.

\begin{lem}[Conformal change of variable close to infinity]\label{lmchgevarinfty}
Let $d\ge 2$, $R> 0$, and $f$ as in \eqref{deff}. We define the analytic functions $g$ and $g_{\opR}$ by
 \begin{align*} g(t,y,v,w,z) &=e^{\frac{d+2}{2} t} f \left(e^{-\frac{d-2}{2}t} v, e^{-\frac{d}{2} t} ( - \frac{d-2}{2} v \otimes y + w \otimes y +  z) \right)\\
  g_{\opR}(t,y,v,w,z) &=e^{\frac{d+2}{2} t} f \left(e^{-\frac{d-2}{2}t} v, e^{-\frac{d}{2} t} ( w \otimes y  +  z) \right)
 \end{align*}

where $t \ge 0$, $y \in \m R^d$, $v \in \m R^N$, $w \in \m R^N$, $z \in \mc M_{N,d}(\m R)$ (we will actually only evaluate $g$ for $y \in \m S^{d-1}$; $w$ and $z$ correspond to the time and space components of the gradient, respectively). 
 
For $u\in \mathscr C^{2}(\R^d\setminus B(0,R), \m R^N)$ with $\nor{u}{W^{1,\infty}(\R^d\setminus B(0,R))} \le \min(\rho,\sigma)$, we have the equivalence 
 \begin{itemize}
 \item
 $u$ solves the equation 
 \begin{align}
 \label{nonlinell}
\Delta u=f(u,\nabla u), \quad x\in \R^d\setminus B(0,R).
\end{align}
\item The map $v$ defined on $[\log(R),+\infty)\times \m S^{d-1}$ by
 $v(t,y)=e^{\frac{(d-2)t}{2}}u(e^ty)$, solves the equation
\begin{align*}
\partial_{tt} v - \gh D^2 v =g (t,y,  v, \partial_t v, \nabla_{y}  v), \quad (t,y)\in [\log(R),+\infty)\times \m S^{d-1}.
\end{align*}
\item The map $v$ defined on $[\log(R),+\infty)\times \m S^{d-1}$ by
 $v(t,y)=e^{\frac{(d-2)t}{2}}u(e^ty)$, solves the equation
\begin{align*}
\partial_{tt} v - \gh D^2 v =g_{\opR} (t,y,  v, (\partial_t-\opD ) v, \opR v), \quad (t,y)\in [\log(R),+\infty)\times \m S^{d-1}.
\end{align*}
\end{itemize}
Moreover, any other function $\tilde {g}$ so that 
\[ g(t,y,v,w,z)=\tilde {g}(t,y,v,w,z) \]
for all $(t,y,v,w)\in\m R \times \m S^{d-1} \times (\m R^N)^2$ and $z\in (T_y \m S^{d-1})^N$ satisfies the same property; this holds also for any function $\tilde {g}_{\opR}$ so that  
\[g_{\opR}(t,y,v,w,z)=\tilde {g}_{\opR}(t,y,v,w,z) \]
for all $(t,y,v,w)\in\m R \times \m S^{d-1} \times (\m R^N)^2$ and $z\in (\R^d)^N$.
\end{lem}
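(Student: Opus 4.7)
The plan is to verify the lemma by a direct computation of the conformal change of variables. The substitution $v(t,y) = e^{\frac{(d-2)t}{2}} u(e^t y)$, equivalent to $u(x) = |x|^{-\frac{d-2}{2}} v(\log|x|, x/|x|)$, is conformal: both $\Delta u$ and $\nabla u$ should admit clean expressions in terms of $v$, $\partial_t v$ and $\nabla_y v$ that match exactly the arguments fed into $f$ in the definitions of $g$ and $g_{\mathcal{R}}$.

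First, I would compute $\Delta u$. Writing $\Delta u = r^{1-d} \partial_r(r^{d-1} \partial_r u) + r^{-2} \Delta_{\mathbb{S}^{d-1}} u$ with $r=e^t$ and plugging in $u = r^{-(d-2)/2} v$, the first-order terms in $t$ cancel and the constant $\left(\frac{d-2}{2}\right)^2$ combines with $-\Delta_{\mathbb{S}^{d-1}}$ into $\mathfrak{D}^2$, yielding the standard identity
\[ \Delta u = e^{-\frac{d+2}{2} t} \left( \partial_{tt} v - \mathfrak{D}^2 v \right). \]

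Second, I would compute the spatial gradient. Using $\partial_{x_i} r = y_i$, $\partial_{x_i} t = y_i / r$ and $\partial_{x_i} y_j = (\delta_{ij} - y_i y_j)/r$ (after extending $v$ radially in a neighborhood of $\mathbb{S}^{d-1}$), so that $\sum_j (\delta_{ij} - y_i y_j) \partial_{y_j} v = D_i v$ with $D_i$ as in \eqref{defDi}, one gets
\[ \partial_{x_i} u = e^{-\frac{d}{2} t} \left[ y_i \left( -\tfrac{d-2}{2} v + \partial_t v \right) + D_i v \right], \]
which matches exactly the expression inside $f$ in the definition of $g$. For $g_{\mathcal{R}}$, I use the identity $D_i v = \mathcal{R}_i v - y_i(\mathfrak{D} - \frac{d-2}{2}) v$ from \eqref{defRi} to rewrite the bracket as $y_i(\partial_t v - \mathfrak{D} v) + \mathcal{R}_i v$, producing the arguments in the definition of $g_{\mathcal{R}}$.

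Combining these two computations and multiplying $\Delta u = f(u,\nabla u)$ through by $e^{\frac{d+2}{2} t}$ gives both equivalent formulations. The smallness hypothesis $\|u\|_{W^{1,\infty}} \le \min(\rho,\sigma)$ ensures that $(u(x),\nabla u(x))$ lies inside the polydisc $D(0,\eta_0)$ where $f$ is defined, so the computation is legitimate at every step and all three equations in the statement are simultaneously meaningful. The ``moreover'' assertion is then immediate, since upon substitution one only ever evaluates $g$ (resp.\ $g_{\mathcal{R}}$) at points $(t,y,v,w,z)$ with $y \in \mathbb{S}^{d-1}$ and $z$ tangential (resp.\ $z \in (\mathbb{R}^d)^N$), so any $\tilde g$ (resp.\ $\tilde g_{\mathcal{R}}$) agreeing with $g$ (resp.\ $g_{\mathcal{R}}$) on these sets produces the same transformed equation. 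I anticipate no serious obstacle: the work is purely computational, and the only subtlety is the bookkeeping of the conformal weights $e^{-\frac{d-2}{2} t}$, $e^{-\frac{d}{2} t}$, $e^{\frac{d+2}{2} t}$ together with the correct use of the algebraic identity relating $D_i$, $\mathcal{R}_i$, and $\mathfrak{D}$.
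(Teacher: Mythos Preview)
Your proposal is correct and follows essentially the same approach as the paper's proof: a direct computation of $\Delta u$ and $\nabla u$ under the conformal substitution $v(t,y)=e^{\frac{(d-2)t}{2}}u(e^ty)$, yielding $(\partial_{tt}-\gh D^2)v = e^{\frac{d+2}{2}t}\Delta u$ and $\partial_{x_i} u = e^{-\frac{d}{2}t}\bigl(y_i(-\tfrac{d-2}{2}v+\partial_t v)+D_i v\bigr) = e^{-\frac{d}{2}t}\bigl(y_i(\partial_t v-\gh D v)+\gh R_i v\bigr)$, followed by the immediate observation for the ``moreover'' part. The only cosmetic differences are the order of the two computations and that the paper uses the radial/angular decomposition $\nabla u = \frac{x}{|x|}\partial_r u + \frac{1}{|x|}\Lambda u$ rather than your chain-rule formulas for $\partial_{x_i} t$ and $\partial_{x_i} y_j$.
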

 
 \begin{remark}
 The point in mentioning $\widetilde{g}$ is to underline the fact that the function $g$ is only applied with $w=\nabla_{y}  v$ whose coordinate lie in $T_{y}\m S^{d-1}$. We could have defined $g$ only on $T\m S^{d-1}$ for the $(y,w)$ variable, but then the series expansion property would be not as tractable.
 This will be useful when the nonlinearity has some structure, see Theorem \ref{thminftyspecial} and its corollaries below.
 \end{remark}
 
\begin{example}
Harmonic maps from $\m R^d$ to the sphere $\m S^{N} \subset \m R^{N+1}$ solve
\[ \Delta u = u |\nabla u|^2. \]
We can write the system near the north pole $e_{N+1} = (0, \dots, 0, 1)$, and consider only the coordinates $u_1, \dots u_N$, taking into account that $u$ takes value in $\m S^{N}$ with
\[ u_{N+1} = \sqrt{1- \sum_{\ell=1}^N u_\ell^2}. \]
Then
\[ |\nabla u_{N+1}|^2 = \frac{1}{\sqrt{1- \sum_{\ell=1}^N u_\ell^2}} \sum_{m,n=1}^N u_m u_n \nabla u_m \cdot \nabla u_n, \]
so that for $i=1, \dots, N$, the corresponding nonlinearity writes
\begin{align*}
f_i(u,\varpi) & =  u_i \sum_{j=1}^{N+1} \sum_{k=1}^d \varpi_{j,k}^2 \\
& = u_i \left( \sum_{j=1}^{N} \sum_{k=1}^d \varpi_{j,k}^2 + \frac{1}{\sqrt{1- \sum_{\ell=1}^N u_\ell^2}} \sum_{m,n=1}^N u_m u_n \sum_{k=1}^d \varpi_{m,k} \varpi_{n,k}. \right) 
\end{align*}
This is the formulation of the Harmonic map equation in local coordinates, which usually contains Cristoffel symbols of the target manifold. From there, one derives the formula of $g_i$, for $i=1, \dots, N$.
\end{example}

 \bnp
 In order to avoid tedious notations, we only prove the result when $u$ is scalar.
 
We denote $t=\ln|x|$, $y=\frac{x}{|x|}\in \m S^{d-1}$. %For a function $u$ on $\R^{d}$, we will denote the angular derivative $(\nabla_{\m S^{d-1}}u)(x)$, as $(\nabla_{\m S^{d-1}}u^{|x|})(\sigma)\in T \mathbb{S}^{d-1}\subset \R^{d}$ where $u^{|x|}$ is the function defined on $\mathbb{S}^{d-1}$ by $u^{|x|}(\sigma)=u(|x|\sigma)$. 
We recall the notation \eqref{def:Lambda u} $\Lambda u$ for the angular derivative.
 We will use similar definition for $(\Delta_{\mathbb{S}^{d-1}}u)(x)$.
  As 
 \begin{align*}
 \partial_t v(t,y) & = e^{\frac{d}{2} t} \left( \frac{d-2}{2} \frac{u}{r} +\frac{\partial u}{\partial r} \right)(e^t y), \\
 \nabla_y v(t,y) & = e^{\frac{d -2}{2} t} \Lambda u(e^t y).
 \end{align*}
 We have $u(x)  = |x|^{-\frac{d-2}{2}} v\left( \ln |x|, \frac{x}{|x|} \right)$ so that (as $\nabla_y v \perp y$)
 \begin{align*}
 \nabla u (x)&= \frac{x}{|x|}\frac{\partial u}{\partial r} (x) +\frac{1}{|x|} \Lambda u(x) = - \frac{d-2}{2|x|^{\frac{d+2}{2}}} x v + \frac{x}{|x|^{\frac{d+2}{2}}} \partial_t v + \frac{1}{|x|^{\frac{d}{2}}} \nabla_y v  \\
& =e^{-\frac{d}{2} t}  \left( - \frac{d-2}{2} yv(t,y) + y \partial_t v +  \nabla_y v \right).
 \end{align*}
 In particular, 
 \begin{align*}
 \partial_i u  &=e^{-\frac{d}{2} t}  \left( - \frac{d-2}{2} y^iv + y^i \partial_t v +  D_i v \right) \\
  &=e^{-\frac{d}{2} t}  \left( y^i(\partial_t v -\opD v)+\opR_i v \right).
 \end{align*}
 So, we have 
 \begin{align*} f(u,\nabla u)&= f \left(e^{-\frac{d-2}{2}t} v (e^t y), e^{-\frac{d}{2} t}  \left( - \frac{d-2}{2} yv + y \partial_t v +  \nabla_y v \right) \right)\\
 &= f \left(e^{-\frac{d-2}{2}t} v (e^t y), e^{-\frac{d}{2} t}  \left(  y (\partial_t v -\opD v)+  \opR v \right) \right).
  \end{align*}
Concerning the Laplacian, we compute
\begin{align*}
\MoveEqLeft \partial_{tt}v+\Delta_{\mathbb{S}^{d-1}}v\\
&= \left(\frac{d-2}{2}\right)^2 e^{\frac{(d-2)t}{2}}u(e^ty)+\left(2 \left(\frac{d-2}{2}\right)+1\right) e^{\frac{(d-2)t}{2}}e^{t}\frac{\partial u}{\partial r} (e^ty) \\
& \qquad +e^{\frac{(d-2)t}{2}} e^{2t}\frac{\partial^{2}u }{\partial r^{2}} (e^ty)+e^{\frac{(d-2)t}{2}}(\Delta_{\m S^{d-1}}u)(e^ty)\\ 
&=\left(\frac{d-2}{2}\right)^2 e^{\frac{(d-2)t}{2}}u(e^ty) \\
& \qquad +e^{\frac{(d+2)t}{2}}\left[\frac{d-1}{e^{t}}\frac{\partial u}{\partial r} (e^ty)+\frac{\partial^{2}u }{\partial r^{2}} (e^ty)+\frac{1}{e^{2t}}(\Delta_{\m S^{d-1}}u)(e^ty)\right]\\
&= \left(\frac{d-2}{2}\right)^2 v(t,y)+e^{\frac{(d+2)t}{2}}(\Delta u)(e^ty).
\end{align*}
That is $(\partial_{tt} v - \gh D^2 v)(t,y)=e^{\frac{(d+2)t}{2}}(\Delta u)(e^ty)$. Then, $u$ solves \eqref{nonlinell} if and only if 
\begin{align*}
\partial_{tt} v - \gh D^2 v & = e^{\frac{d+2}{2} t} f \left( e^{-\frac{d-2}{2}t} v, e^{-\frac{d}{2} t} \left(- \frac{d-2}{2} yv +  y \partial_t v +  \nabla_y v \right) \right) \\
& = g (t,y, v, \partial_t v, \nabla_{y} v)=g_{\opR} (t,y,  v, (\partial_t-\opD ) v, \opR v),
\end{align*}
with the chosen definition. This gives the first result for $g$ and $g_{\opR}$. 
 
For $\tilde g$, we only have to notice that $g(t,y,v,\partial_t v, \nabla_y v)$ and $\tilde g(t,y,v,\partial_t v, \nabla_y v)$ take the same value for all $(t,y)$, which is the case by the assumption, since $\nabla_{y} v\in T_{y} \m S^{d-1}=y^{\perp}$.
\enp

We now state our main results for \eqref{eq:sys_orig}: the first one relates to Theorem \ref{th:conf} and the second one to Theorem \ref{th:conf2}. For $f$ as in \eqref{deff}, the relevant exponent is
\begin{equation} \label{def:nu1}
\nu_1 := \inf \{ (d-2) (|p| + |q|)-d :  a_{i,p,q} \ne 0 \}
\end{equation}

\begin{thm}\label{thmexistglobP}Assume $d\ge 3$. Assume that ${f}$ as in \eqref{deff}, satisfies the supercriticality assumption
\begin{align}
\label{hypbpqinfty}
% a_{i,p,q} \ne 0 \Longrightarrow  (d-2) (|p| + |q|)-d >0.  \\
\nu_1 > 0.
\end{align}
(So that in fact $\nu_1 \ge 1$).
Let $u_{0} \in H^s(\m S^{d-1})$, and  $u_{L} \in \q Z^{\infty}_{s}$ given in \eqref{def:u_L2}.

Then, there exist $r_{0}\ge 1$ and a unique small $u \in \q Z^{\infty}_{s,r_{0}}$ solution of \eqref{eq:sys_orig} on  $\{|x|\ge r_{0}\}$ and such that
\begin{align}
\label{scattranslrd}
\nor{(u-u_{L})(r\cdot)}{Z^{\infty}_{s,r/r_{0}}}\lesssim r^{-\nu_1} \to 0 \quad \text{as} \quad r \to +\infty.
\end{align}
Moreover, the map $u_{0}\mapsto u$ is injective; and if $\nor{u_{0} }{H^s(\m S^{d-1})}$ is small enough, we can take $r_{0}=1$.
\end{thm}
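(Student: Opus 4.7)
The plan is to reduce the statement to Theorem~\ref{th:conf} via the conformal change of variable of Lemma~\ref{lmchgevarinfty}. Setting $v(t,y) = e^{\frac{(d-2)t}{2}} u(e^t y)$ and $r_0 = e^{t_0}$, the equation \eqref{eq:sys_orig} on $\{|x| \ge r_0\}$ is equivalent to
\[ \partial_{tt} v - \opD^2 v = g(t,y,v,\partial_t v, \nabla_y v) \]
on $[t_0,+\infty)\times \m S^{d-1}$, where $g$ is the analytic function given explicitly in Lemma~\ref{lmchgevarinfty}. The linear solution $u_L$ of \eqref{def:u_L2} corresponds to $\bs v_L = \q S(\cdot)(u_0, -\opD u_0)$, and by \eqref{equivnorm} the desired estimate \eqref{scattranslrd} is equivalent to $\|\bs v - \bs v_L\|_{\q Y_{s,t}^{t_0}} \lesssim e^{-\nu_1 (t-t_0)}$, which is exactly \eqref{cvceinftysol} with $\nu_0 = \nu_1$.

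The main technical step is to verify that $g$ fits the framework \eqref{Def:g_series}--\eqref{estimaabetaiota}. Substituting the series for $f$ and expanding the multinomial $\bigl(-\frac{d-2}{2}v\otimes y + w\otimes y + z\bigr)^q$ produces monomials $y^\alpha v^\beta w^\gamma z^\delta$ with
\[ |\beta|+|\gamma|+|\delta| = |p|+|q|, \qquad |\alpha| = |q|-|\delta|, \]
each weighted by a time factor $e^{-\kappa_\vartheta t}$ with $\kappa_\vartheta = \frac{(d-2)|p| + d|q| - (d+2)}{2}$. The analyticity condition \eqref{est:fDSE} provides bounds $B_\vartheta$ controlling these coefficients, and a short calculation gives the crucial identity
\[ \nu_\vartheta = \kappa_\vartheta - |\alpha| + (|\beta|+|\gamma|-1)\tfrac{d-2}{2} + |\delta|\tfrac{d-4}{2} = (d-2)(|p|+|q|) - d, \]
\emph{independent of how $(p,q)$ is split among $(\alpha,\beta,\gamma,\delta)$}. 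Hence $\nu_0 = \nu_1 > 0$, and moreover \eqref{infnuvartheta} holds trivially with $\e = d-2$, so condition \eqref{hyp3nu} of Lemma~\ref{lem:conf} is satisfied. A standard estimate of the multinomial coefficients shows that $h_1$ has positive radius of convergence.

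With these verifications in hand, Lemma~\ref{lem:conf}(2) ensures that \eqref{asumhvLt1} holds provided $t_0$ is chosen large enough depending on $\|u_0\|_{H^s(\m S^{d-1})}$, so Theorem~\ref{th:conf} furnishes a unique small $\bs v \in \q Y_{s,t_0}^{0}$ solving the evolution equation with $\|\bs v-\bs v_L\|_{\q Y_{s,t}} \lesssim e^{-\nu_1 t}$. When $\|u_0\|_{H^s(\m S^{d-1})}$ is small, Lemma~\ref{lem:conf}(1) allows $t_0 = 0$, giving $r_0 = 1$. Undoing the conformal change of variable via \eqref{equivnorm} produces $u \in \q Z^\infty_{s,r_0}$ with the announced decay.

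For injectivity, suppose two initial data $u_0^{(1)}, u_0^{(2)}$ yield the same nonlinear solution $u$, hence the same $\bs v$. Then both $\bs v - \bs v_L^{(i)}$ tend to $0$ in $\q Y_{s,t}$ as $t \to +\infty$, so the same is true of their difference $\bs v_L^{(1)} - \bs v_L^{(2)}$. But by \eqref{est:Hs_Yst0} applied to the purely non-growing-mode datum $(u_0^{(1)} - u_0^{(2)}, -\opD(u_0^{(1)}-u_0^{(2)}))$,
\[ \|\bs v_L^{(1)} - \bs v_L^{(2)}\|_{\q Y_{s,t}} \simeq \|u_0^{(1)} - u_0^{(2)}\|_{H^s(\m S^{d-1})} \]
is constant in $t$, forcing $u_0^{(1)} = u_0^{(2)}$. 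The main obstacle is the bookkeeping in the computation of $\nu_\vartheta$: one must check that the gains from Lemma~\ref{lmprodanalytique} and the losses from Lemma~\ref{lm:polyYst} exactly combine so that every term arising from a given $(p,q)$ enjoys the same exponent $(d-2)(|p|+|q|)-d$, so that the supercriticality assumption translates cleanly into the positivity of $\nu_0$.
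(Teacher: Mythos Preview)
Your proof is correct and follows essentially the same approach as the paper's: the conformal change of variable, the expansion of $g$ into monomials indexed by $\vartheta$, the key computation $\nu_\vartheta = (d-2)(|p|+|q|)-d$, the verification of \eqref{infnuvartheta} with $\e = d-2$, and the application of Theorem~\ref{th:conf} via Lemma~\ref{lem:conf} cases (1) and (2) are exactly what the paper does. Your explicit argument for injectivity via \eqref{est:Hs_Yst0} is the one the paper alludes to in the remark following the theorem statement.
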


Recall that from \eqref{est:Hs_Yst0} and \eqref{equivnorm}, $\|u_{L}(r\cdot) \|_{Z^{\infty}_{s,r/r_{0}}}$ remains bounded from below as $r \to +\infty$, so that \eqref{scattranslrd} gives the leading term of the expansion of $u$, and as a consequence, $u_L$ is unique.

\begin{thm}\label{thmexistglobPgain}
Let $d\ge 3$ and $\nu>0$. Assume that $f$ as in \eqref{deff} satisfies with the supercriticality assumption:
\begin{align}
\label{hypbpqinftynu} \nu_1 \ge 0.
\end{align}
Let ${u_{0}} \in H^s(\m S^{d-1})$, and ${u_{L}}\in \q Z_{s}^{\infty}$ given in \eqref{def:u_L2}, and assume that
\begin{equation} \label{est:f(uL)_exp}
\sup_{r\ge 1}r^{2 + \nu }\nor{f({u_{L}},\nabla {u_{L}})(r\cdot)}{  Z_{s -1 , r}^{\infty}}<+\infty.
\end{equation}
We also assume that at least one of the extra conditions holds true:
\begin{itemize}
    \item $\| u_0 \|_{H^s(\m S^{-1})}$ is small.
    \item $u_0$ has mean zero: $P_0 u_0=0$.
    \item for all $i \in \llbracket 1, N \rrbracket$ and $p \in \m N^N$, if $a_{i,p,0} \ne 0$, then $|p| > \frac{d}{d-2}$ (this is always fulfilled if $d \ge 5$).
\end{itemize}

Then, there exist $r_{0}\ge 1$ and a ${u}\in \q Z^{\infty}_{s,r_{0}}$ solution of \eqref{eq:sys_orig} on  $\{|x|\ge r_{0}\}$, and such that 
\begin{align}
\label{scattranslrdcrit}
\nor{({u}-{u_{L}})(r\cdot)}{Z^{\infty}_{s,r/r_{0}}}\lesssim r^{-\nu}\underset{r\to +\infty}{\longrightarrow}0.
\end{align}
This solution is unique among those such that 
\[ \sup_{r \ge r_0} r^\nu \left[  \| (u - u_L - u_{L,1}) (r \cdot)\|_{ Z_{s,r/r_0}^\infty} + \left\|  r \frac{\partial}{\partial r}  (u - u_L - u_{L,1}) (r \cdot) \right\|_{ Z_{s-1,r/r_0}^\infty} \right] \]
is small (where $u_{L,1} \in \q Z^\infty_{s,r_0}$ is the solution to $\Delta u_{L,1} = f(u_L, \nabla u_L)$ such that $\| u_{L,1}(r \cdot) \|_{Z_{s,r/r_0}^\infty} \to 0$ as $r \to +\infty$).

Moreover, the map ${u_{0}}\mapsto {u}$ is injective (when defined);  and if $\nor{{u_{0}} }{H^s(\m S^{d-1})}$ and the left-hand side of \eqref{est:f(uL)_exp} are small enough, we can take $r_{0}=1$.
\end{thm}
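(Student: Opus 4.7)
The plan is to recast the problem in conformal variables and invoke Theorem \ref{th:conf2}, the refinement of Theorem \ref{th:conf} that accommodates the borderline assumption $\nu_1 \ge 0$ (rather than $\nu_1 > 0$) by exploiting the exponential decay of the first Picard iterate $\Psi(0)$. By Lemma \ref{lmchgevarinfty}, setting $v(t,y) = e^{(d-2)t/2} u(e^t y)$ (and similarly for $v_L$) converts \eqref{eq:sys_orig} on $\{|x| \ge r_0\}$ into the evolution system \eqref{eq:sys_conf} on $[t_0,+\infty) \times \m S^{d-1}$ with $t_0 = \log r_0$. Expanding $f$ via \eqref{deff} and collecting monomials in $y, v, w, z$ from each term $a_{i,p,q} u^p \varpi^q$, one obtains a series decomposition \eqref{Def:g_series} whose contributing multi-indices satisfy $|\beta|+|\gamma|+|\delta| = |p|+|q|$, $|\alpha| = |\beta|+|\gamma|-|p|$, and whose time-exponents read $\kappa_\vartheta = \tfrac{d-2}{2}|p| + \tfrac{d}{2}|q| - \tfrac{d+2}{2}$. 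A direct computation plugging these into \eqref{def:nu_a} then yields
\[ \nu_\vartheta = (d-2)(|p|+|q|) - d, \]
so that $\nu_0 = \nu_1 \ge 0$ and assumption \eqref{hypokabcnu} holds.

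Next I would verify assumption \eqref{est:psi(0)_exp} of Theorem \ref{th:conf2}. The conformal identity $e^{(d+2)t/2} f(u_L, \nabla u_L)(e^t y) = g(t, y, v_L, \partial_t v_L, \nabla_y v_L)$ together with \eqref{equiYZ} translates \eqref{est:f(uL)_exp} into the exponential bound $\sup_{t \ge t_0} e^{\nu t} \|F(t)\|_{Y_{s-1,\, t-t_0}} < +\infty$ for $F(t) := g(t, \cdot, v_L, \partial_t v_L, \nabla_y v_L)$. The Duhamel estimate of Lemma \ref{lmDuhamelinfty} then yields $\|\Psi(0)\|_{\q Y_{s,t}^{t_0}} \lesssim \int_t^{+\infty} e^{-\nu \tau} d\tau \lesssim e^{-\nu t}$, i.e.\ $\Psi(0) \in \q X_{\nu, t_0}$. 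For the smallness condition \eqref{est:h1_small}, I would apply Lemma \ref{lem:th_conf2_hyp}: the small-data case uses part~(1) with $t_0 = t_1 = 0$ (yielding $r_0 = 1$); the hypothesis $P_0 u_0 = 0$ invokes part~(2) with $\ell_0 = 1$; the third case invokes part~(2) with $\ell_0 = 0$. Verifying \eqref{infkvartheta} with $A = \ell_0 + (d-2)/2$ and $D = 0$ reduces, after the simplification $\kappa_\vartheta + (d-2)(k-1)/2 = \nu_\vartheta + |q|$ (where $k = |p|+|q|$), to $\nu_\vartheta + |q| \ge (\e - \ell_0)(k-1)$: this is trivial for $\ell_0 = 1$ with $\e$ small, and for $\ell_0 = 0$ it holds for some $\e > 0$ because the only obstruction is the borderline case $\nu_\vartheta = 0$, $|q| = 0$, which can occur only for $d \in \{3,4\}$ and is precisely excluded by condition~(3).

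Applying Theorem \ref{th:conf2} produces $\bs v = (v, \partial_t v) \in \q Y_{s, t_0}^{t_1}$ with $\|\bs v - \bs v_L\|_{\q Y_{s,t}^{t_1}} \lesssim e^{-\nu t}$. Undoing the conformal change of variables ($r_0 = e^{t_0}$) and invoking \eqref{equivnorm} produces $u \in \q Z^\infty_{s, r_0}$ satisfying \eqref{scattranslrdcrit}. Uniqueness follows from that of Theorem \ref{th:conf2} (the function $u_{L,1}$ in $\q Z$-spaces corresponds to $\Psi(0)$ in $\q Y$-spaces via the same conformal change), and injectivity of $u_0 \mapsto u$ follows because a non-trivial difference $u_L^1 - u_L^2$ of linear solutions is bounded below in $\q Z^\infty_s$ by a constant times $\|u_0^1 - u_0^2\|_{H^s(\m S^{d-1})}$ (Lemma \ref{lmlienlinY} together with \eqref{equivnorm}), contradicting the faster decay imposed by \eqref{scattranslrdcrit}. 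The main technical obstacle is the bookkeeping in the first step: combining the conformal rescaling factors $e^{-(d-2)t/2}$ and $e^{-dt/2}$, the loss $e^{|\alpha|t}$ from polynomial factors $y^\alpha$ (Lemma \ref{lm:polyYst}), and the loss $e^{|\delta|t}$ from angular derivatives (Lemma \ref{lmderiv}) to recover exactly $\nu_\vartheta = (d-2)(|p|+|q|) - d$, thereby matching the criticality threshold of $f$ with the abstract threshold $\nu_0 \ge 0$ of Theorem \ref{th:conf2}.
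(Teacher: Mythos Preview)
Your proposal is correct and follows essentially the same route as the paper: reduce to conformal variables via Lemma \ref{lmchgevarinfty}, compute the exponents to obtain $\nu_\vartheta = (d-2)(|p|+|q|)-d$ so that $\nu_0 = \nu_1 \ge 0$, verify $\Psi(0) \in \q X_{\nu,0}$ from \eqref{est:f(uL)_exp} via Lemma \ref{lmDuhamelinfty}, and then invoke Lemma \ref{lem:th_conf2_hyp} (case (1) for small data, case (2) with $\ell_0=1$ or $\ell_0=0$ otherwise) to feed Theorem \ref{th:conf2}. Your identity $\kappa_\vartheta + \tfrac{d-2}{2}(k-1) = \nu_\vartheta + |q|$ is in fact a slightly cleaner repackaging of the paper's computation \eqref{kvartheta+e}, and your case analysis for \eqref{infkvartheta} matches the paper's; the only omissions are the routine checks that the series $h_1$ converges and that $h_1(0)=0$, both of which follow immediately from \eqref{est:fDSE} and $|p|+|q| \ge 2$.
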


Note that the assumption $|p|>d/(d-2)$ seems to be necessary in some cases where the solutions deviate from the linear asymptotic we describe by a logarithm correction, see for instance \cite{V:81}.

Theorem \ref{thmexistglobP} will typically be used in the case of the semi linear elliptic equation with supercritical exponent, whereas Theorem \ref{thmexistglobPgain} is in order when considering the same equation with critical exponent. Theorem \ref{thmexistglobPintro} is a particular case of Theorem \ref{thmexistglobP} where $f(u,\nabla u)=f(u)$.

Sometimes, notably when derivative are involved, we can make use of extra structure in the nonlinearity which leads to appropriate cancellations. One example is given by the next result.

\begin{thm}
\label{thminftyspecial}
1) Assume $d \ge 3$ and that the coordinates of $f(u,\varpi)$ can be written in the form
\begin{align}
\label{deffstruct}
\sum_{(p,q) \in \m N^N \times \mc M_N(2 \m N) \setminus (0,0)} a_{i,p,q} u^p \Omega^{q/2}
\end{align} 
where $\Omega \in \mc M_N(\m R)$ is the matrix of the scalar products (in $\m R^d$): $\Omega_{j,k} = \varpi_{j}\cdot \varpi_{k}=\sum_{l=1}^d \varpi_{j,l} \varpi_{k,l}$, and we assume that the summability condition \eqref{est:fDSE} holds. The relevant exponent is now
\[ \nu_{1,\opR} : =\inf \{  (d-2)|p| + (d-1)|q|-d: a_{i,p,q} \ne 0 \}. \]
Then if $d\ge3$,

(a) The conclusion of Theorem \ref{thmexistglobP} holds replacing assumption \eqref{hypbpqinfty} by
$\nu_{1,\opR} > 0$.

(b) The conclusion of Theorem \ref{thmexistglobPgain} holds replacing assumption \eqref{hypbpqinftynu} by $\nu_{1,\opR} \ge 0$.

2) Assume $d=2$ and that the coordinates of $f(u, \varpi)$ can be written in the form
\begin{align}
\label{deffstructbracket}
\sum_{(p,q) \in \m N^N \times \mc M_N(2 \m N) \setminus (0,0)} u^p (a_{i,p,q} \Omega^{q/2} + b_{i,p,q} \Sigma^{q/2}),
\end{align} 
where $\Sigma \in \mc M_N(\m R)$ is the matrix of the symplectic products (in $\m R^2$): $\Sigma_{j,k} = \varpi_{j,1} \varpi_{k,2}-\varpi_{j,2} \varpi_{k,1}$ (and \eqref{est:fDSE} holds for the $(a_{i,p,q})_{p,q}$ and the $(b_{i,p,q})_{p,q}$). Define now
\[  \nu_{1,\opR} : =\inf \{  (d-2)|p| + (d-1)|q|-d: a_{i,p,q} \ne 0 \text{ or } b_{i,p,q} \ne 0 \}. \]
Then, if $u_0$ either small or with mean $0$, the same conclusions can be reached as in (a) and (b).
\end{thm}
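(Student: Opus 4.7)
The plan is to apply Theorem \ref{thmconfrefin}, which treats the refined formulation \eqref{eq:sys_confrefin}. The crucial flexibility granted by Lemma \ref{lmchgevarinfty} is that we may replace $g_{\opR}$ by any analytic $\tilde g_{\opR}$ agreeing with $g_{\opR}$ on arguments of the form $(t,y,v,(\partial_t-\opD)v,\opR v)$ with $y\in \m S^{d-1}$. The single identity we exploit on the sphere is $|y|^2=1$, which will eliminate the $y^\alpha$ factors that would otherwise saturate the exponent in a naive expansion.

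For case (1), expanding $\Omega_{j,k}$ at $\varpi=e^{-dt/2}(w\otimes y+z)$ gives
\[ \Omega_{j,k} = e^{-dt}\Bigl[w_jw_k\,|y|^2 + w_j(y\cdot z_k) + w_k(y\cdot z_j) + z_j\cdot z_k\Bigr]. \]
The only term of degree $\ge 2$ in $y$ is the first, which we collapse to $w_jw_k$ using $|y|^2=1$; the other three pieces already carry $y$-degree $\le 1$. Substituting into $u^p\Omega^{q/2}$ and distributing yields $\tilde g_{\opR}$ as a power series whose monomials $y^{\tilde\alpha}v^\beta w^\gamma z^\delta$ all satisfy $|\tilde\alpha|\le |\delta|$ (the previously worst ``pure $w^{|q|}$'' contribution now carries no $y$-factor at all, which is the entire source of the improvement $|q|$). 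Tracking $|\alpha|,|\beta|,|\gamma|,|\delta|$ and the prefactor $e^{(d+2)t/2}$ through \eqref{def:nu_arefin} then yields $\nu_{\vartheta,\opR}\ge (d-2)|p|+(d-1)|q|-d = \nu_{1,\opR}$ for every contributing monomial.

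For case (2), the expansion of $\Sigma_{j,k}$ at $\varpi = e^{-t}(w\otimes y + z)$ in dimension $2$ gives
\[ \Sigma_{j,k} = e^{-2t}\Bigl[w_j(y_1 z_{k,2}-y_2 z_{k,1}) - w_k(y_1 z_{j,2}-y_2 z_{j,1}) + (z_{j,1}z_{k,2}-z_{j,2}z_{k,1})\Bigr]; \]
the antisymmetry of $\Sigma$ kills the $w_jw_k$ term identically, so no sphere identity is even required. All remaining monomials have $y$-degree $\le 1$ per $\Sigma$-factor, and the analogous count gives $\nu_{\vartheta,\opR}\ge \nu_{1,\opR}$. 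This antisymmetry is also the origin of the null condition in dimension $2$ recorded in Remark \ref{rk:nullmat}.

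With $\nu_{1,\opR}>0$ (resp.\ $\nu_{1,\opR}\ge 0$), the hypotheses of Theorem \ref{thmconfrefin} (its Theorem \ref{th:conf} avatar, resp.\ its Theorem \ref{th:conf2} avatar) are met, producing the sought solution $\bs v\in \q Y$, which translates back to $u\in \q Z^\infty_{s,r_0}$ via \eqref{equivnorm} exactly as in the proofs of Theorem \ref{thmexistglobP} and Theorem \ref{thmexistglobPgain}. The main technical obstacle is the combinatorial bookkeeping needed to show that $\tilde g_{\opR}$ still admits a convergent expansion of the form \eqref{Def:g_series} with the claimed rates $\nu_{\vartheta,\opR}\ge \nu_{1,\opR}$; the polynomial combinatorial explosion coming from expanding $\Omega^{q/2}$ (or $\Sigma^{q/2}$) must be absorbed by the $\langle \alpha\rangle^{s+1}$ factor in the definition of $h_{\opR}$, which is what the summability hypothesis \eqref{est:fDSE} precisely makes possible.
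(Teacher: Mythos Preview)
Your proposal is correct and follows essentially the same route as the paper: expand $\Omega_{j,k}$ (and $\Sigma_{j,k}$) under the $g_{\opR}$ change of variable, use $|y|^2=1$ to force $|\alpha|\le|\delta|$ for every surviving monomial, and read off $\nu_{\vartheta,\opR}\ge (d-2)|p|+(d-1)|q|-d$ so that Theorem \ref{thmconfrefin} applies.

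One point you gloss over but the paper carries out explicitly: the verification that the smallness hypothesis \eqref{asumhvLt1} (resp.\ \eqref{est:h1_small}) can be achieved is \emph{not} literally ``exactly as in'' the proofs of Theorems \ref{thmexistglobP}--\ref{thmexistglobPgain}. In the unstructured case one used $\nu_\vartheta\ge(d-2)(|\beta|+|\gamma|+|\delta|)-d$ to get \eqref{infnuvartheta} with $\e=d-2$; here the paper rechecks this for $\nu_{\vartheta,\opR}$ and, for part (b), redoes the computation \eqref{kvartheta+e} exploiting the structural fact that nonzero coefficients have either $|q|=0$ or $|q|\ge2$. This is where the extra hypothesis in dimension $d=2$ (smallness or $P_0u_0=0$, i.e.\ $\ell_0=1$) actually enters: with $d=2$ the term $(\ell_0+d-2-\e)\frac{2}{d-2}$ in the paper's bound would otherwise fail to be positive. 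Your sketch does not explain this, so a reader might not see why the $d=2$ case carries the additional assumption.
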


Note that in \eqref{deffstruct}, the scalar product is in the base space $\R^d$, that is with respect to the the derivative variable while in \eqref{deffstructbracket}, the index $1$ and $2$ are with respect to the derivatives in the target space $\R^2$. In particular, $\varpi_{j,1} \varpi_{k,2}-\varpi_{j,2} \varpi_{k,1}$ corresponds to the Poisson bracket $\{ u_j,u_k\}=\partial_xu_j \partial_y u_k - \partial_y u_j\partial_x u_k$ where the running variable in $\R^2$ is $(x,y)$.

This kind of special structure on $f$ will be relevant for Harmonic maps, which precisely take the form \eqref{deffstruct} while the structure \eqref{deffstructbracket} is typical of the $H$-system.

\begin{remark}
It might also be possible to add some potential $V$ in some suitable space (like $V\in \q Z^{\infty}_{s}$). Yet, it seems at first sight that it would require the initial time to be large. Indeed, the fixed point that we perform for $t=0$ requires that the nonlinearity is of order at least $2$. 
\end{remark}

Theorems \ref{thmexistglobP} and \ref{thmexistglobPgain} rely on the applications of Theorem \ref{th:conf} and \ref{th:conf2} respectively. We make the proofs simultaneously since a large part of the argument is common to both situations.

\begin{proof}[Proof of Theorems \ref{thmexistglobP} and \ref{thmexistglobPgain}]
Here we do the proof for general $N$.
We check that concerning $(\nabla u)^q$ for $q\in \mc M_{N,d}(\m N)$, we have 
\[ \left(e^{-\frac{d}{2} t} (- \frac{d-2}{2} v \otimes y +  w \otimes y +  z )\right)^{q} = e^{- \frac{d}{2}|q|  t} P_q(y,v,w,z) \]
where $P_q$ is a polynomial, of partial degree $|q|$ in the variable $(v,w,z)$, and which can be written
\begin{align*}
P_q(y,v,w,z) & = \sum_{(\alpha,\iota,\gamma,\delta) \in J_q} c_{q,\alpha,\iota,\gamma,\delta} y^\alpha v^{\iota} w^\gamma z^\delta, 
\end{align*}
where 
\begin{align*}
J_q & = \left\{ (\alpha,\iota,\gamma,\delta) \in \m N^{d} \times \m N^N \times \m N^N \times \mc M_{N,d}(\m N) : |\alpha| + |\delta|  =q, |\iota| + |\gamma| = |\alpha|
\right\},
\end{align*}
and the coefficients $c_{q,\alpha,\beta,\gamma,\delta}$ are bounded by
\[ |c_{q,\alpha,\iota,\gamma,\delta}| \le (d/2+1)^{|q|}, \]
from the multinomial formula of Newton.
Notice that if $ (\alpha,\iota, \gamma, \delta) \in J_q$, then $|\alpha|, |\iota|, |\gamma|, |\delta| \le |q|$ so that we can bound the cardinal of $J_q$
\begin{equation} \label{est:card_Jq} 
 |J_q| \lesssim (|q|+1)^{d+2N +Nd}.
\end{equation}
(Notice that in the scalar case $N=1$, $\alpha$ and $\gamma$ determine $\iota$, but for general system, this is no longer the case).
Then using Fubini (to be justified by the following computations)
\begin{align*}
g_i(t,y,v,w,z)  & =e^{\frac{d+2}{2} t} 
 \sum_{p,q} a_{i,p,q} e^{- \left(\frac{d-2}{2} |p| + \frac{d}{2} |q| \right) t} v^p P_q(y,v,w,z)\\
&=\sum_{p,q}\sum_{(\alpha, \iota, \gamma, \delta) \in J_q } a_{i,p,q} c_{q, \alpha, \iota, \gamma, \delta} e^{- \left(\frac{d-2}{2} |p| + \frac{d}{2} |q|-\frac{d+2}{2} \right) t} y^\alpha v^{p+\iota}   w^\gamma z^\delta\\
&=\sum_{\vartheta \in \Theta}  b_{\vartheta}(t) y^\alpha v^\beta w^\gamma z^\delta,
\end{align*}
where (recall we denote $\vartheta= (\alpha, \beta, \gamma, \delta,\iota)$)
\begin{align}
\label{formulaaabc}
b_{\vartheta}(t) & =  \sum_{(p,q) \in I_{\vartheta}} a_{i,p,q} c_{q,\alpha,\iota,\gamma,\delta} e^{- \left(\frac{d-2}{2} p + \frac{d}{2} |q|-\frac{d+2}{2} \right) t}, \\
 \text{with} \quad I_{\vartheta} & = \{ (p,q) \in \m N \times \m N^d : (\alpha,\iota,\gamma,\delta) \in J_q, \beta = p + \iota \}.
\end{align}
Let $\vartheta \in \Theta$ such that $b_\vartheta \ne 0$, so that $I_\vartheta \ne \varnothing$. For any $(p,q) \in  I_{\vartheta}$ then 
\begin{equation} \label{def:pq_ib}
|p| = |\beta| - |\iota| \quad \text{ and } \quad |q| = |\iota| + |\gamma| + |\delta| \quad \text{are prescribed}, 
\end{equation}
 so that the rate in the exponential as well and can be expressed in terms of $\vartheta$:
\begin{align}
\frac{d-2}{2} |p| + \frac{d}{2} |q|-\frac{d+2}{2} & = \frac{d-2}{2} (|\beta| - |\iota|) + \frac{d}{2} (|\iota| + |\gamma| + |\delta|)-\frac{d+2}{2} =: \kappa_{\vartheta}. \label{kthetacasgen}
\end{align}
Also notice that $|\beta| + |\gamma| + |\delta| = |p| + |q|$. Denoting 
\begin{align}
\label{defAabc}
B_{\vartheta} & := (d/2+1)^{|\iota|+|\gamma| + |\delta|}\sum_{(p,q) \in  I_{\vartheta}} \max_{i \in \llbracket 1, N \rrbracket} |a_{i,p,q}|,
\end{align}
($B_{\vartheta} =0$ if $I_{\vartheta}$ is empty), we have the expected estimate \eqref{estimaabetaiota}.
We can then express $\nu_{\vartheta}$ in terms of $\vartheta$, without involving $\iota$. Note that if $B_{\vartheta}\neq 0$ then $J_q\neq \varnothing$ and so $|\iota| + |\gamma| = |\alpha|$.  From \eqref{def:nu_a},
\begin{align} 
\nu_{\vartheta} & = \kappa_{\vartheta} - |\alpha| + (|\beta| + |\gamma| -1) \frac{d-2}{2} + |\delta| \frac{d-4}{2} \nonumber \\
&=(d-2)(|\beta|+|\delta|)+ (d-1)| \gamma| +|\iota|-|\alpha|-d \\
&=(d-2)(|\beta|+|\delta|+|\gamma|)-d.
\label{egal:nu}
\end{align}
In particular, since $(p,q) \in  I_{\vartheta}$, by \eqref{def:pq_ib}, we have:
\begin{align} 
\nu_{\vartheta} &= (d-2) (|p| + |q|) -d\ge \nu_1.
\label{cond:nu}
\end{align}

%(where we used $|\alpha|+|\delta|=|\alpha+\delta|=|q|$ when $(\alpha,\iota,\gamma,\delta) \in J_q$).}
%As $|\alpha| \le q$, we have in particular
%\[ \nu_{\vartheta} \ge  (d-2) (p + |q|) -d. \]
As this is true for any $\vartheta \in \Theta$, we infer $\nu_0 \ge \nu_1$ (recall that $\nu_0$ is defined in \eqref{def:nu0}). Under any of the assumptions \eqref{hypbpqinfty} or \eqref{hypbpqinftynu}, we see that  \eqref{hypokabcnu} is fulfilled. Also $|\beta|+|\gamma|+|\delta| = |p|+|q| \ge 1$ so that \eqref{hypokabc} holds as well.

Under the supercriticality assumption \eqref{hypbpqinfty}, and as only integers are involved, we infer $\nu_0 \ge 1 >0$: this means that under the assumptions Theorem \ref{thmexistglobP}, the first hypothesis of Theorem \ref{th:conf} holds.

We now turn to the convergence of the series defining $h$. Recalling \eqref{defhh1}, we have for $ 0 \le \sigma \le 1$:
\begin{align*}
h_1(\sigma) & = \sum_{\vartheta \in \Theta} B_{\vartheta} \langle \alpha \rangle^{s+1}(|\beta| + |\gamma| + | \delta |) \sigma^{|\beta|+|\gamma| + |\delta|-1} \\
& \lesssim \sum_{p,q} \max_{i \in \llbracket 1, N \rrbracket} |a_{i,p,q}| R_{p,q}(\sigma) \quad \text{where} \\
R_{p,q}(\sigma) & := \sum_{\substack{\vartheta \in \Theta \\ (\alpha, \iota, \gamma,\delta ) \in J_q, p = \beta-\iota}} (d/2+1)^{|\iota|+|\gamma|+|\delta|} \langle \alpha \rangle^{s+1} (|\beta| + |\gamma| + | \delta |) \sigma^{|\beta|+|\gamma|+|\delta|-1} .
\end{align*}
Given $p,q$, in the sum defining $R_{p,q}(\sigma)$, we have $|p|+|q| = |\beta| + |\gamma| + |\delta|$, and $|\alpha| \le |q|$. Also the cardinal of the indexation set is $|J_q|$ (because $\beta$ is prescribed by $p$ and $\iota$. Hence, using \eqref{est:card_Jq}, there hold, for $0 \le \sigma \le 1$
\begin{align*}
R_{p,q}(\sigma) & \le  (d/2+1)^{|q|} (|p|+|q|+1)^{s+2} \sigma^{|p|+|q|-1} \cdot |J_q| \\
& \lesssim (d/2+1)^{|q|} (|p|+|q|+1)^{s+ (N+1)(d+2)} \sigma^{|p|+|q|-1}.
\end{align*}
Since $(|p|+|q|+1)^{s+(N+1)(d+2)}\le C_{s,N,d} 2^{|p|+|q|-1}$ for some large constant $C_{s,N,d}$, and all $p,q$, we conclude
\[ h_1(\sigma)\lesssim  \sum_{p,q} \max_{i \in \llbracket 1, N \rrbracket} |a_{i,p,q}| ((d+2) \sigma)^{|p|+|q|-1}. \]
By the assumption on $f$ \eqref{est:fDSE}, this is convergent for $\sigma$ small enough, and $h_1$ is well defined for small $\sigma$.

Due to \eqref{hypbpqinftynu}, if $a_{p,q} \ne 0$, then $(d-2) (p + |q|)-d \ge 0$ and in particular $p+|q| \ge 2$, and so $h_1(0)=0$.
%\[ h_1(\sigma) \lesssim \sigma \sum_{p,q; p+|q| \ge 2} |a_{p,q}|  \red{(|q|+1)}^{s+d+2} \sigma^{p+|q|-2}. \]

\bigskip

For Theorem \ref{thmexistglobP}, in the general case, we want to apply Lemma \ref{lem:conf}, assumption (2) (in the specific case where $\nor{u_{0} }{H^s(\m S^{d-1})}$ is small and $r_{0}=1$, we use Lemma \ref{lem:conf} Case (1)). So it suffices to check the lower bound \eqref{infnuvartheta} on $\nu_{\vartheta}$: in view of \eqref{egal:nu} and as $d \ge 3$, we can choose $\e = d-2 >0$ and $D = 2$. 

%Finally, let $\bs v_L = \q S(\cdot)(u_0)$ (so that $\bs v_L$ is the conformal transform of $u_L$). Then due to Lemma (and as $d \ge 3$),
%\begin{equation}
%\label{est:dec_vL}
%\| \bs v_L \|_{\q Y^{t_0}_{s,t_0} } \lesssim e^{\frac{d-2}{2} t_0} \| u_0 \|_{H^s(\m S^{d-1})},
%\end{equation}
%so that we are in a position to use assumption $(1b)$ of Lemma \ref{lem:conf} with $t_1=t_0$ large (or assumption $(1a)$ if $\| u_0 \|_{H^s(\m S^{d-1})}$ is small).

This shows that under the conditions of Theorem \ref{thmexistglobP}, we can apply Theorem \ref{th:conf}, and construct the solution $\bs v$ in conformal variables. Letting $u$ be the conformal inverse and $r_{0}=e^{t_{0}}$, $u$ is in the appropriate space thanks to \eqref{equivnorm}. This also gives the convergence in the original variables. Theorem \ref{thmexistglobP} is proven.

\bigskip

Regarding Theorem \ref{thmexistglobPgain}, we want to apply Theorem \ref{th:conf2}:  it remains to check \eqref{est:psi(0)_exp} and \eqref{est:h1_small}. Recall the assumption \eqref{est:f(uL)_exp}: written in conformal variable, we see  that it implies (due to \eqref{equiYZ})
\begin{equation} \label{est:g_exp}
\nor{g(t,y,v_{L},\partial_t v_L, \nabla_y {v_{L}})}{Y_{s-1,t}} = e^{- \frac{d-2}{2} t} \| r^{ \frac{d+2}{2} } f(u_L, \nabla u_L) \|_{Z_{s-1,e^t}^\infty} \lesssim e^{-\nu t},
\end{equation}
and so
 using Lemma \ref{lmDuhamelinfty}, we infer that for $t\ge 0$,
\begin{align*}
\| \Psi(0) \|_{\q Y_{s,t}^{0}}&=\| \Phi(g(t,y,{v_{L}},\partial_t v_L, \nabla_y{v_{L}})) \|_{\q Y_{s,t}^{0}} \\
&\lesssim \int_{t}^{+\infty} (1+\tau-t) \| g(t,y,{v_{L}},\nabla_{t,y}{u_{L}})(\tau) \|_{Y_{s-1,\tau-t}}  d\tau\\
&  \lesssim \int_{t}^{+\infty} (1+\tau-t)e^{-\nu (\tau - t)} d\tau\lesssim e^{-\nu t}.
\end{align*}
Taking the supremum in $t \ge 0$, we infer that $\Psi(0) \in \q X_{\nu,0}$ and \eqref{est:psi(0)_exp} is satisfied. 

We now focus on \eqref{est:h1_small}. 
If $\| u_0 \|_{H^s(\m S^{d-1})}$ is small, we can apply directly the case (1) of  Lemma \eqref{lem:th_conf2_hyp}.

Otherwise, we want to apply  Lemma \eqref{lem:th_conf2_hyp}, Case (2), with $\ell_0=0$ or $1$: so we are to check \eqref{infkvartheta} with $D=0$.

Let $\vartheta \in \Theta$ such that $B_\vartheta \ne 0$. Let $(p,q) \in I_\vartheta$ such that for some $i$, $a_{i,p,q} \ne 0$, then we can express from the definition \eqref{kthetacasgen}
\begin{multline}
\label{kvartheta+e}
\kappa_{\vartheta} +\left(\ell_0+\frac{d-2}{2}-\e\right)(|\beta|+ |\gamma|+|\delta|-1) \\
=(\ell_0+d-2-\e)(|p|+|q|-1)+|q|-2%\\
%=\frac{d-2-\e}{d-2}\nu_{\theta}+d\frac{d-2-\e}{d-2}-(d-2-\e)+|q|-2&\ge d\frac{d-2-\e}{d-2}-(d-2-\e)-2.
\end{multline}
By assumption $\nu_1 \ge 0$, so that $|p|+|q|\geq \frac{d}{d-2}$.
Therefore  the above term \eqref{kvartheta+e} is bounded by below by
\begin{align}
(\ell_0+d-2-\e)\frac{2}{d-2}+|q|-2=\frac{2(\ell_0-\e)}{d-2}+|q|.
\end{align}
If $|q|\ge 1$ or $\ell_0\ge 1$, it can be made positive for $\e = 1/2$.
If $|q|=0$ and $\ell_0=0$, the term in \eqref{kvartheta+e} writes $(d-2-\e) (|p|-1) -2$. As $|p| \ge 2$, this is positive if $d \ge 5$.
Otherwise, our assumption implies that $|p| \ge \frac{d}{d-2} +1$ and so 
\[ (d-2-\e) (|p|-1) -2 \ge d-2 + \frac{d\e}{d-2} \ge 1 - d\e, \]
which is non-negative for $\e =1/d>0$.

Due to case (2) of Lemma \eqref{lem:th_conf2_hyp}, there exist $t_0 \ge t_1 \ge 0$ (large) such that \eqref{est:psi(0)_exp} holds and we are in a position to apply Theorem \ref{th:conf2}: we obtain a suitable conformal solution $\bs v \in \q Y^{t_1}_{s,t_0}$. As this space is included in $\q Y^{t_0}_{s,t_0}$, undoing the conformal transform yields a desired solution $u \in \q Z_{s, r_0}^\infty$ where $r_0 = \ln(t_0)$. This proves Theorem \ref{thmexistglobPgain}.
\end{proof}

\begin{proof}[Proof of Theorem \ref{thminftyspecial}]
Here, due to cancellations specific to the vectorial nature of the equations, we also do the computations for general $N \ge 1$.

Let us compute the term $\varpi_{i}\cdot \varpi_{j}$ according to the change of variable of Lemma \ref{lmchgevarinfty}, for $i,j \in \llbracket 1, N \rrbracket$): we obtain (for the term corresponding to $g_{\opR}$)
\begin{align}
\MoveEqLeft \sum_{k=1}^{d}e^{-d t} ( y_{k} w_{i} +  z_{ik}) (  y_{k} w_{j} + z_{jk})\\
& = e^{-d t} \left( w_{i}w_{j}+ w_i \sum_{k=1}^{d}  y_k z_{jk}+w_j \sum_{k=1}^{d}  y_k z_{ik}+ \sum_{k=1}^{d} z_{ik} z_{jk}\right).
\end{align}
We used that $\sum_{k=1}^{d} y_{k}^{2}=1$ since $y\in \m S^{d-1}$.

In dimension $2$, we compute $\varpi_{1,i} \varpi_{2,j}-\varpi_{2,i} \varpi_{1,j}$ according to the change of variable of Lemma \ref{lmchgevarinfty}. We obtain 
\begin{align}
\MoveEqLeft e^{-d t}\left(( y_{1} w_{i} +  z_{i1}) (  y_{2} w_{j} + z_{j2})-( y_{2} w_{i} +  z_{i2}) (  y_{1} w_{j} + z_{j1})\right)\\
& = e^{-d t}\left( y_{1} w_{i} z_{j2}+  z_{i1} (  y_{2} w_{j} + z_{j2})- y_{2} w_{i}z_{j1} -  z_{i2} (  y_{1} w_{j} + z_{j1})\right).
\end{align}

%Now in the special case of Harmonic maps, $q=2$ so that
%\[ (d-2)p + (d-1)|q|-d \ge 2(d-1)-d = d-2, \]
%and the conclusion follows.

Observe that in both cases, terms are at most linear in $y$ and those where $y$ appear also carry a $z$ factor: hence, for any contributing $b_\vartheta$, there hold
\[ |\alpha| \le |\delta|. \] 
Also, there is no $v$ involved in any of the expressions, so that the index $\iota$ is not useful anymore, and we drop it for the rest of the computations.

We can now argue as in the previous proof of Theorems \ref{thmexistglobP} and \ref{thmexistglobPgain}.

Regarding $u^p(\Omega)^{q/2}$ for $q\in \mc M_N(2 \m N)$: denote
\[ m_{i,j}(y,w,z) :=  w_{i}w_{j}+ w_i \sum_{k=1}^{d}  y_k z_{jk}+w_j \sum_{k=1}^{d}  y_k z_{ik}+ \sum_{k=1}^{d} z_{ik} z_{jk}, \]
the contribution where we erased the $e^{-dt}$ factor.
$m(y,w,z)^{q/2} $ is a polynomial, of partial degree $q$ in the variable $(w,z)$. Therefore, $u^p \Omega^{q/2}$ can be written
\begin{align*}
e^{- \left(\frac{d-2}{2} |p| + \frac{d}{2} |q|-\frac{d+2}{2} \right) t} \sum_{(\alpha, \gamma,\delta) \in J_q} c_{\alpha,\gamma,\delta} y^\alpha v^p  w^\gamma z^\delta, 
\end{align*}
where 
\begin{align*}
J_{q} & = \left\{ (\alpha,\gamma,\delta) \in \m N^{d} \times \m N^N  \times \mc M_{N,d}(\m N) : |\gamma | + |\delta| =|q|, |\alpha| \le |\delta|
\right\}.
\end{align*}
Let $\vartheta = (\alpha,\beta, \gamma, \delta) \in \Theta$ such that $(\alpha, \gamma,\delta) \in J_{q}$ and $\beta =p$, the corresponding
\[ \kappa_{\vartheta,\opR} = \frac{d-2}{2} |p| + \frac{d}{2} |q|-\frac{d+2}{2} = \frac{d-2}{2} |\beta| + \frac{d}{2} (|\gamma | + |\delta|)-\frac{d+2}{2}, \]
and so (see Remark \ref{rkxplaincoeffrefin})
% and which can be written
% \begin{align*}
% m(y,w,z)^{q/2} & = \sum_{(\alpha,\gamma,\delta) \in J_q} c_{\alpha,\gamma,\delta} y^\alpha  w^\gamma z^\delta, 
% \end{align*}
% where 
% \begin{align*}
% J_q & = \left\{ (\alpha,\gamma,\delta) \in \m N^{d} \times \m N^N  \times \mc M_{N,d}(\m N) : |\gamma | + |\delta| =|q|, |\alpha| \le |\delta|
% \right\}.
% \end{align*}
% In particular, each $u^p \Omega^{q/2}$ contributes to a sum of terms $e^{- \left(\frac{d-2}{2} |p| + \frac{d}{2} |q|-\frac{d+2}{2} \right) t}y^\alpha v^{\beta} w^\gamma z^\delta=e^{-\kappa_{\vartheta}t}y^\alpha v^{\beta} w^\gamma z^\delta$ with $\vartheta=( \alpha,\beta,\gamma,\delta)\in $ with $\beta =p$ and $(\alpha,\gamma,\delta)\in J_q$ and $\kappa_{\vartheta}=  \frac{d-2}{2} |p| + \frac{d}{2} |q|-\frac{d+2}{2} = \frac{d-2}{2} |\beta| + \frac{d}{2} (|\gamma | + |\delta|)-\frac{d+2}{2} $. So 
\begin{align} 
\nu_{\vartheta,\gh R} & = \kappa_{\vartheta} - |\alpha| + (|\beta|+ |\gamma|-1) \frac{d-2}{2} + |\delta| \frac{d}{2} \nonumber \\
%&= (d-2)|\beta|+(d-1)|\gamma|+d|\delta|-|\alpha|-d\\
&= (d-2)|\beta|+(d-1)|(|\gamma|+|\delta|)+|\delta|-|\alpha|-d\\
&\ge (d-2)|\beta|+(d-1)(|\gamma|+|\delta|)-d\\
%& = \frac{d-2}{2}p + \frac{d}{2}|q|-\frac{d+2}{2} - |\alpha| + (p+|q|-1) \frac{d-2}{2}  +|\delta| \nonumber \\
%& \ge  (d-2) p+ (d-1) |q| -d  + |\delta| - |\alpha| \\
& \ge  (d-2) |p|+ (d-1) |q| -d. \label{cond:nurefin}
\end{align}
Thus, we obtained $\nu_{0,\opR} \ge \nu_{1,\opR}$, for both cases $(1)$ or $(2)$.

One can then reproduce the end of the proofs of Theorem \ref{thmexistglobP} and \ref{thmexistglobPgain} using instead the refinement given by Theorem \ref{thmconfrefin}. Now, we want to check that some assumptions that allowed to apply Lemma \ref{lem:conf} or \ref{lem:th_conf2_hyp} can be fulfilled.

First, note that the assumptions also imply $|p|+|q|\ge 2$ if $a_{i,p,q}\ne 0$ or $b_{i,p,q} \ne 0$, so that we will also have $h_1(0)=0$. In particular, if we assume smallness, the Case (1) of Lemma \ref{lem:conf} or \ref{lem:th_conf2_hyp} is applicable.

For the equivalent of Theorem \ref{thmexistglobP}, we have $\nu_{\vartheta,\gh R} \ge (d-2)(|\beta|+\gamma|+|\delta)-d$, so \eqref{infnuvartheta} is satisfied with $\e=d-2>0$ since $d\ge 3$. So, we apply Lemma \ref{lem:conf} Case (2).

For the equivalent of Theorem \ref{thmexistglobPgain}, we want to verify Case (2) of Lemma \ref{lem:th_conf2_hyp}. The computation \eqref{kvartheta+e} still holds for $\kappa_{\vartheta,\opR}$. We denote that under both cases $d\ge 3$ or $d=2$, we always have $\ell_0+d-2\ge 1$. In particular, if $\e<1$, the expression in \eqref{kvartheta+e} is positive as soon as $|q|\ge2$. Moreover, under the structural assumptions of the nonlinearity we made, we have either $q=0$ or $q\ge 2$. So, it only remains to check the case $q=0$. It can only happen for $d\ge 3$, for which the same analysis as in Theorem \ref{thmexistglobP} works, under the same assumptions. We conclude the proof similarly.
\end{proof}
\begin{remark}
  In what follows, the refined Theorem \ref{thmexistglobPgain} will be applied in cases where the better behavior of the first Duhamel estimate \eqref{est:f(uL)_exp} will be valid for any $u_0$, as seen in Section \ref{s:null2} for the case of derivative nonlinearities satisfying some null condition. Yet, there can be some cases where the better behavior of the Duhamel term is due to $u_0$. For instance, in dimension $d=3$, if we consider polynomial type nonlinearities as in Theorem \ref{thmexistglobPintro}, the critical exponent is given by the condition $\nu=(d-2)p-d\ge 0$, that is $p\ge 3$. Consider the system 
  \begin{equation}
\begin{cases}
\Delta u = u^5+v^2 u, \\
\Delta v=v^5 +u^2v
\end{cases}
\end{equation}
It is critical for our criterium  because of the cubic coupling nonlinearity. Yet, if we consider some "asymptotic datum" of the form $u_0=0$, $v_0\in H^s(\m S^{d-1})$ then, \eqref{est:f(uL)_exp} will be satisfied because $f(u_L, v_L)$ will be of the form $(0,v_L^5)$ which has a good behaviour since the power $5$ is supercritical.
\end{remark}

\section{Scattering close to zero}
For problem close to zero, each one of the theorems in Section \ref{s:scattinfty} has its counterpart close to $0$. We only write those corresponding to the applications we have in mind, and omit the proofs unless when they are not as in the scattering close to infinity.

We start with the equivalent of Lemma \ref{lmchgevarinfty} but close to zero.  The results look very similar except for the sign of the exponent terms. This will not change the ideas of the proofs, but it does impact the numerology. In particular, our results close to zero require some ``small'' degree of monomials for the nonlinearity while the results close to infinity required some ``large'' degree.

\begin{lem}[Conformal change of variable close to zero]\label{lmchgevarzero}Let $d\ge 2$ and $R> 0$.  Let $D(0,(\rho,\sigma)) = \{ (w,z) = (w,z_1,\dots,z_d) \in \m R^{1+d} : |w| \le \rho, |z| \le \sigma \}$ be a polydisc in $\m R^{1+d}$ and $f: D(0,(\rho,\sigma)) \to \m R^N$ be a smooth function on $D(0,(\rho,\sigma))$. 
  
We define the smooth function $g$ by 
\begin{align*} g(t,y,v,w,z) &=e^{-\frac{d+2}{2} t} f \left(e^{\frac{d-2}{2}t} v, e^{\frac{d}{2} t} (-  \frac{d-2}{2}v\otimes  y - w\otimes y +  z) \right)\\
g_{\opR}(t,y,v,w,z) &=e^{-\frac{d+2}{2} t} f \left(e^{\frac{d-2}{2}t} v, e^{\frac{d}{2} t} ( -w \otimes y  +  z) \right)
\end{align*}
 
 where $t \ge 0$, $y \in \m S^{d-1}$, $v \in \m R^N$, $w \in \m R^N$, $z \in \mc M_{N,d}(\m R)$.

  For $u\in \q C^{2}(B(0,R))$ with $\nor{u}{W^{1,\infty}(B(0,R))}\le \min(\rho,\sigma)$, we have the equivalence 
 \begin{itemize}
 \item
 $u$ solves the equation 
 \begin{align}
 \label{nonlinell0}
\Delta u=f(u,\nabla u), \quad x\in B(0,R)
\end{align}
\item 
 $v(t,y)=e^{-\frac{(d-2)}{2}t}u(e^{-t}y)$, $v$ solves the equation
\begin{align*}
\partial_{tt} v - \gh D^2 v =g (t,y, v, \partial_t v,\nabla_{y} v), \quad (t,y)\in [-\log(R),+\infty)\times \m S^{d-1}.
\end{align*}
\item 
 $v(t,y)=e^{-\frac{(d-2)}{2}t}u(e^{-t}y)$, $v$ solves the equation
\begin{align*}
\partial_{tt} v - \gh D^2 v =g_{\opR} (t,y, v, \partial_t v+\opD v,\opR v), \quad (t,y)\in [-\log(R),+\infty)\times \m S^{d-1}.
\end{align*}
\end{itemize}
Moreover, any other function $\widetilde{g}$ so that $g(t,y,v,w,z)=\widetilde{g}(t,y,v,w,z)$ for all $(t,y,v,w)\in\m R \times \m S^{d-1} \times (\m R^N)^2$ and $z\in (T_{y}\m S^{d-1})^N$ satisfies the same property. 

The same also holds for any function $\tilde {g}_{\opR}$ so that  
 $g_{\opR}(t,y,v,w,z)=\tilde {g}_{\opR}(t,y,v,w,z)$ for all $(t,y,v,w,z)\in\m R \times \m S^{d-1} \times (\m R^N)^2 \times  \mc M_{N,d}(\m R)$.
 \end{lem}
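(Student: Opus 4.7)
The proof will be a direct mirror of the proof of Lemma \ref{lmchgevarinfty}, with the sign of $t$ flipped to reflect that we now rescale near $0$ rather than near $\infty$. Setting $r = e^{-t}$ and $x = e^{-t} y$, so that $|x|=r$ and $x/|x|=y$, the only novelty is tracking the factor $e^{-t}$ in place of $e^t$; as before we argue in the scalar case for notational simplicity.

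The first step is to differentiate $v(t,y) = e^{-\frac{d-2}{2}t} u(e^{-t}y)$. The chain rule together with $\partial_t [u(e^{-t}y)] = -e^{-t}\frac{\partial u}{\partial r}(e^{-t}y)$ gives
\begin{align*}
\partial_t v(t,y) &= -\tfrac{d-2}{2}\,v(t,y) - e^{-\frac{d}{2}t}\,\tfrac{\partial u}{\partial r}(e^{-t}y), \\
\nabla_y v(t,y) &= e^{-\frac{d-2}{2}t}\,\Lambda u(e^{-t}y),
\end{align*}
with $\Lambda$ as in \eqref{def:Lambda u}. Inverting and using $\nabla u(x) = (x/|x|)\,\partial_r u + |x|^{-1}\Lambda u$, one finds
\[ \nabla u(e^{-t}y) = e^{\frac{d}{2}t}\left(-\tfrac{d-2}{2}\, y\, v - y\,\partial_t v + \nabla_y v\right), \]
which is precisely the second argument of $f$ in the definition of $g$; therefore $f(u,\nabla u)(e^{-t}y) = e^{\frac{d+2}{2}t}\,g(t,y,v,\partial_t v,\nabla_y v)$. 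Using \eqref{defRi} to rewrite each component $D_i v$ as $-y_i(\opD - \frac{d-2}{2})v + \opR_i v$ absorbs the $-\tfrac{d-2}{2}yv$ term and combines $\partial_t v$ with $\opD v$, producing the corresponding expression in $g_{\opR}$ with arguments $(\partial_t v + \opD v, \opR v)$.

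Next I will relate $(\Delta u)(e^{-t}y)$ to $\partial_{tt} v - \opD^2 v$. Setting $w(t,y):=u(e^{-t}y)$, a direct computation of $\partial_{tt} w$ together with the polar form $\Delta u = \partial_r^2 u + \frac{d-1}{r}\partial_r u + \frac{1}{r^2} \Delta_{\m S^{d-1}} u$ evaluated at $r=e^{-t}$ yields
\[ \partial_{tt} w - (d-2)\,\partial_t w + \Delta_{\m S^{d-1}} w = e^{-2t}(\Delta u)(e^{-t}y). \]
Substituting $w = e^{\frac{d-2}{2}t} v$ and using $\opD^2 = -\Delta_{\m S^{d-1}} + \left(\tfrac{d-2}{2}\right)^2$, the left-hand side simplifies to $e^{\frac{d-2}{2}t}(\partial_{tt} v - \opD^2 v)$, so $(\Delta u)(e^{-t}y) = e^{\frac{d+2}{2}t}(\partial_{tt} v - \opD^2 v)$. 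Combining with the previous step, $\Delta u = f(u,\nabla u)$ is equivalent to $\partial_{tt} v - \opD^2 v = g(t,y,v,\partial_t v,\nabla_y v)$, and also to the corresponding identity with $g_{\opR}$. The uniqueness of $\widetilde g$ is automatic: since $\nabla_y v(t,y)$ has each component in $T_y\m S^{d-1}$, only the values of $g$ on $\m R \times \m S^{d-1} \times (\m R^N)^2 \times (T_y\m S^{d-1})^N$ are ever tested, so any $\widetilde g$ coinciding with $g$ on this subset produces the same equation; likewise for $\widetilde g_{\opR}$, whose third argument $\opR v(t,y)$ varies freely in $\mc M_{N,d}(\m R)$. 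I do not foresee any genuine obstacle beyond careful sign bookkeeping in the exponentials and in the decomposition $D_i = -y_i(\opD - \tfrac{d-2}{2}) + \opR_i$.
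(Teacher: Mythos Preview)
Your proof is correct and follows exactly the same route as the paper: both reduce to the computations of Lemma~\ref{lmchgevarinfty} with the substitution $t\mapsto -t$ (i.e.\ $r=e^{-t}$), tracking the resulting sign and exponent changes. Your version is in fact slightly more explicit than the paper's, which simply points to Lemma~\ref{lmchgevarinfty} and writes out the gradient and Laplacian identities; your intermediate step via $w(t,y)=u(e^{-t}y)$ and the identity $\partial_{tt}w-(d-2)\partial_t w+\Delta_{\m S^{d-1}}w=e^{-2t}(\Delta u)(e^{-t}y)$ is a clean way to reach the same conclusion.
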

 
\bnp
The computations are mostly the same as in Lemma \ref{lmchgevarinfty}, up to a few changes of exponent and signs, changing $t$ to $-t$, that is $t=-\ln|x|$, $y=\frac{x}{|x|}\in \m S^{d-1}$. For instance, as we have instead $u(x)  = |x|^{-\frac{d-2}{2}} v\left( -\ln |x|, \frac{x}{|x|} \right)$ so that (as $\nabla_y v \perp y$)
 \begin{align*}
 \nabla u & = - \frac{d-2}{2|x|^{\frac{d+2}{2}}} x v - \frac{x}{|x|^{\frac{d+2}{2}}} \partial_t v + \frac{1}{|x|^{\frac{d}{2}}} \nabla_y v  \\
& =e^{\frac{d}{2} t}  \left( - \frac{d-2}{2} yv(t,y) - y \partial_t v +  \nabla_y v \right)
 \end{align*}
 So, we have $f(u,\nabla u)= f \left(e^{\frac{d-2}{2}t} v, e^{\frac{d}{2} t}  \left( - \frac{d-2}{2} yv - y \partial_t v +  \nabla_y v \right) \right) $.

Concerning the Laplacian, changing $t$ to $-t$ in the formula of Lemma \ref{lmchgevarinfty}, we still have $(\partial_{tt} v - \gh D^2 v)(t,y)=e^{-\frac{(d+2)t}{2}}(\Delta u)(e^{-t}y)$. 
\enp

Here is the result without extra structure (analog to Theorem \ref{thmexistglobP}).

\begin{thm}\label{thmexistglobP0}
Assume $d\ge 2$ and $f$ as in \eqref{deff} satisfies the subcriticality assumption
\[ a_{i,p,q} \ne 0 \Longrightarrow q=0. \]
Let $u_{0} \in H^s(\m S^{d-1})$, and denote $u_{L}\in \q Z_{s}^{0}$ the associated bounded solution of 
\begin{align}\label{eqnlin0} \Delta {u_{L}}=0 \quad \text{on } B(0,1) \quad \text{and} \quad u_{L}|_{\m S^{d-1}}= u_{0}. \end{align}
1) If either $f$ is polynomial, or $u_0$ has zero mean, then, there exist $r_{0}\le 1$ and a unique $u \in \q Z^{0}_{s,r_{0}}$ solution of \eqref{eq:sys_orig} on  $B(0,r_{0})\setminus \{0\}$ so that 
\begin{align}
\label{scattranslrd0}
\nor{(u- u_{L})(r\cdot)}{Z^{0}_{s,r/r_{0}}}\lesssim r^{2} \to 0 \quad \text{as} \quad r \to 0.
\end{align}
Moreover, the map $u_{0}\mapsto u$ is injective. 

2) If instead $f$ satisfies ($a_{i,p,q} \ne 0 \Longrightarrow |p|\ge 2$) and $\nor{u_{0} }{H^s(\m S^{d-1})}$ is small enough, the conclusion of 1) holds with $r_{0}=1$.
\end{thm}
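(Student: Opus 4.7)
The plan is to recast the problem via the conformal change of variables near zero provided by Lemma \ref{lmchgevarzero}. Setting $v(t,y) = e^{-\frac{d-2}{2}t} u(e^{-t} y)$ with $t = -\log|x|$, the equation $\Delta u = f(u)$ on $B(0,r_0) \setminus \{0\}$ becomes
\[ \partial_{tt} v - \opD^2 v = g(t,v), \quad g_i(t,v) = e^{-\frac{d+2}{2}t} f_i(e^{\frac{d-2}{2}t} v) = \sum_{p} a_{i,p} e^{-\kappa_p t} v^p, \]
for $t \ge t_0 := -\log r_0$, where $\kappa_p = \frac{d+2}{2} - \frac{d-2}{2}|p|$ (possibly negative for large $|p|$). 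The bounded harmonic extension $u_L$ corresponds via the same transform to $\bs v_L = \q S(\cdot)(u_0, -\opD u_0) \in \q Y_s$, with $\|\bs v_L\|_{\q Y_s} \lesssim \|u_0\|_{H^s(\m S^{d-1})}$ by Lemma \ref{lmlienlinY}. The goal is then to apply Theorem \ref{th:conf} to construct a solution $\bs v$ on $[t_0, +\infty) \times \m S^{d-1}$ asymptotic to $\bs v_L$, and then to invert the conformal transform using \eqref{equivnorm}.

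Next I compute the exponent $\nu_\vartheta$ from \eqref{def:nu_a}: since $f$ has no derivative variables, $|\alpha| = |\gamma| = |\delta| = 0$ and $|\beta| = |p| \ge 1$, giving uniformly
\[ \nu_\vartheta = \kappa_p + (|p|-1) \tfrac{d-2}{2} = \tfrac{d+2}{2} - \tfrac{d-2}{2} = 2. \]
Hence $\nu_0 = 2 > 0$, which is precisely the rate producing the $r^2$ decay claimed in \eqref{scattranslrd0} after the conformal inversion (the weight $e^{-\nu_0 t}$ in \eqref{cvceinftysol} translates to $r^{\nu_0}=r^2$).

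Three cases then have to be handled according to the assumptions, by selecting a suitable case of Lemma \ref{lem:conf}. When $f$ is polynomial (Part 1), only finitely many monomials contribute, the series defining $h$ trivially converges, and \eqref{infnuvartheta} is satisfied with, say, $\e = 1$ and $D$ sufficiently large, so Case (2) of Lemma \ref{lem:conf} applies with $t_1 = 0$ and $t_0$ large, giving a small $r_0$. When instead $P_0 u_0 = 0$ (Part 1, zero-mean), I invoke Case (3) with $\ell_0 = 1$ and $A = d/2$: the condition \eqref{infkvartheta} becomes $\kappa_p \ge (-d/2 + \e)(|p|-1) - D$, which simplifies to $(1-\e)|p| + 1 + \e + D \ge 0$ and holds for any $\e \in (0,1)$ and $D \ge 0$. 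Finally, for Part 2 the assumption $|p| \ge 2$ forces $h_1(0) = 0$, so Case (1) of Lemma \ref{lem:conf} applies with $t_0 = t_1 = 0$ and $r_0 = 1$ whenever $\|u_0\|_{H^s(\m S^{d-1})}$ is small.

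In each situation Theorem \ref{th:conf} produces $\bs v \in \q Y_{s,t_0}^{t_1}$ with $\|\bs v - \bs v_L\|_{\q Y_{s,t}} \lesssim e^{-2t}$; combined with the inclusion $\|\cdot\|_{\q Y_{s,t}^{t}} \le \|\cdot\|_{\q Y_{s,t}^{t_1}} \le \|\cdot\|_{\q Y_{s,t}}$ from \eqref{inegYtri} and the equivalence \eqref{equivnorm}, this yields $u \in \q Z_{s,r_0}^0$ solving \eqref{eq:sys_orig} and the decay \eqref{scattranslrd0}. Injectivity of $u_0 \mapsto u$ follows from uniqueness of the bounded harmonic extension $u_L$ together with the fixed-point uniqueness within Theorem \ref{th:conf}. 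The main obstacle is the zero-mean case: unlike the analysis near infinity where the $\kappa_\vartheta$ enjoy a natural positive lower bound under supercriticality, here $\kappa_p$ can be arbitrarily negative for high-degree monomials, and one must precisely offset this growth against the angular-regularity gain $e^{-(d/2) t_1}\|u_0\|_{H^s}$ on $\bs v_L$ afforded by $P_0 u_0 = 0$ via \eqref{est:Hs_Yst0_exp_2d}. This balancing is exactly what is carried out in Case (3) of Lemma \ref{lem:conf}, whose numerology I have just verified.
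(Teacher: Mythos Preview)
Your proof is correct and follows essentially the same route as the paper: conformal change via Lemma \ref{lmchgevarzero}, computation of $\nu_\vartheta=2$ from the absence of derivative variables, and then verification of the smallness hypothesis \eqref{asumhvLt1} of Theorem \ref{th:conf} through the appropriate case of Lemma \ref{lem:conf}. The only cosmetic difference is that for the polynomial subcase you invoke Case~(2) of Lemma \ref{lem:conf} (checking \eqref{infnuvartheta}, which is trivial since $\nu_\vartheta\equiv 2$ and $|p|$ is bounded), whereas the paper invokes Case~(3) with $\ell_0=0$ (checking \eqref{infkvartheta}); both are valid here, and your choice is arguably more direct. Your verification of \eqref{infkvartheta} in the zero-mean case is also cleaner than the paper's (which records the constants as $\e=-1$, $D=-2$, evidently a typo for $\e=1$, $D=-2$).
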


Here is the results where the first iterate is better behaved, when the nonlinearity $f$ has structure (corresponding to Theorem \ref{thminftyspecial}).

\begin{thm}\label{thmexistglobPgainzero}
Let $d \ge 2$, $\nu >0$ and assume $f$ has the structure as in Theorem \ref{thminftyspecial}, that is \eqref{deffstruct} and, if $d=2$, \eqref{deffstructbracket}, with the summability condition \eqref{est:fDSE}. Also assume that $f$ as in \eqref{deff} satisfies the subcriticality assumption:
\[ a_{i,p,q} \ne 0 \Longrightarrow   |q| \le 2. \]
Let $u_{0} \in H^s(\m S^{d-1})$, denote $u_{L} \in \q Z_{s}^{0}$ the associated bounded solution of \eqref{eqnlin0}.
We assume furthermore that
\begin{equation} \label{est:f(uL)_exp0}
\sup_{0 < r\le 1}r^{2 - \nu }\nor{f({u_{L}},\nabla {u_{L}})(r\cdot)}{  Z_{s -1 , r}^{0}}<+\infty. %r^{2 + \nu }f({u_{L}},\nabla {u_{L}})\in \q Z_{s -1 }^{0}.
\end{equation}

1) If $u_0$ has zero mean, then, there exist $r_{0}\le 1$ and a unique $u\in \q Z^{0}_{s,r_{0}}$ solution of \eqref{eq:sys_orig} on  $B(0,r_{0})\setminus \{0\}$ so that 
\begin{align}
\label{scattranslrd0nu}
\nor{(u-u_{L})(r\cdot)}{Z^{0}_{s,r}}\lesssim r^{\nu} \to 0 \quad \text{as} \quad r \to 0.
\end{align}
Moreover, the application ${u_{0}}\mapsto {u}$ is injective (where defined). 

2) If instead $f$ satisfies 
\[ a_{i,p,q} \ne 0 \Longrightarrow |p|+|q| \ge 2, \]
and $\nor{{u_{0}} }{H^s(\m S^{d-1})}$ and the right-hand side of \eqref{est:f(uL)_exp0} are small enough, the conclusion of 1) holds with $r_{0}=1$.
\end{thm}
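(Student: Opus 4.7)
The argument mirrors that of Theorem~\ref{thmexistglobPgain} combined with the structural refinement of Theorem~\ref{thminftyspecial}, but uses the conformal change of variables near the origin provided by Lemma~\ref{lmchgevarzero} in place of Lemma~\ref{lmchgevarinfty}. Setting $t_1 = -\log r_0 \ge 0$ and $v(t,y) = e^{-\frac{d-2}{2}t}u(e^{-t}y)$, the equation $\Delta u = f(u,\nabla u)$ on $B(0,r_0)\setminus\{0\}$ becomes
\[ \partial_{tt} v - \opD^2 v = g_{\opR}\bigl(t,y,v,(\partial_t+\opD)v,\opR v\bigr), \qquad t \ge t_1. \]
The linear piece $\bs v_L = (v_L, -\opD v_L) = \q S(\cdot)(u_0, -\opD u_0)$ is the non-growing linear solution corresponding to $u_L$, and the identities \eqref{equiYZ}--\eqref{equivnorm} translate the $\q Z_{s,r_0}^0$-norm into the $\q Y_{s,t_1}^{t_1}$-norm, so that the desired decay \eqref{scattranslrd0nu} becomes an $\q X_{\nu,t_1}^{t_1}$-bound on $\bs v - \bs v_L$.

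Expanding $g_{\opR}$ as in the proof of Theorem~\ref{thminftyspecial}, the structural hypothesis \eqref{deffstruct} (or \eqref{deffstructbracket} for $d=2$) forces each index $\vartheta=(\alpha,\beta,\gamma,\delta)$ with $B_\vartheta\ne 0$ to satisfy $|\alpha|\le|\delta|$, $|\beta|=|p|$ and $|\gamma|+|\delta|=|q|$. The sign reversal in Lemma~\ref{lmchgevarzero} produces
\[ \kappa_\vartheta \;=\; \tfrac{d+2}{2} - \tfrac{d-2}{2}|p| - \tfrac{d}{2}|q|, \]
and a direct computation from \eqref{def:nu_arefin} gives
\[ \nu_{\vartheta,\opR} \;=\; \kappa_\vartheta - |\alpha| + (|\beta|+|\gamma|-1)\tfrac{d-2}{2} + |\delta|\tfrac{d}{2} \;=\; 2 - |\alpha| - |\gamma| \;\ge\; 2 - |q|. \]
The subcriticality $|q|\le 2$ therefore yields $\nu_{0,\opR}\ge 0$, which is exactly the standing hypothesis \eqref{hypokabcnurefin} of Theorem~\ref{thmconfrefin}; convergence of $h_{1,\opR}$ near $0$ follows from \eqref{est:fDSE} through the counting bound \eqref{est:card_Jq}, as in the proof of Theorem~\ref{thmexistglobP}.

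The decisive input is the decay assumption \eqref{est:f(uL)_exp0}: via \eqref{equiYZ} it becomes $\|g_{\opR}(t,\cdot,v_L,(\partial_t+\opD)v_L,\opR v_L)\|_{Y_{s-1,t}}\lesssim e^{-\nu t}$, and Lemma~\ref{lmDuhamelinfty} then furnishes $\Psi(0)\in \q X_{\nu,t_1}^{t_1}$, which is assumption \eqref{est:psi(0)_exp} of the refined abstract theorem. In case~(1), the mean-zero hypothesis $P_0 u_0 = 0$ plays the role of $\ell_0=1$ in the analogue of Lemma~\ref{lem:th_conf2_hyp}(2): substituting $A = d/2$, $D = 0$ into \eqref{infkvartheta} reduces, after simplification, to $(1+\e)+(1-\e)|p| - \e|q| \ge 0$, which holds for any $0<\e<1$ whenever $|q|\le 2$; then choosing $t_1$ large (equivalently $r_0$ small) secures \eqref{est:h1_small}. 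In case~(2), the condition $|p|+|q|\ge 2$ forces $f$ to be at least quadratic, so that $h_{1,\opR}(0)=0$, and the small-data branch Lemma~\ref{lem:th_conf2_hyp}(1) applies directly at $t_1=0$, i.e., $r_0=1$.

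Applying Theorem~\ref{thmconfrefin} in its \eqref{th:conf2}-style form (with $\nu_\vartheta,\nu_0,h$ replaced by $\nu_{\vartheta,\opR}, \nu_{0,\opR}, h_{\opR}$) produces $\bs v$ on $[t_1,+\infty)\times\m S^{d-1}$ satisfying \eqref{cvceinftysolnu}; reversing the conformal transform via \eqref{equivnorm} yields the solution $u\in\q Z_{s,r_0}^0$ with \eqref{scattranslrd0nu}, and injectivity is inherited from the uniqueness clause of Theorem~\ref{thmconfrefin}. The main delicacy is the critical value $|q|=2$, where $\kappa_\vartheta$ can be strongly negative and $h_{\opR}$ fails to be bounded on any neighbourhood of $\varsigma=0$: it is the combination of the structural gain $|\alpha|\le|\delta|$ (which produces $\nu_{\vartheta,\opR}\ge 0$) with the mean-zero assumption (which supplies the missing exponential factor through $\ell_0=1$) that makes the contraction estimate close.
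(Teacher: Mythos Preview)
Your proof is correct and follows essentially the same route as the paper's own argument: conformal change of variable near zero via Lemma~\ref{lmchgevarzero}, computation of the refined exponents $\kappa_{\vartheta,\opR}$ and $\nu_{\vartheta,\opR}$ using the structural constraint $|\alpha|\le|\delta|$, verification of $\Psi(0)\in\q X_{\nu}$ from \eqref{est:f(uL)_exp0}, and then the application of Theorem~\ref{thmconfrefin} through Lemma~\ref{lem:th_conf2_hyp}, Case~(2) for part~(1) (with $\ell_0=1$) and Case~(1) for part~(2). The only cosmetic difference is in how you verify \eqref{infkvartheta}: you reduce it to the explicit inequality $(1+\e)+(1-\e)|p|-\e|q|\ge 0$, whereas the paper rewrites $\kappa_{\vartheta,\opR}=2-\tfrac{d-2}{2}(|\beta|+|\gamma|+|\delta|-1)-(|\gamma|+|\delta|)$ and reads off the bound with $\e=1$; both are valid.
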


\begin{remark}
    The extra assumption that $u_0$ has zero mean is natural in this context: the associated linear solution converges to this value at zero and we need to be sure that $f$ is well defined there. It can be easily removed by transforming $f$ by $f(\cdot+x_0)$. It was not necessary in the problem at infinity, since all linear (bounded) solutions converge to zero at infinity, except for $d=2$.
\end{remark}

\begin{proof}[Proof of Theorems \ref{thmexistglobP0} and \ref{thmexistglobPgainzero}]

The computations are similar to that in the proofs of Theorems \ref{thmexistglobP} and \ref{thmexistglobPgain} (we use their notations): the gain and losses explained in Remarks \ref{rkxplaincoeff} and \ref{rkxplaincoeffrefin} (respectively) are the same, and the exponent in $e^t$ in front of the monomial, is now the opposite  $\ds - \frac{d+2}{2} + \frac{d-2}{2} |p| + \frac{d}{2} |q|$.

More precisely, due to Lemma \ref{lmchgevarzero} $g$ now takes the following form:
\begin{align}
g_i(t,y,v,w,z) & =e^{-\frac{d+2}{2} t} 
 \sum_{p,q} a_{p,q} e^{ \left(\frac{d-2}{2} |p| + \frac{d}{2} |q| \right) t} v^p P_q(y,v,-w,z)\\
&=\sum_{p,q}\sum_{(\alpha,\iota,\gamma,\delta) \in J_q} a_{i,p,q} c_{\alpha,\iota,\gamma,\delta} (-1)^{\gamma}e^{- \left(\frac{d+2}{2}-\frac{d-2}{2} p - \frac{d}{2} |q| \right) t} y^\alpha v^{p+\iota}   w^\gamma z^{\delta}\\
&=\sum_{\vartheta \in \Theta}  b_{i,\vartheta}(t) y^\alpha v^\beta w^\gamma z^{\delta},  \label{def:g_zero}
\end{align}
where $\vartheta = (\alpha,\beta,\gamma,\delta,\iota)$ and
\[ b_{i,\vartheta}(t) = (-1)^{\gamma} c_{\alpha,\iota,\gamma, \delta} \sum_{(p,q) \in I_{\vartheta}} a_{i,p,q} e^{- \left(\frac{d+2}{2}-\frac{d-2}{2} |p| - \frac{d}{2} |q| \right)t }. \]
As before, $p = \beta-\iota$ and $|q| = |\iota| + |\gamma| + |\delta|$ are prescribed in $I_{\vartheta}$, so for any $(p,q) \in I_{\vartheta}$
\begin{align} \label{formkappa0} \frac{d+2}{2}-\frac{d-2}{2} |p| - \frac{d}{2} |q| = \frac{d+2}{2}-\frac{d-2}{2} (|\beta|-|\iota|) - \frac{d}{2}(|\iota|+|\gamma|+|\delta|) =: \kappa_{\vartheta}.
\end{align}
We also define
\[ B_{\vartheta} = (d/2+1)^{|\iota|+|\gamma|+|\delta|} \sum_{(p,q) \in I_\vartheta} \max_{i \in \llbracket 1, N \rrbracket} |a_{i,p,q}|, \]
(otherwise $0$ if the sum is empty), and we have the expected estimate \eqref{estimaabetaiota}. 

If $b_{i,\vartheta}(t) \ne 0$, then $I_{\vartheta} \ne \varnothing$ and we have
\begin{align*}
\nu_{\vartheta} & = \kappa_{\vartheta} - |\alpha| + (|\beta| + |\gamma|-1) \frac{d-2}{2}+|\delta|\frac{d-4}{2} \\
& = 2 - |\alpha| - |\gamma| - |\iota| - 2|\delta| \\
& = 2  - |\alpha| -|q|-|\delta|.
\end{align*}

We start by considering 1) for both Theorem  \ref{thmexistglobP0} and Theorem \ref{thmexistglobPgainzero}.

In the context of Theorem \ref{thmexistglobP0}, the assumption is that $q=0$ so that $\alpha$ are $\delta$ as well (as $|\alpha| \le |q|$). Hence $\nu_{\vartheta} = 2$, and $\nu_1=2$. We want to us Theorem \ref{th:conf}. It remains to check the smallness condition \eqref{asumhvLt1}, which we do by applying Case (3) of Lemma \ref{lem:conf}, that is verifying \eqref{infkvartheta}. 

If $f$ is polynomial, this condition is always satisfied (by choosing $D$ large enough). Otherwise, we assumed that $u_0$ has zeo mean, so that $\ell_0=1$ (that is $u_0$ of zero means). Now, as for contributing $b_{i,\vartheta}$, $|\iota|=|\gamma|=|\delta|=0$ (as $q=0$) formula \eqref{formkappa0} writes 
\[ \kappa_{\vartheta} = \frac{d+2}{2}-\frac{d-2}{2} |\beta| \ge \left( - \left( 1 + \frac{d-2}{2} \right) +1 \right) (|\beta|-1) +2, \] 
so that Case (3) of Lemma \ref{lem:conf} holds with $\e =-1$ and $D=-2$.
Theorem \ref{th:conf} applies, and \eqref{cvceinftysol} gives the claimed convergence rate. 

For Theorem \ref{thmexistglobPgainzero}, we want to use the structure assumption \eqref{deffstruct} and apply Theorem \ref{thmconfrefin}. Following the proof of Theorem \ref{thminftyspecial}, first notice that
\begin{equation}
\nor{g(t,y,v_{L},\partial_t v_L, \nabla_y {v_{L}})}{Y_{s-1,t}} = e^{ \frac{d-2}{2} t} \| r^{ \frac{d+2}{2} } f(u_L, \nabla u_L) \|_{Z_{s-1,e^{-t}}^0} \lesssim e^{-\nu t}.
\end{equation}
Hence \eqref{est:psi(0)_exp} holds. 
Then we need to compute the new exponents: $g_{\opR}$ has a similar structure as \eqref{def:g_zero}   (mostly only changing $w$ to $-w$), and we recall that there is no index $\iota$, $p=\beta$, $|\gamma | + |\delta| =|q|$ and $|\alpha| \le |\delta|$. Then as before for a contributing $\vartheta$ and $(p,q) \in I_\vartheta$,
\begin{align}\label{formkappa0bis} \frac{d+2}{2}- \frac{d-2}{2} |p| - \frac{d}{2} |q| =  \frac{d+2}{2} - \frac{d-2}{2} |\beta| - \frac{d}{2} (|\gamma | + |\delta|) =:\kappa_{\vartheta,\opR},
\end{align}
and so
\begin{align*} 
\nu_{\vartheta,\gh R} & = \kappa_{\vartheta, \gh R} - |\alpha| + (|\beta| + |\gamma| - 1) \frac{d-2}{2} + |\delta| \frac{d}{2} \nonumber \\
& = \frac{d+2}{2}-\frac{d-2}{2} |p| - \frac{d}{2}|q|- |\alpha| + (|p|+|q|-1) \frac{d-2}{2}  +|\delta| \nonumber \\
& = 2  -|q|- |\alpha| + |\delta|\ge 2  -|q|.
\end{align*}
By assumption $|q| \le 2$ so that $\nu_{1,\opR} \ge 0$. 
Now, we need to check that we can obtain the smallness of \eqref{asumhvLt1} by applying Lemma \eqref{lem:th_conf2_hyp}, Case (2). The definition \eqref{formkappa0bis} gives 
\[ \kappa_{\vartheta,\opR}=2-   \frac{d-2}{2}\left( |\beta| +|\gamma | + |\delta|-1 \right) - (|\gamma | + |\delta|)\ge -\frac{d-2}{2}\left( |\beta| +|\gamma | + |\delta|-1 \right), \]
since $|\gamma | + |\delta|= |q|\le 2$.
In particular, \eqref{infkvartheta} holds true with $\ell_0=1$, $\e=1$ and $D=0$. We are in a position to apply Lemma \eqref{lem:th_conf2_hyp}, Case (2) and then Theorem \ref{thmconfrefin}. 

\bigskip

It remains to treat 2) of both Theorems. We use the case \ref{hypott0} of Lemma \ref{lem:conf} and Lemma \eqref{lem:th_conf2_hyp}. Recall the relation \eqref{def:pq_ib}, so that the extra assumption ensures that for any contributing $\vartheta$ (and $(p,q) \in I_\vartheta$)
\[ |\beta|+|\gamma|+|\delta|-1 = |p|+|q|-1 \ge 1, \]
and so $h_1(0)=0$.
\end{proof}

%   \begin{lem}
% Assume $f$ as in \eqref{deffstruct}
   
%    Assume  first $d\ge 2$ and the refined supercriticality assumption: for all $(p,q)\in \N^{2}$,
%    \[ b_{p,q} \ne 0 \Longrightarrow   q=0,2 \]
% then there exists $g^{\pi}$ that satisfies the last property of Lemma \ref{lmchgevarzero} and the assumptions of Theorem \ref{th:conf}.

% Now, let $\nu>0$ and we allow $d\ge 2$. Then, if instead, $f$ satisfies the supercriticality assumption: for all $(p,q)\in \N^{2}$,
% \[ b_{p,q} \ne 0 \Longrightarrow   q\le1, \]
% then there exists $g^{\pi}$ that satisfies the last property of Lemma \ref{lmchgevarzero} and the assumptions of Theorem \ref {th:conf2}.

% Moreover, if we have $ b_{p,q} \ne 0 \Longrightarrow p+2q\ge 2$, then we have $h_{1}(0)=0$ for one associated series defined in \eqref{defhh1}.
%   \end{lem}
%     \bnp
%   The computations are very similar. They lead to define $\kappa_{\beta,\gamma,\iota}=\frac{d+2}{2}-\frac{d-2}{2} (\beta-\iota) - \frac{d}{2}(\iota+|\gamma|)$. As before, we check that  $a_{\beta,\gamma,\iota}(t)\neq 0$ implies the existence of $(p,q)\in \N^{2}$ so that $\iota+|\gamma| = 2q$, $p=\beta-\iota$ and $b_{p,q}\neq 0$. In this configuration, $\kappa_{\beta,\gamma,\iota}=\frac{d+2}{2}-\frac{d-2}{2} p -dq$ and $\beta+ |\gamma|=p+2q$. In particular, we have 
% \[
% \nu_{\beta,\gamma,\iota}=\kappa_{\beta,\gamma,\iota}+ (|\beta|+ |\gamma|-1) \frac{d-2}{2}=  -2q+2>0.
% \]
% since we have $q=0$. In the second case, we have $\nu_{\beta,\gamma,\iota}=-2q+2\ge0$ since $qle 1$.
%  \enp
\section{General results for the Dirichlet problem}
In this part, we gather the equivalent of the previous theorem we stated for scattering at infinity or at $0$ for a Dirichlet problem. The proof are mainly the same once we have the equivalent of the Duhamel formulation for Dirichlet problem stated in Lemma \ref{lmDuhamelzero}. 

\subsection{The Dirichlet problem in conformal variable}

In conformal variable, we are interested in solving in $\q Y_s$ the problem
\begin{equation} \label{eq:sys_conf_Dir} 
\partial_{tt} v - \gh D^2 v = g(t,y,v,\partial_t v, \nabla_{y} v), \quad v(0) = v_0, 
\end{equation}
where $v_0$ is given (we underline that the condition bears only on the function, not its time derivative).

It will be convenient to introduce a map similar to $\Psi$ for which we will seek a fixed point, and adapted to data at infinity: for this, we use the operator $\Phi^{D}$ well adapted to the Dirichlet boundary condition, instead of $\Phi$. 

More precisely,
given $v_0 \in (H^s(\m S^{d-1})^N$, we denote
\begin{equation} \label{def:vL}
\bs v_L = \q S(\cdot) (v_0, - \gh D v_0),
\end{equation}
and define the operator
\begin{gather} \label{def:PsiD}
% \Psi: \bs v \mapsto \Phi^{D}( g(t,y, v + v_{L} , \nabla_{t,y} (v + v_{L}))),
\Psi^D: \bs v \mapsto \Phi^{D}( g(t,y, v + v_L,\dot v+\dot{v}_L , \nabla_{y} (v+v_L)))
\end{gather}
where $\Phi^D$ is defined in \eqref{def:PhiD} and acts component by component.

\begin{thm}[Conformal variables] \label{th:confDir}
Under the same assumptions as Theorem \ref{th:conf} and $h_{1}(0)=0$, there exists $\eta>0$ so that for any given data 
\[ v_0 = (v_{1,0}, \dots, v_{N,0}) \in H^s(\m S^{d-1}), \quad \text{with} \quad \| v_{0}\|_{H^s(\m S^{d-1})}\le \eta, \]
there exists a unique solution $\bs v = (\bs v_1, \dots,\bs v_N) \in  \q Y_{s}$ (defined for times $t \ge 0$) to the integral formulation of the system \eqref{eq:sys_conf_Dir}, with initial condition $v(0) = v_0$.
% \begin{align}
% \label{Dirichletconf}
% \forall i \in \llbracket 1, k \rrbracket, \quad \bs v_i(0)=v_{i,0}
% \end{align}

Moreover, there exists a unique $v_+\in H^s(\m S^{d-1})$ so that
\begin{align}
\label{cvceinftysolDir}
\| \bs v - \q S(\cdot) (v_+ ,-\gh D v_+) \|_{\q Y_{s,t}} \lesssim e^{-\nu_{0}t}\to 0 \quad \text{as} \quad t \to +\infty. 
\end{align}
\end{thm}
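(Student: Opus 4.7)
The strategy mirrors that of Theorem~\ref{th:conf}, but with $\Psi^D$ from \eqref{def:PsiD} replacing $\Psi$ and exploiting Lemma~\ref{lmDuhamelzero} in place of Lemma~\ref{lmDuhamelinfty}. First I would decompose $\bs v = \bs v_L + \bs w$, where $\bs v_L$ is the linear $\q Y_s$-solution defined in \eqref{def:vL}. By Lemma~\ref{lmDuhamelzero}, the Dirichlet condition $v(0) = v_0$ is equivalent (in integral form) to the fixed-point equation $\bs w = \Psi^D(\bs w)$ in $\q Y_s$, with the constraint $w(0) = 0$ being built into $\Phi^D$.

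Next, I would run a contraction argument on $\Psi^D$ in the closed ball
\[ B = \{ \bs w \in \q Y_s : \|\bs w\|_{\q Y_s} \le \|\bs v_L\|_{\q Y_s} \}. \]
The crucial input is to reproduce the multilinear estimates of Lemma~\ref{lminegG} with $\Phi$ replaced by $\Phi^D$: the nonlinear product bound \eqref{est:mult_Psi} is unchanged, and the Duhamel step uses \eqref{est:lin_DuhamelDir} rather than \eqref{est:lin_Duhamel}, with $t_0 = t_1 = 0$. Writing $M = \max(\|\bs w\|_{\q Y_s}, \|\tilde{\bs w}\|_{\q Y_s}, \|\bs v_L\|_{\q Y_s})$, this yields
\begin{align*}
\|\Psi^D(\bs w)\|_{\q Y_s} & \lesssim h_1(CM)\left(\|\bs w\|_{\q Y_s} + \|\bs v_L\|_{\q Y_s}\right), \\
\|\Psi^D(\bs w) - \Psi^D(\tilde{\bs w})\|_{\q Y_s} & \lesssim h_1(CM) \|\bs w - \tilde{\bs w}\|_{\q Y_s}.
\end{align*}
Since $h_1(0) = 0$ and $\|\bs v_L\|_{\q Y_s} \lesssim \|v_0\|_{H^s(\m S^{d-1})} \le \eta$ by \eqref{est:Hs_Yst0}, choosing $\eta$ small enough makes $\Psi^D$ a strict contraction on $B$, producing the unique fixed point $\bs w$.

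For the asymptotic statement, I would exploit the splitting $\Phi^D(F) = \q S(\cdot)\bs u_{+,F} - \Phi(F)$ from \eqref{def:PhiD}, applied to $F = g(\cdot, \bs v, \partial_t \bs v, \nabla_y \bs v)$. Setting $v_+ := v_0 + u_{+,F} \in H^s(\m S^{d-1})$, one then has
\[ \bs v - \q S(\cdot)(v_+, -\gh D v_+) = \bs w - \q S(\cdot) \bs u_{+,F} = -\Phi(F). \]
Arguing as in the end of the proof of Theorem~\ref{th:conf} (with $t_1 = 0$), the right-hand side is controlled in $\q Y_{s,t}$ by $h(C\|\bs v_L\|_{\q Y_s}, e^{-t}, 1) \lesssim e^{-\nu_0 t}$, which is precisely \eqref{cvceinftysolDir}. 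Uniqueness of $v_+$ follows from the corresponding uniqueness statement in Lemma~\ref{lmDuhamelzero}.

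The main technical point is verifying that $F$ meets the integrability required by Lemma~\ref{lmDuhamelzero}, namely $\int_0^{+\infty} \|F(\tau)\|_{Y_{s-1,\tau}} d\tau < +\infty$ together with the weighted $P_0 F$ contribution in dimension $2$. Both follow from the pointwise bound $\|F(\tau)\|_{Y_{s-1,\tau}} \lesssim e^{-\nu_0 \tau}$ furnished by the same computations as in Lemma~\ref{lminegG} once $\bs v$ lies in $B$; the assumption $\nu_0 > 0$ inherited from Theorem~\ref{th:conf} is what absorbs the polynomial weight $\tau$ in the two-dimensional term, and this is where the reinforced hypothesis of the theorem is used.
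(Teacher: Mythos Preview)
Your proof is correct and follows essentially the same approach as the paper: the paper also isolates a Dirichlet-adapted analogue of Lemma~\ref{lminegG} (its Lemma~\ref{lminegGDir2}), runs the contraction on the same ball $\{\|\bs w\|_{\q Y_s}\le\|\bs v_L\|_{\q Y_s}\}$ with $t_0=t_1=0$, and extracts $v_+$ via Lemma~\ref{lmDuhamelzero}. One harmless slip: from \eqref{def:Phi}--\eqref{def:PhiD} the splitting is $\Phi^D(F)=\q S(\cdot)\bs u_{+,F}+\Phi(F)$ (not $-\Phi(F)$), so $\bs w-\q S(\cdot)\bs u_{+,F}=\Phi(F)$; this does not affect the estimate, and your identification $v_+=v_0+u_{+,F}$ is in fact more precise than the paper's closing line.
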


\bnp
The proof is the same as Theorem \ref{thmexistglobP}: our goal is to construct a fixed point for $\Psi^D$. The only modification in the argument is that we have to take $t_{0}=t_{1}=0$ which imposes the smallness of $ \| v_{0}\|_{H^s(\m S^{d-1})}$: this corresponds to case in condition (1) of Lemma \ref{lem:conf}.

We need the following variant of Lemma \ref{lminegG} adapted to the Dirichlet operator: 
\begin{lem}
\label{lminegGDir2}
There exists a universal constant $C>0$ so that, given $\bs v_L$ as in \eqref{def:vL} and $\bs v, \bs w\in (\q Y_{s})^k$ so that denoting $M = \max\left(\|\bs v \|_{\q Y_{s}},\| \bs w \|_{\q Y_{s}},\| \bs v_L \|_{\q Y_{s}} \right)$, then we have  
\begin{align}
\| \Psi^{D} (\bs v) - \Psi^{D} (\bs w) \|_{\q Y_{s}} & \lesssim  \sum_{\nu \in \Theta} B_{\vartheta} \left< \alpha \right>^{s+1}(|\beta| + |\gamma|+|\delta|) (CM)^{|\beta| + |\gamma|+|\delta|-1} \\
& \qquad \qquad \times \int_0^{+\infty}(1+\tau)  e^{-\nu_{\vartheta}\tau}\| \bs v - \bs w \|_{\q Y_{s,\tau}}d\tau 
\end{align}
where $\Psi^{D}$ (depending on $\bs v_L$) is defined in \eqref{def:PsiD}. Similarly,
\begin{align}
\| \Psi^D (\bs v) \|_{\q Y_{s}} & \lesssim  \sum_{\nu \in \Theta} B_{\vartheta} \left< \alpha \right>^{s+1}(|\beta| + |\gamma|+|\delta|) (CM)^{|\beta| + |\gamma|+|\delta|-1} \\
& \qquad \qquad \times \int_0^{+\infty} (1+\tau)  e^{-\nu_{\vartheta}\tau}\left(\| \bs v \|_{\q Y_{s,\tau}} + \| \bs v_{L}\|_{\q Y_{s,\tau}}\right)d\tau 
 \end{align}
\end{lem}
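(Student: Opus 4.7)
The plan is to follow the proof of Lemma \ref{lminegG} essentially verbatim, with $t_{0}=t_{1}=0$, replacing the Duhamel bound of Lemma \ref{lmDuhamelinfty} by its Dirichlet analogue from Lemma \ref{lmDuhamelzero}. The two differ only in that $\Phi^{D}$ enforces $v(0)=0$ together with decay at infinity, which makes the integration variable $\tau$ range over $[0,+\infty)$ rather than $[t,+\infty)$; this is exactly what produces the integral from $0$ in the announced estimates. Since all quantities are evaluated at $t_{1}=0$, every $Y_{s,\tau-t_{1}}$ norm appearing in the proof of Lemma \ref{lminegG} becomes a $Y_{s,\tau}$ norm here, and likewise for $\q Y_{s,\tau}^{t_{1}}$ which becomes $\q Y_{s,\tau}$.

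The first step is to reproduce the multilinear product estimate \eqref{est:mult_Psi}: writing $\tilde{\bs v}=\bs v+\bs v_{L}$ and $\tilde{\bs w}=\bs w+\bs v_{L}$, Proposition \ref{lmprodanalytique} together with Lemma \ref{lmderiv} yields, for every admissible multi-index $(\beta,\gamma,\delta)$ and every $\tau\ge 0$,
\begin{align*}
\MoveEqLeft \bigl\| \tilde v^{\beta}\dot{\tilde v}^{\gamma}(\nabla_{y}\tilde v)^{\delta} - \tilde w^{\beta}\dot{\tilde w}^{\gamma}(\nabla_{y}\tilde w)^{\delta} \bigr\|_{Y_{s-1,\tau}} \\
& \lesssim (|\beta|+|\gamma|+|\delta|)(CM)^{|\beta|+|\gamma|+|\delta|-1} e^{-((|\beta|+|\gamma|-1)\frac{d-2}{2}+|\delta|\frac{d-4}{2})\tau} \| \bs v - \bs w \|_{\q Y_{s,\tau}},
\end{align*}
with $M$ as defined in the statement. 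Multiplying by the monomial factor $y^{\alpha}$ costs at most $C\langle \alpha\rangle^{s+1} e^{|\alpha|\tau}$ by \eqref{est:prod_ya}, and the coefficient bound \eqref{estimaabetaiota} provides an extra $e^{-\kappa_{\vartheta}\tau}$. These three contributions combine into $e^{-\nu_{\vartheta}\tau}$ by the very definition \eqref{def:nu_a} of $\nu_{\vartheta}$ (with $t_{1}=0$), exactly as in Lemma \ref{lminegG}.

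The second step is to plug this pointwise-in-$\tau$ bound into \eqref{est:lin_DuhamelDir} from Lemma \ref{lmDuhamelzero}. In dimension $d\ge 3$ this directly produces the expected integral $\int_{0}^{+\infty} e^{-\nu_{\vartheta}\tau}\| \bs v - \bs w\|_{\q Y_{s,\tau}}\, d\tau$, and one may trivially insert the factor $1+\tau$. In dimension $d=2$ there is an additional contribution $\tau\|P_{0}F(\tau)\|_{L^{2}(\m S^{1})}$; because $s-1>\tfrac{d}{2}=1$, the Sobolev embedding $L^{2}(\m S^{1})\supset H^{s-1}(\m S^{1})$ and the trivial inequality $\|P_{0}F\|_{L^{2}}\le \|F\|_{Y_{s-1,\tau}}$ allow us to bound this exactly as in the $d=2$ portion of the proof of Lemma \ref{lminegG}, producing the $(1+\tau)$ factor in the announced estimate. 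Summing in $\vartheta\in\Theta$ and taking the supremum of the $\q Y_{s}$-norm of $\Phi^{D}(\cdot)$ finishes the difference estimate.

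The bound on $\Psi^{D}(\bs v)$ alone is obtained in strictly the same manner, using the non-difference version of the multilinear estimate, which produces the $\|\bs v\|_{\q Y_{s,\tau}}+\|\bs v_{L}\|_{\q Y_{s,\tau}}$ factor in place of $\|\bs v-\bs w\|_{\q Y_{s,\tau}}$. There is no genuine obstacle: the entire content of the lemma is packaged in the product law of Proposition \ref{lmprodanalytique}, the derivative bounds of Lemma \ref{lmderiv}, and the Dirichlet Duhamel estimate of Lemma \ref{lmDuhamelzero}; the rest is bookkeeping that mirrors Lemma \ref{lminegG} line for line.
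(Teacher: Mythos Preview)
Your proposal is correct and matches the paper's approach exactly: the paper omits the proof of Lemma \ref{lminegGDir2}, stating only that it ``follows closely the lines of that of Lemma \ref{lminegG} where the estimates of Lemma \ref{lmDuhamelinfty} are replaced by the estimates \eqref{est:lin_DuhamelDir} in Lemma \ref{lmDuhamelzero}.'' Your write-up is a faithful unpacking of this sentence, with the correct handling of the $d=2$ case via the $P_0$ term.
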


We omit the proof of Lemma \ref{lminegGDir2} since it follows closely the lines of that of Lemma \ref{lminegG} where the estimates of Lemma \ref{lmDuhamelinfty} are replaced by the estimates \eqref{est:lin_DuhamelDir} in Lemma \ref{lmDuhamelzero}. 

Using now Lemma \ref{lminegGDir2}, we get for $\bs v, \bs w\in Y := \{ \bs w : \| \bs w \|_{\q Y_{s}} \le \| \bs v_L \|_{\q Y_s} \}$
\begin{align}
\label{est:Psi1diffDir}
\MoveEqLeft \| \Psi^D (\bs v) - \Psi^D (\bs w)  \|_{\q Y_{s}} \\
& \lesssim  \sum_{\nu \in \Theta} B_{\vartheta}  \left< \alpha \right>^{s+1}(|\beta| + |\gamma|+|\delta|+1)  (C \| \bs v_L \|_{\q Y_{s}})^{|\beta| + |\gamma|+|\delta|-1} \\
& \qquad \qquad \times \int_0^{+\infty}\| \bs v - \bs w \|_{\q Y_{s,\tau}} \tau e^{- \nu_{\vartheta}\tau} d\tau \\
& \lesssim  \sum_{\nu \in \Theta} B_{\vartheta}  \left< \alpha \right>^{s+1}(|\beta| + |\gamma|+|\delta|+1)  (C \|\bs v_L \|_{\q Y_{s}})^{|\beta| + |\gamma|+|\delta|-1} \\
& \qquad \qquad \times \| \bs v - \bs w \|_{\q Y_{s}} \left( \int_0^{+\infty} \tau e^{-\nu_0 \tau} d\tau \right)  \\
&\lesssim h_1(C \| \bs v_L \|_{\q Y_{s}}) \| \bs v - \bs w \|_{\q Y_{s}}.
\end{align}
And similarly, $\| \Psi^D (\bs w) \|_{Y_s} \lesssim h_1(C \| \bs v_L \|_{\q Y_{s}}) \| \bs v_L \|_{\q Y_s}$. A classical argument shows that for $\eta >0$ small enough, $\Psi^D$ admits a unique fixed point $\bs r \in Y$.

Also $g(t,y,r+v_L,\partial_t (r+v_L), \nabla_{y}(r+v_L))  \in \q Y_s$ so that Lemma \ref{lmDuhamelzero} applies (and the discussion that precedes it): in particular, $v_+$ is the first component of 
\[ \int_0^\infty \q S(-\tau) \begin{pmatrix} 0 \\ g(\tau,y,r+v_L,\partial_t (r+v_L), \nabla_{y}(r+v_L)) \end{pmatrix} d\tau . \qedhere \]
\enp

In the case of a gain of the first Duhamel iterate, we get similarly the following result.

\begin{thm}[Conformal variables 2] \label{th:confDirnu}
Under the same assumptions as Theorem \ref{th:conf2}, there exists $\eta>0$ so that for any given data $v_0 = (v_{1,0}, \dots, v_{N,0}) \in H^s(\m S^{d-1})$ such that 
\[ \| v_{0}\|_{H^s(\m S^{d-1})}+ \| \Psi^D(0) \|_{\q Y_{s} }\le \eta \]
and satisfying
\[ \Psi^D(0) \in \q X_{\nu,0}. \]
Then there exists a unique solution $\bs v = (\bs v_1, \dots, \bs v_N) \in  \q Y_{s}$ (defined for times $t \ge 0$) to the integral formulation of the system \eqref{eq:sys_conf}, with initial condition $v(0) = v_0$

Moreover, there exists a unique $v_+ \in H^s(\m S^{d-1})^{N}$ so that
\begin{align}
\label{cvceinftysolDircrit}
\| \bs v - \q S(\cdot) (w_+,-\gh D w_+) \|_{\q Y_{s,t}} \lesssim e^{-\nu t}\to 0 \quad \text{as} \quad t \to +\infty. 
\end{align}
\end{thm}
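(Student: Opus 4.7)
The strategy mirrors the proof of Theorem \ref{th:conf2}, performing a fixed point argument with $\Psi^D$ in place of $\Psi$. We look for a solution of the form $\bs v = \bs v_L + \Psi^D(0) + \bs r'$, where $\bs r'$ is a fixed point of the modified map
\[ \tilde\Psi^D : \bs v \mapsto \Psi^D(\Psi^D(0) + \bs v) - \Psi^D(0), \]
in a small ball of $\q X_{\nu, 0}$. The first step is to establish a Dirichlet analog of Lemma \ref{lminegG}, exactly as Lemma \ref{lminegGDir2} adapts that lemma in the proof of Theorem \ref{th:confDir}: combine the multilinear estimates of Proposition \ref{lmprodanalytique} and Lemma \ref{lmderiv} with the Dirichlet Duhamel estimate \eqref{est:lin_DuhamelDir} in place of \eqref{est:lin_Duhamel}, which turns the tail integral over $[t,+\infty)$ into an integral over $[0,+\infty)$ weighted by $(1+\tau)$.

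Promoting these bounds to the $\q X_{\nu,0}$-norm, in the spirit of \eqref{est:tPsi_v} and \eqref{est:tPsi_vtheta}, yields
\begin{align*}
\|\tilde\Psi^D(\bs v) - \tilde\Psi^D(\bs w)\|_{\q X_{\nu,0}} &\lesssim h_1(C\q M)\,\|\bs v - \bs w\|_{\q X_{\nu,0}},\\
\|\tilde\Psi^D(\bs v)\|_{\q X_{\nu,0}} &\lesssim h_1(C\q M)\bigl(\|\bs v\|_{\q X_{\nu,0}} + \|\Psi^D(0)\|_{\q X_{\nu,0}}\bigr),
\end{align*}
with $\q M := \|\bs v_L\|_{\q Y_s} + \|\Psi^D(0)\|_{\q Y_s} + \max(\|\bs v\|_{\q X_{\nu,0}}, \|\bs w\|_{\q X_{\nu,0}})$; the convergence of the relevant time integrals is ensured by $\nu > 0$ and $\nu_\vartheta \ge \nu_0 \ge 0$. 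The smallness $\|v_0\|_{H^s} + \|\Psi^D(0)\|_{\q Y_s} \le \eta$, together with $h_1(0) = 0$ (inherited from the hypotheses of Theorem \ref{th:conf2}), makes $\tilde\Psi^D$ a contraction on a small ball of $\q X_{\nu, 0}$, providing a unique fixed point $\bs r'$.

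Finally, $\bs v := \bs v_L + \Psi^D(0) + \bs r'$ is a fixed point of $\Psi^D$: it satisfies the PDE \eqref{eq:sys_conf_Dir} with Dirichlet condition $v(0) = v_0$, since the first coordinate of $\Phi^D(F)$ vanishes at $t=0$ for any $F$ by construction (as in the proof of Theorem \ref{th:confDir}). Applying the convergence statement \eqref{cvceinftysolDirDuham} of Lemma \ref{lmDuhamelzero} to $\bs r := \bs v - \bs v_L = \Phi^D(F)$ (with $F = g(t, y, v, \partial_t v, \nabla_y v)$) identifies a unique $u_{+,F} \in H^s(\m S^{d-1})$ such that $\|\bs r - \q S(\cdot)(u_{+,F}, -\gh D u_{+,F})\|_{\q Y_{s,t}} \to 0$; setting $v_+ := v_0 + u_{+,F}$ and using $\bs v - \q S(\cdot)(v_+, -\gh D v_+) = \bs r - \q S(\cdot)(u_{+,F}, -\gh D u_{+,F}) \in \q X_{\nu, 0}$ yields the decay rate $e^{-\nu t}$ in \eqref{cvceinftysolDircrit}. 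Uniqueness of $v_+$ is immediate, since $\|\q S(\cdot)(v, -\gh D v)\|_{Y_{s,t}} = \|v\|_{H^s}$ is constant in $t$; uniqueness of $\bs v$ is to be interpreted as in Theorem \ref{th:conf2}, namely uniqueness of $\bs v - \bs v_L - \Psi^D(0) = \bs r'$ in a small ball of $\q X_{\nu,0}$.

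The main obstacle is the non-local nature of $\Phi^D$: it contains the linear term $\q S(\cdot) \bs u_{+,F}$, whose $Y_{s,t}$-norm coincides with $\|u_{+,F}\|_{H^s}$ and is therefore constant in $t$. In particular, the hypothesis $\Psi^D(0) \in \q X_{\nu,0}$ is genuinely restrictive — effectively requiring a compatibility between the Dirichlet data $v_0$ and the asymptotic behavior driven by the nonlinearity, reflecting the fact that, unlike for the scattering problem at infinity, one cannot absorb an arbitrary linear mode at the asymptotic end. Once this hypothesis is granted, however, the contraction scheme goes through as above.
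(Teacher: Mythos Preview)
Your overall strategy matches the paper's (very brief) proof: combine the $\tilde\Psi$--fixed-point scheme of Theorem~\ref{th:conf2} with the Dirichlet modifications of Theorem~\ref{th:confDir} via Lemma~\ref{lminegGDir2}. However, the step where you ``promote'' the Dirichlet estimates to the $\q X_{\nu,0}$-norm does not go through as written.

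The issue is precisely the one you flag in your last paragraph, but it bites the \emph{iteration}, not only the hypothesis. Recall $\Phi^D(F)=\q S(\cdot)\bs u_{+,F}+\Phi(F)$, so
\[
\tilde\Psi^D(\bs v)-\tilde\Psi^D(\bs w)=\Phi^D(G)=\q S(\cdot)\bs u_{+,G}+\Phi(G),
\]
with $G$ the difference of the nonlinear sources. The first summand has \emph{constant} $\q Y_{s,t}$-norm equal to $\|u_{+,G}\|_{H^s}+\|\gh D u_{+,G}\|_{H^{s-1}}$, hence lies in $\q X_{\nu,0}$ only when $u_{+,G}=0$; there is no reason for this cancellation to occur along the iteration. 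Thus $\tilde\Psi^D$ need not map $\q X_{\nu,0}$ to itself, and your contraction inequality in $\q X_{\nu,0}$ is not valid in general. (Lemma~\ref{lminegGDir2} only bounds $\|\Psi^D(\cdot)\|_{\q Y_s}$, a single norm at $t=0$; there is no family of $\q Y_{s,t}$-estimates to take a weighted supremum over, unlike \eqref{est:tPsi_vtheta}.)

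A clean fix is to run the contraction in the direct sum $\q L\oplus\q X_{\nu,0}$, where $\q L=\{\q S(\cdot)(w,-\gh D w):w\in H^s\}$, equipped with $\|w\|_{H^s}+\|\cdot\|_{\q X_{\nu,0}}$; the decomposition is unique since $\q L\cap\q X_{\nu,0}=\{0\}$. Bound the $\q L$-component via the estimate for $u_{+,F}$ in the proof of Lemma~\ref{lmDuhamelzero}, and the $\q X_{\nu,0}$-component via Lemma~\ref{lmDuhamelinfty} exactly as in \eqref{est:tPsi_vtheta}; both are controlled by $h_1(C\q M)$ times the input, so the contraction closes for $\eta$ small. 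The asymptotic profile $v_+$ is then $v_0$ plus the $\q L$-component of the fixed point, and your final identification of the rate via \eqref{cvceinftysolDirDuham} then goes through. (Alternatively, observe that the hypothesis $\Psi^D(0)\in\q X_{\nu,0}$ forces $u_{+,g_0}=0$, hence $\Psi^D(0)=\Psi(0)\in\q X_{\nu,0}$; one can then leverage the $\Psi$-estimates of Theorem~\ref{th:conf2} more directly.)
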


% The main tool for both theorems is the following Lemma \ref{lminegGDir2} which is a variant of Lemma \ref{lminegG} adapted to the Dirichlet opeator. The proof is exactly the same using Lemma \ref{lmDuhamelzero} instead of Lemma \ref{lmDuhamelinfty}.

\begin{proof}
It follows the lines of the proof of Theorem \ref{th:conf2}, with the same modification as in the proof of Theorem \ref{th:confDir} using Lemma \ref{lminegGDir2}.
\end{proof}

\subsection{The Dirichlet problem close to infinity}

We consider again the system \eqref{eq:sys_orig}, now with boundary condition:
\begin{equation} \label{eq:sys_Dir}
\begin{cases}
\Delta u = f(u, \nabla_x u) \quad & \text{on } \{ |x| \ge 1 \}, \\
u|_{\m S^{d-1}} = u_0,
\end{cases}
\end{equation}
where $u_0$ is given. We have analoguous results to that in Section \ref{s:scattinfty}, and the proofs follow the same lines: we leave the details to the reader.

We assume $f$ can be expanded in power series as in \eqref{deff}, and we recall the definition  \eqref{def:nu1} of the exponent:
\[ \nu_1 : = \inf \{ (d-2) (|p| + |q|)-d :  a_{i,p,q} \ne 0 \}. \]

Here is the first general statement (corresponding to Theorem \ref{thmexistglobP}).

\begin{thm}\label{thmexistglobPDir}
Assume that $d \ge3$ and $f$ satisfies
\begin{equation} \label{est:nu1_scrit_D}
\nu_1 \ge 0.
\end{equation}
There exists $\eta>0$ so that the following property holds.

Let $u_0 \in H^s(\m S^{d-1})$ with  $ \| u_{0} \|_{H^s(\m S^{d-1})}\le \eta$,
% denote $u_{L}\in \q Z_{s}^{\infty}$ the associated bounded solution of 
% \[ \Delta u_{L}=0 \quad \text{on } \{|x|\ge 1\} \quad \text{and} \quad u_{L}|_{\m S^{d-1}}= u_{0}. \]
then, there exists a unique $\bs{u}\in \q Z_{s}^{\infty}$ solution of \eqref{eq:sys_Dir}; moreover, there exists a unique $\bs u_{+,L} \in \q Z_{s}^{\infty}$ solution of $\Delta u_{+,L} =0$ so that
\begin{align}
\nor{(\bs u - \bs u_{+,L})(r\cdot)}{\q Z^{\infty}_{s,r}} \to 0 \quad \text{as} \quad r \to +\infty.
\end{align}
Actually, convergence holds with rate $r^{-\nu_1}$.
\end{thm}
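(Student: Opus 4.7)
The strategy is to recast the Dirichlet problem \eqref{eq:sys_Dir} in conformal variables and then invoke the abstract result Theorem~\ref{th:confDir}. The conformal change of variable of Lemma~\ref{lmchgevarinfty} with $R=1$, namely $v(t,y) = e^{(d-2)t/2}u(e^t y)$, transforms \eqref{eq:sys_Dir} into the equivalent Dirichlet problem on $[0,+\infty)\times\m S^{d-1}$:
\[ \partial_{tt}v - \opD^2 v = g(t,y,v,\partial_t v, \nabla_y v), \qquad v(0,\cdot)=u_0, \]
where $g$ is the nonlinearity given by Lemma~\ref{lmchgevarinfty}. The boundary condition $u|_{\m S^{d-1}}=u_0$ becomes precisely the Dirichlet datum at conformal time $t=0$ that Theorem~\ref{th:confDir} is designed to handle.

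To apply Theorem~\ref{th:confDir}, one verifies its hypotheses by reusing the series expansion for $g$ carried out at the beginning of the proof of Theorem~\ref{thmexistglobP}. The resulting exponents satisfy $\nu_\vartheta = (d-2)(|p|+|q|) - d \ge \nu_1$ for each $(p,q)\in I_\vartheta$; assumption $\nu_1\ge 0$ combined with $d\ge 3$ and integrality forces $|p|+|q|\ge 2$ for every contributing $(p,q)$, so $h_1(0)=0$ and $\nu_0\ge\nu_1$. When $\nu_1\ge 1$ the strict positivity $\nu_0>0$ required by Theorem~\ref{th:confDir} is immediate; when $\nu_1=0$ one uses instead the refined Theorem~\ref{th:confDirnu}, after checking that $\Psi^D(0)$ belongs to some $\q X_{\nu,0}$ for small $\nu>0$, which follows from the exponential decay of $g(t,y,v_L,\partial_t v_L,\nabla_y v_L)$ in $Y_{s-1,t}$ when $v_L := \q S(\cdot)(u_0,-\opD u_0)$ is small in $\q Y_s$.

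For $\|u_0\|_{H^s(\m S^{d-1})}\le\eta$ small, the abstract theorem produces a unique $\bs v\in\q Y_s$ with $v(0,\cdot)=u_0$, together with a unique $v_+\in H^s(\m S^{d-1})$ such that $\|\bs v-\q S(\cdot)(v_+,-\opD v_+)\|_{\q Y_{s,t}} \lesssim e^{-\nu_0 t}$. Undoing the conformal transform via the norm identities \eqref{equivnorm} gives the desired $u\in\q Z_s^\infty$ solution to \eqref{eq:sys_Dir}, and defining $u_{+,L}$ from $v_+$ through \eqref{def:u_L} yields the asymptotic linear profile; the conformal decay translates directly into $\|(u-u_{+,L})(r\cdot)\|_{Z^\infty_{s,r}}\lesssim r^{-\nu_0}\le r^{-\nu_1}$ in the radial variable $r=e^t$, and uniqueness of both $u$ and $u_{+,L}$ is inherited from the fixed-point statement of Theorem~\ref{th:confDir}.

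The only obstacle beyond bookkeeping is the critical case $\nu_1=0$: the entire expansion and coefficient analysis for $g$ is literally that of the proof of Theorem~\ref{thmexistglobP}, and the sole conceptual substitution is replacing the Duhamel operator $\Phi$ adapted to infinity by its Dirichlet counterpart $\Phi^D$ from Lemma~\ref{lmDuhamelzero}, which is exactly what Theorem~\ref{th:confDir} (and its refinement) packages.
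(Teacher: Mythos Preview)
Your approach is exactly what the paper intends: it explicitly says that the proofs of the Dirichlet theorems ``follow the same lines'' as those in Section~\ref{s:scattinfty} and leaves the details to the reader. Recasting via Lemma~\ref{lmchgevarinfty}, reusing the expansion of $g$ and the computation of $\nu_\vartheta$ from the proof of Theorem~\ref{thmexistglobP}, and then invoking Theorem~\ref{th:confDir} in place of Theorem~\ref{th:conf} is precisely the substitution the paper has in mind.

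One point deserves a caveat. Your treatment of the borderline case $\nu_1=0$ has a gap: you assert that $\Psi^D(0)\in\q X_{\nu,0}$ for some $\nu>0$ ``follows from the exponential decay of $g(t,y,v_L,\partial_t v_L,\nabla_y v_L)$ in $Y_{s-1,t}$ when $v_L$ is small''. Smallness alone does not give extra decay; when $\nu_\vartheta=0$ for some contributing $\vartheta$, the bound coming from Lemma~\ref{lminegG} only gives $\|g(\cdot,v_L,\ldots)(\tau)\|_{Y_{s-1,\tau}}=O(1)$, and Theorem~\ref{th:confDirnu} then cannot be invoked without an additional hypothesis of the type~\eqref{est:f(uL)_exp}. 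In fact the stated convergence rate $r^{-\nu_1}$ is vacuous when $\nu_1=0$, and comparison with the parallel Theorem~\ref{thmexistglobP} (which assumes $\nu_1>0$) and with Theorem~\ref{thmexistglobPgainDir} (which assumes $\nu_1\ge 0$ \emph{plus} the extra decay of the first iterate) strongly suggests that the hypothesis here is intended to be $\nu_1>0$. Under that reading your argument is complete; if one insists on $\nu_1\ge 0$ as written, the case $\nu_1=0$ requires exactly the supplementary decay assumption of Theorem~\ref{thmexistglobPgainDir}, not the bare smallness you invoke.
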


The next statement consider the case when the first Duhamel iterate has improved decay (corresponding to Theorem \ref{thmexistglobPgain}).

\begin{thm}\label{thmexistglobPgainDir}
Let $d \ge 3$, $\nu >0$ and assume $f$ satisfies 
\begin{equation} \label{est:nu1_crit_D}
\nu_1 \ge 0. 
\end{equation}
There exists $\eta>0$ so that the following property holds.

Let $u_{0} \in H^s(\m S^{d-1})$, denote $\bs u_{L} \in  \q Z_{s}^{\infty}$ the solution in $\q Z_{s}^{\infty}$ of 
\[ \Delta u_{L}=0 \quad \text{on } \{|x|\ge 1\} \quad \text{and} \quad u_{L}|_{\m S^{d-1}}= u_{0}, \]
and assume that
\[ \| u_{0} \|_{H^s(\m S^{d-1})}+\sup_{r\ge 1}r^{2 + \nu }\nor{f({u_{L}},\nabla {u_{L}})(r\cdot)}{  Z_{s -1 , r}^{\infty}}\le \eta. \]
Then, there exists a unique $\bs u \in \q Z_{s}^{\infty}$ small\,\footnote{See Theorem  \ref{thmexistglobPgain} for a precise condition.} solution of \eqref{eq:sys_Dir}; moreover, there exists a unique $\bs u_{+,L}\in \q Z_{s}^{\infty}$ solution of $\Delta u_{+,L} = 0$ so that
\begin{align}
\nor{(\bs u - \bs u_{+,L})(r\cdot)}{\q Z^{\infty}_{s,r}} \to 0 \quad \text{as} \quad r \to +\infty.
\end{align}
Actually, convergence holds with rate $r^{-\nu}$.
\end{thm}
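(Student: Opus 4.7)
The approach mirrors that of Theorem \ref{thmexistglobPgain}, with the wave-operator construction of Theorem \ref{th:conf2} replaced by the Dirichlet construction of Theorem \ref{th:confDirnu}. First, I apply the conformal change of variable from Lemma \ref{lmchgevarinfty}: setting $v(t,y) = e^{(d-2)t/2} u(e^t y)$ for $t \ge 0$ and $y \in \m S^{d-1}$, the Dirichlet problem \eqref{eq:sys_Dir} is equivalent to the evolution problem
\[ \partial_{tt} v - \gh D^2 v = g(t,y,v,\partial_t v, \nabla_y v) \quad \text{on } [0,+\infty) \times \m S^{d-1}, \quad v(0) = u_0, \]
where $g$ is the transformed analytic nonlinearity. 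Likewise the free linear solution $u_L$ corresponds to $\bs v_L = \q S(\cdot)(u_0,-\gh D u_0)$ by Lemma \ref{lmlienlinY}.

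\textbf{Verification of hypotheses.} The expansion of $g$ into monomials $b_\vartheta(t) y^\alpha v^\beta w^\gamma z^\delta$ and the values of the exponents $\kappa_\vartheta$ and $\nu_\vartheta = (d-2)(|p|+|q|)-d \ge \nu_1 \ge 0$, as well as the convergence of $h_1$ on a neighbourhood of $0$, are identical to the derivations up through \eqref{cond:nu} in the proof of Theorem \ref{thmexistglobPgain}. To apply Theorem \ref{th:confDirnu}, one must check that, for $\eta$ small enough, both $\|v_0\|_{H^s(\m S^{d-1})} + \|\Psi^D(0)\|_{\q Y_s}$ is small and $\Psi^D(0) \in \q X_{\nu,0}$. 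The first summand reduces to $\|u_0\|_{H^s(\m S^{d-1})} \le \eta$. For the control of $\Psi^D(0) = \Phi^D\bigl(g(\cdot,y,v_L,\partial_t v_L,\nabla_y v_L)\bigr)$, the conformal identity \eqref{equiYZ} converts the hypothesis on $f(u_L,\nabla u_L)$ into (as in \eqref{est:g_exp})
\[ \| g(t,y,v_L,\partial_t v_L,\nabla_y v_L) \|_{Y_{s-1,t}} \lesssim \eta\, e^{-\nu t}. \]
Inserting this bound into the Dirichlet Duhamel estimate \eqref{est:lin_DuhamelDir} of Lemma \ref{lmDuhamelzero} yields $\|\Psi^D(0)\|_{\q Y_s} \lesssim \eta$, while tracking the exponential tail of the Duhamel integral provides the required $\q X_{\nu,0}$ control.

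\textbf{Conclusion and main obstacle.} Theorem \ref{th:confDirnu} then furnishes a unique small $\bs v \in \q Y_s$ solving the conformal Dirichlet problem together with a unique $w_+ \in H^s(\m S^{d-1})^N$ so that $\| \bs v - \q S(\cdot)(w_+, -\gh D w_+)\|_{\q Y_{s,t}} \lesssim e^{-\nu t}$. Undoing the conformal transform via \eqref{equivnorm}, $\bs v$ becomes the sought-for $u \in \q Z_s^\infty$ solution of \eqref{eq:sys_Dir}, while $\q S(\cdot)(w_+, -\gh D w_+)$ becomes $u_{+,L} \in \q Z_s^\infty$ with $\Delta u_{+,L} = 0$; the exponential rate $e^{-\nu t}$ translates, via \eqref{equiYZ}, into the polynomial rate $r^{-\nu}$. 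Uniqueness of $u$ follows from the uniqueness of the fixed point, and uniqueness of $u_{+,L}$ from that of $w_+$. The main technical subtlety, analogous to Theorem \ref{thmexistglobPgain}, is extracting the exponential decay of $\Psi^D(0)$: the Dirichlet operator $\Phi^D$ differs from $\Phi$ by a boundary-compensation term $\q S(t)\bs u_{+,F}$ which does not decay in isolation, but which is precisely what gets absorbed into the linear asymptotic profile $\q S(\cdot)(w_+,-\gh D w_+)$; the genuinely decaying part of $\Psi^D(0)$ is $\Phi(F)$, whose rate is controlled through the bound \eqref{est:lin_Duhamel}.
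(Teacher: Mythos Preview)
Your proposal is correct and follows exactly the approach the paper intends: the paper itself provides no proof for this theorem, stating only that ``the proofs follow the same lines'' as those in Section~\ref{s:scattinfty}, and your argument does precisely that by combining the conformal reduction and exponent computations from the proof of Theorem~\ref{thmexistglobPgain} with the Dirichlet fixed-point Theorem~\ref{th:confDirnu}. You also correctly isolate the one genuinely non-trivial point---that the boundary term $\q S(\cdot)\bs u_{+,F}$ in $\Phi^D$ does not itself decay in $\q X_{\nu,0}$ but is instead absorbed into the asymptotic linear profile $u_{+,L}$, while the remaining $\Phi(F)$ piece carries the $e^{-\nu t}$ rate via \eqref{est:lin_Duhamel}.
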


Finally, in the case when $f$ has a structure so that the corresponding $g$ does not depend on $y$, we have the analoguous of Theorem \ref{thminftyspecial}.

\begin{thm}
\label{thminftyspecialDir}
Let $d \ge 2$. Assume that $f$ has the structure as in Theorem \ref{thminftyspecial}, that is \eqref{deffstruct} and, if $d=2$, \eqref{deffstructbracket}. Recall the relevant exponant
\[ \nu_{1,\opR} : =\inf \{  (d-2)|p| + (d-1)|q|-d: a_{i,p,q} \ne 0 \}. \]
Then:

(1) The same result as Theorem \ref{thmexistglobPDir} holds replacing assumption \eqref{est:nu1_scrit_D} by $\nu_{1,\opR} >0$.

(2)  The same result as Theorem \ref{thmexistglobPgainDir} holds replacing assumption \eqref{est:nu1_crit_D} by $\nu_{1,\opR} \ge 0$.
\end{thm}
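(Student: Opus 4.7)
My plan is to follow the same strategy as in the proof of Theorem~\ref{thminftyspecial}, but using the Dirichlet conformal framework of Theorems~\ref{th:confDir} and \ref{th:confDirnu} in place of the scattering framework. First, I would transform the Dirichlet problem \eqref{eq:sys_Dir} into a Dirichlet problem for a conformal system \eqref{eq:sys_conf_Dir} using Lemma~\ref{lmchgevarinfty}, via $v(t,y) = e^{\frac{(d-2)t}{2}} u(e^t y)$: the boundary data $u_0$ on $\m S^{d-1}$ corresponds to $v_0 = v(0, \cdot) = u_0$, and by \eqref{equivnorm} the solution $u \in \q Z^\infty_s$ corresponds to $\bs v = (v, \partial_t v) \in \q Y_s$ (with $t_0 = 0$). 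Crucially, because of the special structure assumption \eqref{deffstruct} (and \eqref{deffstructbracket} when $d=2$), I would write $f$ in terms of the invariants $\Omega_{j,k} = \varpi_j\cdot\varpi_k$ (and $\Sigma_{j,k}$ in dimension 2) and use the $g_{\opR}$ formulation rather than $g$, to exploit Theorem~\ref{thmconfrefin}.

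Next, I would reproduce verbatim the algebraic computation of the exponents from the proof of Theorem~\ref{thminftyspecial}: writing $m_{i,j}(y,w,z) = w_i w_j + w_i\sum_k y_k z_{jk} + w_j \sum_k y_k z_{ik} + \sum_k z_{ik}z_{jk}$ (and its $\Sigma$-counterpart in $d=2$), every monomial contributing to $g_{\opR}$ satisfies $|\alpha| \le |\delta|$. This forces the improved lower bound
\[ \nu_{\vartheta,\opR} = (d-2)|\beta| + (d-1)(|\gamma|+|\delta|) + |\delta| - |\alpha| - d \ge (d-2)|p| + (d-1)|q| - d \ge \nu_{1,\opR}, \]
as in \eqref{cond:nurefin}. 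The convergence of the series $h_{\opR,1}$ is obtained exactly as in the proof of Theorem~\ref{thmexistglobP}, using the summability \eqref{est:fDSE} and combinatorial bounds on $|J_q|$. Since the structural assumption forces $|p|+|q| \ge 2$ whenever $a_{i,p,q}$ or $b_{i,p,q}$ is nonzero, we obtain $h_{\opR,1}(0) = 0$.

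For part~(1), under $\nu_{1,\opR} > 0$ we apply the Dirichlet version of Theorem~\ref{th:confDir} (adapted to the refined $g_{\opR}$ formulation, analogously to how Theorem~\ref{thmconfrefin} refines Theorem~\ref{th:conf}): smallness of $\| u_0 \|_{H^s(\m S^{d-1})}$ translates via Lemma~\ref{lmlienlinY} into smallness of $\| \bs v_L \|_{\q Y_s}$, and combined with $h_{\opR,1}(0)=0$ this gives \eqref{asumhvLt1} at $t_0=t_1=0$. The conclusion \eqref{cvceinftysolDir} then yields, through \eqref{equivnorm} and Lemma~\ref{lmlienlinY} applied to the scattering datum $v_+$, a linear solution $u_{+,L} \in \q Z^\infty_s$ with the desired rate $r^{-\nu_{1,\opR}}$. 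For part~(2), under $\nu_{1,\opR} \ge 0$ and the smallness of the quantity $\sup_{r\ge 1} r^{2+\nu} \| f(u_L, \nabla u_L)(r\cdot) \|_{Z^\infty_{s-1,r}}$, the same conformal computation as in the proof of Theorem~\ref{thmexistglobPgain} shows that $\Psi^D(0) \in \q X_{\nu,0}$ with small norm, and one applies Theorem~\ref{th:confDirnu} (in its refined version) to obtain the solution with rate $r^{-\nu}$.

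The main technical obstacle I expect is checking that the Dirichlet refined theorems (which combine Theorem~\ref{th:confDir}/\ref{th:confDirnu} with the $g_{\opR}$-framework of Theorem~\ref{thmconfrefin}) are in place with uniform estimates at $t_0=t_1=0$; this requires that smallness of $\| u_0 \|_{H^s}$ alone suffices, which is where the structural cancellation $|\alpha| \le |\delta|$ does the heavy lifting. A secondary subtlety appears in $d=2$, where the $Y_{s,t}$ product law no longer carries an exponential gain and where the Duhamel estimate \eqref{est:lin_DuhamelDir} includes the extra $\tau\|P_0 F\|_{L^2}$ term; the structural assumption \eqref{deffstructbracket} and the null-type analysis of Section~\ref{s:null2} (to be invoked indirectly through the refined $g_{\opR}$ formulation) ensure that the critical scaling $\nu_{1,\opR} \ge 0$ still closes the fixed point argument.
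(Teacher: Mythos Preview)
Your proposal is correct and follows essentially the same approach as the paper, which in fact does not write out a detailed proof of Theorem~\ref{thminftyspecialDir} but simply states that ``the proofs follow the same lines'' as the scattering results in Section~\ref{s:scattinfty} and leaves the details to the reader. Your outline---passing to conformal variables via Lemma~\ref{lmchgevarinfty}, using the $g_{\opR}$ formulation to exploit $|\alpha|\le|\delta|$ and recover the improved exponent $\nu_{\vartheta,\opR}$ as in \eqref{cond:nurefin}, then applying the Dirichlet fixed-point theorems~\ref{th:confDir}/\ref{th:confDirnu} in their refined versions---is exactly what the paper intends.

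One small clarification: your remark that the null-type analysis of Section~\ref{s:null2} is ``invoked indirectly through the refined $g_{\opR}$ formulation'' slightly conflates two mechanisms. The $\opR$-formulation alone (via Lemma~\ref{lmderiv} and Theorem~\ref{thmconfrefin}) already handles the $d=2$ case at the level of the abstract Dirichlet theorem; the null condition of Section~\ref{s:null2} enters only later, in the applications, to verify the hypothesis \eqref{est:f(uL)_exp} on the first iterate for specific equations like \eqref{formconf}. This does not affect the validity of your argument for Theorem~\ref{thminftyspecialDir} itself.
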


\subsection{The Dirichlet problem close to zero}
\label{s:Dirzero}

Finally, we state the equivalent of Theorem \ref{thmexistglobP0} and \ref{thmexistglobPgainzero} for Dirichlet boundary condition, close to zero, that is:
\begin{equation} \label{eq:sys_Dir0}
\begin{cases}
\Delta u = f(u, \nabla_x u) \quad \text{on } B(0,1) \setminus \{0\}, \\
u|_{\m S^{d-1}} = u_0.
\end{cases}
\end{equation}
Note that the solutions are naturally constructed outside of zero because of the change of variable. Yet, they will be proved to be solution on $B(0,1)$ in several cases. 

The proofs are the same, and we leave the details to the reader.

\begin{thm}\label{thmexistglobPDir0} Assume $d\ge 2$ and that $f$ as in \eqref{deff} satisfies the subcriticality assumptions
\[ a_{i,p,q} \ne 0 \Longrightarrow (q=0 \text{ and } |p| \ge 2). \]
There exists $\eta >0$ such that the following holds.

Let $u_{0} \in H^s(\m S^{d-1})$ with $\| u_0 \|_{H^s(\m S^{d-1})} \le \eta$, then, there exists a unique $\bs{u}\in \q Z_{s}^{\infty}$ solution of \eqref{eq:sys_Dir0}; moreover, there exists a unique $\bs u_{+,L} \in \q Z_{s}^{0}$ solution of $\Delta u_{+,L} =0$ (on $B(0,1)$) so that
\begin{align}
\nor{(\bs u - \bs u_{+,L})(r\cdot)}{\q Z^{0}_{s,r}} \to 0 \quad \text{as} \quad r \to 0.
\end{align}
Actually, convergence holds with rate $r^2$.
\end{thm}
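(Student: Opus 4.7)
The plan is to reduce this to Theorem~\ref{th:confDir}, i.e., the Dirichlet variant in conformal variables, by applying the conformal change of variable close to zero given in Lemma~\ref{lmchgevarzero}.

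First, I would transform the problem. Setting $v(t,y) = e^{-(d-2)t/2} u(e^{-t} y)$ for $(t,y) \in [0,+\infty)\times \m S^{d-1}$, the Dirichlet condition $u|_{\m S^{d-1}} = u_0$ becomes the initial condition $v(0,\cdot) = u_0$, and Lemma~\ref{lmchgevarzero} rewrites \eqref{eq:sys_Dir0} as
\[ \partial_{tt} v - \gh D^2 v = g(t,y,v,\partial_t v, \nabla_y v), \quad v(0)=u_0, \]
for the appropriate nonlinearity $g$ (or, equivalently, $g_{\opR}$). The task is then to produce a solution $\bs v = (v,\partial_t v) \in \q Y_s$ of this conformal Dirichlet problem, which by \eqref{equivnorm} will translate into $u \in \q Z^0_s$.

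Next, I would check the hypotheses of Theorem~\ref{th:confDir}. Following the computation carried out in the proof of Theorem~\ref{thmexistglobP0}, every contributing index $\vartheta=(\alpha,\beta,\gamma,\delta,\iota)$ satisfies $(p,q)\in I_\vartheta$ with $q=0$ by the subcriticality assumption; this forces $|\iota|=|\gamma|=|\delta|=0$ and hence $|\alpha|\le |q|=0$, and $|\beta|=|p|\ge 2$. Plugging into \eqref{formkappa0} and the formula for $\nu_\vartheta$ derived there gives $\kappa_\vartheta = \tfrac{d+2}{2}-\tfrac{d-2}{2}|\beta|$ and $\nu_\vartheta = 2$ for all contributing $\vartheta$; in particular $\nu_0=2>0$. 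Because $|\beta|+|\gamma|+|\delta|=|p|\ge 2$, the series $h_1$ satisfies $h_1(0)=0$, and the summability \eqref{est:fDSE} ensures that $h_1$ is defined and continuous near $0$.

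Applying Theorem~\ref{th:confDir} (with $t_0=t_1=0$), provided $\|u_0\|_{H^s(\m S^{d-1})}\le \eta$ is small enough, we obtain a unique small solution $\bs v \in \q Y_s$ of the conformal Dirichlet problem, together with a unique $v_+ \in H^s(\m S^{d-1})$ so that
\[ \nor{\bs v - \q S(\cdot)(v_+, -\gh D v_+)}{\q Y_{s,t}} \lesssim e^{-2t} \to 0 \quad \text{as } t \to +\infty. \]

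Finally, I would undo the conformal change. Define $u_{+,L}$ by $u_{+,L}(x) = |x|^{-(d-2)/2} v_{+,L}(-\log|x|, x/|x|)$ where $v_{+,L} = \q S(\cdot)(v_+, -\gh D v_+)$ is the bounded (in $\q Y_{s,t}$) linear solution with trace $v_+$ at $t=0$; this produces $u_{+,L}\in \q Z^0_s$ solving $\Delta u_{+,L}=0$. Using the equivalence \eqref{equivnorm} applied to $u - u_{+,L}$ at radius $r\in (0,1]$, which corresponds to conformal time $t=-\log r\ge 0$, the decay estimate above translates into
\[ \nor{(u-u_{+,L})(r\cdot)}{\q Z^0_{s,r}} = \nor{(\bs v - \bs v_{+,L})}{\q Y^{-\log r}_{s,-\log r}} \lesssim e^{-2(-\log r)} = r^2, \]
yielding the announced $r^2$ convergence. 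Uniqueness of $u$ in $\q Z^0_s$ and uniqueness of $u_{+,L}$ follow directly from the corresponding uniqueness statements in Theorem~\ref{th:confDir}. The only mildly delicate point is tracking the mapping of norms under the conformal transform, but the identities \eqref{equivnorm} make this essentially bookkeeping; no new analytic obstacle arises beyond those already handled in the proofs of Theorems~\ref{thmexistglobP0} and \ref{th:confDir}.
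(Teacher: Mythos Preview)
Your proposal is correct and follows exactly the approach the paper intends: the paper itself omits the proof and simply states ``the proofs are the same, and we leave the details to the reader,'' referring to the pattern of Theorems~\ref{thmexistglobP0} and \ref{th:confDir}. You have carried out precisely that pattern---conformal change via Lemma~\ref{lmchgevarzero}, verification that $\nu_\vartheta=2$ and $h_1(0)=0$ as in the proof of Theorem~\ref{thmexistglobP0}, application of Theorem~\ref{th:confDir}, and translation back via \eqref{equivnorm}. One cosmetic remark: in your final display the norm on the left should be the non-calligraphic $Z^0_{s,r}$ (a norm on functions of $\m S^{d-1}$, matching the $(r\cdot)$ restriction), and the right side should be the pointwise $Y_{s,-\log r}$ norm rather than the space-time $\q Y$; the statement of the theorem itself carries the same typo, and the bound you obtain is unaffected.
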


\begin{thm}\label{thmexistglobPgainDir0}
Assume $d \ge 2$, $f$ as in \eqref{deff} satisfies the structure condition of Theorem \ref{thminftyspecial} (that is, \eqref{deffstruct} and, if $d=2$, \eqref{deffstructbracket}) and the subcriticality assumptions
\[ a_{i,p,q} \ne 0 \Longrightarrow (  |q| \le 2 \text{ and } |p| + |q| \ge 2) . \]
Let $u_{0} \in H^s(\m S^{d-1})$, denote $u_{0,L} \in \q Z_{s}^{0}$ the associated bounded solution of
\[ \Delta {u_{L}}=0 \quad \text{on } B(0,1) \quad \text{and} \quad u_{L}|_{\m S^{d-1}}= u_{0}, \]
and assume that we have the bound
\[  \| u_0 \|_{H^s(\m R^{d-1})} + \sup_{0<r\le 1}r^{2 - \nu }\nor{f({u_{L}},\nabla {u_{L}})(r\cdot)}{  Z_{s -1 , r}^{0}} \le \eta. \]
Then, there exist a unique $u\in \q Z^{0}_s$ small\,\footnote{See Theorem  \ref{thmexistglobPgain} for a precise condition.} solution of \eqref{eq:sys_Dir0}; moreover, there exists a unique $\bs u_{+,L} \in \q Z_{s}^{0}$ solution of $\Delta u_{+,L} =0$ (on $B(0,1)$) so that
\begin{align}
\nor{(\bs u - \bs u_{+,L})(r\cdot)}{\q Z^{0}_{s,r}} \to 0 \quad \text{as} \quad r \to 0.
\end{align}
Actually, convergence holds with rate $r^\nu$.
\end{thm}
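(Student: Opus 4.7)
The plan is to reduce the problem to the conformal abstract result Theorem~\ref{th:confDirnu}, arguing exactly as in the proof of Theorem~\ref{thmexistglobPgainzero} but with the Dirichlet Duhamel operator $\Phi^{D}$ instead of $\Phi$. First I would perform the conformal change of variables close to zero of Lemma~\ref{lmchgevarzero}, setting $v(t,y) = e^{-(d-2)t/2} u(e^{-t}y)$ and $v_L(t,y) = e^{-(d-2)t/2} u_L(e^{-t}y)$, so that the Dirichlet condition $u|_{\m S^{d-1}} = u_0$ becomes the initial condition $v(0) = u_0$ at $t=0$, and the equation \eqref{eq:sys_Dir0} transforms into \eqref{eq:sys_conf_Dir} with nonlinearity $g_{\opR}$ of the form computed in Lemma~\ref{lmchgevarzero}, applied in the refined form using the operators $\opR_i$ and $\partial_t + \opD$. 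By \eqref{equivnorm}, membership in $\q Z^0_s$ corresponds to membership in $\q Y_s$ for the associated $v$, so it suffices to produce a fixed point of the map $\Psi^D$ in $\q Y_s$.

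The next step is to verify the hypotheses of Theorem~\ref{th:confDirnu}. Arguing exactly as in the proof of Theorem~\ref{thmexistglobPgainzero} (which in turn follows the scheme of Theorem~\ref{thminftyspecial}), the structure assumptions \eqref{deffstruct} and \eqref{deffstructbracket} on $f$ enforce, for every nonzero coefficient $b_{\vartheta,\opR}$ in the expansion of $g_{\opR}$, that $|\alpha|\le|\delta|$, $|\gamma|+|\delta|=|q|$ and $|\beta|=|p|$, which yields
\[
\kappa_{\vartheta,\opR} = \tfrac{d+2}{2} - \tfrac{d-2}{2}|\beta| - \tfrac{d}{2}(|\gamma|+|\delta|),
\qquad
\nu_{\vartheta,\opR} = 2 - |q| - |\alpha| + |\delta| \ge 2 - |q|.
\]
The subcriticality $|q|\le 2$ then gives $\nu_{0,\opR}\ge 0$, which is \eqref{hypokabcnurefin}. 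The condition $|p|+|q|\ge 2$ forces $h_{1,\opR}(0)=0$, so that the nonlinearity has no linear part. The smallness of $\| u_0\|_{H^s(\m S^{d-1})}$ translates into smallness of $\|\bs v_L\|_{\q Y_s}$ via Lemma~\ref{lmlienlinY}, and the bound
\[
\sup_{0<r\le 1} r^{2-\nu}\|f(u_L,\nabla u_L)(r\cdot)\|_{Z^0_{s-1,r}}\le\eta
\]
translates, through the conformal dictionary \eqref{equiYZ} and the identity $\partial_{tt}v - \opD^2 v = e^{-(d+2)t/2}(\Delta u)(e^{-t}\cdot)$ from Lemma~\ref{lmchgevarzero}, into
\[
\sup_{t\ge 0} e^{\nu t}\|g_{\opR}(t,\cdot,v_L,\partial_t v_L + \opD v_L,\opR v_L)\|_{Y_{s-1,t}} \lesssim \eta,
\]
which via the Duhamel estimate \eqref{est:lin_DuhamelDir} of Lemma~\ref{lmDuhamelzero} gives $\Psi^D(0)\in\q X_{\nu,0}$ with small norm.

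With these inputs, Theorem~\ref{th:confDirnu} (in its refined $\opR$-version, analogous to Theorem~\ref{thmconfrefin}) produces a unique small fixed point $\bs v = (v,\partial_t v)\in\q Y_s$ of $\Psi^D$, together with a unique $v_+\in H^s(\m S^{d-1})$ such that $\|\bs v - \q S(\cdot)(v_+,-\opD v_+)\|_{\q Y_{s,t}}\lesssim e^{-\nu t}$. Undoing the conformal change via \eqref{equivnorm} yields $u\in \q Z^0_s$ solving \eqref{eq:sys_Dir0} on $B(0,1)\setminus\{0\}$, and defining $u_{+,L}$ as the function corresponding to $\q S(\cdot)(v_+,-\opD v_+)$, we obtain $u_{+,L}\in \q Z^0_s$ with $\Delta u_{+,L}=0$ on $B(0,1)$ (because $\q S(\cdot)(v_+,-\opD v_+)$ solves the linear equation $\partial_{tt}w - \opD^2 w = 0$ with no growing modes) and $\|(u-u_{+,L})(r\cdot)\|_{Z^0_{s,r}}\lesssim r^\nu$.

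The main technical point is the translation of the first-iterate smallness from the original to the conformal variables and checking that the structural condition on $f$ gives the improved rate $\nu_{0,\opR}\ge 0$ rather than the worse $\nu_0$; uniqueness of $u$ and of $u_{+,L}$ follows from the uniqueness of the fixed point of $\Psi^D$ and from the uniqueness statement in Lemma~\ref{lmDuhamelzero}.
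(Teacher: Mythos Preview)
Your proposal is correct and follows exactly the approach the paper intends: the paper itself does not give a proof of Theorem~\ref{thmexistglobPgainDir0}, stating only that ``the proofs are the same, and we leave the details to the reader'', referring to the combination of the Dirichlet-in-conformal-variables result Theorem~\ref{th:confDirnu} with the exponent computations already carried out in the proofs of Theorems~\ref{thmexistglobPgainzero} and~\ref{thminftyspecial}. Your write-up correctly fills in those details, including the structural constraint $|\alpha|\le|\delta|$, the resulting bound $\nu_{\vartheta,\opR}\ge 2-|q|\ge 0$, the vanishing $h_{1,\opR}(0)=0$ from $|p|+|q|\ge 2$, and the translation of the first-iterate bound into $\Psi^D(0)\in\q X_{\nu,0}$ via Lemma~\ref{lmDuhamelzero}.
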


\section{Applications}
\subsection{Critical semilinear equations}

This section is about the proof of the main results about the critical semilinear equation. We first give a definition of weak solution. 

\begin{definition}
\label{defsoluext}
We say that $u\in \dot{H}^1_{loc}(\{|x|\ge R\})$ is a solution of 
\[ \Delta u = f(u) \quad \text{on} \quad \{|x|\ge R\} \]
if we have
\begin{align}
\label{eqnsemilin}
\forall v \in \mc C^{\infty}_c(\{|x|> R\}), \quad \int_{|x|> R}\nabla u\cdot \nabla v~dx + \int_{|x|> R} f(u) v~dx=0
\end{align}
\end{definition}

\bnp[Proof of Theorem \ref{thmexistglobPintro}]
This is just a particular case of Theorem \ref{thmexistglobP} when $f$ does not depend on the derivatives.
\enp
\bnp[Proof of Theorem \ref{thmH1reg}]
1) Let $\e>0$ small to be chosen later. Since $u\in\dot{H}^{1}(\{|x|\ge 1\})$, there exists $r_{0}$ so that $\nor{\nabla u}{L^{2}(|x|\ge r_{0}/2)}\le \e$. Denoting $u^{r_{0}}(x)=(r_{0})^{d/2-1}u(r_{0}x)$, the function  $u^{r_{0}}$ satisfies $\nor{\nabla u^{r_{0}}}{L^{2}(|x|\ge 1/2)}\le \e$ and is solution of the same elliptic equation. By Sobolev estimate \eqref{Soblocal}, we have also $\nor{u^{r_{0}}}{L^{2^*}(|x|\ge 1/2)}\lesssim \e$. If $\e$ is small enough, the trace estimate of Proposition \ref{propelipticregtrace} given in the Appendix, yields $ \nor{u^{r_{0}}|_{\mathbb{S}^{d-1}}}{H^{s}(\mathbb{S}^{d-1})}\lesssim \e$.

We notice that for the critical exponent $p=2^{*}-1=\frac{d+2}{d-2}$, Theorem \ref{thmexistglobP} and \ref{thmexistglobPDir} hold with $\nu_{0}=(d-2)p-d=2$.
Let us choose $\e$  small enough so that $C\e\le \eta$ where $\eta$ is given in Theorem \ref{thmexistglobPDir}: it applies and yields a solution $\widetilde{u}\in\q Z_{s}^{\infty}$ so that $\widetilde{u} |_{\m S^{d-1}}=u^{r_{0}}|_{\m S^{d-1}}$. We check easily that it satisfies 
\[ \nor{\widetilde{u}}{\q Z_{s}^{\infty}}\lesssim  \nor{u^{r_{0}}|_{\mathbb{S}^{d-1}}}{H^{s}(\mathbb{S}^{d-1})}\lesssim \e. \]
Due to Lemma \ref{lminjectZ},  $\widetilde{u}\in \dot{H}^{1}(|x|\ge 1)$, NS $\nor{\widetilde{u}}{L^{2^*}(|x|\ge 1)}\lesssim \nor{\widetilde{u}}{\q Z_{s}^{\infty}}\lesssim \e$.  In particular, for $\e$ small enough, the uniqueness property given in Proposition \ref{thmH1boundaryelem}, applied with $q=2^{*}-1$ and $\frac{d(q-1)}{2}=2^{*}$, implies that $\widetilde{u}=u^{r_{0}}$ and therefore, $u^{r_{0}}\in  \q Z_{s}^{\infty}$. Hence $u\in \q Z_{s,r_{0}}^{\infty}$.

The scattering result \eqref{scattranslrdinverseH1regpuiss} is obtained from the similar statement in Theorem  \ref{thmexistglobPDir}.%\ref{thmsolZsboundary}.

2) The converse (and last part) of the Theorem is actually a consequence of Theorem \ref{thmexistglobP} and a special case of Theorem \ref{thmexistglobPintro}.
\enp

\bnp[Proof of Corollary \ref{corUCPup}]
Theorem \ref{thmH1reg} gives $r_0 \ge 1$ such that for any $\ell\in \N$,
\begin{align*}
(r/r_{0})^{d-2+\ell}\nor{P_{\ell}(u-u_L)(r\cdot)}{H^{s}(\mathbb{S}^{d-1})}\le   \nor{(u-u_{L})(r\cdot)}{Z_{s,r/r_{0}}^{\infty}}\underset{r\to +\infty}{\longrightarrow}0. 
\end{align*}
From the assumption and triangular inequality, we infer \[ (r/r_{0})^{d-2+\ell}\nor{P_{\ell}u_L(r\cdot)}{H^{s}(\mathbb{S}^{d-1})} \to 0 \quad \text{as} \quad r\to +\infty. \]
Now, notice that $P_{\ell}u_L(r\cdot)=(r/r_0)^{-(d-2)-\ell}P_{\ell}u_L(r_0\cdot)$: hence $P_{\ell}u_L=0$ for any $\ell \in \m N$, and therefore $u_L=0$. The uniqueness in Theorem \ref{thmH1reg} 2) implies $u=0$ in $\{|x|\ge r_1\}$ for a possibly larger $r_1\ge r_0$. 

Now, due to the result of Trudinger \cite{Trud:68} (see also Section \ref{s:Apsemilin} of the Appendix for a quantification), we have $u\in \mc C^{\infty}(\{|x|>1 \})$. In particular, $u$ solves the equation $\Delta u=Vu$ with $V=\kappa u^{p-1}\in L^{\infty}_{\text{loc}}(\{|x|>1 \})$. We can conclude by standard unique continuation arguments that $u=0$ in $\{|x|\ge 1\}$, see for instance \cite[Theorem 5.2]{LLR_Book1}.

For the second part of the Corollary: let $\ell\in \N$, then for any $\beta \ge 0$, the condition $u(x) = O(|x|^{-\beta})$ gives
\[ \nor{P_{\ell}u(r\cdot)}{H^{s}(\mathbb{S}^{d-1})}\le C_{\ell,s} \nor{u(r\cdot)}{L^{2}(\mathbb{S}^{d-1})}\le C_{\ell,s} \nor{u(r\cdot)}{L^{\infty}(\mathbb{S}^{d-1})}\le C_{\ell,s,\beta} r^{-\beta}. \] 
Choose $\beta>\ell+d-2 $, so that the assumptions of the first part of the Corollary are fulfilled, and this gives the result.
\enp

\bnp[Proof of Theorem \ref{thmregdecay}]
The scheme of the proof is quite similar to the critical case with different scaling and spaces. We only stress the differences. 
1) The appropriate scaling is given by denoting $u^{r_{0}}(x)=r_{0}^{\frac{2}{p-1}}u(r_{0}x)$: then  $u^{r_{0}}$ is solution of the same elliptic equation and satisfies $\nor{u^{r_{0}}}{L^{\infty}(|x|\ge 1/2)}\le Cr_0^{-\eta} $. For sufficiently large $r_0$, we can apply Proposition \ref{propelipticregtracegen} to get that $ \nor{u^{r_{0}}|_{\mathbb{S}^{d-1}}}{H^{s}(\mathbb{S}^{d-1})}\le C_{s}\e$. Then Theorem \ref{thmexistglobPDir} applies and we can construct a nonlinear solution $\widetilde{u}\in \q Z_{s}^{\infty}$ with the same Dirichlet data as $u^{r_0}$, and with convergence to a linear solution. To conclude as in the critical case, we want to apply the uniqueness Theorem  \ref{thmH1boundaryelem}: it remains to check that $\nor{u^{r_{0}}}{L^{\frac{d(p-1)}{2}}(|x|\ge 1)}$ and $\nor{\widetilde{u}}{L^{\frac{d(p-1)}{2}}(|x|\ge 1)}$ are finite and can be made small enough, possibly making $r_0$ even larger. We use the decay assumption to get 
\[ \nor{u^{r_{0}}}{L^{\frac{d(p-1)}{2}}(|x|\ge 1)}^{\frac{d(p-1)}{2}}\le Cr_{0}^{d}\int_1^{+\infty} r^{d-1}(rr_0)^{-d-\eta\frac{d(p-1)}{2}}dr \le C_{\eta,d,p}r_0^{-\eta\frac{d(p-1)}{2}}. \]
For $\widetilde{u}$, we use Lemma \ref{lminjectZ}: as $p>\frac{d}{d-2}$ there hold $\frac{d(p-1)}{2}>\frac{d}{d-2}$. 
\qedhere 
\enp

\bnp[Proof of Corollary \ref{corfocsemi}]
Note that since $p\in 2\N+1$, then $|u|^{p-1}u=u^p$ (recall that $u$ is real valued) which is the context of the equations considered in \cite{BBC:75,V:81}. V\'eron proved in \cite[Th\'eor\`eme 4.1]{V:81} that $|x|^{d-2}u(x)$ converges to a constant. In particular, since $d-2>\frac{2}{p-1}$, the decay assumptions of Theorem \ref{thmregdecay} is satisfied and we can also get by Lemma \ref{decaygradsemi} that $u\in \dot{H}^1(\{|x|\ge R\})$ for $R$ large enough. In particular, the assumptions of Theorem \ref{thmregdecay} are verified. 
\enp

%%%%%%%%%%%%%%%%%%%%%%%%%%%%%%%%%%%%%%%%%%%%%%%%%%%%%%%%%%

\subsection{Conformal equations in dimension \texorpdfstring{$2$}{2}}
\label{s:conf2}

The purpose of this section is the proof of Theorem \ref{thmH1regHarmon}. 

The formulation \eqref{systconformembed} considering the embedding $\mathcal{N}\subset \R^M$ is well adapted for regularity results of weak solutions as is often the case in the literature. Yet, in one part of our results, we want to construct some solutions and it seems better suited to consider local coordinates on the manifold $\mathcal{N}$ to ensure that the constructed solutions indeed belong to $\mathcal{N}$. This will not be a loss of generality when the solution is regular enough and we can localize in the target manifold $\mathcal{N}$. %In particular, for $x_{0}\in \R^{2}$, 

%Let $W\subset \mathcal{N}$ be a small open subset so that there exists some coordinate charts so that $W \sim V\subset \R^{N}$. In these coordinates so that, by abuse of notation, we can write $U(x)\in V\subset \R^{N}$ for $x\in \mathcal{N}$ close to $x_{0}$, $U$ is solution of

Let $u_\infty \in \mathcal{N}$ and $W$ be a small open neighbourhood of $u_\infty$ in $\q N$ so that there exists some coordinate charts so that $W \sim V\subset \R^{N}$. 
In local coordinates, for $u\in \mc C^2(\Omega,\mathcal{N})$, we will study some solutions of 
\begin{align}
\label{formconf}
\tag{Conf-C}
\Delta u& =f(u,\nabla u) \quad \text{with}\\
f_{i}(u,\nabla u)&=-\sum_{j,\ell}\Gamma_{j\ell}^{i}(u)\nabla u^{j}\cdot \nabla u^{\ell}-\sum_{j,\ell}H_{j\ell}^{i}(u)\partial_x  u^{j}\partial_y u^{\ell}\\
&=-\sum_{j,\ell}\Gamma_{j\ell}^{i}(u)\nabla u^{j}\cdot \nabla u^{\ell} + \frac{1}{2} \sum_{j,\ell} H_{j\ell}^{i}(u) \nabla^{\perp}  u^{j}\cdot \nabla  u^{\ell},
\end{align}
where $\Gamma_{j\ell}^{i}$ are the Cristoffel symbols and $H_{j,\ell}^i=-H_{\ell, j}^i=-H^j_{i,\ell}$. Here, we denoted $\nabla^{\perp}u=(-\partial_y u,\partial_x u)$. Note, that in what follows, when we will say that $u$ is solution of \eqref{formconf}, it will always be implicit that it is valued in some local charts in the considered domain.

Before starting the proof of Theorem \ref{thmH1regHarmon} \emph{per se}, we begin by checking that equation \eqref{formconf} satisfies the null condition and the various conditions of our abstract theorems. %Note that we can check directly (or find it as a consequence of conformal invariance), that the equation \red{in conformal coordinates} for the problem at infinity or the problem close to zero are the same.

First we verify that the assumptions of Theorem \ref{thminftyspecial} are satisfied. We have then
\begin{align}
f_i(u,\varpi)=-\sum_{j,\ell}\Gamma_{j\ell}^{i}(u)\varpi_{j}\cdot \varpi_{\ell}-\frac{1}{2}\sum_{j,\ell}H_{j\ell}^{i}(u)(\varpi_{1,k}\varpi_{2,\ell}-\varpi_{1,\ell}\varpi_{2,k})
\end{align}
We see that the harmonic part of the conformal system (the first sum in \eqref{formconf} where the Christoffel symbols $\Gamma$ appear) has the form \eqref{deffstruct} of Theorem \ref{thminftyspecial} while the $H$-system nonlinearity are sums of terms of the form
\[ H_{k,\ell}^i (u) (\partial_{x} u_k \partial_y u_\ell - \partial_x u_\ell \partial_y u_k) \]
which satisfies the typical form \eqref{deffstructbracket} in Theorem \ref{thminftyspecial}. Concerning the exponents: $d=2$ and  $|q|=2$ when $a_{i,p,q} \ne 0$, so that we compute
\[ \nu_{1,\opR} : =\inf \{  (d-2)|p| + (d-1)|q|-d: a_{i,p,q} \ne 0 \} = 0. \]
We compute the associated $g$ for $y\in \m S^{1}$ and $z\in (T_y \m S^{1})^N$: the expression is the same as in the proof of Theorem \ref{thminftyspecial} for $g_{\opR}$, but with more simplifications. Indeed,
\begin{align*}
    g_i(t,y,v,w,z)=e^{2t} f_i(v,e^{-t} (w\otimes y+z)).
\end{align*}
Let us compute the term corresponding to $\varpi_{j}\cdot \varpi_{\ell}$. It writes
\begin{align}
\MoveEqLeft \sum_{k=1}^{2} ( y_{k} w_{j} +  z_{j,k}) (  y_{k} w_{\ell} + z_{\ell,k})
 =  w_{j}w_{\ell}+ \sum_{k=1}^{2} z_{j,k} z_{\ell,k}.
\end{align}
We used that $\sum_{k=1}^{2} y_{k}^{2}=1$ (as $y\in \m S^{1}$) and $\sum_{k=1}^{2}  y_k z_{i,k}=y\cdot z_i=0$ (as $z_i\in T_y \m S^{1}$). Then, the term corresponding to $\varpi_{k,1}\varpi_{\ell,2}-\varpi_{\ell,1}\varpi_{k,2}$ writes
\begin{align}
\MoveEqLeft ( y_{1} w_{k} +  z_{k,1}) (  y_{2} w_{\ell} + z_{\ell,2})-( y_{1} w_{\ell} +  z_{\ell,1}) (  y_{2} w_{k} + z_{k,2})\\
%& = y_{1} w_{k} z_{\ell,2}+  z_{k,1} (  y_{2} w_{\ell} + z_{\ell,2})- y_{1} w_{\ell}z_{k,2} -  z_{\ell,1} (  y_{2} w_{k} + z_{k,2})\\
&=w_k( y_1 z_{\ell,2}-y_2z_{\ell,1})+w_{\ell}(y_2z_{k,1}-y_1z_{k,2})+ z_{k,1}z_{\ell,2}-z_{\ell,1}z_{k,2} \\
&=w_k z_{\ell}\cdot y^{\perp}-w_{k}z_{k}\cdot y^{\perp}.
\end{align}
We used that $z_{k,1}z_{\ell,2}-z_{\ell,1}z_{k,2}=0$ (as $z_k$ and $z_{\ell}$ are necessarily colinear). Notice that for two function $u$ , $v$ defined on $ \m S^{1}\approx \R/2\pi \Z$ with running point  $\theta $, we have $\nabla_y u\cdot y^{\perp}= \partial_{\theta} u$ and $\sum_{k=1}^{2} (e_k\cdot \nabla_y u )(e_{k} \cdot \nabla_y v)=\nabla_y  u\cdot \nabla_y v=\partial_{\theta}u \partial_{\theta }v$. In particular, we get
\begin{align}
\label{gconf2}
    g_i(t,y,v,\partial_t v,\nabla_y v)&=-\sum_{j,\ell}\Gamma_{j\ell}^{i}(v)\nabla_{t,\theta} v^{j}\cdot \nabla_{t,\theta} v^{\ell}-\frac{1}{2}\sum_{j,\ell}H_{j\ell}^{i}(v)\nabla_{t,\theta}^{\perp}  v^{\ell}\cdot \nabla_{t,\theta}  v^{j}.
\end{align}
This expression also allows to recover the conformal invariance of the equation.

Let us now check \eqref{est:f(uL)_exp}. As in the proof of Theorem \ref{thmexistglobPgain} (see the computations \eqref{est:g_exp}, it suffices to verify that there holds
\[ \nor{g(t,y,v_{L},\partial_t v_L, \nabla_y {v_{L}})}{Y_{s-1,t}} \lesssim e^{-\nu t}, \]
for some $\nu >0$.

Now, in view of Remark \ref{rk:nullmat}, the nonlinearity $g$ given in \eqref{gconf2} satisfies the elliptic null condition at each point. Hence  Proposition \ref{propNullgain} applies.
%to the second fundamental form $A(u)$ for the harmonic part (with null matrix $I_2$) (and this takes care of the terms related to harmonic map) and to the terms related to the $H$-system (with null matrix $J$). 
As $\| v_L \|_{Y_{s-1,t}} \lesssim \| u_L \|_{\q Z^\infty_{s,t_0}}$ for $t \ge t_0 := \ln (r_0)$ (and the standard product law \eqref{estimprodYst}), the improved product law \eqref{est:null_ell_2d} gives
\[ \nor{g(t,y,v_{L},\partial_t v_L, \nabla_y {v_{L}})}{Y_{s-1,t}} \lesssim e^{-2 t}. \]
In particular, due to \eqref{est:g_exp}, we see that \eqref{est:f(uL)_exp} is satisfied with $\nu=2$. Hence, we can apply our general Theorems \ref{thminftyspecial} and \ref{thminftyspecialDir} with $\nu=2$. For $y_{\infty}\in V\subset \R^{N}$, it means that there exists $\eta>0$, so that for any $u_{0}\in H^{s}(\m S^{d-1})$ with $\nor{u_{0}}{H^{s}(\m S^{d-1})}\le\eta$, we can construct
\begin{enumerate}[(a)]
    \item \label{i:solinfty} a solution of the problem with prescribed data at infinity, that is a unique small solution $u\in \q Z^{\infty}_{s}$ of \eqref{formconf} on $\R^{2}\setminus B(0,1)$ so that 
 \begin{align}
\label{scattranslrdHM}
\nor{u(r\cdot)-(y_{\infty}+u_{L}(r\cdot))}{Z^{\infty}_{s,r}}\lesssim r^{-2}.
\end{align}
\item \label{i:solinftyDir} a solution of the Dirichlet problem at infinity, that is 
a unique small solution $u\in \q Z^{\infty}_{s}$ of \eqref{formconf} on $\R^{2}\setminus B(0,1)$ so that 
 \begin{align}
u_{\left|\m S^{d-1} \right.}=y_{\infty}+u_{0}
\end{align}
and moreover, there exists a unique $w_{L}\in \q Z_{s}^{\infty}$ solution of $\Delta w_{L}=0$ so that
\begin{align}
\nor{(u-w_{L})(r\cdot)}{Z^{\infty}_{s,r}}\lesssim r^{-2}.
\end{align}
\end{enumerate}
Concerning the problems close to zero, a similar analysis (or using conformal invariance of both problems) allows to apply Theorem \ref{thmexistglobPgainDir0} (since $|q|=2$ for non-zero coefficients). With the same assumptions, we can construct
\begin{enumerate}[resume*]
\item \label{i:solzeroDir} a solution of the Dirichlet problem close to zero, that is 
a unique small solution $u\in \q Z^{\infty}_{s}$ of \eqref{formconf} on $B(0,1)\setminus \{0\}$ so that 
 \begin{align}
u_{\left|\m S^{d-1} \right.}=y_{\infty}+u_{0}.
\end{align}
\end{enumerate}

Item (a) answers part 2) of Theorem \ref{thmH1regHarmon}.

We now focus on part 1) of Theorem \ref{thmH1regHarmon}. For this, we need two extra independent results. The first one is the following theorem of removable singularity; it was proved by Sacks-Uhlenbeck \cite{SU:81} in the particular case of Harmonic maps, i.e. $H=0$. The proof, presented in the appendix, is mostly the same once we have proved the improved regularity Theorem \ref{thm:W1inftyH} and an equipartition result (Lemma \ref{lmPoho}).

\begin{thm}\label{thmsackuhlextensconf}If $u: B(0,1)\setminus \{0\} \longrightarrow \mathcal{N}$  is solution of \eqref{systconformembed} with finite energy, then $u$ extends to a smooth function $u: B(0,1) \longrightarrow \mathcal{N}$, solution of the same equation.
\end{thm}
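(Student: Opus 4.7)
The plan is to follow the strategy of Sacks--Uhlenbeck, leveraging the two new ingredients already established in the appendix (Theorem~\ref{thm:W1inftyH} on improved $W^{1,\infty}$ regularity and the equipartition Lemma~\ref{lmPoho}) together with the Dirichlet-type scattering result close to a point (item (c) in the discussion above, i.e.\ Theorem~\ref{thmexistglobPgainDir0} applied to \eqref{formconf}). Since $u$ has finite energy, the integrals $\int_{B(0,r)\setminus\{0\}}|\nabla u|^{2}\,dx$ vanish as $r\to 0$, so for any $\e>0$ I can fix a small $r_{1}>0$ such that the energy on $B(0,r_{1})\setminus\{0\}$, and in particular on each dyadic annulus, is bounded by~$\e$. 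By conformal invariance of the equation in dimension $2$ (passing to the cylindrical variables $(t,\theta)$ with $t=-\log|x|$), the problem amounts to the behaviour as $t\to+\infty$ of a solution $v(t,\theta)$ of
\[
\partial_{tt}v-\opD^{2}v \;=\; g_{\opR}(t,y,v,\partial_{t}v+\opD v,\opR v)
\]
on a half-cylinder, with finite energy.

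Next I would use Theorem~\ref{thm:W1inftyH} (the $\e$-regularity giving $W^{1,\infty}$ control) together with the equipartition Lemma~\ref{lmPoho} to convert this small-energy assumption into smallness in a stronger norm on the slices. More precisely, on a small sphere $\m S^{1}_{r}=\{|x|=r\}$ with $r\leq r_{1}$, Lemma~\ref{lmPoho} balances the radial and tangential parts of the Dirichlet energy, and a standard bootstrap using the equation and the $W^{1,\infty}$ estimate improves the trace regularity to $H^{s}(\m S^{1})$ with a norm going to~$0$ as $r\to 0$. Fixing $\bar u(r)$ to be the mean value of $u$ on $\m S^{1}_{r}$, Cauchy criterion arguments (integrating the radial derivative over dyadic annuli and using the equipartition to bound it) produce a limit $u_{\infty}\in\q N$ with $\bar u(r)\to u_{\infty}$ and $\|u(r\cdot)-u_{\infty}\|_{H^{s}(\m S^{1})}\to 0$. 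In particular, for $r_{0}\leq r_{1}$ small enough, the trace $u_{0}:=u(r_{0}\cdot)-u_{\infty}$ can be made arbitrarily small in $H^{s}(\m S^{1})$, with values in a small neighbourhood of $u_{\infty}$ in which local coordinates on $\q N$ are available.

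At this point I invoke the Dirichlet problem close to zero for the conformal system~\eqref{formconf} (Theorem~\ref{thmexistglobPgainDir0}, valid here because the nonlinearity satisfies the structural hypothesis \eqref{deffstructbracket} with $\nu_{1,\opR}=0$ and the null gain given by Proposition~\ref{propNullgain} provides \eqref{est:f(uL)_exp0} with some $\nu>0$, as shown at the beginning of Section~\ref{s:conf2}). This yields a smooth solution $\tilde u\in \q Z^{0}_{s,r_{0}}$ on $B(0,r_{0})\setminus\{0\}$ satisfying $\tilde u|_{\m S^{1}_{r_{0}}}=u|_{\m S^{1}_{r_{0}}}$ and admitting a well-defined limit $u_{\infty}$ at $0$, with $\|(\tilde u-u_{\infty})(r\cdot)\|_{Z^{0}_{s,r/r_{0}}}\lesssim r^{\nu}$. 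Uniqueness for the Dirichlet boundary value problem with small $\dot H^{1}$ data in this regularity class (in the same spirit as Proposition~\ref{thmH1boundaryelem}, once the trace and energy of $u$ near $0$ are made small) forces $u=\tilde u$ on $B(0,r_{0})\setminus\{0\}$. Thus $u$ extends continuously at $0$ with value $u_{\infty}$, and belongs to $W^{1,\infty}(B(0,r_{0}))$ thanks to Lemma~\ref{lminjectZ0}.

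Finally, once $u$ is known to be a bounded finite-energy weak solution of \eqref{systconformembed} on the whole ball $B(0,1)$, Rivière's regularity theorem \cite{R:07} (applied to \eqref{systconformembed} with the $H$-term, or Hélein's \cite{Helein:91} when $H=0$) upgrades $u$ to a smooth map $B(0,1)\to\q N$. The main obstacle in this scheme is the uniqueness/matching step: I need to know that the a priori $\dot H^{1}$ solution $u$ coincides with the solution $\tilde u$ constructed in $\q Z^{0}_{s,r_{0}}$, and this requires the small-energy regularity and trace smallness produced by Theorem~\ref{thm:W1inftyH} and Lemma~\ref{lmPoho} to place both $u$ and $\tilde u$ in a common functional framework where uniqueness applies.
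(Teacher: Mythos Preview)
Your approach is genuinely different from the paper's, and the obstacle you flag at the end is real and is not resolved by the ingredients you list.

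The paper does \emph{not} use the Dirichlet construction near zero to prove Theorem~\ref{thmsackuhlextensconf}; it gives a direct Sacks--Uhlenbeck argument. The heart of it combines the equipartition identity (Lemma~\ref{lmPoho}) with the pointwise orthogonality $\langle \partial_x u, f(u,\nabla u)\rangle = \langle \partial_y u, f(u,\nabla u)\rangle = 0$ (which holds because $A(u)(\nabla u,\nabla u)\perp T_u\mathcal N$ and $H$ is alternating) and a Poincar\'e--Wirtinger inequality on circles, to obtain the differential inequality
\[
(1-2\delta)\int_{B(0,r)}|\nabla u|^{2}\,dx \;\le\; r\int_{\partial B(0,r)}|\nabla u|^{2}\,d\sigma ,
\]
with $\delta=\|\nabla u\|_{L^2(B(0,2))}$. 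This integrates to the polynomial energy decay $\int_{B(0,r)}|\nabla u|^{2}\lesssim r^{1-2\delta}$, and then Theorem~\ref{thm:W1inftyH} on balls $B(x_0,|x_0|)$ yields $|\nabla u(x_0)|\lesssim |x_0|^{-1/2-\delta}$, hence $\nabla u\in L^{q}$ for some $q>2$. Lemma~\ref{lmsol0pointe}(3) then removes the singularity.

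In your scheme the matching step $u=\tilde u$ is a genuine gap. The available uniqueness result on the disk (Proposition~\ref{propUCPDiskHarmon}) needs $\nabla u,\nabla v\in L^{\infty}$ small. A weaker version would still require $\nabla u\in L^{q}$ small for some $q>2$: in dimension $2$ one estimates $\int|\nabla u|^{2}|w|^{2}\le \|\nabla u\|_{L^{q}}^{2}\|w\|_{L^{2q'}}^{2}$ and closes via $\|w\|_{L^{2q'}}\lesssim\|\nabla w\|_{L^{2}}$, which fails at the endpoint $q=2$ since $H^{1}\not\hookrightarrow L^{\infty}$. But Theorem~\ref{thm:W1inftyH} applied on dyadic annuli only gives $|\nabla u(x)|\lesssim \e|x|^{-1}$, which lies in no $L^{q}$ with $q\ge 2$ near the origin, and Lemma~\ref{lmPoho} by itself does not improve this. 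The same missing decay also undermines your Cauchy-criterion argument for $\bar u(r)\to u_\infty$: the dyadic oscillations satisfy $|\bar u(2^{-m})-\bar u(2^{-m-1})|\lesssim a_m:=\|\nabla u\|_{L^2(2^{-m-1}\le |x|\le 2^{-m})}$, and finite energy gives $\sum a_m^2<\infty$ but not $\sum a_m<\infty$. Both issues are exactly what the differential inequality above settles. So as written, your route still needs the core Sacks--Uhlenbeck step before any comparison with $\tilde u$ can be made; once that step is in place, the detour through $\q Z^{0}_{s}$ is no longer needed.
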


We will deduce from this result some decay for harmonic maps at infinity (by conformal equivalence).

\begin{prop}
\label{propchartsasym}
There exists $\e>0$ so that for any $u$ solution of \eqref{formconf} in $\R^{2}\setminus B(0,1/12)$ with the smallness assumption  
\begin{align*}
\nor{\nabla u}{L^{2}(\R^{2}\setminus B(0,1/12))}+\nor{u-y_{\infty}}{L^{\infty}(\R^{2}\setminus B(0,1/12)))}\le \e
\end{align*}
 for one $y_{\infty}\in \R^{N}$, then, $u\in \q Z_{s}^{\infty}$ and $u$ is the unique small solution on $\R^{2}\setminus B(0,1)$ defined by Item \ref{i:solinftyDir} with $u|_{\m S^1} = y_{\infty}+u_{0}$.  
 
 In particular, there exists $v_{L}\in \q Z_{s}^{\infty}$ solution of $\Delta v_{L}=0$ (with $\| v_L \|_{Z_s^\infty} \lesssim \e$) so that 
  \begin{align}
\nor{u(r\cdot)-(y_{\infty}+v_{L}(r\cdot))}{Z^{\infty}_{s,r}}\lesssim r^{-2}.
\end{align}
\end{prop}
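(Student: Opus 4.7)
The strategy is to exploit the conformal invariance of \eqref{formconf} in dimension $2$, which turns a problem near infinity into a problem near a point, then apply the removable singularity result (Theorem \ref{thmsackuhlextensconf}) and identify $u$ via the local solvability items \ref{i:solzeroDir} and \ref{i:solinftyDir} together with the inversion isomorphism of Lemma \ref{lmisom0infty}.

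Define $\tilde u(x) := u(x/|x|^{2})$ on $B(0,12)\setminus\{0\}$. Both the $\Gamma$-part and the $H$-part of \eqref{formconf} are conformally invariant in $d=2$, so $\tilde u$ satisfies the same equation; the Dirichlet energy is also invariant, giving $\|\nabla\tilde u\|_{L^{2}(B(0,12)\setminus\{0\})}\le\e$ and $\|\tilde u-y_{\infty}\|_{L^{\infty}}\le\e$. For $\e$ small enough, Theorem \ref{thmsackuhlextensconf} extends $\tilde u$ to a smooth solution on $B(0,12)$; set $z_{\infty}:=\tilde u(0)$, so that $|z_{\infty}-y_{\infty}|\le\e$ and $z_\infty$ lies in a local chart around $y_\infty$.

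Next I would show that $\tilde u-z_{\infty}\in\q Z^{0}_{s,\rho_{0}}$ for some small $\rho_{0}$. After the rescaling $\tilde u_{\rho_{0}}(x):=\tilde u(\rho_{0}x)$ on $B(0,1)$ (which still solves \eqref{formconf} by conformal invariance), the trace $\tilde u_{\rho_{0}}|_{\m S^{1}}-z_{\infty}$ is smooth and, by the smoothness of the extended $\tilde u$ at $0$, can be made small in $H^{s}(\m S^{1})$ by choosing $\rho_{0}$ small. Item \ref{i:solzeroDir} then yields a small solution $w\in\q Z^{0}_{s}$ with the same Dirichlet trace, and a uniqueness argument for the nonlinear Dirichlet problem (see the last paragraph) gives $\tilde u_{\rho_{0}}-z_{\infty}=w$. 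Applying Lemma \ref{lmisom0infty} to $v:=\tilde u-z_{\infty}$ transports this to $u-z_{\infty}\in\q Z^{\infty}_{s}$ on a neighborhood of infinity, and interior elliptic regularity fills the intermediate annulus, so that $u\in\q Z^{\infty}_{s}$ on $\R^{2}\setminus B(0,1)$ with $u|_{\m S^{1}}-y_{\infty}$ small in $H^{s}(\m S^{1})$. Item \ref{i:solinftyDir} applied with this boundary data produces a unique small $\q Z^{\infty}_{s}$-solution, which must coincide with $u$, and its asymptotic statement furnishes $v_{L}\in\q Z^{\infty}_{s}$ solving $\Delta v_{L}=0$ with $\|(u-y_{\infty}-v_{L})(r\cdot)\|_{Z^{\infty}_{s,r}}\lesssim r^{-2}$.

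The main obstacle is the nonlinear uniqueness step identifying $\tilde u_{\rho_{0}}-z_{\infty}$ with $w$: both are solutions of the same quadratic elliptic equation that are small in $W^{1,\infty}(B(0,1))$ (for $w$ by Lemma \ref{lminjectZ0}, and for $\tilde u_{\rho_{0}}-z_{\infty}$ by the smooth extension together with the scaling), and share the same Dirichlet trace, so the difference satisfies a linear elliptic equation with $O(\e)$ coefficients and zero trace on $\m S^{1}$; this should be forced to vanish by a standard energy estimate once $\e$ is sufficiently small. A secondary but essential point is the quantitative passage from $L^{\infty}$ smallness of $\tilde u-z_{\infty}$ near $0$ to $H^{s}(\m S^{1})$ smallness of its traces on small spheres (and analogously from the $\q Z^{\infty}_{s}$ bound of $u$ to the $H^{s}$ smallness of $u|_{\m S^{1}}-y_{\infty}$), provided by interior elliptic regularity using the analyticity of the nonlinearity.
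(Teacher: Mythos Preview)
Your strategy is the paper's: conformally invert to $\tilde u$ on $B(0,12)\setminus\{0\}$, remove the singularity via Theorem~\ref{thmsackuhlextensconf}, solve the Dirichlet problem near zero (Item~\ref{i:solzeroDir}), identify by a small-$W^{1,\infty}$ uniqueness argument, and transport back via Lemma~\ref{lmisom0infty}. Two points of your execution deserve tightening, and one step can be streamlined.

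First, the rescaling by a small $\rho_0$ is unnecessary and creates the awkward ``fill the intermediate annulus'' step. The paper works directly at radius $1$: Proposition~\ref{propelipticregtraceHarmon} gives, quantitatively in $\e$, both $\|\nabla\tilde u\|_{L^\infty(B(0,3/2))}\le C\e$ and $\|\tilde u|_{\m S^1}-y_\infty\|_{H^s(\m S^1)}\le C_s\e$. This is precisely what produces $\|v_L\|_{\q Z^\infty_s}\lesssim\e$ in the end, and avoids introducing a second reference point $z_\infty$.

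Second, your claim that $w\in W^{1,\infty}(B(0,1))$ ``by Lemma~\ref{lminjectZ0}'' is too quick: plain membership $w\in\q Z^0_s$ only gives $W^{1,\infty}_{\loc}(B(0,1)\setminus\{0\})$, because the bound \eqref{est:Z_W1infty} requires the stronger weighted quantity with the $1/r$ factor. The paper obtains the needed $L^\infty$ gradient control by using the output structure of Theorem~\ref{thmexistglobPgainDir0}: $\tilde v=y_0+\tilde v_{+,L}+\tilde w$ with $\Delta\tilde v_{+,L}=0$ and $r^{-2}\tilde w\in\q Z^0_s$, so the linear part is handled by the $H^s$ trace bound and the remainder by Lemma~\ref{lminjectZ0} with the extra $r^2$ decay. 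You also need, before invoking uniqueness, to extend the constructed $\tilde v$ across the origin by a second application of Theorem~\ref{thmsackuhlextensconf}; the paper does this explicitly, and then the uniqueness statement you sketch is exactly Proposition~\ref{propUCPDiskHarmon}.
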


\bnp
%By Proposition \ref{propelipticregtraceHarmon}, if $\e$ is small enough, we have $ \nor{\underline{u}_{\left|\mathbb{S}^{1}\right.}-y_{0}}{H^{s}(\mathbb{S}^{1})}le C_{s}\e$. Let us define $\underline{v}\in \q Z^{\infty}_{s}$  the unique solution on $\R^{2}\setminus B(0,1)$ with $\underline{v}_{\left|\mathbb{S}^{1}\right.}=\underline{u}_{\left|\mathbb{S}^{1}\right.}$ defined by Corollary \ref{corHMinftyDir}. 

We consider $\widetilde{u}(x)=u\left(\frac{x}{|x|^{2}}\right)$ which is also a harmonic map with the same bound for the energy on $B(0,12)\setminus \{0\}$. Theorem \ref{thmsackuhlextensconf} implies that $\widetilde{u}$ is solution on $B(0,12)$. By Proposition \ref{propelipticregtraceHarmon}, if $\e$ is small enough, we have 
\[ \nor{\nabla \widetilde{u}}{L^{\infty}(B(0,3/2))}\le C\e \quad \text{and} \quad \nor{\widetilde{u}|_{\m S^1} -y_{\infty} }{H^{s} (\mathbb{S}^{1})}\le C_{s}\e. \]

Using Item \ref{i:solzeroDir}, we define $\widetilde{v}\in \q Z^{0}_{s}$ small solution of \eqref{formconf} on $B(0,1)\setminus \{0\}$ and so that $\widetilde{v}|_{\m S^1} =\widetilde{u}|_{\m S^1} =u|_{\m S^1}$. We claim that we have a similar smallness bound as that for $\widetilde u$ on $\widetilde v$, namely
\[ \nor{\nabla \widetilde{v}}{L^{\infty}(B(0,1))}+\nor{\widetilde{v}-y_{\infty}}{L^{\infty}(B(0,1))} \le C\e. \]
(This is not direct from being in $\q Z^0_s$). Indeed, following Theorem \ref{thmexistglobPgainDir0}, we decompose 
\[ \widetilde{v}=y_{0}+\widetilde{v}_{+,L}+\widetilde{w}, \]
with $\widetilde{v}_{+,L}$ solution of $\Delta \widetilde{v}_{+,L}=0$  and $r^{-\nu }\widetilde{w}\in \q Z_{s}^{0}$ for $\nu=2$. The term $\widetilde{v}_{+,L}$ satisfies the expected estimates since for a linear solution and $s>1$, 
\[ \nor{\nabla \widetilde{v}_{+,L}}{L^{\infty}(B(0,1))}+\nor{\widetilde{v}_{+,L}}{L^{\infty}(B(0,1))} \lesssim \nor{\widetilde{v}_{+,L}(0)}{H^{s}(\mathbb{S}^{1})}. \]
We use Lemma \ref{lminjectZ0} to estimate $\widetilde{w}$. Note also that the proof of Theorem \ref{thmexistglobPgainDir0} provides the smallness of $\widetilde{v}_{+,L}$ and $\widetilde{w}$ in the norms used. We define $\tilde v_L = \tilde v_{+,L} + y_0 - y_\infty$.

Theorem \ref{thmsackuhlextensconf} implies that  $\widetilde{v}$ can extended to a smooth solution on $B(0,1)$ of \eqref{formconf} (there is no singularity at $0$). Proposition \ref{propUCPDiskHarmon} then gives $\widetilde{u}=\widetilde{v}$. In particular, this implies that $\widetilde{u}\in \q Z^{0}_{s}$ and therefore $u\in \q Z^{\infty}_{s}$ by Lemma \ref{lmisom0infty}. Finally, denote $v_L(x) = \tilde v_L(\frac{x}{|x|^2})$. This gives the expected result, due to uniqueness in the class of small solutions in $\q Z^{\infty}_{s}$.
\enp

\begin{remark}
In the above proof we perform a conformal transform so as to work in $B(0,1)$: it allows to conveniently make use of the Poincar\'e inequality in the uniqueness result and to get smallness in $W^{2,\infty}$ using results of the existing literature \cite{SU:81} or \cite{Schoen:84}. We believe however that it should be possible to complete a proof directly in $\R^{2}\setminus B(0,1)$ without resorting to the conformal transform. 
%It is not clear to us how necessary it is to make the previous change of unknown in order to consider Harmonic maps on $B(0,1)$. It is actually useful to be able to use . It could certainly be possible to do it directly in $\R^{2}\setminus B(0,1)$ but performing the estimates directly in this set. The proof we employ is mainly to obtain a more direct proof relying on the existing literature.
\end{remark}

The second result is the existence of adapted coordinate charts where the Christoffel symbols vanish at a point.

\begin{lem}
\label{lmprojectiso}
Let $y_{\infty}\in \mathcal{N}\subset \R^M$, and consider $\pi_{T_{y_{\infty}}\mathcal{N}}$ the orthogonal projection (with respect to the Euclidian metric of $\R^M$) on $T_{y_{\infty}}\mathcal{N}$. There exists a small neighbourhood $W$ of $y_\infty \in \q N$ such that $\Phi: = \pi_{T_{y_{\infty}}\mathcal{N}}|_{\q N \cap W}$ is a diffeomorphism to its image (in $\m R^M$). 

Moreover, if we take orthonormal coordinates on $T_{y_{\infty}}\mathcal{N} \oplus T_{y_{\infty}}\mathcal{N}^\perp$, its inverse can be obtained writing $\mathcal{N}$ as a local graph 
\[ \psi(x_{1},\dots, x_{N})= \left(x_{1},\dots, x_{N}, \psi_{N+1}((x_{1},\dots, x_{N}),\dots, \psi_{M}(x_{1},\dots, x_{N})\right) \]
where the $\psi_{i}$ are analytic functions. In these coordinates, we have $\Gamma_{jm}^{i}(y_{\infty})=0$, for the Christoffel symbols.

Moreover, if $u$ is solution of \eqref{systconformembed} on with $\Omega\subset \R^2$ with $u(\Omega)\subset W$, then $\Phi \circ u$ is solution of \eqref{formconf} on $\Omega$.
\end{lem}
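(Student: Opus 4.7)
My plan for proving Lemma \ref{lmprojectiso} proceeds in four steps.

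First, for the diffeomorphism statement, I will invoke the inverse function theorem. Indeed, the differential of $\pi_{T_{y_\infty}\q N}$ at $y_\infty$, restricted to $T_{y_\infty}\q N$ (which is $T_{y_\infty}(\q N)$ viewed as a subspace of $\R^M$), is the identity on $T_{y_\infty}\q N$. Hence $d\Phi(y_\infty) = \mathrm{Id}_{T_{y_\infty}\q N}$ is invertible, and the analyticity of the embedding ensures the local inverse is analytic. Choosing orthonormal coordinates $(e_1,\dots,e_N)$ on $T_{y_\infty}\q N$ completed by $(e_{N+1},\dots,e_M)$ on $T_{y_\infty}\q N^\perp$ and translating so that $y_\infty$ corresponds to the origin, the inverse $\Phi^{-1}$ takes the form $(x_1,\dots,x_N)\mapsto(x_1,\dots,x_N,\psi_{N+1}(x),\dots,\psi_M(x))$ for analytic $\psi_\ell$. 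Moreover, since $T_{y_\infty}\q N$ is the tangent space at $y_\infty=0$, the graph has a horizontal tangent plane there, so $\psi_\ell(0)=0$ and $\partial_j\psi_\ell(0)=0$ for all $\ell\geq N+1$ and $j\in\llbracket 1,N\rrbracket$.

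Second, I compute the Christoffel symbols at $y_\infty$. In the above coordinates, the induced metric on $\q N$ reads
\[
g_{jm}(x)=\delta_{jm}+\sum_{\ell=N+1}^{M}\partial_j\psi_\ell(x)\,\partial_m\psi_\ell(x).
\]
Differentiating,
\[
\partial_k g_{jm}(x)=\sum_{\ell=N+1}^{M}\bigl(\partial_k\partial_j\psi_\ell\,\partial_m\psi_\ell+\partial_j\psi_\ell\,\partial_k\partial_m\psi_\ell\bigr),
\]
and the vanishing $\partial_j\psi_\ell(0)=0$ established above gives $g_{jm}(0)=\delta_{jm}$ and $\partial_k g_{jm}(0)=0$. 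Since $\Gamma^i_{jm}$ is a combination of $g^{i\cdot}$ and first derivatives of $g_{\cdot\cdot}$, we conclude $\Gamma^i_{jm}(y_\infty)=0$.

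Third, I handle the equivalence of the two formulations. The coordinates of $A$ and of $\Gamma$ are related by the classical formula $A(u)(X,Y)=-\sum_{i}X^jY^m\bigl(\pi_{T_u\q N}(\partial_{jm}\psi)\bigr)^i$ versus the intrinsic Hessian, and the two forms of the harmonic map equation agree once one decomposes $\Delta(\Phi\circ u)$ into tangential and normal parts via the graph parametrization. For the $H$ term, I will use that $\widetilde\omega=\pi_{\q N}^{*}\omega$ so that in local coordinates $H^i_{jm}$ is precisely the coefficient appearing in \eqref{formconf}; the skew-symmetry $H^i_{jm}=-H^j_{im}$ ensures the normal components of $H(u)(\partial_x u,\partial_y u)$ vanish along $\q N$, so that projecting $\Delta u$ onto $T_{y_\infty}\q N$ in the ambient $\R^M$ equation yields exactly \eqref{formconf} in the graph coordinates. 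In short, composing the extrinsic equation with $d\Phi$ and using that $\Phi$ is the projection onto $T_{y_\infty}\q N$ converts the second fundamental form part into Christoffel symbols and leaves the $H$ part unchanged (up to the graph change of variables).

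The main obstacle I anticipate is the third step: carefully unpacking how the extrinsic quantities $A(u)$ and $H(u)$ transform under the graph parametrization so as to exactly reproduce the intrinsic expression \eqref{formconf}. While this is morally standard for harmonic maps, the additional $H$ term requires verifying that the chosen $H^i_{jk}$ in the intrinsic formulation matches the one defined via $d\widetilde\omega$ in \eqref{formH}, which involves a direct computation using the skew-symmetries $H^i_{jk}=-H^j_{ik}$ and the fact that the differential $d\pi_{\q N}(y_\infty)$ equals $\pi_{T_{y_\infty}\q N}$.
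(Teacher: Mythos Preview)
Your first two steps match the paper's proof essentially verbatim: the graph representation via the inverse function theorem, the vanishing $\partial_j\psi_\ell(0)=0$ from the tangent-space condition, and the computation $g_{jm}=\delta_{jm}+\sum_\ell \partial_j\psi_\ell\,\partial_m\psi_\ell$ leading to $\partial_k g_{jm}(0)=0$ and hence $\Gamma^i_{jm}(y_\infty)=0$. The paper's proof actually stops there and does not spell out the third part at all; your outline for it is sound, with one simplification worth noting: $H(u)(\partial_x u,\partial_y u)\in T_u\mathcal{N}$ holds directly by the definition \eqref{formH} (it is declared $T_z\mathcal{N}$-valued), so the normal components vanish automatically rather than via the skew-symmetry you invoke.
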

\bnp
Writing $\mathcal{N}$ as a local graph is classical. In these coordinates, we compute for $i=1,\dots,N$, 
\[ \psi^*(\frac{\partial}{\partial x_i})=\frac{\partial \psi}{\partial x_i}=(0,\dots,0,1,0,\frac{\partial\psi_{N+1}}{\partial x_i},\dots,\frac{\partial\psi_{M}}{\partial x_i}), \] where the $1$ is in position $i$. Note also, that by definition of the tangent space, we have 
\begin{equation} \label{psi(y_infty)}
\frac{\partial\psi_{\ell}}{\partial x_i}(y_{\infty})=0 \quad \text{ for  all } i=1,\dots,N \text{ and } \ell=N+1,\dots, M.
\end{equation}

Concerning the Christoffel symbols: denoting $g$ the metric of $\mathcal{N}$ in the coordinates given by $\psi$, and $(g^{-1})_{ij}=g^{ij}$ its inverse, recall that 
\[ \Gamma_{jm}^{i}=\frac{1}{2}\sum_{\ell =1}^{N} g^{i \ell}\left(\partial_j g_{m \ell}+\partial_m g_{\ell j}-\partial_\ell g_{jm}\right). \]
(see for instance \cite[p71]{GallotHulinLaf}). Now,
\[ g_{ij}=g(\frac{\partial}{\partial x_i},\frac{\partial}{\partial x_j})=\delta_{i,j} +\sum_{l=N+1}^{M} \left(\frac{\partial\psi_{l}}{\partial x_i}\right)\left(\frac{\partial\psi_{l}}{\partial x_j}\right). \] 
In view of \eqref{psi(y_infty)}, we infer that $(\partial_m g_{ij})(0)=0$ for all $i,j,m \in \llbracket 1, N \rrbracket$, and so $\Gamma_{jm}^{i}(y_{\infty}) =0$.
\enp

We can now conclude the proof of Part 1) in Theorem \ref{thmH1regHarmon}.

We consider a harmonic map $u$ as specified. We apply Corollary \ref{corscalinvers} with $\e$ small enough so that $u_{r_0}(\cdot)=u(r_{0} \cdot)$ defined in $\R^{2}\setminus B(0,1)$ is supported in some coordinate charts of $\mathcal{N}$ close to $y_{\infty}$, and moreover  satisfies 
\begin{align}\label{ur0petit}
\nor{\nabla u_{r_0}}{L^{2}(\R^{2}\setminus B(0,1))}+\nor{u_{r_0}-y_{\infty}}{L^{\infty}(\R^{2}\setminus B(0,1))} \le \e.
\end{align}
In particular, we can consider the solution of \eqref{formconf} with value in a single chart. More specifically, for $W\subset \mathcal{N}$ a small neighborhood of $y_{\infty}$ in $\mathcal{N}$ we consider the chart $\Phi: \mathcal{N}\cap W \to T_{y_{\infty}}\mathcal{N}$ defined by $\Phi(x)=\pi_{T_{y_{\infty}}\mathcal{N}} (x)$. Due to Lemma \ref{lmprojectiso}, we see that for $W$ small enough, $\Phi$ is a diffeomorphism to its image $V=\Phi(W)$. In particular, for $r_{0}$ large enough, $\Phi \circ u_{r_0}=\pi_{T_{y_{\infty}}\mathcal{N}}(u_{r_{0}})$ with value in $T_{y_{\infty}}\mathcal{N} \simeq \R^{N}$ is a solution of an equation of the type \eqref{formconf} with analytic Christoffel symbols $\Gamma $ and coefficients $H$, and $\pi_{T_{y_{\infty}}\mathcal{N}}(u_{r_{0}})$  satisfies similar estimates as \eqref{ur0petit}. We can then apply Proposition \ref{propchartsasym} to $\pi_{T_{y_{\infty}}\mathcal{N}}(u_{r_{0}})$: we obtain that $\pi_{T_{y_{\infty}}\mathcal{N}}(u_{r_{0}})\in \q Z_{s}^{\infty}$ and 
there exists $w_{L}\in \q Z_{s}^{\infty}$ solution of $\Delta w_{L}=0$ so that 
\begin{align}
\nor{\pi_{T_{y_{\infty}}\mathcal{N}}(u_{r_{0}})(r\cdot)-(y_{\infty}+w_{L}(r\cdot))}{Z^{\infty}_{s,r}}=\nor{\pi_{T_{y_{\infty}}\mathcal{N}}(u)(r_{0}r\cdot)-(y_{\infty}+w_{L}(r\cdot))}{Z^{\infty}_{s,r}} \lesssim r^{-2}.
\end{align}
In particular, since $r_{0}$ is fixed now, we can define $u_{L}(x)=w_{L}(x/r_{0})$ so that $w_{L}(r\cdot)=v_{L}(r_{0}r\cdot)$ and  \[ \nor{\pi_{T_{y_{\infty}}\mathcal{N}}(u)(r\cdot)-(y_{\infty} + u_{L}(r\cdot))}{Z^{\infty}_{s,r/r_{0}}}\lesssim r^{-2}. \]
It is the expected result \eqref{scattranslrdinverseHarmonMap}. Moreover, the same computation gives $\pi_{T_{y_{\infty}}\mathcal{N}}(u)\in \q Z_{s,r_{0}}$. Note that the fact that $u(x)\underset{|x|\to +\infty}{ \longrightarrow}y_{\infty}$ implies $P_0 u_L=0$. Under the assumption $P_0 u_L=0$, the uniqueness is obtained as in other cases (this assumption is necessary since in dimension $2$, this component do not decay).

Up to now, we have only proved $\pi_{T_{y_{\infty}}\mathcal{N}}(u)$ is in the correct space, but we do not control the orthogonal component. Yet, as explained in Lemma \ref{lmprojectiso}, up to some rotation and translation, we can assume that $\pi_{T_{y_{\infty}}\mathcal{N}}=\R^{N}\times \{0\}$ and $\mathcal{N}\cap W$ can be parametrized by a local graph 
\[ \mathcal{N}\cap W=\left\{\left(x_{1},\dots, x_{N}, \psi_{N+1}(x_{1},\dots, x_{N}),\dots, \psi_{M}(x_{1},\dots, x_{N})\right); (x_1, \dots,x_N)\in V\right\}, \]
where the $\psi_{i}$ are analytic functions. In particular, since $u(x)\in \mathcal{N}$, we can write 
\[ u= \left(\pi_{T_{y_{\infty}}\mathcal{N}}(u),\psi_{N+1}(\pi_{T_{y_{\infty}}\mathcal{N}}(u)),\dots, \psi_{M}(\pi_{T_{y_{\infty}}\mathcal{N}}(u)\right). \]
Therefore, due to Corollary \ref{coranalyt}, $u$ belongs to $\q Z_{s,r_{0}}^\infty$.

This completes the proof of part 1) of Theorem \ref{thmH1regHarmon}.

We are now concerned with part 2) of Theorem \ref{thmH1regHarmon}. If $u_0$ is small, Item \ref{i:solinfty} settles this result. If not, $u_0$ is assumed to have zero mean, and Theorem \ref{thminftyspecial} also concludes for $r_0$ large enough: we do exactly the same analysis, but in the other direction. We first construct the solution of \eqref{formconf} in the tangent plane using Theorem \ref{thminftyspecial} and then raise it to $\mathcal{N}\subset \R^M$ by adding the other coordinates. It is then solution of \eqref{systconformembed}. We leave the details to the reader since it is very similar to part 1).

%\begin{remark}
%It is not completely clear to us if the orthogonal component has a meaning. It seems that in higher dimension (for the initial space $\R^{d}$), when some convergence like \eqref{scattranslrdinverseHarmonMap} can happen, the orthogonal component satisfies $\nor{\pi^{\perp}_{T_{U_{\infty}}\mathcal{N}}(U^{r})}{W_{s,r}}\to 0$ thanks to the fact that there is a gain of $e^{-tc_{d}}$ in \eqref{lmprodanalytique} and the functions $\psi_{i}$ satisfy $D\psi(0)=0$ and therefore only contain terms quadratic or more.

%In dimension $d=2$, it seems that the profile $V_{L}$ depends on the coordinate charts for $\mathcal{N}$ in a complicated nonlinear way, which does not seem to be the case in dimension $d\ge 3$ where the profile $V_{L}$ seems to be only affected by the differential at zero of the transition maps.
%\end{remark}

%%%%%%%%%%%%%%%%%%%%%%%%%%%%%%%%%%%%%%%%%%%%%%%%%%%%%%%%%%%%%%%%%%%%%%%%%%%%%%%%
\subsection{Harmonic maps in dimension \texorpdfstring{$d\ge 3$}{d >= 3}}
The purpose of this section is the proof of Theorem \ref{thmH1Harmondimd}. We begin with a few generalities about Harmonic maps.  First, we define the weak solutions. 
\begin{definition}
\label{defsoluextHM}
We say that $u\in H^{1}_{\textrm{loc}}(\Omega,\mathcal{N})$ is a weak solution of \eqref{eqnHarmonicMaps} on $\Omega$ if, for any $v\in \mc C^{\infty}_{c}(\Omega,\R^{M})$, there hold 
\begin{align}
\label{eqnHM}
\sum_{\alpha=1}^{d}\sum_{i=1}^{M}\int_{|x|\ge R}\left[\partial_{\alpha }u^{i}\partial_{\alpha }v^{i} -\sum_{j=1}^{M}\sum_{k=1}^{M}v^{i}A^{{i}}_{jk}(u)\partial_{\alpha} u^{j}\partial_{\alpha}u^{k}\right]~dx=0
\end{align}
\end{definition}

In fact, we will mostly work in suitable coordinate charts on the target manifold as in the previous Section \ref{s:conf2}, from which we borrow the notations given in the beginning.

For a function $u$ with value in $V$, we will consider the following equation of Harmonic maps,
 \begin{align}
 \label{eqnHarmonicMapsChristo}
 \tag{HM-C}
 \Delta u^{i}+\sum_{j,k}\Gamma_{jk}^{i}(u)\nabla u^{j}\cdot \nabla u^{k}=0
 \end{align}
 where $\Gamma_{jk}^{i}$ are the Christoffel symbols for the metric $g$ in the chosen coordinate charts. We will only consider charts where the coefficients are analytic. For simplicity, we will sometimes write equation \eqref{eqnHarmonicMapsChristo} as
  \begin{align*}
 \Delta u+\Gamma(u)(\nabla u,\nabla u)=0.
 \end{align*}

As before, we state some results concerning the decay, convergence and uniqueness that will be the main tool to prove that the solution is actually the one that can be constructed by our general theorem.  We begin by some result about the decay of small solutions.
\begin{lem}
\label{lmregHMHd}
Let $d\ge 3$. Let $u\in \mc C^{2}(\R^{d}\setminus B(0,1))$ of finite energy (see \eqref{defNRJHM}) solution of the Harmonic Maps equation \eqref{eqnHarmonicMaps}. Then, for any $\e>0$, there exists $R_{0}>0$ and $C>0$ so that we have 
\begin{itemize}
\item $\ds  |\nabla u(x)|\le C \frac{\nor{\nabla u}{L^{2}(\R^{d}\setminus B(0,R_{0}))}}{|x|^{d/2}} \le \frac{\e}{|x|^{-d/2}}$ for all $x\in \R^{d} \setminus B(0,R_{0})$,
\item $\nor{\nabla u}{L^{2}(\R^{d}\setminus B(0,R_{0}))}+\nor{\nabla u}{L^{d}(\R^{d}\setminus B(0,R_{0}))}+\nor{\nabla u}{L^{\infty}(\R^{d}\setminus B(0,R_{0}))}\le \e$.
\end{itemize}
\end{lem}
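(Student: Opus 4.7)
The plan is to combine the finite energy assumption with an $\e$-regularity/gradient estimate of Schoen–Uhlenbeck type (presumably the already invoked Theorem \ref{thmSchoen}) applied at scale $r=|x|/2$. The key point is that the Dirichlet energy of $u$ is conformally invariant in the sense that $r^{2-d}\int_{B(x,r)}|\nabla u|^2\,dy$ is scale invariant, so exterior smallness of the total energy translates to smallness of this local scale invariant quantity on large balls.

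First, since $u$ is of finite energy on $\R^{d}\setminus B(0,1)$, by dominated convergence,
\[ \nor{\nabla u}{L^{2}(\R^{d}\setminus B(0,R))}\underset{R\to +\infty}{\longrightarrow}0. \]
Pick $\e_{0}>0$ to be the small-energy threshold of the Schoen–Uhlenbeck $\e$-regularity theorem for smooth harmonic maps (which applies here, since $u\in \mc C^{2}$ and $\q N$ is analytic). Choose $R_{1}\ge 1$ large enough so that
\[ \nor{\nabla u}{L^{2}(\R^{d}\setminus B(0,R_{1}))}^{2}\le \e_{0}. \]
For any $x$ with $|x|\ge 2R_{1}$, the ball $B(x,|x|/2)$ is contained in $\{|y|\ge R_{1}\}$, hence
\[ \left(\frac{|x|}{2}\right)^{2-d}\int_{B(x,|x|/2)}|\nabla u|^{2}\,dy\le |x|^{2-d}\e_{0}\le \e_{0}. \]
The $\e$-regularity theorem then yields the pointwise bound on $B(x,|x|/4)$ (see e.g.\ \cite{Schoen:84} or Theorem \ref{thmSchoen}):
\[ |\nabla u(x)|^{2}\le \frac{C}{|x|^{d}}\int_{B(x,|x|/2)}|\nabla u|^{2}\,dy\le \frac{C}{|x|^{d}}\nor{\nabla u}{L^{2}(\R^{d}\setminus B(0,R_{1}))}^{2}. \]
Taking square roots gives the first assertion, with the constant $C$ independent of $x$; and the bound $\e/|x|^{d/2}$ follows by choosing $R_{0}\ge 2R_{1}$ so large that $C\nor{\nabla u}{L^{2}(\R^{d}\setminus B(0,R_{0}))}\le \e$.

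The three bounds in the second statement are then straightforward. The $L^{2}$ decay is automatic from finite energy. For the $L^{\infty}$ estimate, the first item gives $\nor{\nabla u}{L^{\infty}(\R^{d}\setminus B(0,R))}\le C R^{-d/2}\nor{\nabla u}{L^{2}(\R^{d}\setminus B(0,R_{1}))}$ for $R\ge 2R_{1}$, which tends to $0$. For the $L^{d}$ estimate, combining both pieces yields
\[ \int_{|x|\ge R}|\nabla u|^{d}\,dx\le \nor{\nabla u}{L^{\infty}(|x|\ge R)}^{d-2}\nor{\nabla u}{L^{2}(|x|\ge R)}^{2}, \]
and both factors tend to $0$ (the first because $d\ge 3$ makes $R^{-d(d-2)/2}$ vanish at infinity). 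Enlarging $R_{0}$ if necessary so that each of the three quantities is at most $\e$ concludes the proof. The main (and only genuine) obstacle is the invocation of $\e$-regularity, which we regard as the black box Theorem \ref{thmSchoen}; the rest is scaling bookkeeping.
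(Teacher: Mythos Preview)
Your proof is correct and follows essentially the same approach as the paper: apply the Schoen--Uhlenbeck $\e$-regularity Theorem~\ref{thmSchoen} on balls $B(x,|x|/2)$ (the paper phrases this via the rescaled map $\tilde u(y)=u(x_0+|x_0|y/2)$ on $B(0,1)$, which is equivalent), then deduce the $L^\infty$ and $L^d$ bounds from the pointwise decay by interpolation. The only cosmetic slip is the intermediate bound $(|x|/2)^{2-d}\int_{B(x,|x|/2)}|\nabla u|^2\le |x|^{2-d}\e_0$, which is off by a factor $2^{d-2}$; this is harmless since $(|x|/2)^{2-d}\le 1$ for $|x|\ge 2$ already gives the needed smallness.
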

The next results proves that the solutions has a limit at infinity.
\begin{lem}
\label{lm:H1HM}
Under the same assumptions as Lemma \ref{lmregHMHd}, there exists $u_{\infty}\in \mathcal{N}\subset \R^{M}$ so that $u(x)\underset{|x|\to +\infty}{\longrightarrow} u_{\infty}$. Moreover, $u-u_{\infty}\in \dot{H}^{1}(\R^{d}\setminus B(0,1))$.
\end{lem}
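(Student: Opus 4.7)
\medskip

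\textbf{Plan of proof.} The plan splits into two steps: first produce the limit $u_\infty$, then show the $\dot H^1$ membership.

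\textbf{Step 1: Existence of the limit $u_\infty$.} The idea is to use the pointwise gradient decay given by Lemma~\ref{lmregHMHd}, namely $|\nabla u(x)| \le C |x|^{-d/2}$ for $|x|\ge R_0$, and conclude that $u$ is Cauchy at infinity. Given two points $x=r_1\omega_1$, $y=r_2\omega_2$ with $R_0\le r_1\le r_2$, I will bound $|u(x)-u(y)|$ by connecting them via a radial segment followed by an arc on the sphere of radius $r_2$, and integrating $|\nabla u|$ along that path. The radial contribution is bounded by $C\int_{r_1}^{\infty} s^{-d/2}\,ds\lesssim r_1^{1-d/2}$, which tends to $0$ since $d\ge3$, and the spherical contribution is bounded by $C r_2 \cdot r_2^{-d/2}= C r_2^{1-d/2}$, again tending to $0$. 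This gives the Cauchy condition, hence a limit $u_\infty \in \R^M$. Since $u$ takes values in the compact set $\mathcal{N}\subset \R^M$, we have $u_\infty\in \mathcal{N}$.

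\textbf{Step 2: Membership in $\dot H^1$.} According to the definition recalled before the lemma, I need to produce an extension $\bar w \in \dot H^1(\R^d)$ with $\bar w|_{\{|x|\ge1\}}=u-u_\infty$. Since $u$ is $\mc C^2$ on $\R^d\setminus B(0,1)$, I will first extend $u-u_\infty$ smoothly to a neighborhood of $\{|x|\ge 1\}$ using a standard trace extension (or reflection across $\m S^{d-1}$), then cut off: $\bar w = \chi\,(u-u_\infty)$, with $\chi\in\mc C^\infty(\R^d)$, $\chi=0$ on $|x|\le 1/2$, $\chi=1$ on $|x|\ge 1$. Then $\nabla\bar w = \chi\,\nabla u + (u-u_\infty)\nabla\chi$ lies in $L^2(\R^d)$: the first term by the finite energy assumption, the second since it is supported in a bounded annulus where everything is smooth.

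It remains to see that $\bar w$ actually belongs to the completion $\dot H^1(\R^d)$. For this I will approximate by compactly supported functions: letting $\chi_R(x) = \chi_0(x/R)$ for a standard bump $\chi_0$ equal to $1$ on $B(0,1)$ and vanishing outside $B(0,2)$, I need $\|\nabla(\chi_R \bar w - \bar w)\|_{L^2}\to 0$ as $R\to+\infty$. Dominated convergence handles the $(1-\chi_R)\nabla\bar w$ contribution; the delicate piece is $\|\bar w\,\nabla\chi_R\|_{L^2}^2\lesssim R^{-2}\int_{R\le|x|\le 2R}|\bar w|^2\,dx$. To control this, I will use a radial Hardy-type identity: since $\bar w(r\omega)\to 0$ as $r\to+\infty$,
\[
|\bar w(r\omega)|^2 \le \Bigl(\int_r^{\infty} s^{-(d-1)}\,ds\Bigr)\Bigl(\int_r^{\infty}|\partial_s u(s\omega)|^2 s^{d-1}\,ds\Bigr) \le \frac{C}{r^{d-2}}\,E(r),
\]
with $E(r):=\int_{|x|\ge r}|\nabla u|^2\,dx\to 0$, which after integrating over $\m S^{d-1}$ and $r\in[R,2R]$ yields $\int_{R\le|x|\le 2R}|\bar w|^2\,dx\lesssim R^2 E(R)$, hence $\|\bar w\,\nabla\chi_R\|_{L^2}^2\lesssim E(R)\to 0$. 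A final mollification shows $\bar w$ is the $\|\nabla\cdot\|_{L^2}$-limit of $\mc C^\infty_c(\R^d)$ functions.

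\textbf{Main difficulty.} Step 1 is routine once Lemma~\ref{lmregHMHd} is available. The delicate part is Step 2: the borderline pointwise decay $|u-u_\infty|\lesssim|x|^{1-d/2}$ deduced from the gradient bound is \emph{not} integrable enough to give $\bar w\in L^{2^*}$ directly from pointwise estimates. The Hardy-type bound above is what saves the cutoff argument, as it converts the (integrated) decay of the tail energy $E(R)$ into the required vanishing of the cutoff error, leveraging crucially $d\ge 3$.
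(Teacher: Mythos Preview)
Your proposal is correct and follows essentially the same two-step structure as the paper's proof: first obtain $u_\infty$ from the pointwise gradient decay of Lemma~\ref{lmregHMHd} by integrating along paths, then approximate $u-u_\infty$ by compactly supported functions via a cutoff at infinity.

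The one noteworthy difference is how the cutoff error is controlled. You bound $\|\bar w\,\nabla\chi_R\|_{L^2}^2$ via a Hardy-type inequality driven by the tail energy $E(R)=\int_{|x|\ge R}|\nabla u|^2$. The paper instead reuses the pointwise decay already established in Step~1, namely $\|u-u_\infty\|_{L^\infty(\{R\le|x|\le 2R\})}=o(R^{1-d/2})$: since $\int_{\R^d}|\nabla\chi_R|^2\lesssim R^{d-2}$, this gives directly $\|\bar w\,\nabla\chi_R\|_{L^2}^2 = o(R^{2-d})\cdot R^{d-2}=o(1)$. Your Hardy argument is slightly more robust (it only needs $E(R)\to 0$, not the $\varepsilon$-regularity behind the pointwise bound), but the paper's route is shorter given what you already extracted in Step~1. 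In particular, the ``main difficulty'' you flag dissolves once you notice the little-$o$ in the pointwise estimate: the decay $|u-u_\infty|\lesssim |x|^{1-d/2}$ is indeed borderline, but $o(|x|^{1-d/2})$ is just enough.
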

The following results concerns uniqueness of small $\dot W^{1,d}$ solutions in appropriate norms.
\begin{prop}
\label{propUCPRdHarmon}Let $d\ge 3$.
There exists $\e>0$ (only depending on $\mathcal{N}$ the compact target manifold) so that if $u$ and $v$ are two weak solutions in the energy space $u_{\infty}+\dot{H}^{1}(\R^{d}\setminus B(0,1))$ of Harmonic Maps equation \eqref{eqnHarmonicMaps} in $\R^{d}\setminus B(0,1))$ so that $u=v$ on the unit sphere $\m S^{d-1}$ and we have the assumptions  
\begin{align*}
\nor{\nabla u}{L^{d}(\R^{d}\setminus B(0,1))}+\nor{\nabla v}{L^{d}(\R^{d}\setminus B(0,1))}\le \e.
\end{align*}
Then, $u=v$ in $\R^{d}\setminus B(0,1))$.  
\end{prop}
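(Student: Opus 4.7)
The idea is the classical energy method for the difference: set $w=u-v\in \dot H^1_0(\R^d\setminus B(0,1))$, test the difference equation against $w$, and exploit the scale-critical $L^d$ smallness of $\nabla u,\nabla v$ via the Sobolev embedding $\dot H^1(\R^d\setminus B(0,1))\hookrightarrow L^{2^*}$ (with $2^*=\tfrac{2d}{d-2}$, valid since $d\ge3$; see \eqref{Soblocal}).

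First I would write down the (subtracted) weak formulation: for every $\varphi\in \mc C_c^\infty(\R^d\setminus \overline{B(0,1)},\R^M)$,
\[
\int \nabla w\cdot\nabla\varphi\,dx=\int \varphi\cdot\bigl[A(u)(\nabla u,\nabla u)-A(v)(\nabla v,\nabla v)\bigr]\,dx,
\]
and use the pointwise algebraic decomposition
\[
A(u)(\nabla u,\nabla u)-A(v)(\nabla v,\nabla v)=\bigl(A(u)-A(v)\bigr)(\nabla u,\nabla u)+A(v)(\nabla w,\nabla u)+A(v)(\nabla v,\nabla w).
\]
Since $\q N$ is compact and analytic, $A$ is Lipschitz, so $|A(u)-A(v)|\lesssim |w|$ uniformly. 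Next, because $w\in \dot H^1_0(\R^d\setminus B(0,1))$, approximate $w$ by a sequence $\varphi_n\in \mc C_c^\infty(\R^d\setminus\overline{B(0,1)})$ converging to $w$ in $\dot H^1$. By Sobolev $\varphi_n\to w$ also in $L^{2^*}$, and the bounds below show the right-hand side is a continuous functional of the test function in $\dot H^1$, so the identity passes to the limit and yields
\[
\int |\nabla w|^2\,dx=\int w\cdot\bigl[A(u)(\nabla u,\nabla u)-A(v)(\nabla v,\nabla v)\bigr]\,dx.
\]

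The main estimates are then two applications of H\"older with the critical exponents $\tfrac{2}{2^*}+\tfrac{2}{d}=1$ and $\tfrac{1}{2^*}+\tfrac{1}{2}+\tfrac{1}{d}=1$, followed by Sobolev:
\begin{align*}
\left|\int w\,(A(u)-A(v))(\nabla u,\nabla u)\right|&\lesssim \int |w|^2|\nabla u|^2\,dx\lesssim \|w\|_{L^{2^*}}^2\|\nabla u\|_{L^d}^2\lesssim \e^{2}\,\|\nabla w\|_{L^2}^2,\\
\left|\int w\,A(v)(\nabla w,\nabla u)\right|&\lesssim \|w\|_{L^{2^*}}\|\nabla w\|_{L^2}\|\nabla u\|_{L^d}\lesssim \e\,\|\nabla w\|_{L^2}^2,
\end{align*}
and analogously for the term with $\nabla v$. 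Summing, $\|\nabla w\|_{L^2}^2\le C\e\,\|\nabla w\|_{L^2}^2$; choosing $\e$ so small that $C\e<1$ forces $\nabla w=0$. Combined with $w|_{\m S^{d-1}}=0$ (and the fact that $w\in \dot H^1_0$ has no nonzero locally constant representative in this exterior domain), we conclude $w\equiv0$.

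The main obstacle I anticipate is the rigorous justification that $w$ itself can be used as test function: one must verify that $A(u)(\nabla u,\nabla u)-A(v)(\nabla v,\nabla v)$ lies in the dual of $\dot H^1_0(\R^d\setminus B(0,1))$, which is exactly the content of the two H\"older estimates above (they guarantee that the pairing extends continuously and the approximation by $\mc C_c^\infty$ functions passes to the limit). Once this density step is secured, everything reduces to the displayed chain of inequalities.
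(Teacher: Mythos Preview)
Your proof is correct and follows essentially the same approach as the paper's: both set $w=u-v\in\dot H^1_0(\R^d\setminus B(0,1))$, decompose $A(u)(\nabla u,\nabla u)-A(v)(\nabla v,\nabla v)$ in the same way, approximate $w$ by $\mc C^\infty_c$ functions, and close with the identical H\"older exponents $\tfrac{2}{d}+\tfrac{2}{2^*}=1$ and $\tfrac{1}{2}+\tfrac{1}{d}+\tfrac{1}{2^*}=1$ together with the Sobolev embedding \eqref{Soblocal}. The only cosmetic difference is that the paper keeps the approximating test function $w_n$ through the estimates and passes to the limit at the very end, whereas you first justify that the pairing extends to $w$ and then estimate directly.
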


The proofs of the above three statements are done in the Appendix, Section \ref{s:app:HM3}.

\bnp[Proof of Theorem \ref{thmH1Harmondimd}]
We first need to prove additional smallness and regularity proved previously. Denoting $u_{R_{0}}(x)=u(R_{0}x)$, using Lemma \ref{lmregHMHd} we get for $R_{0}$ large enough, $\nor{\nabla u_{R_{0}}}{L^{\infty}(\R^{d}\setminus B(0,1/2))}\le CR_{0}^{1-d/2}$ which can be made arbitrary small. In particular, for $R_{0}$ large enough,
\[ \nor{A(u_{R_{0}}) (\nabla u_{R_{0}},\nabla u_{R_{0}})}{L^{\infty}(\R^{d}\setminus B(0,1/2))}\le \e. \] 
Moreover, with Lemma \ref{lm:H1HM}, and up to making $R_0$ even larger, we can select $u_{\infty}\in \mathcal{N}\subset \R^{M}$ so that \[ \nor{u_{R_{0}}-u_{\infty}}{L^{\infty}(\R^{d}\setminus B(0,1/2))} \le \e. \]
Using standard iterated elliptic regularity, we get that for any $s\in \R$ and small $\eta >0$ 
\[ \nor{u_{R_{0}}-u_{\infty}}{H^{s}(B(0,1+\eta)\setminus B(0,1-\eta))}\le C_{s,\eta} \e. \] 
In particular, we have the trace estimate $\nor{(u_{R_{0}}-u_{\infty})|_{\m S^{d-1}}}{H^{s}(\m S^{d-1})}\le C_s\e$.

Now that we have gained enough regularity and decay, we see that $u_{R_{0}}$ is actually solutions of the equation \eqref{eqnHarmonicMapsChristo} in appropriate coordinates for the target manifold $\mathcal{N}$. We choose the coordinates given by the orthogonal projection on $T_{y_{\infty}}\mathcal{N} $ as in Lemma \ref{lmprojectiso}. 

We can now proceed as in Section \ref{s:conf2} the other treated cases, except that we can apply directly Theorem \ref{thminftyspecial} and \ref{thminftyspecialDir} without using the null structure (which might hold true anyway). The equation for $v=u-y_{\infty}$ writes 
\[ \Delta v+\Gamma( y_{\infty}+v)(\nabla v,\nabla v)=0, \] 
with $\Gamma(y_{\infty})=0$ (see Lemma \ref{lmprojectiso}). The quadratic nonlinearity corresponds to $q\ge 2$ and, the cancellation $\Gamma(y_{\infty})=0$ corresponds to $p\ge 1$. So, we get, for non zero coefficients, 
\[ (d-2)p + 2(d-1)q-d\ge 2(d-2)=: \nu_{1,\opR}>0. \]
Theorem \ref{thminftyspecialDir} therefore yields the existence of a solution $\widetilde{u}\in \q Z_{s}^{\infty}$ of the Harmonic map equation with the same boundary condition as $u_{R_0}-y_{\infty}$ on $\m S^{d-1}$. We wnat to prove that
\[ u_{R_0}-y_{\infty}=\widetilde{u}, \]
and for this, we will apply Proposition \ref{propUCPRdHarmon} to $u_{R_0}$ and $y_{\infty}+\widetilde{u}$. Note that Proposition \ref{propUCPRdHarmon} applies either for formulation \eqref{eqnHarmonicMaps} or \eqref{eqnHarmonicMapsChristo} in the case of regular solutions. We need both solutions to lie in $y_{\infty}+\dot{H}^{1}(\R^{d}\setminus B(0,1))$ (not necessarily small), and with gradient small in $L^{d}(\R^{d}\setminus B(0,1))$. 
For the constructed solution $y_{\infty}+\widetilde{u}$, we claim the following inequality, for $s>\frac{d}{2} + \frac{1}{2}$,
\begin{align}
\label{injectZsiLd}
\nor{\nabla \widetilde{u}}{L^{2}(\R^{d}\setminus B(0,1))}+\nor{\nabla \widetilde{u}}{L^{d}(\R^{d}\setminus B(0,1))}\le C \nor{\widetilde{u}}{\q Z_{s}^{\infty}}.
\end{align}
Indeed, it can be obtained by combining Lemma \ref{lminjectZ},  estimate \eqref{est:Zinfty_W1infty} of Lemma \ref{lminjectZ0}  and an interpolation argument between $L^{2}$ and $L^{\infty}$. 

For the solution considered $u_{R_0}$, we use Lemma \ref{lm:H1HM} to get $u_{R_0}\in y_{\infty}+\dot{H}^{1}(\R^{d}\setminus B(0,1))$ and Lemma \ref{lmregHMHd} for the norm $L^d$ of the gradient, observing the scale invariance 
\[ \nor{\nabla u_{R_{0}}}{L^{d}(\R^{d}\setminus B(0,1))}=\nor{\nabla u}{L^{d}(\R^{d}\setminus B(0,R_{0}))}. \]

We are now in position to apply Proposition \ref{propUCPRdHarmon} and get $u_{R_0}=y_{\infty}+\widetilde{u}$. We conclude as in dimension $2$ to get \eqref{scattranslrdinverseHMgeq3}, recalling that $\nu_{1,\opR}=2(d-2)$.

Finally for the orthogonal part, as in dimension $2$, we write 
\[ u= (\pi_{T_{y_{\infty}}\mathcal{N}}(u),\psi_{N+1}(\pi_{T_{y_{\infty}}\mathcal{N}}(u)),\dots, \psi_{M}(\pi_{T_{y_{\infty}}\mathcal{N}}(u)). \]
It is easy to see that in these coordinates, by orthogonality, we have for any $j=N+1,\dots, M$.
\[ \psi_{j}((x_{1}, \dots,x_{N})=O(\nor{(x_{1},\dots,x_{N})}{}^{2}). \]

Therefore, due to Corollary \ref{coranalyt} with $n=2$, each term $\psi_{j}(\pi_{T_{y_{\infty}}\mathcal{N}}(u))$ belongs to $\q Z_{s,r_{0}}$ and  the orthogonal component decays like $r^{-\frac{d-2}{2}}$.

Regarding the additional decay \eqref{addidecaythmHM}: it is obtained after applying the conformal transform, as a consequence of the last refinement \eqref{est:sol_conf_nu+nu_0} of Theorem \ref{th:conf2}, with $\nu= \nu_0 = 2(d-2)$.

Part 2) (on the existence of solution with prescribed scattering data) is proved applying Theorem \ref{thminftyspecial} in local coordinates.
\enp

\begin{remark}
\label{rkLamy}
In principle, our general theorem should allow to make an expansion to any order of our solutions using the iterated versions of the Duhamel formula. With the first iteration, we can recover a result of Alama-Bronsard-Lamy-Venkatraman \cite{ABLV22}. More precisely, it shows that $u$ can be written
\[ u=u_{0}+u_{harm}+u_{corr}+\mathcal{O}\left(\frac{1}{r^{4}}\right), \]
with $u_{0}\in \m S^{2}$, 
\begin{align*} 
u_{harm} &  = \frac{1}{r}v_{0}+\sum_{j=1}^{3}p_{j}\partial_{j}\left(\frac{1}{r}\right)+\sum_{k,\ell=1}^{3}c_{k,\ell}\partial_{k}\partial_{\ell}\partial_{\ell}\left(\frac{1}{r}\right), \\
\text{and} \quad u_{corr} & =-\frac{|v_{0}|^{2}}{r^{2}}n_{0}-\frac{|v_{0}|^{2}}{6r^{3}}v_{0}-\frac{3}{r}\sum_{k,\ell=1}^{3}v_{0}\cdot p_{j}\partial_{j}\left(\frac{1}{r}\right) n_{0},
\end{align*}
with $v_{0}$ and $p_{j}$, $ j=1, 2, 3$ orthogonal to $u_{0}$.

According to our description, we can divide the terms  in several parts:
\begin{itemize}
\item $u_{0}$ is our constant part $y_{\infty}$ at infinity
\item $u_{harm}$ is the first sphree spherical components of our linear part $u_{L}$
\item $-\frac{|v_{0}|^{2}}{r^{2}}n_{0}-\frac{3}{r}\sum_{k,\ell=1}^{3}v_{0}\cdot p_{j}\partial_{j}\left(\frac{1}{r}\right) n_{0}$ is the nonlinear correction orthogonal to the tangent space. In coordinates $(x_{1},x')$ so that $n_{0}=e_{1}$, the sphere $\m S^{2}$ can be written as a graph $x_{1}=\sqrt{1-|x'|^{2}}=1-\frac{|x'|^{2}}{2}+\mathcal{O}|x'|^{4}$. So up to terms $\mathcal{O}\left(\frac{1}{r^{4}}\right)$, the main terms will be \[ 1-\frac{1}{2}\left|\frac{1}{r}v_{0}+\sum_{j=1}^{3}p_{j}\partial_{j}\left(\frac{1}{r}\right)\right|^{2}=1-\frac{1}{2r^{2}}\left|v_{0}\right|^{2}-  \frac{1}{r} \sum_{j=1}^{3}p_{j}\cdot v_{0}\partial_{j}\left(\frac{1}{r}\right)+\mathcal{O}(\frac{1}{r^4}). \]
\item $-\frac{|v_{0}|^{2}}{6r^{3}}v_{0}$ is the first nonlinear tangential correction that corresponds to the first iterate of the Duhamel formula in our setting, taking only $\frac{v_0}{r}$ as linear component, i.e. it solves 
\[ -\Delta \left(-\frac{|v_{0}|^{2}}{6r^{3}}v_{0}\right)= \frac{v_{0}|v_{0}|^{2}}{r^5}=\frac{v_0}{r}\left|\nabla \left(\frac{v_0}{r}\right)\right|^2. \]
All the other terms of the linear part lead to more decaying terms in this formula. 
\end{itemize}
\end{remark}

%%%%%%%%%%%%%%%%%%%%%%%%%%%%%%%%%%%%%%%%%%%%%%%

\subsection{Local problem for analytic functions}
\label{subsectFischer}

The purpose of this subsection is the proof of Theorem \ref{thmexistzero}. 

We begin by recalling the notion of Fischer decomposition and related results, we refer to \cite{Render:08}.

\begin{definition}
\label{defFischer}
A polynomial $P$ and a differential operator $Q(D)$ form a Fischer pair for the space $E$, and we say that
$(P,Q(D))$ is a Fischer pair if for each $f \in E$, there exist a unique elements $q\in E$ and $r\in E$ such that $f = Pq+r$ and $Q(D)r = 0$.
\end{definition}

Let $B_{R} := \left\{x \in \R^{n} : |x| < R\right\}$ be the open ball in $\R^{n}$ with center zero and radius
$0 < R \le +\infty$, and let $A(B_{R})$ be the set of all infinitely differentiable functions
$f : B_{R}\mapsto \m C$ so that for any compact subset $K\subset B_{R}$, the homogeneous Taylor
series $\sum_{m=0}^{+\infty} f_{m}(x)$ 
converges absolutely and uniformly to $f$ on $K$, where $f_{m}$ are the
homogeneous polynomials of degree $m$ defined by the Taylor series of $f$.

%It is known that $A(B_{R})$ is isomorphic to the set of all holomorphic functions on the harmonicity hull of $B_{R}$????????\todo{je ne comprends pas, mais je crois que ca ne sert pas}.

It follows from Lemma \ref{lmdecompHarmo} (sometimes called Gauss decomposition in this context) that $(|x|^2,\Delta)$ is a Fischer pair for $\m C[x_1,..., x_n]$. We need this result for analytic function. It is proved in the following theorem which is a particular case of a much more general result in \cite{Render:08}.

\begin{thm}[Theorem 13 of \cite{Render:08}]
\label{thmRender}
Let $0 < R \le +\infty$. Then $(|x|^2,\Delta)$ is a Fischer pair for  $A(B_{R})$.
\end{thm}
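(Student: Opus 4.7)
The plan is to reduce to the polynomial case, where the statement is precisely Lemma \ref{lmdecompHarmo} (Gauss decomposition), and then extend to analytic functions by applying the decomposition degree by degree and controlling the resulting series uniformly on compacts of $B_R$.

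First, I would recall that for each homogeneous polynomial $P_m$ of degree $m$, Lemma \ref{lmdecompHarmo} yields a unique splitting $P_m = |x|^2 Q_{m-2} + H_m$ with $H_m$ a harmonic homogeneous polynomial of degree $m$ and $Q_{m-2}$ a homogeneous polynomial of degree $m-2$. This defines two linear projections $\pi_H: P_m \mapsto H_m$ and $\pi_Q: P_m \mapsto Q_{m-2}$. Then, given $f \in A(B_R)$ with homogeneous Taylor expansion $f = \sum_{m \ge 0} f_m$, I would set $q := \sum_{m \ge 0} \pi_Q(f_{m+2})$ and $r := \sum_{m \ge 0} \pi_H(f_m)$, and check that both series converge uniformly on compact subsets of $B_R$ to elements of $A(B_R)$. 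By construction, $f = |x|^2 q + r$ on $B_R$, and $r$ is harmonic as a locally uniform limit of harmonic polynomials.

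The quantitative heart of the argument is the norm control on $\pi_H$ and $\pi_Q$. On any sphere $\partial B_\rho$ one has $|x|^2 = \rho^2$, so the decomposition restricts (after dividing by $\rho^2$) to the $L^2$-orthogonal decomposition of $P_m|_{\partial B_\rho}$ into spherical harmonics of degree $m$ (yielding $H_m|_{\partial B_\rho}$) versus lower (yielding $Q_{m-2}|_{\partial B_\rho}$ up to a factor $\rho^2$). This gives $\| H_m \|_{L^2(\partial B_\rho)} \le \| P_m \|_{L^2(\partial B_\rho)}$ and a corresponding bound on $Q_{m-2}$. Upgrading to $L^\infty$ via Sobolev embedding on $\mathbb{S}^{n-1}$ costs at most a polynomial factor in $m$, and homogeneity then propagates the bound to $L^\infty(B_r)$ for any $r<\rho$. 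Since $f \in A(B_R)$ implies (via Cauchy-type estimates on the homogeneous components of a convergent power series) that $\| f_m \|_{L^\infty(B_r)}$ decays geometrically in $m$ for any $r < R$, these polynomial factors are absorbed, and the series defining $q$ and $r$ converge absolutely and uniformly on compacts of $B_R$. Uniqueness is then straightforward: any alternative decomposition $f = |x|^2 q' + r'$ with $r'$ harmonic and $q',r' \in A(B_R)$ expands into homogeneous components $f_m = |x|^2 Q'_{m-2} + H'_m$ with $H'_m$ harmonic, and Lemma \ref{lmdecompHarmo} forces $Q'_{m-2} = Q_{m-2}$ and $H'_m = H_m$ for every $m$.

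The main obstacle is precisely this quantitative control on the projections $\pi_H$ and $\pi_Q$: a purely algebraic Gauss decomposition gives no obvious estimate, and one could imagine that iterating $|x|^{2k}$ in the expansion $P_m = \sum_{k\ge 0} |x|^{2k} h_{m-2k}$ inflates coefficients in an uncontrolled way. The saving grace is the orthogonality of the decomposition in the spherical $L^2$ inner product (equivalently, in the Fischer inner product $\langle P, Q\rangle_F := P(D)\overline Q\,(0)$, under which multiplication by $|x|^2$ and the Laplacian $\Delta$ are mutually adjoint): this makes $\pi_H$ and $\pi_Q$ contractions in an $m$-independent sense on the sphere, and the conversion to the $L^\infty$ norms needed to characterize analyticity only costs a polynomial in $m$, which is killed by the exponential decay of Taylor coefficients on any sub-ball $B_r \subsetneq B_R$.
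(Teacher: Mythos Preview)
The paper does not give its own proof of this statement; it is quoted verbatim as Theorem~13 of \cite{Render:08} and used as a black box. Your argument is correct and supplies a direct, self-contained proof for the specific pair $(|x|^2,\Delta)$: the $L^2(\mathbb{S}^{d-1})$-orthogonality of spherical harmonics of different degrees makes the projections $\pi_H$ and $\pi_Q$ contractions on each sphere, the passage to $L^\infty$ costs only a polynomial factor in the degree (e.g.\ via Lemma~\ref{lmSogge}), and this polynomial loss is absorbed by the geometric decay $\|f_m\|_{L^\infty(\overline{B_r})}\le C(r/\rho)^m$ available for any $r<\rho<R$ from the definition of $A(B_R)$. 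Render's original result covers general Fischer pairs and is correspondingly more involved; your approach trades that generality for a transparent argument tailored to $(|x|^2,\Delta)$, which is all the paper actually uses.
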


$A(B_{R})$ seems to have naturally a structure of Fr\'echet space while it would seem preferable to deal with normed spaces. We will use the following space and norm for analytic functions on $\R^{d}$. For $R>0$, we say that $f\in  \mathcal{A}(R)$ if, for $|x| \le R$, it can be written as its Taylor expansion 
\begin{align*}
f(x) = \sum_{\alpha\in \N^{d}}\frac{1}{\alpha !} \frac{\partial^{\alpha} f}{\partial x^{\alpha} }(0) x^{\alpha} %\quad, \textnormal{ for }m\in \N_{0}.
\end{align*}
with 
\begin{align*}
\nor{f}{\mathcal{A}(R)}= \sum_{\alpha\in \N^{d}}\frac{1}{\alpha !} \left|\frac{\partial^{\alpha} f}{\partial x^{\alpha} }(0)\right|  R^{|\alpha|}<+\infty.
\end{align*}
With these notations, we have $\mathcal{A}(R)\subset A(B_{R})\subset \mathcal{A}(c_d R)$ for some constant $c_d>$ only depending on the dimension. To prove the last embedding, notice that due to \cite[Lemma 22]{Render:08} (actually quoting \cite[Lemma 1]{S:74}), any function $f$ in $A(B_R)$ can be extended holomorphically to some domain of $\m C^d$ containing $B_{\m C^d}(0,R/\sqrt{2})$. In particular, $f$ is also holomorphic in the polydisc $B_{\m C}(0,R/\sqrt{2d})^d \subset B_{\m C^d}(0,R/\sqrt{2})$. The Cauchy inequality for polydiscs (see for instance \cite[Theorem 2.2.7]{HSeveralBook}) gives $\left|\frac{\partial^{\alpha} f}{\partial x^{\alpha} }(0)\right|\le C\frac{\alpha !}{(R/\sqrt{2d})^{|\alpha|}}$. This gives $f\in \mathcal{A}(c_d R)$ for $c_d< (1/\sqrt{2d})$.

%By criterium of \cite[Proposition 12]{Render:08}, it should be enough to check if $f_m=\sum_{|\alpha|=m}\frac{1}{\alpha !} \frac{\partial^{\alpha} f}{\partial x^{\alpha} }(0) x^{\alpha}$, then $\sum_{|\alpha|=m}\frac{1}{\alpha !} \left|\frac{\partial^{\alpha} f}{\partial x^{\alpha} }(0)\right|  \le \mathcal{O}(m^{qqchose}) \nor{f_m}{L^{\infty}(\m S^{d-1})}$
\begin{prop}
\label{propanalZ}
Assume $f\in \mathcal{A}(R)$ is an analytic function around $0$ of radius $R>1$, then, $u\in  \q Z_{s}^{0}$ with 
\begin{align*}
 \nor{f}{ \q Z_{s}^{0}}\le C(R)\nor{f}{\mathcal{A}(R)}.
\end{align*}
\end{prop}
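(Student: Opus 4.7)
The plan is to expand $f$ into its Taylor series around $0$, apply the monomial bound from Lemma \ref{lm:polyYst} termwise, and then use that $R>1$ to absorb the polynomial-in-$|\alpha|$ factor into the geometric weight $R^{|\alpha|}$ appearing in the $\mathcal{A}(R)$ norm.

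First, since $f \in \mathcal{A}(R)$, the series
\[ f(x) = \sum_{\alpha \in \N^d} \frac{1}{\alpha!} \partial^\alpha f(0) \, x^\alpha \]
converges absolutely in $\mathcal{A}(R)$, and in particular the coefficients $c_\alpha := \frac{1}{\alpha!} |\partial^\alpha f(0)|$ satisfy $\sum_\alpha c_\alpha R^{|\alpha|} = \nor{f}{\mathcal{A}(R)}$. The key ingredient is the monomial estimate from Lemma \ref{lm:polyYst}, which states $\nor{x^\alpha}{\q Z_{s}^{0}} \le C \left<\alpha\right>^{s+2}$ (where $x^\alpha$ is viewed as a function on $B(0,1) \subset \R^d$). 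Since $\q Z_s^0$ is a Banach space (as noted after \eqref{equivnorm} in Section~\ref{sec:Duh}), one only needs to verify that the series $\sum_\alpha \frac{1}{\alpha!} \partial^\alpha f(0) \, x^\alpha$ is absolutely convergent there, with a bound of the stated form.

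The remaining step is a direct comparison: choosing $R > 1$, the quantity
\[ C(R) := \sup_{\alpha \in \N^d} \left<\alpha\right>^{s+2} R^{-|\alpha|} \]
is finite, since polynomial growth in $|\alpha|$ is dominated by exponential growth with base $R>1$. Combining this with the triangle inequality and Lemma \ref{lm:polyYst} yields
\[ \nor{f}{\q Z_{s}^{0}} \le \sum_{\alpha} c_\alpha \nor{x^\alpha}{\q Z_{s}^{0}} \le C \sum_\alpha c_\alpha \left<\alpha\right>^{s+2} \le C \cdot C(R) \sum_\alpha c_\alpha R^{|\alpha|} = C \cdot C(R) \nor{f}{\mathcal{A}(R)}, \]
which simultaneously proves absolute convergence of the series in $\q Z_s^0$ and the desired inequality. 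There is no real obstacle here: the proof is essentially a bookkeeping exercise combining the already-established monomial bound with the gap between the polynomial weight $\left<\alpha\right>^{s+2}$ of the $\q Z_s^0$ side and the geometric weight $R^{|\alpha|}$ (with $R > 1$) of the $\mathcal{A}(R)$ side.
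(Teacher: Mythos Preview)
Your proof is correct and follows essentially the same approach as the paper: expand $f$ in its Taylor series, apply the monomial bound $\nor{x^\alpha}{\q Z_s^0}\le C\langle\alpha\rangle^{s+2}$ from Lemma~\ref{lm:polyYst}, and absorb the polynomial factor into the geometric weight via $C(R)=\sup_\alpha \langle\alpha\rangle^{s+2}R^{-|\alpha|}<\infty$ for $R>1$. Your explicit mention of the Banach space structure of $\q Z_s^0$ to justify termwise estimation is a small clarification the paper leaves implicit.
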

\bnp
We write for $|x| \le 1<R$
\begin{align*}
f(x) = \sum_{\alpha\in \N^{d}}\frac{1}{\alpha !} \frac{\partial^{\alpha} f}{\partial x^{\alpha} }(0) x^{\alpha} %\quad, \textnormal{ for }m\in \N_{0}.
\end{align*}
We can write using the second estimate in Lemma \ref{lm:polyYst},
\begin{align*}
\nor{f}{\q Z_{s}^{0}}\le \sum_{\alpha\in \N^{d}}\frac{1}{\alpha !} \left|\frac{\partial^{\alpha} f}{\partial x^{\alpha} }(0)\right|  \nor{x^{\alpha}}{\q Z_{s}^{0}}\le C(R)\nor{f}{\mathcal{A}(R)}
\end{align*}
with $C(R)=\sup_{\alpha\in \N^{d}}\left< |\alpha|\right>^{s+{2}}R^{-|\alpha|}$ which is finite for any $R>1$.
\enp
\begin{lem}
\label{lemFischercvgece}
Let $f\in A(B_{R})$ for $R>1$. Let $f=f_{L}+|x|^{2}q$ with $\Delta f_{L}=0$ and $f_{L}$, $q\in A(B_{R})$ be the Fischer decomposition of $f$ for $(|x|^2,\Delta)$ and $A(B_{R})$. Then, we have, for any $0<r\le 1$,
\begin{align*}
\nor{f(r\cdot)-f_{L}(r\cdot)}{Z_{s,r}^{0}}\le C r^{2}.
\end{align*}
where $C$ depends on $R$ and $f$.
\end{lem}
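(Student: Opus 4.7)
The plan is to reduce the estimate to a controlled quantity on the sphere. Since $|y|=1$ for $y\in \m S^{d-1}$, the Fischer decomposition evaluated at $x=ry$ gives
\[
f(ry)-f_L(ry) = r^2|y|^2 q(ry) = r^2 q(ry),
\]
so the lemma reduces to proving $\|q(r\cdot)\|_{Z_{s,r}^0} \le C$ uniformly in $r\in(0,1]$, where $C$ may depend on $R$ and $q$.

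Next I would decompose $q$ along its homogeneous components $q=\sum_{m\ge 0}Q_m$, and apply the Gauss decomposition of Lemma~\ref{lmdecompHarmo} to each $Q_m$: on $\m S^{d-1}$, $Q_m = \sum_{j\ge 0,\,2j\le m} H_{m-2j}^{(m)}$ with $H_\ell^{(m)}$ the restriction of a harmonic polynomial of degree $\ell$, so $P_\ell H_{\ell'}^{(m)}=\delta_{\ell,\ell'} H_\ell^{(m)}$. This yields
\[
P_\ell\bigl(q(r\cdot)\bigr)(y) = r^\ell \sum_{j\ge 0} r^{2j}\, H_\ell^{(\ell+2j)}(y).
\]
Plugging this into the definition of the $Z_{s,r}^0$ norm, using $r\le 1$ to drop the factors $r^{2j}\le 1$, then applying Cauchy--Schwarz with summable weight $(1+j)^{-2}$ and re-indexing via $m=\ell+2j$, I expect to obtain
\[
\|q(r\cdot)\|_{Z_{s,r}^0}^2 \;\lesssim\; \sum_\ell \langle\ell\rangle^{2s}\sum_{j\ge 0}(1+j)^2 \|H_\ell^{(\ell+2j)}\|_{L^2(\m S^{d-1})}^2
\;\lesssim\; \sum_m \langle m\rangle^{2s+2}\,\|Q_m\|_{L^2(\m S^{d-1})}^2,
\]
where the $L^2$-orthogonality of distinct spherical harmonic spaces gives $\sum_\ell\|H_\ell^{(m)}\|_{L^2}^2=\|Q_m\|_{L^2(\m S^{d-1})}^2$.

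The last step uses the analyticity of $q$. Since $q\in A(B_R)$ with $R>1$, for each fixed $y\in \m S^{d-1}$ the one-variable function $t\mapsto q(ty)$ is holomorphic on the disc $|t|<R$ with Taylor coefficients $Q_m(y)$. The classical one-variable Cauchy estimate then gives, for any $1<\rho<R$,
\[
|Q_m(y)| \le \|q\|_{L^\infty(\overline{B_\rho})}\,\rho^{-m},
\qquad \text{hence}\qquad \|Q_m\|_{L^2(\m S^{d-1})}\lesssim \rho^{-m}.
\]
Inserting this bound, the series $\sum_m \langle m\rangle^{2s+2}\rho^{-2m}$ converges because $\rho>1$, producing a finite bound on $\|q(r\cdot)\|_{Z_{s,r}^0}$ independent of $r\in(0,1]$. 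The main subtlety is precisely at this step: a naive approach based on the absolute Taylor expansion $q=\sum_\alpha c_\alpha x^\alpha$ would force a polydisc/Cauchy estimate requiring analyticity radius larger than $\sqrt{2d}$, whereas the radial Cauchy estimate above only needs $R>1$ -- which is exactly what the hypothesis provides. Combining with the first step yields $\|f(r\cdot)-f_L(r\cdot)\|_{Z_{s,r}^0} \le C r^2$, as required.
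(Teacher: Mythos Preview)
Your proof is essentially correct and takes a different route from the paper. The paper's proof is much shorter: it invokes the embedding $A(B_R)\subset \mathcal{A}(c_d R)$ and then Proposition~\ref{propanalZ} to bound $\nor{q}{\q Z_s^0}$ directly. As you correctly anticipated, this chain needs $c_d R>1$, i.e.\ $R>1/c_d>\sqrt{2d}$, so the paper's argument as written does not cover the full range $R>1$ stated in the lemma (this does not affect the application in Theorem~\ref{thmexistzero}, where one may rescale). Your direct spherical-harmonic decomposition avoids the polydisc loss entirely and genuinely works for every $R>1$.

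One small correction: in your last step you write the one-variable Cauchy estimate as $|Q_m(y)|\le \nor{q}{L^\infty(\overline{B_\rho})}\rho^{-m}$. The right-hand side involves only real evaluations of $q$, whereas the Cauchy bound requires the supremum of the holomorphic extension $g(t)=\sum_m t^m Q_m(y)$ over the complex circle $|t|=\rho$. The fix is immediate and does not even need complex analysis: by the very definition of $A(B_R)$, the series $\sum_m |Q_m(\rho y)|=\sum_m \rho^m|Q_m(y)|$ converges uniformly in $y\in\m S^{d-1}$ for each $\rho<R$, hence is bounded by some $M_\rho<\infty$, giving $|Q_m(y)|\le M_\rho\,\rho^{-m}$. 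Since $C$ is allowed to depend on $f$ (hence on $q$), this constant is perfectly acceptable and the rest of your argument goes through unchanged.
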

\bnp
Select $\widetilde{R}=c_d R$ so that $A(B_{R})\subset \mathcal{A}(\widetilde{R})$ and $f$, $f_L$ and $q$ are in $\mathcal{A}(\widetilde{R})$. Then, we write
\begin{align*}
\nor{f(r\cdot)-f_{L}(r\cdot)}{Z_{s,r}^{0}}=\nor{r^{2}q(r\cdot)}{Z_{s,r}^{0}} =r^{2}\nor{q(r\cdot)}{Z_{s,r}^{0}} \le r^{2}\nor{q}{\q Z_{s}^{0}}\le C(\widetilde{R})r^{2}\nor{q}{\mathcal{A}(\widetilde{R})}
\end{align*}
where we have used Proposition \ref{propanalZ} for the last estimate.
\enp
\bnp[Proof of Theorem \ref{thmexistzero}]
The first part is a direct application of known results on Fischer decomposition. Indeed, it is known that $\q C^{0}$ solutions are actually smooth and also analytic (see for instance \cite{Friedman:58})in $B(0,1)$. In particular, they belong to $A(B_R)$ for some $R>0$. By Theorem \ref{thmRender}, there exists $u_{L}$ and $q$ in $A(B_{R})$ so that \eqref{Fischerthm} is satisfied.

For the converse, let $u_{L}$ be a bounded solution. Up to rescaling (by a factor $2$ for instance), we can assume that $u_{L}|_{\m S^{d-1}}$ is in $H^{s}(\m S^{d-1})$. We can therefore apply Theorem \ref{thmexistglobP0} and there exist $r_{0}\le 1$ and a unique $u\in \q Z^{0}_{s,r_{0}}$ solution of $\Delta u=f(u) $ on $B(0,r_{0})\setminus \{0\}$ so that 
$\nor{(u-u_{L})(r\cdot)}{Z^{0}_{s,r/r_{0}}}\lesssim r^{2}$. Due to Lemma \ref{lmsol0pointe} in the Appendix, $u$ is also an analytic solution in the classical sense on $B(0,r_{0}/2)$.  We can therefore apply Theorem \ref{thmRender} to $u$ to get that there exist $\widetilde{u}_{L}$ and $\widetilde{g}$ analytic on $B(0,r_{0}/2)$ so that $\Delta \widetilde{u}_{L}=0$ and $u=\widetilde{u}_{L}+|x|^{2}\widetilde{g}$. Lemma \ref{lemFischercvgece} gives after scaling $\nor{(u-\widetilde{u}_{L})(r\cdot)}{Z_{s,r/3r_{0}}^{0}}\le C r^{2}$. In particular since $Z_{s,r/3r_{0}}^{0}\subset Z_{s,r/r_{0}}^{0}$, we obtain $\nor{(u_{L}-\widetilde{u}_{L})(r\cdot)}{Z_{s,r/3r_{0}}^{0}}\le C r^{2}$. From Lemma \ref{lmlienlinY}, we conclude that $u_{L}=\widetilde{u}_{L}$ on $B(0,r_{0}/3)$. In particular, $u=u_{L}+|x|^{2}\widetilde{g}$ with $\widetilde{g}$ analytic in $B(0,r_{0}/3)$, which is the desired result.
\enp

%%%%%%%%%%%%%%%%%%%%%%%%%%%%%%%%%%%%%%%%%%%%%%

\appendix
\section{Various estimates about the applications}

\subsection{A continuation result}
\begin{lem}
\label{lmsol0pointe}
Let $d\ge 2$ and $r_0>0$. 

1) Let $u\in L^{\infty}(B(0,r_0))$ solution of $\Delta u=f$ on $B(0,r_{0})\setminus \{0\}$ in the distributional sense for some $f\in L^p(B(0,r_0))$ for $p>\frac{d}{2}$. Then, it is solution in $B(0,r_{0})$ in the distributional sense. 

2) Assume that $s> \frac{d}{2} - \frac{1}{2}$ and 
$u\in \q Z^{0}_{s,r_{0}}$ satisfies $\Delta u=f(u)$ on $B(0,r_{0})\setminus \{0\}$ in the distributional sense, where $f$ is analytic. Then, $u$ is solution $\Delta u = f(u)$ in $B(0,r_{0})$ in the distributional sense and is therefore analytic.

3) Assume $d=2$ and $u\in L^{\infty}(B(0,r_0))$ is a solution of \eqref{systconformembed} on $B(0,r_{0})\setminus \{0\}$ and so that $\nabla u$, viewed as a distribution of $B(0,r_{0})\setminus \{0\}$, belongs to $L^q(B(0,r_{0})\setminus \{0\})$ for some $q>2$. Then $u$ solves \eqref{systconformembed} on $B(0,r_0)$ and is analytic.
\end{lem}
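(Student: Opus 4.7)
The plan for part 1 is a standard cutoff argument. We wish to show that for any $\phi \in \mc C^\infty_c(B(0,r_0))$,
\[ \int u \Delta \phi \, dx = \int f \phi \, dx. \]
Choose a radial smooth cutoff $\chi_\e$ vanishing near the origin with $\chi_\e \equiv 1$ for $|x|$ large. Testing the equation on the punctured ball against $\chi_\e \phi$ gives, after expanding $\Delta(\chi_\e \phi) = \chi_\e \Delta \phi + 2 \nabla \chi_\e \cdot \nabla \phi + \phi \Delta \chi_\e$, the identity
\[ \int u \chi_\e \Delta\phi \, dx + 2 \int u \nabla \chi_\e \cdot \nabla \phi \, dx + \int u \phi \Delta\chi_\e \, dx = \int f \chi_\e \phi \, dx. \]
The first and last terms converge by dominated convergence (using $f \in L^p$, $p >d/2$, so in particular $f \in L^1_{\loc}$, and $u \in L^\infty$). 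The plan reduces to showing that the two error terms vanish as $\e \to 0$ using only $\|u\|_{L^\infty}$. For $d \ge 3$, the \emph{polynomial} cutoff $\chi_\e(x) = \chi(|x|/\e)$, with $\chi \equiv 0$ on $[0,1]$ and $\chi \equiv 1$ on $[2,\infty)$, works since $\int |\nabla \chi_\e| \lesssim \e^{d-1}$ and $\int |\Delta \chi_\e| \lesssim \e^{d-2}$.

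The main obstacle, and the only genuine subtlety, is $d=2$: the polynomial cutoff gives $\int |\Delta \chi_\e| = O(1)$, which is not enough. The fix is to use a \emph{logarithmic} cutoff $\chi_\e(x) = \phi\bigl(\log|x|/|\log \e|\bigr)$ with $\phi$ smooth, $\phi \equiv 0$ on $(-\infty,-1]$ and $\phi \equiv 1$ on $[-1/2,0]$, exploiting the conformal identity $\Delta \log|x|=0$ on $\m R^2 \setminus \{0\}$. Then $\nabla \chi_\e = \phi'(\cdot) \tfrac{1}{|\log \e|} \tfrac{x}{|x|^2}$ and $\Delta \chi_\e = \phi''(\cdot) \tfrac{1}{|\log \e|^2} \tfrac{1}{|x|^2}$, whence $\int |\nabla \chi_\e|, \int |\Delta \chi_\e| \lesssim 1/|\log \e| \to 0$. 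This finishes part 1 in all dimensions $d \ge 2$.

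For part 2, the plan is to combine part 1 with the embedding. Under $s > \frac{d}{2}-\frac{1}{2}$, Lemma \ref{lminjectZ0} gives $u \in L^\infty(B(0,r_0))$. Since $f$ is analytic (say on a neighborhood of the closed range of $u$; if needed one shrinks $r_0$ so that this range lies in the disc of convergence, using that $u(r\cdot) \to P_0 u_L$ in appropriate topology), $f(u) \in L^\infty \subset L^p$ for any $p > d/2$. Part 1 then extends the equation $\Delta u = f(u)$ to the full ball $B(0,r_0)$ in the distributional sense. Analyticity follows from a standard elliptic bootstrap: $\Delta u \in L^\infty$ yields $u \in W^{2,p}_{\loc}$ for all $p < \infty$, then $u \in \mc C^{1,\alpha}_{\loc}$, then by iteration $u \in \mc C^\infty_{\loc}$; analyticity of the solution of a semilinear elliptic equation with analytic nonlinearity is then a classical result (Morrey, or \cite{Friedman:58} already invoked elsewhere in the paper).

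For part 3, under the assumption $\nabla u \in L^q(B(0,r_0)\setminus \{0\})$ with $q > 2$, the right-hand side of the conformal system \eqref{systconformembed} is controlled by $|A(u)(\nabla u,\nabla u)| + |H(u)(\partial_x u,\partial_y u)| \lesssim |\nabla u|^2$, using $u \in L^\infty$ and that $A$, $H$ are smooth on $\q N$. Hence this right-hand side lies in $L^{q/2}$ with $q/2 > 1 = d/2$, and part 1 applies componentwise to extend the equation to $B(0,r_0)$. Regularity and analyticity then follow from the results already cited in the body of the paper, namely Rivière's $\mc C^\infty$ regularity \cite{R:07} for conformal equations in dimension two, combined with the analyticity results for solutions valued in an analytic target (as in the discussion around Theorem \ref{thmsackuhlextensconf} and \cite{Friedman:58}).
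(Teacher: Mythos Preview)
Your proof is correct, but your route for part 1) is genuinely different from the paper's.

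The paper argues structurally: it considers the distribution $v = \Delta u - f \in \mc D'(B(0,r_0))$, observes that $v$ is supported at $\{0\}$ and hence of the form $\sum_{|\alpha|\le N} c_\alpha \partial^\alpha \delta_0$, and then rules out each $c_\alpha$ by subtracting the Dirichlet solution of $\Delta w = f$ and the corresponding combination $h = -\sum c_\alpha \partial^\alpha G$ of derivatives of the Green function. Since $u - w - h$ is harmonic (hence bounded) and $u,w \in L^\infty$, one gets $h \in L^\infty$ near $0$; but any nontrivial combination of $\partial^\alpha G$ blows up along rays through $0$, forcing all $c_\alpha = 0$.

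Your approach bypasses this entirely via a direct cutoff argument, with the standard logarithmic trick in $d=2$ to handle the critical term $\int |\Delta\chi_\e|$. This is more elementary and shorter; the paper's argument, on the other hand, makes the potential obstruction (point-supported distributions) explicit and shows why boundedness of $u$ kills it. Both are valid. One small slip in your description: for the logarithmic cutoff you want $\phi \equiv 1$ on $[-1/2,\infty)$ rather than $[-1/2,0]$, so that $\chi_\e \equiv 1$ for all $|x| \ge \sqrt{\e}$; the computation you wrote is correct regardless.

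For parts 2) and 3) your reductions to part 1) (via Lemma~\ref{lminjectZ0} giving $u\in L^\infty$ and $f(u)\in L^\infty$, respectively via $|\nabla u|^2 \in L^{q/2}$ with $q/2>1=d/2$) and the subsequent bootstrap to analyticity coincide with the paper's treatment.
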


\bnp
1) To simplify the exposition, we assume $r_0=1$, and $p<+\infty$ ($p=+\infty$ is in fact a stronger assumption). Consider $v=\Delta u-f\in \mc {D}'(B(0,1))$: it is a distribution supported in $0$ and so can be writen 
\[ v=\sum_{|\alpha|\le N}c_{\alpha}\partial_{\alpha}\delta_{0}, \]
where $N\in\N$ and $c_{\alpha}$ are constants. Denote $G=\frac{g_{d}}{|x|^{d-2}}\in L^{1}(B(0,1))\subset \mc {D}'(B(0,1))$ (or $-\frac{1}{2\pi}\ln|x|$ if $d=2$) the Green function: $-\Delta G=\delta_{0}$. Define the distribution $h=-\sum_{|\alpha|\le N} c_{\alpha}\partial_{\alpha}G$. Then 
\[ \Delta h=\sum_{|\alpha|\le N}c_{\alpha}\partial_{\alpha}\delta_{0}=v=\Delta u-f \quad \text{in } \mc D'(B(0,1)). \] Let $w$ be the solution of $\Delta w=f$ on $B(0,1)$ and $w=0$ on $\m S^{d-1}$. Since $f\in  L^{p}(B(0,1))$, for some $1<p<+\infty$, $w\in W^{2,p}(B(0,1))$ by elliptic regularity (see for instance \cite[Theorem 9.15]{GTbook}). Since $p>d/2$, the Sobolev embedding gives that $w\in L^{\infty}(B(0,1))$ (and in fact has some Hölder regularity).

Let $r=u-h-w$. We have 
\[ \Delta r=\Delta u-(\Delta u-f)-f=0 \quad \text{in } \mc D'(B(0,1)). \]
In particular, $r\in \mc C^{\infty}(B(0,1/2)\subset L^{\infty}(B(0,1/2))$, and so $h = u-r-w\in L^{\infty}(B(0,1/2))$. To finish, it suffices observe the:

\medskip

\textbf{Claim:} Assume $h\in L^{\infty}(B(0,1/2))$ can be written $h=-\sum_{|\alpha|\le N} c_{\alpha} \partial_{\alpha}G$ (where $c_\alpha$ are constants), then $c_{\alpha}=0$ for all $\alpha$ and $h=0$ in $\mc D'(\m R^d)$.

\medskip

The claim immediately implies that $v=0$ and so $\Delta u=f$ in $\mathcal{D}'(B(0,1))$. We postpone the proof of the claim after the other items, which are consequences of 1).

\medskip

2) Using Lemma \ref{lminjectZ0}, we can extend $u$ to $B(0,1)$ with $u\in \q Z^{0}_{s}\subset L^{\infty}(B(0,1))\subset \mathcal{D}'(B(0,1))$. Since $f(u)\in L^{\infty}(B(0,1))$, elliptic regularity gives $u\in W^{2,p}(B(0,1/2))$ for any $1\le p<+\infty$. We can then iterate to prove that $u$ is smooth and then analytic by classical analytic regularity, see \cite{Friedman:58}.

\medskip
3) The regularity result of Rivi\`ere gives the result once we have proved that $u$ is a solution of \eqref{systconformembed} on $B(0,r_0)$ (or we bootstrap the estimates). Now, as $f$ is only quadratic in $\nabla u$, and from the assumptions, we actually have $f(u,\nabla u) \in L^{q/2}$, with $q/2 >1$. We can conclude using 1).

\medskip

This finishes the proofs up to the verification of the Claim, which we do now.

As $h\in \mc C^{\infty}(\R^d\setminus \{0\} \cap L^{\infty}(B(0,1/2))$, we can consider, for $\omega \in \m S^{d-1}$, the function $h_\omega: t \mapsto h(t \omega)$. Then $h_\omega \in C^{\infty}(\m R \setminus \{0\} \cap L^{\infty}((-1/2,1/2))$.

Assume $d \ge 3$ for the moment. In view of the formula for $G$, we can write $h_\omega (t)=\sum_{k=d-2}^{(d-2)+N} \frac{c_k}{t^k}$ for $t>0$ and for some constants $c_k\in\m C$ (which may depend on $\omega$). Considering the asymptotics close to $0$, as $h_{\omega}$ is bounded close to $0$, we infer that $c_k=0$ for all $k$. In particular, $h_{\omega}=0$ on $(-1/2,1/2\setminus \{0\}$. Now this is true for all $\omega \in \m S^{d-1}$, hence $h(x) =0$ for all $x \in B(0,1/2) \setminus \{ 0 \}$. As $h \in L^\infty(B(0,1/2)$ and is analytic on $\m R^d \setminus \{ 0 \}$, $h=0$ in $\mc D'(\m R^d)$.

The case $d=2$ is similar taking precaution with the value $\alpha=0$ which contains the term $c_0 \ln |x|$. But the asymptotics close to $0$ imply the same result.

So, we have obtained $h=0$. In particular, in the sense of distribution of $B(0,1)$, we have $0=\Delta h=\sum_{|\alpha|\le N}c_{\alpha}\partial_{\alpha}\delta_{0}$. By the uniqueness of this decomposition, we get $c_{\alpha}=0$.
\enp

\subsection{Some estimates for semilinear equations}

\label{s:Apsemilin}
We recall the following classical fact.
\begin{lem}
\label{lemsoluext}
Assume that $d\ge 3$, $q=2^{*}-1$ and
$u\in \dot{H}^1(\{|x|\ge R\})$ is solution of 
\[ \Delta u=\kappa u^{q} \quad \text{on} \quad \{|x|\ge R\}. \]
(in the sense of Definition \ref{defsoluext}; if $q$ is not an integer, we write $u^q$ for either $|u|^{q-1} u$ or $|u|^q$). Then, we have
\begin{align}
\label{eqnsemilinH1}
\forall v\in \dot{H}^1_0(|x|\ge R), \quad \int_{\{|x|\ge R\}}\nabla u\cdot \nabla v~dx+\kappa\int_{|x|\ge R}u^{q} v~dx=0.
\end{align}
\end{lem}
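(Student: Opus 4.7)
The plan is to pass from the given weak formulation against compactly supported smooth test functions to the wider class $\dot H^1_0(\{|x|\ge R\})$ by a density argument, controlling each of the two terms in \eqref{eqnsemilinH1} separately.

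First, by the very definition of $\dot H^1_0(\{|x|\ge R\})$ recalled just before \eqref{Soblocal}, any $v\in \dot H^1_0(\{|x|\ge R\})$ is the limit of a sequence $v_n\in \mc C^\infty_c(\{|x|>R\})$ in the sense that $\nabla v_n \to \nabla v$ in $L^2(\{|x|\ge R\})$. Extending each $v_n$ by $0$ on $B(0,R)$ produces a function in $\dot H^1(\R^d)$ (this is where $d\ge 3$ is essential; it is already encoded in the definition of $\dot H^1_0$), and the global Sobolev embedding $\dot H^1(\R^d)\hookrightarrow L^{2^*}(\R^d)$ gives $v_n\to v$ in $L^{2^*}(\{|x|\ge R\})$ as well.

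Next, I would handle the two terms of \eqref{eqnsemilinH1} applied to $v_n$ (which hold by the distributional formulation \eqref{eqnsemilin}) and let $n\to+\infty$. The linear term $\int \nabla u\cdot \nabla v_n\,dx$ converges to $\int \nabla u\cdot \nabla v\, dx$ by Cauchy--Schwarz and the fact that $\nabla u\in L^2(\{|x|\ge R\})$. For the nonlinear term, I observe that $u\in \dot H^1(\{|x|\ge R\})$ together with the localised Sobolev inequality \eqref{Soblocal} gives $u\in L^{2^*}(\{|x|\ge R\})$, so that $u^{q}\in L^{(2^*)'}(\{|x|\ge R\})$ with
\[ \|u^q\|_{L^{(2^*)'}}=\|u\|_{L^{2^*}}^{q}, \]
since $q=2^*-1$ and $(2^*)'=2^*/(2^*-1)$. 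H\"older's inequality then yields
\[ \left|\int_{|x|\ge R} u^{q}(v_n-v)\, dx\right|\le \|u\|_{L^{2^*}}^{q}\,\|v_n-v\|_{L^{2^*}}\longrightarrow 0, \]
so $\int u^{q}v_n\,dx\to \int u^{q}v\,dx$, concluding the passage to the limit.

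I do not expect any serious obstacle; the only point requiring mild care is to confirm that extension by zero genuinely sends $\dot H^1_0(\{|x|\ge R\})$ into $\dot H^1(\R^d)$, so that the standard Sobolev inequality on $\R^d$ applies uniformly to $v_n-v$ (equivalently, one can apply \eqref{Soblocal} after rescaling to the exterior of the unit ball). Once this is in place, the two convergences above combine to give \eqref{eqnsemilinH1} for all $v\in \dot H^1_0(\{|x|\ge R\})$.
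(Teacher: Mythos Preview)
Your proposal is correct and follows essentially the same approach as the paper: both use that Sobolev embedding gives $u\in L^{2^*}$, hence $u^q\in L^{(2^*)'}$, so the nonlinear term defines a continuous linear functional on $\dot H^1_0$, and then conclude by density of $\mc C^\infty_c$. Your write-up is simply a more detailed version of the paper's two-line argument.
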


\bnp
Due to Sobolev embedding, $u \in L^{2^*} (\{|x|\ge R\})$ so that $u^q \in L^{(2^*)'}(\{|x|\ge R\})$, and $v \mapsto \int_{|x| \ge R} u^q vdx$ is a continuous linear form on $\dot H^1_0(\{ |x| \ge R \})$. We can conclude 
 by density of $\mc C^{\infty}_c(\{|x|> 1\})$ in $\dot{H}_0^1(\{|x|\ge 1\}$.
\enp

\begin{prop}\label{thmH1boundaryelem}Assume $d\ge 3$ and $q>1$.  Then, there exists a universal constant $\e_{d,q}>0$ so that if $u_{i}\in \dot{H}^{1}(\{|x|\ge 1\})$ , $i=1, 2$ both satisfy 
\begin{itemize}
\item $\Delta u_{i}=\kappa u_{i}^{q} $ on $\{|x|\ge 1\}$ in the sense of Definition \ref{defsoluext} for some $\kappa$ with $|\kappa|\le 1$,
\item $u_{1}=u_{2}$ on $\{|x|= 1\}$,
\item $\nor{u_{1}}{L^{\frac{d(q-1)}{2}}(|x|\ge 1)}+\nor{u_{2}}{L^{\frac{d(q-1)}{2}}(|x|\ge 1)} \le \e$,
\end{itemize}
then $u_{1}=u_{2}$.
\end{prop}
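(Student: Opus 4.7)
The plan is to run a weak-strong uniqueness argument by testing the equation satisfied by the difference $w := u_1 - u_2$ against itself. Since $u_1 = u_2$ on $\{|x|=1\}$ in the trace sense, $w \in \dot H^1_0(\{|x| \ge 1\})$, so it is an admissible test function in the sense of Lemma~\ref{lemsoluext}. Applying that lemma to $u_1$ and $u_2$ with $v = w$ and subtracting the two identities yields
\[
\int_{|x| \ge 1} |\nabla w|^2 \, dx = -\kappa \int_{|x| \ge 1} \bigl( u_1^{q} - u_2^{q} \bigr) w \, dx.
\]

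Next, I would bound the nonlinear right-hand side by a mean value type inequality: there exists $C_q$ (depending only on $q$) such that
\[
\bigl| u_1^{q} - u_2^{q} \bigr| \le C_q \bigl( |u_1|^{q-1} + |u_2|^{q-1} \bigr) |w|.
\]
Then by Hölder's inequality with exponents $d/2$ and $d/(d-2)$,
\[
\int_{|x| \ge 1} |u_i|^{q-1} |w|^2 \, dx \le \| |u_i|^{q-1} \|_{L^{d/2}} \| w \|_{L^{2^{*}}}^{2} = \| u_i \|_{L^{d(q-1)/2}}^{q-1} \| w \|_{L^{2^{*}}}^{2},
\]
and the Sobolev embedding \eqref{Soblocal} on $\{|x| \ge 1\}$, applied to $w \in \dot H^1_0$, gives $\|w\|_{L^{2^{*}}} \le C_d \|\nabla w\|_{L^2}$. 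Combining these estimates and using $|\kappa| \le 1$ together with the smallness hypothesis yields
\[
\| \nabla w \|_{L^2}^2 \le C_{d,q} \bigl( \| u_1 \|_{L^{d(q-1)/2}}^{q-1} + \| u_2 \|_{L^{d(q-1)/2}}^{q-1} \bigr) \| \nabla w \|_{L^2}^2 \le 2 C_{d,q} \, \e^{q-1} \| \nabla w \|_{L^2}^2.
\]
Choosing $\e_{d,q}$ so small that $2 C_{d,q} \e_{d,q}^{q-1} < 1/2$ allows one to absorb the right-hand side in the left-hand side, forcing $\| \nabla w \|_{L^2} = 0$ and hence $u_1 = u_2$.

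The main technical obstacle will be to justify rigorously the test $v=w$ in the weak formulation \eqref{eqnsemilinH1}, in particular the integrability of $\int u_i^{q} w \, dx$ for each $i$. This is handled by the same three-factor Hölder computation: writing $|u_i|^q|w| = |u_i|^{q-1} \cdot |u_i| \cdot |w|$ and using that $u_i \in L^{d(q-1)/2}$ by assumption, $u_i \in L^{2^{*}}$ by Sobolev embedding \eqref{Soblocal} (as $u_i \in \dot H^1$), and $w \in L^{2^{*}}$ by Sobolev again, one obtains
\[
\int_{|x| \ge 1} |u_i|^{q} |w| \, dx \le \| u_i \|_{L^{d(q-1)/2}}^{q-1} \| u_i \|_{L^{2^{*}}} \| w \|_{L^{2^{*}}} < +\infty.
\]
An approximation by $\mc C^\infty_c(\{|x|>1\})$ functions, using density of compactly supported functions in $\dot H^1_0$, then allows one to extend \eqref{eqnsemilinH1} to $v = w$, and the argument above goes through.
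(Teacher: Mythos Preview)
Your proposal is correct and follows essentially the same approach as the paper's proof: both test the difference against itself (via approximation by $\mc C^\infty_c$ functions in $\dot H^1_0$), use the pointwise bound $|u_1^q-u_2^q|\le C_q(|u_1|^{q-1}+|u_2|^{q-1})|w|$, apply H\"older with exponents $\frac{1}{2^*}+\frac{1}{2^*}+\frac{2}{d}=1$ together with the Sobolev embedding \eqref{Soblocal}, and absorb for small $\e$. One minor remark: Lemma~\ref{lemsoluext} as stated in the paper is only for the critical exponent $q=2^*-1$, so you cannot invoke it verbatim for general $q>1$; however, your three-factor H\"older computation at the end is exactly the argument needed to extend that lemma under the extra hypothesis $u_i\in L^{d(q-1)/2}$, so the proof goes through.
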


\bnp
The result is certainly classical, but we provide a short proof for self containedness. Consider $r=u_{1}-u_{2}$: its satisfies $-\Delta r=Vr$ on $\{ |x| \ge 1 \}$, where 
\[ |V(x)|\le C_{q}\left(|u_{1}(x)|^{q-1}+|u_{2}(x)|^{q-1}\right). \]
That is 
\begin{align}
\label{eqnsemilinr}
\int_{|x|\ge 1}\nabla r\cdot \nabla v~dx=\int_{|x|\ge 1}Vrv~dx
\end{align}
for any $v\in \mc C^{\infty}_c(\{|x|> R\})$. Since $r\in \dot{H}^{1}_0(\{|x|\ge 1\})$, we can pick $v_n\in \mc C^{\infty}_c(\{|x|> R\})$ so that $\nor{\nabla r-\nabla v_n}{L^2} \to 0$. This gives the convergence of the first term to $\int_{|x|\ge 1}|\nabla r|^{2}dx$.
For the second term, using the bound on $V$, H\"older estimates with
\[ \frac{1}{2^*} + \frac{1}{2^*} + \frac{2}{d} = 2 \left( \frac{1}{2} - \frac{1}{d} \right) + \frac{2}{d} = 1, \]
and Sobolev embedding \eqref{Soblocal}, we get 
\begin{align*}
\MoveEqLeft \left|\int_{|x|\ge 1} V rv_n(x)dx\right| \le  \nor{r}{L^{2^{*}}(|x|\ge 1)}\nor{v_n}{L^{2^{*}}(|x|\ge 1)}\nor{V}{L^{\frac{d}{2}}(|x|\ge 1)}\\
&\le  C_{d,q} \nor{r}{L^{2^{*}}(|x|\ge 1)}\nor{v_n}{L^{2^{*}}(|x|\ge 1)}(\nor{u_{1}}{L^{\frac{d(q-1)}{2}}(|x|\ge 1)}^{q-1}+\nor{u_{2}}{L^{\frac{d(q-1)}{2}}(|x|\ge 1)}^{q-1})\\
&\le C_{d,q} \nor{\nabla r}{L^{2}(|x|\ge 1)}\nor{\nabla v_n}{L^{2}(|x|\ge 1)}\e^{q-1}.
\end{align*}
In particular, after passing to the limit, we get 
\begin{align}
\label{gradV}
\int_{\{|x|\ge 1\}}|\nabla r|^{2}dx\le  C_{d,q} \nor{\nabla r}{L^{2}(|x|\ge 1)}^{2} \e^{q-1}.
\end{align}
If $C_{d,q} \e^{q-1}<1$, this yields $\nabla r=0$ and then $r=0$.
\enp

The following Lemma is a quantified version of the regularity result of Trudinger \cite[Theorem 3]{Trud:68}. We follow the original proof, tracking the estimates.

\begin{lem}\label{lmTrudi}Assume $q=2^{*}-1$. There exists $\e_{0}>0$ and $C>0$ so that 
for any real valued $u\in\dot{H}^{1}(\{|x|\ge 1/2\})$ solution of  $\Delta u=\kappa u^{q} $ on $\{|x|\ge 1/2\}$, with $|\kappa|\le 1$ and so that \[ \e : = \nor{u}{L^{2^{*}}(\{|x|\ge 1/2\})} \le \e_0, \]
then  we have, for $p=\frac{(2^{*})^{2}}{2}=\frac{2d^{2}}{(d-2)^{2}}>2^{*}$,
\begin{align*}
\nor{u}{L^{p}(\{|x|\in (3/4,3/2)\})} & \le  C \e.
\end{align*}
\end{lem}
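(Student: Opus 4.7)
The plan is to perform a single step of a Moser / Brezis--Kato type iteration, in which the smallness of $\e := \nor{u}{L^{2^*}}$ is exactly what allows the critical nonlinearity to be absorbed. Since $q = (d+2)/(d-2)$ is precisely critical, direct elliptic bootstrap from $u\in L^{2^*}$ gives no gain at all (one recovers only $u\in L^{2^*}_{\loc}$), so a test-function argument exploiting smallness is essentially unavoidable; the target exponent $p = (2^*)^2/2$ is precisely what one step of the iteration produces.

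First I would fix a cutoff $\eta\in \mc C^{\infty}_{c}(\R^{d})$ with $\eta\equiv 1$ on $\{3/4\le |x|\le 3/2\}$, $\Supp(\eta)\subset \{5/8<|x|<13/8\}\subset\{|x|\ge 1/2\}$, and $|\nabla\eta|\le C$. The exponents are dictated by the choice $\beta = (q-1)/2 = (2^*-2)/2$, so that $\beta+1 = 2^*/2$ and $w := |u|^{\beta}u$ satisfies $|w|^{2} = |u|^{2^*}$ and $|w|^{2^*} = |u|^{p}$. After truncating to $u_{M} := \sgn(u)\min(|u|,M)$ (to obtain bounded test functions), I would test the weak equation from Lemma~\ref{lemsoluext} against $\psi_{M} := |u_{M}|^{2\beta} u\,\eta^{2}\in \dot H^{1}_{0}(\{|x|\ge 1/2\})\cap L^{\infty}$. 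Expanding $\nabla\psi_{M}$, applying Young's inequality to the cross term, and writing $w_{M} := |u_{M}|^{\beta} u$, one obtains an estimate of the form
\[
 c_{\beta}\int |\nabla w_{M}|^{2}\eta^{2}\,dx \le C\int w_{M}^{2} |\nabla \eta|^{2}\,dx + |\kappa|\int |u|^{q-1} w_{M}^{2}\eta^{2}\,dx,
\]
with $c_\beta>0$ depending only on $d$.

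The final term is then controlled by Hölder, using $|u|^{q-1}\in L^{d/2}(\Supp\eta)$ with $\nor{|u|^{q-1}}{L^{d/2}}\le \e^{q-1}$, combined with Sobolev $\nor{w_{M}\eta}{L^{2^{*}}}\le C_{S}\nor{\nabla(w_{M}\eta)}{L^{2}}$:
\[
 |\kappa|\int |u|^{q-1} w_{M}^{2}\eta^{2}\,dx \le 2 C_{S}\e^{q-1}\bigl(\nor{\eta\nabla w_{M}}{L^{2}}^{2}+\nor{w_{M}\nabla\eta}{L^{2}}^{2}\bigr).
\]
Choosing $\e_{0}$ small enough that $2C_{S}\e_{0}^{q-1}\le c_{\beta}/2$ (which is allowed since $|\kappa|\le 1$), the first summand is absorbed into the LHS. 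A further application of Sobolev then gives
\[
 \nor{w_{M}\eta}{L^{2^{*}}}^{2}\le C\nor{w_{M}}{L^{2}(\Supp\nabla\eta)}^{2}\le C\nor{u}{L^{2^{*}}(\{|x|\ge 1/2\})}^{2^{*}} = C\e^{2^{*}},
\]
uniformly in $M$. Sending $M\to+\infty$ by Fatou (integrands are non-negative and $|u_{M}|\uparrow|u|$), restricting to $\{3/4\le|x|\le 3/2\}$ where $\eta\equiv 1$, and using $|w|^{2^{*}} = |u|^{p}$ converts this into $\nor{u}{L^{p}(\{3/4\le|x|\le 3/2\})}^{2^{*}/2}\le C\e^{2^{*}/2}$, which is the desired bound.

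The main obstacle is the rigorous justification of the truncation step: $u$ is only a priori in $\dot H^{1}$, so naively testing with $|u|^{2\beta} u\eta^{2}$ is not legal. Working with $u_{M}$ produces extra contributions on $\{|u|>M\}$ because $\nabla u_{M}$ vanishes there while $\nabla u$ does not; these must be split off so that the pieces of indefinite sign appear on the LHS with the \emph{good} sign (and can then be dropped), leaving bounds that are uniform in $M$ so that Fatou / monotone convergence applies when $M\to+\infty$. This book-keeping is precisely the delicate part of Trudinger's argument and is the only non-routine ingredient; once it is set up, the quantitative dependence on $\e$ in all the estimates above is immediate.
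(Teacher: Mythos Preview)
Your approach is essentially the same as the paper's: one step of the Trudinger/Moser iteration with the critical term absorbed by smallness, using the identical exponent choice (your $\beta+1=2^*/2$ is the paper's $\alpha$, and your test-function power $2\beta+1=2^*-1$ is the paper's $\beta$). The only cosmetic differences are that the paper uses the auxiliary pair $F_L,G_L$ (linear extensions beyond the cutoff $L$) and treats $\underline u=\max(u,0)$ and $-\underline{(-u)}$ separately, whereas you use the hard cutoff $u_M$ and the odd power $|u|^{\beta}u$ to handle both signs at once; both bookkeepings are standard.

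One small slip: your test function $\psi_M=|u_M|^{2\beta}u\,\eta^2$ is \emph{not} in $L^\infty$ (on $\{|u|>M\}$ it equals $M^{2\beta}u\,\eta^2$), so drop that claim. What matters is $\psi_M\in \dot H^1_0(\{|x|\ge 1/2\})$, which does hold (since $||u_M|^{2\beta}u|\le M^{2\beta}|u|$ and $u\in L^{2^*}\subset L^2_{\loc}$ on $\Supp\eta$), and that is all Lemma~\ref{lemsoluext} requires.
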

\bnp
 We denote $\alpha=2^{*}/2$, $\beta=2^{*}-1$ and notice $\beta>1$, $\alpha=\frac{\beta+1}{2}\in (1,\beta)$. For any any (large) $L>0$, we define the Lipschitz functions on $\R$
\begin{align*}
G_{L}(t)&=\left\{\begin{array}{ll}0&\textnormal{ if } t\le 0\\
t^{\beta}&\textnormal{ if } 0\le t\le L\\
 L^{\alpha-1}\left(\alpha L^{\alpha-1}t-(\alpha-1)L^{\alpha}\right) &\textnormal{ if }t> L\end{array}\right.\\
 F_L(t) & = \left\{\begin{array}{ll}0&\textnormal{ if }t\le 0\\
 t^{\alpha}&\textnormal{ if }0\le t \le L\\
 \alpha  L^{\alpha-1}t-(\alpha-1)L^{\alpha} &\textnormal{ if }t> L\end{array}\right.%\\
% H_{L}(t)&=&\left\{\begin{array}{ll}0&\textnormal{ if }tle 0\\
%\frac{1}{\sqrt{\beta}}t^{\alpha}&\textnormal{ if }0le tle L\\
% \frac{1}{\sqrt{\alpha}}\left(\alpha L^{\alpha-1}t-(\alpha-1)L^{\alpha}\right) &\textnormal{ if }t> L\end{array}\right.\\
\end{align*}
They satisfy for every $t\in \R$ (except for $t\in \{0,L\}$ for  \eqref{F2Gprime}) and uniformly in $L>0$, 
\begin{align}
\label{F2Gprime} \left(F_L'(t)\right)^{2}&\le \alpha G_{L}'(t)\\
\label{F2G} \left(F_{L}(t)\right)^{2}&\ge t G_{L}(t)\\
\label{GLborne} G_{L}(t) & \le |t|^{\beta}, \quad F_{L}(t)\le |t|^{\alpha}\\
\label{HL}G_{L}(t) & \le \frac{1}{\sqrt{\alpha}}\sqrt{G_{L}'(t)}F_{L}(t)
\end{align}

%We will also need the function
%
%\begin{align*}
% H_{L}(t)&=&\left\{\begin{array}{ll}0&\textnormal{ if }tle 0\\
%\frac{1}{\sqrt{\beta}}t^{\alpha}&\textnormal{ if }0le tle L\\
% \frac{1}{\sqrt{\alpha}}\left(\alpha L^{\alpha-1}t-(\alpha-1)L^{\alpha}\right) &\textnormal{ if }t> L\end{array}\right.\\
%\end{align*}
%that satisfies
%\begin{align}
%\label{HL}G_{L}(t)=\sqrt{G_{L}'(t)}H_{L}(t)
%\end{align}
Denote $\underline{u}=\max(u,0)$. Let $\eta\in \mc C^{\infty}_{0}(\{|x|\in (1/2,2)\})$ non negative nonnegative values, so that $\eta(x)=1$ when $|x|\in [3/4, 3/2]$. Then $v:=\eta^{2}G_{L}(u)=\eta^{2}G_{L}(\underline{u}) \in\dot{H}^1\cap L^{2^{*}+1}(\{|x|\ge 1/2\})$, and 
\[ \nabla v= \eta^{2}G_{L}'(\underline{u})\nabla u+2 G_{L}(\underline{u})\eta \nabla \eta. \]
According to Lemma \ref{lemsoluext}, we can now substitute the test function $v$ in \eqref{eqnsemilinH1} to get
 \begin{align*}
0=\int_{|x|\ge 1/2}\eta^{2}G_{L}'(\underline{u})|\nabla u|^{2}+ 2\int_{|x|\ge 1/2} \eta G_{L}(\underline{u})\nabla u\cdot \nabla \eta+\kappa  \int_{|x|\ge 1/2}\eta^{2} G_{L}(\underline{u})u^{q}
\end{align*}
So, noticing that $G_{L}(\underline{u})u^{q}=G_{L}(\underline{u})\underline{u}^{q}$ and using \eqref{HL} and then Cauchy-Schwarz, we get
\begin{align*}
\MoveEqLeft \int_{|x|\ge 1/2}\eta^{2}G_{L}'(\underline{u})|\nabla u|^{2} \\
&\le \frac{2}{\sqrt{\alpha}}\int_{|x|\ge 1/2}\eta  \sqrt{G_{L}'(\underline{u})}F_{L}(\underline{u})\left|\nabla u\right| \left|\nabla \eta\right|+ \int_{|x|\ge 1/2}\eta^{2} G_{L}(\underline{u})\underline{u}^{q}\\
&\le \frac{1}{2}\int_{|x|\ge 1/2}\eta^{2}G_{L}'(\underline{u})|\nabla u|^{2} + \frac{2}{\alpha}\int_{|x|\ge 1/2} \left(F_{L}(\underline{u})\right)^{2} \left|\nabla \eta\right|^{2}+ \int_{|x|\ge 1/2}\eta^{2} G_{L}(\underline{u})\underline{u}^{q}.
\end{align*}
We now bound separately the three terms of the right-hand side.
For the first one, for a constant $C= C(\eta,\alpha)$,
\begin{align}
 \int_{|x|\ge 1/2}\eta^{2}G_{L}'(\underline{u})|\nabla u|^{2}
\label{IPPTrudinger}&\le C \int_{|x|\in (1/2,2) } \left(F_{L}(\underline{u})\right)^{2}+C \int_{|x|\ge 1/2}\eta^{2} G_{L}(\underline{u})\underline{u}^{q}.
\end{align}
Using that $\nabla (F_{L}(\underline{u}))=F_{L}'(\underline{u}) \nabla u$ and \eqref{F2Gprime}, we get  
\begin{align*}
\int_{|x|\ge 1/2}\left|\eta \nabla (F_{L}(\underline{u}))\right|^{2} = \int_{|x|\ge 1/2}\eta^{2}(F_{L}'(\underline{u}))^{2}|\nabla u|^{2}\ \le \alpha \int_{|x|\ge 1/2}\eta^{2}G_{L}'(\underline{u})|\nabla u|^{2}.
\end{align*}
Concerning the third term, we use \eqref{F2G} to get 
\begin{align*}
 \int_{|x|\ge 1/2}\eta^{2} G_{L}(\underline{u})\underline{u}^{q}&\le  \int_{|x|\ge 1/2}\eta^{2} \left(F_{L}(\underline{u})\right)^{2}\underline{u}^{q-1}.
 \end{align*}
Summing up, at that point, we have proved
\begin{align}
\MoveEqLeft \nonumber \int_{|x|\ge 1/2}\left|\eta \nabla (F_{L}(\underline{u}))\right|^{2}
 \le C \nor{ F_{L}(\underline{u})}{L^{2}(|x|\in (1/2,2)) }^{2} +C \int_{|x|\ge 1/2}\eta^{2} \left(F_{L}(\underline{u})\right)^{2}\underline{u}^{q-1}\\
& \qquad \le C \nor{ F_{L}(\underline{u})}{L^{2}(\{|x|\in (1/2,2)\}) }^{2}+C \nor{\eta F_{L}(\underline{u})}{L^{2^{*}}}^{2} \nor{u}{L^{2^{*}}(\{|x|\ge 1/2\})}^{q-1},  \label{intermediTrud}
\end{align}
where we have used H\"older inequality using that $\frac{2}{2^{*}}+\frac{q-1}{2^{*}}=1$. Now, using Sobolev embeding for $\eta F_{L}(\underline{u})$, we get
\begin{align*}
\nor{\eta F_{L}(\underline{u})}{L^{2^{*}}(\{|x|\ge 1/2\})}^{2} & \le C \nor{\nabla\left(\eta  F_{L}(\underline{u})\right)}{L^{2}(\{|x|\ge 1/2\})}^{2} \\
&\le C \nor{\eta \nabla F_{L}(\underline{u})}{L^{2}}^{2}+C\nor{(\nabla \eta) F_{L}(\underline{u})}{L^{2}}^{2}\\
&\le C \nor{\eta \nabla F_{L}(\underline{u})}{L^{2}}^{2}+C\nor{ F_{L}(\underline{u})}{L^{2}(|x|\in (1/2,2)) }^{2}. 
\end{align*} 
So, combining with \eqref{intermediTrud} and using $ \nor{u}{L^{2^{*}}(|x|\ge 1/2)} = \e\le 1$, we get
\begin{align*}
(1-C \e^{q-1})\nor{\eta F_{L}(\underline{u})}{L^{2^{*}}}^{2}&\le  C\nor{ F_L(\underline{u})}{L^{2}(|x|\in (1/2,2)) }^{2}. 
\end{align*}
Finally, using \eqref{GLborne} which holds uniformly in $L$, for $\e \le (2C)^{-\frac{1}{q-1}}$, we get uniformly in $L> 0$
\begin{align*}
\nor{\eta F_{L}(\underline{u})}{L^{2^{*}}}^{2}&\le  C\nor{\underline{u}}{L^{2\alpha}(|x|\in (1/2,2)) }^{2\alpha} = C\e^{2\alpha}.
\end{align*}
(recall $\alpha=2^{*}/2$). This estimate is uniform in $L$: letting $L \to +\infty$, we obtain,
\begin{align*}
\nor{\eta \underline{u}^{\alpha}}{L^{2^{*}}}^{2}&\le  C\e^{2\alpha}. 
\end{align*}
Replacing $u$ by  $-u$, we obtain the same result for $u$ and an estimate
\begin{align*}
\nor{u}{L^{ 2^{*}\alpha}(|x|\in (3/4,3/2))}^{2\alpha }&\le  C\e^{2\alpha}.  \qedhere
\end{align*}
\enp

\begin{remark}
Note that a quite twisted way to prove the previous result of Lemma \ref{lmTrudi} would be to use Strichartz estimates for the non linear wave equation outside the translated cone. Indeed, a solution of $\Delta u=\kappa u^{q}$ on $|x|\ge 1/2)$ is also a constant solution $\Box u=\kappa u^{q} $ on $|x|\ge t+1/2)$. This should prove that $u\in L^{p}$ for some $p>2^{*}$.
\end{remark}

\begin{prop}[Trace regularity]\label{propelipticregtrace}
For any $s >0$, there exists $C_s>0$, given $u$ under the conditions of Lemma \ref{lmTrudi} with $q\in \N$,
$u|_{\mathbb{S}^{d-1}}\in H^{s}(\mathbb{S}^{d-1})$ and  $ \nor{u|_{\mathbb{S}^{d-1}}}{H^{s}(\mathbb{S}^{d-1})}\le C_{s}\e$.
\end{prop}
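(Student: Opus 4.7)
The strategy is a standard bootstrap in elliptic regularity for $\Delta u = \kappa u^q$, starting from the improved integrability given by Lemma~\ref{lmTrudi}, combined with careful tracking of the dependence on $\e$ so that every intermediate estimate remains linear in $\e$ (at the cost of shrinking $\e_0$).

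First, I would fix a decreasing sequence of annuli
$A_k = \{|x|\in(3/4+\delta_k, 3/2-\delta_k)\}$,
with $0 < \delta_0 < \delta_1 < \cdots$ small, so that $\m S^{d-1}\subset A_k$ for every $k$. Lemma~\ref{lmTrudi} already gives $\|u\|_{L^{p_1}(A_0)}\le C\e$ with $p_1 = 2d^2/(d-2)^2 > 2^*$. Using the decomposition $u^q = u^{q-1}\cdot u$, H\"older's inequality, and interior Calder\'on--Zygmund / $W^{2,p}$ estimates for the Poisson equation (with cutoffs between $A_k$ and $A_{k+1}$), I would show inductively that $\|u\|_{W^{2,r_k}(A_k)}\le C_k \e$ and hence (by Sobolev embedding) $\|u\|_{L^{r_{k+1}}(A_{k+1})}\le C_{k+1}\e$, for an increasing sequence $r_k$. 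The key quantitative identity is
\[
\|u^q\|_{L^{r}(A_k)} \le \|u\|_{L^\infty(A_k)}^{q-1}\,\|u\|_{L^r(A_k)} \quad\text{or}\quad \|u^q\|_{L^r(A_k)}\le \|u\|_{L^{qr}(A_k)}^{q-1}\,\|u\|_{L^{qr}(A_k)},
\]
so that whenever the previous step gives $\|u\|_{L^{qr}(A_k)}\le C\e$ with $C\e \le C\e_0$, the factor $\|u\|^{q-1}$ is absorbed into a small constant, and the estimate on $u^q$ remains \emph{linear} in $\e$. This is the key point to obtain $C_s \e$ at the end rather than some power of $\e$.

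Next, after finitely many steps the exponent $r_k$ exceeds $d/2$ (because each step improves the exponent via $W^{2,r}\hookrightarrow L^{r^*}$ with $r^*=dr/(d-2r)$ or $r^*=\infty$), so $u \in L^\infty(A_{k_0})$ with $\|u\|_{L^\infty(A_{k_0})}\le C\e$. From this point on the bootstrap becomes cleaner: $\|u^q\|_{L^\infty(A_{k_0})}\le \|u\|_{L^\infty}^{q-1}\|u\|_{L^\infty}\le (C\e_0)^{q-1}\cdot C\e$ stays linear in $\e$, and iterating elliptic regularity/Schauder estimates with cutoffs on the nested annuli $A_k$ gives, for every $m\in\m N$,
\[
\|u\|_{W^{2m,2}(A_{k_0+m})} \le C_m \e.
\]
Since $A_{k_0+m}$ is a smooth tubular neighborhood of $\m S^{d-1}$, the standard trace theorem (\cite[Section 7.3]{LionsMagenes1}) then yields $\|u|_{\m S^{d-1}}\|_{H^{s}(\m S^{d-1})}\le C \|u\|_{H^{s+1/2}(A_{k_0+m})}\le C_s \e$ by choosing $2m \ge s+1/2$, which is the desired conclusion.

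The only real obstacle is the one mentioned above: at each step of the bootstrap one must carefully split a factor of $\|u\|^{q-1}$, bound it by $(C\e_0)^{q-1}$ (which is a small absolute constant once $\e_0$ is chosen small), and keep one explicit linear factor of $\|u\|$ to preserve the linear dependence on $\e$ through the finitely many iterations required to reach $H^s$. Everything else is classical local elliptic regularity on annular domains.
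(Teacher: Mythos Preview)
Your proposal is correct and follows essentially the same bootstrap as the paper's proof: start from the improved integrability of Lemma~\ref{lmTrudi}, iterate interior $W^{2,p}$ estimates on a nested family of annuli around $\m S^{d-1}$ until the exponent passes the Sobolev threshold, then continue to arbitrary $W^{k,r}$ regularity and conclude by a trace estimate. The only cosmetic difference is in bookkeeping the linear dependence on $\e$: you split $u^q = u^{q-1}\cdot u$ and absorb $\|u\|^{q-1}\le (C\e_0)^{q-1}$ into the constant, whereas the paper simply writes $\|u^q\|_{L^{p/q}} = \|u\|_{L^p}^q \le C\e^q \le C\e$ (using $\e\le 1$), which is marginally simpler.
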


\bnp
We will need a (finite) sequence of decreasing domains around $\mathbb{S}^{d-1}$. Define $\Omega_{n}=\{|x|\in (1-1/(n+4),1+1/(n+4))\}$ so that $\Omega_{n+1} \Subset \Omega_{n}$ and $\mathbb{S}^{d-1}\subset \Omega_{n}$.

The previous Lemma \ref{lmTrudi} gives $\nor{u}{L^{p_{0}}(\Omega_{0})}\le  C\e$ with $p_{0}=\frac{(2^{*})^{2}}{2}>2^{*}$. In particular, $\nor{u^{q}}{L^{p_{0}/q}(\Omega_{0})}\le  C\e^{q}$ with $1<\frac{p_{0}}{q}<+\infty$, and we are in position to use elliptic regularity. Using the equation, we get
\begin{align*}
\nor{u}{W^{2, p_{0}/q}(\Omega_{1})}&\le C\nor{\Delta u}{L^{p_{0}/q}((\Omega_{0})}+C\nor{ u}{L^{p_{0}q}(\Omega_{0})} \\
&\le C \nor{ u^{q}}{L^{p_{0}/q}(\Omega_{0})} +C\nor{ u}{L^{p_{0}q}(\Omega_{0})}\\
&\le C\nor{u}{L^{p_{0}}(\Omega_{0})}^{q}+C\nor{u}{L^{p_{0}}(\Omega_{0})}\le C\e. 
\end{align*}
By Sobolev embedding, we get $\nor{u}{L^{p_{1}}(\Omega_{1})}\le C\e$ with $p_{1}=\frac{p_{0}d}{qd-2p_{0}}$, except if $qd\le 2p_{0}$ in which case we get the same estimate for any $1\le p_{1}<+\infty$. We get $\frac{p_{1}}{p_{0}}>1+\delta$ ($\delta >0$) if and only if $p_0 > \frac{qd}{2} - \frac{d}{2(1+\delta)}$, which is the case for $\delta$ sufficiently small. 

Then, we can iterate the previous process with some increasing sequence $p_{i}$ with $p_{i+1}\ge (1+\delta)p_{i}$ to get that for any $i\in \N$, there exists $C_{i}$ so that $\nor{u}{L^{p_{i}}(\Omega_{i})}\le C_{i}\e$. 

Fix $j$ so large that $p_{j}/q>d$. Using once again the equation, we get $\nor{u}{W^{2,p_{j}/q}(\Omega_{j+1})}\le C_{j}\e$, and since $p_{j}/q>d$, Sobolev embedding implies that
\[ \nor{u}{C^{1}(\Omega_{i})}\le C\e. \]
In particular,  $\nor{u^{q}}{\mc C^{1}(\Omega_{i})}\le C\e^{q}$ and using again the equation and elliptic regularity,  $\nor{u}{W^{2,r}(\Omega_{j+2})}\le C_{r}\e$ for any $1\le r<+\infty$. Choose $r >  d \ge 2$  so that $W^{k,r}$ is an algebra for all $k\in \m N$, then using repetitively the equation and elliptic regularity, we infer that $k\in \N$, \[ \nor{u}{W^{k,r}(\Omega_{i+2+k})}\le C_{r, k}\e. \]
In particular, for all $s >0$ integer, $\nor{u}{H^{s+1/2}(\Omega_{i+3+s})}\le C_{s}\e$ and we get the result by trace estimates.
%
%
%\begin{itemize}
%\item $d=3$ and $q=5$: The previous Lemma \ref{lmTrudi} gives 
%\begin{align*}
%\nor{u}{L^{18}(\{|x|\in (3/4,3/2)\})}&le  C\e. 
%\end{align*}
% By trace estimates, and since $s>d/2-1/4=$, it is enough to prove $\nor{u}{H^{2}(\{|x|\in (3/4,3/2)\})}le \e$. By elliptic regularity and using the equation, it is enough to prove $\Delta u=\kappa u^{5}\in L^{2}$, that is $u\in L^{10}$. More precisely,
% \begin{align*}
% \nor{u_{\left|\mathbb{S}^{d-1}\right.}}{H^{s}(\mathbb{S}^{d-1})}&le \nor{u}{H^{2}(\{|x|\in (3/4,3/2)\})}le\nor{\Delta u}{L^{2}(\{|x|\in (3/4,3/2)\})}+\nor{ u}{L^{2}(\{|x|\in (3/4,3/2)\})} \\
% &le  \nor{ u^{5}}{L^{2}(\{|x|\in (3/4,3/2)\})} +\nor{ u}{L^{2}(\{|x|\in (3/4,3/2)\})}le \nor{u}{L^{18}(\{|x|\in (3/4,3/2)\})}^{5}+\nor{u}{L^{18}(\{|x|\in (3/4,3/2)\})}\\
% &le C\e. 
%\end{align*}
%\item $d=4$, $2^{*}=4$ and $q=3$: The previous Lemma \ref{lmTrudi} gives 
%\begin{align*}
%\nor{u}{L^{8}(\{|x|\in (3/4,3/2)\})}&le  C\e. 
%\end{align*}
%
%\item $d=6$, $2^{*}=3$ and $q=2$: The previous Lemma \ref{lmTrudi} gives 
%\begin{align*}
%\nor{u}{L^{9/2}(\{|x|\in (3/4,3/2)\})}&le  C\e. 
%\end{align*}
%\end{itemize}
\enp

The previous results were about the energy critical equation. We now obtain similar result for more gene general pure power nonlinearities, but under a much stronger assumption of small $L^{\infty}$ norm. We only sketch the proof.

\begin{prop}[Trace regularity]\label{propelipticregtracegen}

Let $q\in \N^*$. Then, for any $s>0$ and $R_0>0$, there exists $C>0$ so that for any $u\in\dot{H}^{1}(\{|x|\in (1/2,2)\})$, solution of  $\Delta u=\kappa u^{q} $ on $\{|x|\in (1/2,2)\}$ and so that 
\[ \e := \nor{u}{L^{\infty}(|x|\in (1/2,2))}\le R_{0}, \]
then $u_{\left|\mathbb{S}^{d-1}\right.}\in H^{s}(\mathbb{S}^{d-1})$ and $ \nor{u|_{\mathbb{S}^{d-1}}}{H^{s}(\mathbb{S}^{d-1})}\le C \e$.
\end{prop}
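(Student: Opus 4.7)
The strategy mirrors the end of the proof of Proposition \ref{propelipticregtrace}: a bootstrap based on interior elliptic regularity on a decreasing sequence of annular neighborhoods of $\m S^{d-1}$. The situation is simpler here because one starts directly from an $L^\infty$ control, which allows to bypass the Trudinger-type $L^p$-improvement used in the energy critical case. The key point is to propagate an estimate that is \emph{linear} in $\e$ at each step; the additional powers of $\nor{u}{L^\infty}$ that appear from the nonlinearity $u^q$ will be absorbed into constants depending on $R_0$.

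First I would fix a decreasing sequence $\Omega_{n+1}\Subset\Omega_n$ of annular open sets with $\Omega_n\subset \{|x|\in(1/2,2)\}$ and $\m S^{d-1}\subset\Omega_n$ for all $n$. The initial step is immediate:
\[ \nor{u^q}{L^\infty(\Omega_0)}\le\nor{u}{L^\infty(\Omega_0)}^q \le R_0^{q-1}\,\e, \]
so $\Delta u \in L^\infty(\Omega_0)$ with norm $\lesssim \e$. Interior elliptic regularity (see \cite[Theorem 9.11]{GTbook}) then gives, for any $1<p<\infty$,
\[ \nor{u}{W^{2,p}(\Omega_1)} \le C_p\left(\nor{\Delta u}{L^p(\Omega_0)}+\nor{u}{L^p(\Omega_0)}\right)\le C_p\,\e. \]
Choosing $p>d$, $W^{2,p}$ embeds into $W^{1,\infty}$ (and is an algebra for higher Sobolev spaces), and we obtain $\nor{u}{\mc C^1(\Omega_1)}\le C\e$.

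The core of the argument is an inductive bootstrap. Suppose we have shown $\nor{u}{W^{2k,p}(\Omega_{k+1})}\le C_k\,\e$ for some $k\ge 1$ (with $p>d$). The Moser / tame product estimate for composition with the polynomial $u\mapsto u^q$ yields
\[ \nor{u^q}{W^{2k,p}(\Omega_{k+1})}\le C(q,k)\,\nor{u}{W^{2k,p}(\Omega_{k+1})}\,\nor{u}{L^\infty(\Omega_{k+1})}^{q-1}\le C_k R_0^{q-1}\,\e, \]
which is the essential linear-in-$\e$ estimate. Feeding this back into the equation $\Delta u=\kappa u^q$ and applying interior elliptic regularity once more gives $\nor{u}{W^{2k+2,p}(\Omega_{k+2})}\le C_{k+1}\,\e$, closing the induction.

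Finally, to conclude, given $s>0$, we choose $k$ so that $2k-1/p>s+1/2$; then the continuous trace map $W^{2k,p}(\Omega_{k+1})\to H^s(\m S^{d-1})$ (for instance via the standard $W^{2k,p}\hookrightarrow W^{\lceil s+1/2\rceil,p}$ and the trace theorem) gives
\[ \nor{u|_{\m S^{d-1}}}{H^s(\m S^{d-1})}\le C\nor{u}{W^{2k,p}(\Omega_{k+1})}\le C_s\,\e, \]
as wanted. The main point requiring care is the Moser estimate above — it is a classical fact but must be invoked to keep the dependence in $\e$ linear, all the powers $\e^{q-1}$ being safely absorbed into the $R_0$-dependent constant.
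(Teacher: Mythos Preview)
Your proposal is correct and follows essentially the same approach as the paper: the paper's proof simply records the first step $\nor{u}{W^{2,p}(\Omega_1)}\le C(R_0)\e$ from the $L^\infty$ bound on $u^q$ and then writes ``iterate as before'' (referring to the bootstrap in Proposition~\ref{propelipticregtrace}). Your invocation of a Moser/tame estimate to keep the dependence linear in $\e$ is a slightly more careful version of what the paper does implicitly; note that since $\e\le R_0$, even the plain algebra property of $W^{k,p}$ for $p>d$ would already give $\nor{u^q}{W^{k,p}}\lesssim \nor{u}{W^{k,p}}^q\le (C_k\e)^q\le C_k^q R_0^{q-1}\e$, so the tame estimate is not strictly needed.
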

\bnp
By elliptic regularity and using the equation, we get for any $1<p<+\infty$
\begin{align*}
\nor{u}{W^{2, p}(\Omega_{1})}&\le C\nor{\Delta u}{L^{p}((\Omega_{0})}+C\nor{ u}{L^{p}(\Omega_{0})} \\
 &\le C \nor{ u^{q}}{L^{\infty}(\Omega_{0})} +C\nor{ u}{L^{\infty}(\Omega_{0})}\le C(R_0)\e. 
\end{align*}
We can them iterate as before to get the expected result.
\enp
\begin{lem}
\label{decaygradsemi}
Assume $d\ge 3$ and $q\in \N^*$. Let  $u\in L^{\infty}(\{|x|\ge 1\})$ be a solution of  $\Delta u=\kappa u^{q} $ on $\{|x| \ge 1\}$ and assume that there exists $C>0$ so that 
\[ \forall |x| \ge 1, \quad |u(x)|\le C |x|^{-(d-2)} . \]
Then, there exists $C'>0$ and $R \ge 2$ so that $u\in \dot{H}^1(\{|x|\ge R\})$ and
\[ \forall |x| \ge R, \quad |\nabla u(x)|\le C' |x|^{-d+1}. \] 
\end{lem}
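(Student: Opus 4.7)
The strategy is to combine elliptic regularity with a scaling argument to propagate the pointwise decay on $u$ to a matching pointwise decay on $\nabla u$, and then to obtain $\dot H^1$ membership by direct integration.

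First I would upgrade the regularity of $u$: since $u \in L^{\infty}(\{|x| \ge 1\})$ and $\Delta u = \kappa u^q \in L^{\infty}_{\mathrm{loc}}(\{|x| \ge 1\})$, a standard elliptic bootstrap (of the same flavor as what is used in Corollary~\ref{corUCPup} via \cite{Trud:68}, or as in the iteration in Proposition~\ref{propelipticregtracegen}) gives $u \in \mc C^{\infty}(\{|x| > 1\})$, so $\nabla u$ is defined pointwise.

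The main step is a scaling/rescaling argument. Fix $x_0$ with $R := |x_0|$ large and define on the unit ball
\[ w(y) := R^{d-2}\, u\!\left(x_0 + \tfrac{R}{2}\, y\right), \qquad y \in B(0,1). \]
Since $|x_0 + \tfrac{R}{2} y| \ge R/2$ for $y \in B(0,1)$, the hypothesis $|u(x)| \le C|x|^{-(d-2)}$ gives $\|w\|_{L^{\infty}(B(0,1))} \le C'$ uniformly in $R$. A direct computation shows that $w$ solves
\[ \Delta w = \kappa\, R^{2 - (d-2)(q-1)}\, w^q \quad \text{on } B(0,1). \]
Under the condition $(d-2)(q-1) \ge 2$, which holds in all cases of interest coming from Theorem~\ref{thmregdecay} and Corollary~\ref{corfocsemi} (where $q > d/(d-2)$), the scaling coefficient $R^{2-(d-2)(q-1)}$ is bounded uniformly in $R$. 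Thus $\|\Delta w\|_{L^{\infty}(B(0,1))}$ is uniformly bounded, and classical interior Schauder (or $W^{2,p}$ + Sobolev embedding) estimates yield $\|\nabla w\|_{L^{\infty}(B(0,1/2))} \le C$ with a constant independent of $R$. Undoing the change of variables,
\[ |\nabla u(x_0)| \;=\; \frac{2}{R^{d-1}}\, |\nabla w(0)| \;\le\; C'\, R^{-(d-1)} \;=\; C'\, |x_0|^{-(d-1)}, \]
which is the desired pointwise bound.

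The $\dot H^1$ statement then follows by direct integration in polar coordinates:
\[ \int_{|x| \ge R} |\nabla u|^2\, dx \;\le\; (C')^2 \int_{|x| \ge R} |x|^{-2(d-1)}\, dx \;=\; C'' \int_R^{+\infty} r^{-(d-1)}\, dr \;<\; +\infty, \]
since $d \ge 3$. The only conceptual difficulty lies in the marginal regime $(d-2)(q-1) < 2$: there the rescaled coefficient is unbounded, so one would first need to improve the decay of $u$ (or of $u^q$) through a preliminary Newtonian-potential/Green-function argument before applying the scaling. In all applications of Lemma~\ref{decaygradsemi} within this paper, however, we sit strictly in the non-marginal regime, so the clean scaling argument above is enough.
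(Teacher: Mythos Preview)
Your proof is correct and follows the same core idea as the paper: rescale around $x_0$ so as to reduce to a uniform interior gradient estimate on the unit ball, then undo the scaling. The only difference is the choice of scaling: you rescale by the decay exponent, setting $w(y)=R^{d-2}u(x_0+\tfrac{R}{2}y)$, which keeps $\|w\|_{L^\infty}$ bounded but changes the equation to $\Delta w=\kappa R^{2-(d-2)(q-1)}w^q$; the paper instead uses the equation-invariant scaling $u_{x_0}(x)=|x_0|^{2/(q-1)}u(x_0-|x_0|x)$, which preserves $\Delta v=\kappa v^q$ exactly and makes $\|u_{x_0}\|_{L^\infty}$ small for $|x_0|$ large. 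Both variants require the same supercriticality $q>\tfrac{d}{d-2}$ (to bound your coefficient, respectively to make the paper's $L^\infty$ norm $\le 1$), which you correctly flag and which holds in every application of the lemma. The paper also records that the decay $|u|\le C|x|^{-(d-2)}$ gives $u\in L^{2^*}$, completing the $\dot H^1$ membership; you might add that one line.
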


\bnp
For $x_0\in \R^d$, we will use the rescaled and translated solution $u_{x_0}(x)=|x_0|^{\frac{2}{p-1}}u(x_0-|x_0|x)$. For $|x_0|\ge 2$, it is solution of the same equation on $B(0,1/2)$ with $\nor{u_{x_0}}{L^{\infty}(B(0,1/2))}\le C2^{d-2}|x_0|^{\frac{2}{p-1}-(d-2)}$. We will use the following Claim.

\medskip

\textbf{Claim:} Let $v$ be a solution of $\Delta v=\kappa v^{p} $ on $B(0,1/2)$ with $\nor{v}{L^{\infty}(B(0,1/2))}\le 1$. Then, $\nabla v$ is bounded on $B(0,1/4)$ and there exists a constant $D$ so that 
\[ \nor{\nabla v}{L^{\infty}(B(0,1/4))} \le D\nor{v}{L^{\infty}(B(0,1/2))}. \]

The Claim is classical by elliptic estimates, we omit the proof. 

Let $R$ so large that $C2^{d-2}R^{\frac{2}{p-1}-(d-2)} \le 1$. We apply it to $u_{x_0}$
and obtain that $|\nabla u^{x_0}(0)|\le C|x_0|^{\frac{2}{p-1}-(d-2)}$, that is 
\[ |\nabla u(x_0)|\leq C|x_0|^{-d+1}. \]
This holds uniformly for $|x_0| \ge R$. This implies $\nabla u\in L^2(\{|x|\ge 2\})$, since $d>2$. Note also that the decay $|u(x)|\le C |x|^{-(d-2)}$ also implies $ u\in L^{2^*}(\{|x|\ge 1\})$, so that $u\in \dot{H}^1(\{|x|\ge R\})$.
\enp

\subsection{Conformal equations in dimension \texorpdfstring{$2$}{2}}

In all this section, we are in dimension $d=2$ and consider solutions of equations of the type  \eqref{systconformembed}. We gather some already known facts and also some results that are quite classical consequences of them. For some of them, the results are already written for Harmonic maps, but we did not find the exact similar statement for equation  \eqref{systconformembed}. We refer to \cite{HeleinBook,HeleinSurvey, Schoen:84} for books or survey on Harmonic maps.

It has been noticed by Rivière (see the proof of \cite[Theorem 1.2.]{R:07}) that if $u$ is solution of \eqref{systconformembed}, then, it is also solution of $-\Delta u=\Omega \cdot \nabla u$ (scalar product in $\m R^2$)\footnote{That is $-\Delta u_i= \sum_{j=1}^M\Omega_{i}^{j}\cdot \nabla u_j=\sum_{j=1}^M\left(\Omega_{i,1}^{j} \partial_x u_j+ \Omega_{i,2}^{j} \partial_y u_j\right)$ for $i=1,\dots ,M$.} with $\Omega =(\Omega_{j}^{i})_{1\le i,j\le M } $ defined by 
\begin{align}
\Omega_{j}^{i}=-\sum_{\ell=1}^{M}\left(A_{j\ell}^{i}(u)-A_{i\ell}^{j}(u)\right)\nabla u^{\ell}+\frac{1}{4} \sum_{\ell=1}^{M}\left(H_{j\ell}^{i}(u)-H_{i\ell}^{j}(u)\right)\nabla^{\perp} u^{\ell}
\end{align}
which satisfies $\Omega_{j}^{i}=-\Omega_{i}^{j}$. This is a consequence of the fact that $H_{j\ell}^{i}=-H_{i\ell}^{j}$ and we have $\sum_{j=1}^{M}A_{i\ell}^{j}(u)\nabla u^{j}=\left<A(u)(e_{i},e_{\ell}),\nabla u\right>=0$ (the last scalar product being on $\R^M$) since $A(u)(e_{i},e_{\ell})\perp T_{u}\mathcal{N}$ and $\nabla u\in (T_{u}\mathcal{N})^2$.

In particular, if $\nabla u\in L^{2}(B(0,1))$, we have $\Omega\in L^{2}(B(0,1),so(M)\otimes \R^{2})$ together with $\nor{\Omega}{L^{2}(B(0,1))}\le C \nor{\nabla u}{L^{2}(B(0,1))}$.

The result of Rivi\`ere \cite{R:07} combined with the result of Giusti-Miranda and Morrey (see Theorem 9.8 of \cite{GM:book:12} for the H\"older continuity of the gradient which can be iterated by Schauder estimates) provides the following regularity result. This followed an earlier result of H\'elein \cite{Helein:91} for Harmonic maps.
\begin{thm} \label{th:Hel91}
Any $u\in H^{1}(B(0,1),\mathcal{N})$ weak solution of \eqref{systconformembed} is smooth. 
 \end{thm}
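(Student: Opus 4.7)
The plan is to exploit the antisymmetric reformulation observed by Rivière. As recalled just before the statement, any $H^1$ weak solution $u$ of \eqref{systconformembed} also satisfies the system
\[
-\Delta u = \Omega \cdot \nabla u \quad \text{in } B(0,1),
\]
where $\Omega = (\Omega_j^i)_{1\le i,j\le M}$ is built from $A(u)$, $H(u)$ and $\nabla u$, takes values in $\mathrm{so}(M)\otimes \mathbb{R}^2$, and satisfies the $L^2$ bound $\|\Omega\|_{L^2(B(0,1))}\lesssim \|\nabla u\|_{L^2(B(0,1))}$. The antisymmetry $\Omega_j^i = -\Omega_i^j$ is the crucial structural input, coming from $H_{j\ell}^i = -H_{i\ell}^j$ and from the orthogonality $A(u)(e_i,e_\ell)\perp T_u\mathcal{N}$ combined with $\nabla u \in (T_u\mathcal{N})^2$.

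First I would invoke Rivière's theorem on elliptic systems with antisymmetric $L^2$ potentials in dimension two: it provides a Coulomb-type gauge decomposition of $\Omega$ leading to a conservation law, from which one deduces that any $H^1$ weak solution of $-\Delta u = \Omega\cdot \nabla u$ is continuous on $B(0,1)$, and actually lies in $W^{1,p}_{\mathrm{loc}}$ for some $p>2$. Once $u\in W^{1,p}_{\mathrm{loc}}$ with $p>2$, the tensors $A(u)$ and $H(u)$ are continuous (and bounded), hence the nonlinearity $A(u)(\nabla u,\nabla u) + H(u)(\partial_x u,\partial_y u)$ lies in $L^{p/2}_{\mathrm{loc}}$ with $p/2>1$; standard $L^q$ elliptic regularity then gives $u\in W^{2,p/2}_{\mathrm{loc}}$ and, by Sobolev embedding in dimension $2$, $u\in C^{0,\alpha}_{\mathrm{loc}}$ for some $\alpha>0$ with $\nabla u \in L^q_{\mathrm{loc}}$ for all $q<\infty$.

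Next I would upgrade to Hölder continuity of the gradient via the Giusti--Miranda / Morrey results for quasilinear systems with quadratic growth in the gradient (as stated, e.g., in \cite[Theorem 9.8]{GM:book:12}), which applies once we know $u$ is continuous and has $\nabla u \in L^q_{\mathrm{loc}}$ for large $q$: this yields $u\in C^{1,\alpha}_{\mathrm{loc}}$. From here, the equation \eqref{systconformembed} has the form $\Delta u = F(u,\nabla u)$ with $F$ analytic in $(u,\nabla u)$ and the right-hand side in $C^{0,\alpha}_{\mathrm{loc}}$; classical Schauder estimates iterated on the equation produce $u\in C^{k,\alpha}_{\mathrm{loc}}$ for every $k$, hence $u\in C^\infty$.

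The only genuinely hard step is the first one, namely the passage from $H^1$ to continuity (and a bit better) using the antisymmetry of $\Omega$: this is precisely the content of Rivière's theorem, which we invoke as a black box. Everything afterwards is a routine bootstrap that uses only the smoothness of the target manifold $\mathcal{N}$ (to guarantee that $A$ and $H$ are smooth functions of $u$) and the fact that we are in dimension $2$, so that each gain in integrability or Hölder regularity propagates through the quadratic nonlinearity.
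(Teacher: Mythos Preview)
Your proposal is correct and follows exactly the approach the paper indicates: the paper does not give a detailed proof but states that the result is obtained by combining Rivi\`ere's theorem \cite{R:07} (via the antisymmetric reformulation $-\Delta u=\Omega\cdot\nabla u$ recalled just before the statement) with the Giusti--Miranda/Morrey $C^{1,\alpha}$ regularity \cite[Theorem~9.8]{GM:book:12}, and then iterating Schauder estimates. Your write-up simply spells out this bootstrap in more detail than the paper does.
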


We have the following result of Rivi\`ere \cite{R:07} (we found it written in this way in \cite[Theorem 3.2.]{LR:14}) and \cite{ST:13}:
\begin{prop}\cite{R:07}
\label{propregconfLq}
 There exists $\e_{0}>$ and $C_{p}$ only depending on $p\in \N^{*}$ so that for every $\Omega\in L^{2}(B(0,1),so(M)\otimes \R^{2})$ with $\nor{\Omega}{L^{2}(B(0,1))}\le \e_{0}$ and every $u\in W^{1,2}(B(0,1))$ solution of  $-\Delta u=\Omega \cdot\nabla u$, we have
\begin{align}
\nor{\nabla u}{L^{p}(B(0,1/4))}\le C_{p}\nor{\nabla u}{L^{2}(B(0,1))}.
\end{align}
\end{prop}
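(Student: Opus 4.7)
The plan is to follow the strategy introduced by Rivière, which exploits the antisymmetry of $\Omega$ to recast the subcritical elliptic system as a conservation law amenable to div-curl / Hardy space estimates. The argument breaks naturally into three steps: construction of a Coulomb gauge, derivation of a conservation law and a first Morrey gain, then a Calderón-Zygmund bootstrap.

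First, under the smallness assumption on $\nor{\Omega}{L^2(B(0,1))}$, I would build an Uhlenbeck/Hélein-type gauge: there exist $A \in W^{1,2}(B(0,1), SO(M)) \cap L^{\infty}$ and $B \in W^{1,2}_{0}(B(0,1), so(M))$ satisfying the algebraic identity
\[
A \Omega = A^{-1} \nabla A + \nabla^{\perp} B,
\]
together with the controls $\nor{\nabla A}{L^2(B(0,1))} + \nor{\nabla B}{L^2(B(0,1))} \le C \nor{\Omega}{L^2(B(0,1))}$. This is obtained by a variational or fixed point argument on the functional $A \mapsto \nor{A^{-1}\nabla A - A\Omega A^{-1}}{L^2}$ among $SO(M)$-valued Sobolev maps, and is precisely where $\e_0$ must be chosen small and the antisymmetry $\Omega \in so(M)$ is crucial. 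I expect this step to be the main obstacle, since without it one cannot extract the divergence form below and the two-dimensional subcriticality would prevent any direct bootstrap.

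With the gauge in hand, left-multiplying $-\Delta u = \Omega \cdot \nabla u$ by $A$ and using the identity above, the symmetric/antisymmetric split of the terms involving $\nabla A$ cancels precisely enough to leave
\[
- \Div(A \nabla u) = \nabla^{\perp} B \cdot \nabla u,
\]
where each component of the right-hand side is a sum of Jacobians of the form $\partial_x B_{ij} \partial_y u_\ell - \partial_y B_{ij} \partial_x u_\ell$. By the Coifman-Lions-Meyer-Semmes theorem these Jacobians lie in the local Hardy space $\mathcal H^1$ with norm controlled by $\nor{\nabla B}{L^2} \nor{\nabla u}{L^2}$. The operator $-\Div(A \nabla \cdot)$ is uniformly elliptic since $A$ is valued in $SO(M)$, so $\mathcal H^1$--$BMO$ duality together with a Hodge decomposition of $\nabla u$ (into a divergence-free and a curl-free part, the latter solved against $A$) yields the Morrey-type decay
\[
\int_{B(x_0,r/2)} |\nabla u|^2 \le C\,2^{-2\alpha} \int_{B(x_0,r)} |\nabla u|^2
\]
for some $\alpha = \alpha(\e_0) > 0$ and every ball $B(x_0,r) \subset B(0,1/2)$. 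Campanato's iteration then gives $\nabla u \in L^{q}_{\loc}$ for some $q > 2$.

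Finally I would bootstrap on the original equation. Once $\nabla u \in L^q_{\loc}$ with $q > 2$, Hölder's inequality gives $\Omega \cdot \nabla u \in L^{2q/(q+2)}_{\loc}$, and Calderón-Zygmund applied to $-\Delta u = \Omega \cdot \nabla u$ on slightly smaller balls yields $\nabla u \in L^{q'}_{\loc}$ with $q' > q$ determined by Sobolev embedding. Iterating this finitely many times, shrinking the domain at each step (say from $B(0, 1/2 + 2^{-k-2})$ to $B(0, 1/2 + 2^{-k-3})$, finishing inside $B(0,1/4)$), reaches any prescribed $p \in \N^*$; the resulting constant $C_p$ depends only on $p$ because $\e_0$ is universal and each step involves only a fixed number of universal constants.
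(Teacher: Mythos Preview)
The paper does not give its own proof of this proposition: it is stated with the attribution \cite{R:07} and the surrounding text explicitly says the result is due to Rivi\`ere (with the precise formulation taken from Laurain--Rivi\`ere and Sharp--Topping), and the paper then moves directly to deriving Theorem~\ref{thm:W1inftyH} as a consequence. So there is nothing in the paper to compare against.

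That said, your sketch is essentially the Rivi\`ere argument from the cited references and is correct in outline: antisymmetry of $\Omega$ allows the Coulomb-type gauge, the conservation law $-\Div(A\nabla u)=\nabla^{\perp}B\cdot\nabla u$ puts Jacobians on the right-hand side, Wente/CLMS gives the initial Morrey gain, and then one bootstraps. One small caution on the gauge identity you wrote: the standard normalisation is $\nabla A - A\Omega = -\nabla^{\perp}B\, A$ (equivalently $A^{-1}\nabla A - \Omega = -A^{-1}\nabla^{\perp}B\, A$), not $A\Omega = A^{-1}\nabla A + \nabla^{\perp}B$; with your formula the computation leading to $-\Div(A\nabla u)=\nabla^{\perp}B\cdot\nabla u$ does not close as written, so if you intend to flesh this out you should recheck that algebraic step.
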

We immediately obtain the following result (also written in \cite[Lemma 4.3]{Laur:XEDP}). The statement was also obtained for Harmonic maps in \cite{SU:81}.
\begin{thm}
\label{thm:W1inftyH}
Suppose that $u\in H^{1}(B(0,r),\mathcal{N})$ is a solution of \eqref{systconformembed}. There exists $\e>0$ and $C>0$ depending only on $\mathcal{N}$ and $\omega$ such that if
\begin{align*}
\int_{B(0,r)}|\nabla u(x)|^{2}dx\le \e,
\end{align*}
then $u$ satisfies the inequality 
\begin{align*}
\sup_{x\in B(0,r/8)}|\nabla u(x)|^{2}\le Cr^{-2}\int_{B(0,r)}|\nabla u(x)|^{2}dx.
\end{align*}
\end{thm}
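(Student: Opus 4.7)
The plan is to combine Rivière's reformulation of \eqref{systconformembed} as an antisymmetric system with the $L^p$-regularity result of Proposition \ref{propregconfLq}, and then bootstrap via standard elliptic regularity. The key is that the equation is conformally invariant so that the problem reduces to the scale $r=1$.

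First, I would reduce to the case $r=1$ by scaling: set $v(x) = u(rx)$ on $B(0,1)$. In dimension $2$, the energy is conformally invariant, $\int_{B(0,1)}|\nabla v|^2\,dx = \int_{B(0,r)}|\nabla u|^2\,dy$, and $v$ still satisfies \eqref{systconformembed} because both sides of the equation scale by a factor $r^2$. The statement for $u$ at scale $r$ is then equivalent to the $r=1$ statement for $v$, which carries the $r^{-2}$ factor of the theorem.

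Next, using Rivière's observation recalled just before Proposition \ref{propregconfLq}, $u$ solves the antisymmetric system $-\Delta u = \Omega\cdot\nabla u$ with $\Omega\in L^2(B(0,1),so(M)\otimes\R^2)$ and $\|\Omega\|_{L^2(B(0,1))}\le C\|\nabla u\|_{L^2(B(0,1))}\le C\sqrt{\varepsilon}$. Choosing $\varepsilon$ small enough so that $C\sqrt{\varepsilon}\le \varepsilon_0$, Proposition \ref{propregconfLq} applies at every scale in the interior and provides, for any $p\in[2,\infty)$,
\[ \|\nabla u\|_{L^p(B(0,1/4))} \le C_p\,\|\nabla u\|_{L^2(B(0,1))}. \]
Note that the dependence on $\|\nabla u\|_{L^2}$ is \emph{linear}; this will eventually give the linear bound squared that appears in the statement.

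Then I would bootstrap using the original equation \eqref{systconformembed}. Since $A(u)$ and $H(u)$ are bounded (the image lies in the compact manifold $\q N$), we have pointwise $|\Delta u|\le C|\nabla u|^2$, hence $\Delta u\in L^{p/2}$ locally, with
\[ \|\Delta u\|_{L^{p/2}(B(0,1/4))}\le C\|\nabla u\|_{L^p(B(0,1/4))}^2 \le CC_p^2\|\nabla u\|_{L^2(B(0,1))}^2. \]
Picking $p>4$, standard interior elliptic regularity gives $u\in W^{2,p/2}_{\loc}$, and Morrey's embedding in dimension $2$ yields $\nabla u\in C^{0,\alpha}_{\loc}$ with $\alpha = 1-4/p > 0$, hence $\nabla u\in L^\infty$ on say $B(0,1/8)$, together with the bound
\[ \|\nabla u\|_{L^\infty(B(0,1/8))} \le C\bigl(\|\Delta u\|_{L^{p/2}(B(0,1/4))} + \|\nabla u\|_{L^{p/2}(B(0,1/4))}\bigr) \le C\|\nabla u\|_{L^2(B(0,1))}, \]
where in the last inequality the quadratic contribution is absorbed by the linear one using $\|\nabla u\|_{L^2(B(0,1))}\le \sqrt{\varepsilon}\le 1$. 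Squaring gives $\|\nabla u\|_{L^\infty(B(0,1/8))}^2\le C\|\nabla u\|_{L^2(B(0,1))}^2$, and undoing the scaling produces the $r^{-2}$ factor, concluding the proof.

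The main (minor) obstacle is bookkeeping the small-energy assumption through the bootstrap: one must verify that after applying Proposition \ref{propregconfLq} one can iterate Rivière's estimate at dyadic interior scales so that $\|\Omega\|_{L^2}$ stays below $\varepsilon_0$ on each subball used in the bootstrap, and that the quadratic term $\|\nabla u\|_{L^p}^2$ is dominated by the linear term thanks to the smallness of $\|\nabla u\|_{L^2(B(0,1))}$. Everything else is standard elliptic regularity once the critical $L^p$ gain has been harvested from Rivière's theorem.
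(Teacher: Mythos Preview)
Your proposal is correct and follows essentially the same route as the paper: reduce to $r=1$ by conformal invariance, apply Proposition~\ref{propregconfLq} to get $\nabla u\in L^p$ on $B(0,1/4)$ for any finite $p$, use the quadratic structure $|\Delta u|\le C|\nabla u|^2$ to place $\Delta u$ in $L^{p/2}$ with $p/2>2$, and then invoke Calder\'on--Zygmund plus Sobolev/Morrey to reach $L^\infty$ on $B(0,1/8)$; the quadratic contribution is absorbed by the linear one thanks to $\|\nabla u\|_{L^2}\le\sqrt{\e}$. The only cosmetic difference is that the paper makes the elliptic step explicit by multiplying by a cutoff and subtracting the mean (Poincar\'e--Wirtinger) before applying Calder\'on--Zygmund, whereas you invoke interior estimates abstractly; also note there is no need to ``iterate Rivi\`ere's estimate at dyadic scales'' as you suggest in your last paragraph---a single application of Proposition~\ref{propregconfLq} already delivers every $L^p$.
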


\bnp
By scaling, we need to prove it only for $r=1$. Fix an integer $p>2$. The equation and Proposition \ref{propregconfLq} give 
\[ \nor{\Delta u}{L^{p}(B(0,1/4))}\le C\nor{\nabla u}{L^{2p}(B(0,1/4))}^{2}\le C_{p}\nor{\nabla u}{L^{2}(B(0,1))}^{2}. \]
Let $\chi\in \mc C^{\infty}_{c}(B(0,1/4))$ equal to $1$ on $B(0,1/8)$ and denote \[ u_{B_{1/4}}=\frac{1}{|B(0,1/4)|}\int_{B(0,1/4)}u(x)dx. \]
Applying elliptic estimates to the compactly supported function $v=\chi (u-u_{B_{1/4}})$,  the Poincar\'e-Wirtinger inequality and our previous bounds, we get 
\begin{align*}
\MoveEqLeft \nor{v}{W^{2,p}(B(0,1/4))}\le C\nor{\Delta v}{L^{p}(B(0,1/4))}\\
&\le C\left(\nor{\Delta u}{L^{p}(B(0,1/4))}+\nor{\nabla  u}{L^{p}(B(0,1/4))}+\nor{u-u_{B_{1/4}}}{L^{p}(B(0,1/4))}\right)\\
&\le C \left(\nor{\Delta u}{L^{p}(B(0,1/4))}+\nor{\nabla  u}{L^{p}(B(0,1/4))}\right) \lesssim_p  (\sqrt{\e} + 1) \nor{\nabla u}{L^{2}(B(0,1))},
\end{align*}
Since $p>2$, the Sobolev inequality gives 
\[ \nor{\nabla u}{L^{\infty}(B(0,1/8))}=\nor{\nabla v}{L^{\infty}(B(0,1/8)}\le \nor{v}{W^{2,p}(B(0,1/4))}. \] This gives the expected result.
\enp

The following equality is an equipartition result for solutions of \eqref{systconformembed} on $B(0,1)\setminus \{0\}$, in the energy space. This was proved for Harmonic maps in \cite[Lemma 3.5]{SU:81} using the holomorphy of the Hopf differential $u_{x}^{2}+u_{y}^{2}+iu_{x}\cdot u_{y}$. We prove it here by a Pohozahev type identity, following Laurain-Riviere \cite{LR:14}; there it is proved for solutions on the full $B(0,1)$ (see also (VII.14) in \cite{R:cours} for the harmonic case). So, we have to be careful about the cutoff introduced to avoid the point $0$ where $u$ might not be solution.

\begin{lem}
\label{lmPoho}
Let $u\in \mc C^{2}(B(0,1)\setminus \{0\},\mathcal{N})$ with finite energy on $B(0,1)$, and solution of \eqref{systconformembed}. Then, for any $0<r\le 1$, we have 
\begin{align} \label{eq:Poho}
\int_{\m S^{1}} |\partial_{\theta}u(r\theta)|^{2}~d \theta=r^{2}\int_{\m S^{1}} |\partial_{r}u(r\theta)|^{2}~d \theta.
\end{align}
\end{lem}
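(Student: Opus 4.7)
The plan is to prove a Pohozaev-type identity on an annulus by multiplying \eqref{systconformembed} by $x\cdot\nabla u$. Since the target $\mathcal{N}$ is a submanifold of $\m R^M$, each $\partial_k u$ lies in $T_{u(x)}\mathcal{N}$, and hence so does $x\cdot\nabla u=x_1\partial_x u+x_2\partial_y u$. As $A(u)(\nabla u,\nabla u)\in (T_u\mathcal{N})^\perp$ by definition of the second fundamental form, it is orthogonal to $x\cdot\nabla u$. For the $H$-term, the defining formula \eqref{formH} combined with the fact that $d\omega$ is a $3$-form (hence totally antisymmetric in its three tangent arguments) gives
\[
(x\cdot\nabla u)\cdot H(u)(\partial_x u,\partial_y u)=x_1\,d\omega_u(\partial_x u,\partial_x u,\partial_y u)+x_2\,d\omega_u(\partial_y u,\partial_x u,\partial_y u)=0.
\]
Consequently $\Delta u\cdot(x\cdot\nabla u)\equiv 0$ on $B(0,1)\setminus\{0\}$.

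Next I would run the classical Pohozaev computation on the annulus $A(\rho,R)=\{\rho<|x|<R\}$ for $0<\rho<R\le 1$. A direct differentiation gives, component-wise and summed,
\[
2\,\Delta u\cdot(x\cdot\nabla u)=\mathrm{div}\bigl(2(x\cdot\nabla u)\cdot\nabla u-x\,|\nabla u|^2\bigr)+(d-2)\,|\nabla u|^2,
\]
and in $d=2$ the last term vanishes. Integrating over $A(\rho,R)$, using the divergence theorem together with $\partial_\nu u=\partial_r u$, $x\cdot\nu=\pm r$ on the two spheres, and the polar decomposition $|\nabla u|^2=|\partial_r u|^2+r^{-2}|\partial_\theta u|^2$, the boundary term on $\partial B(0,r)$ equals $\tfrac12 F(r)$ where
\[
F(r):=\int_{\m S^1}\bigl(r^2|\partial_r u(r\theta)|^2-|\partial_\theta u(r\theta)|^2\bigr)\,d\theta.
\]
Since the bulk integral vanishes, $F(R)=F(\rho)$ for all $0<\rho<R\le 1$, i.e.\ $F\equiv c$ is constant on $(0,1]$.

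Finally I would use the finite-energy assumption to conclude $c=0$. Writing $F(r)=r\bigl(a(r)-b(r)\bigr)$ with
\[
a(r)=\int_{\m S^1} r\,|\partial_r u(r\theta)|^2\,d\theta,\qquad b(r)=\int_{\m S^1} r^{-1}|\partial_\theta u(r\theta)|^2\,d\theta,
\]
both nonnegative, the finite-energy identity $\int_0^1\!\!\int_{\m S^1}|\nabla u|^2\, r\,d\theta\,dr<+\infty$ forces $a,b\in L^1(0,1)$. If $c\neq 0$, the relation $a(r)-b(r)=c/r$ would yield $\int_0^1 a=\pm\infty$ or $\int_0^1 b=\pm\infty$, a contradiction; therefore $c=0$ and \eqref{eq:Poho} follows. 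The only subtle step is the passage from $F\equiv c$ to $c=0$ via finite energy; the algebraic cancellation of the nonlinearity and the Pohozaev computation itself are routine once the antisymmetry of $d\omega$ is exploited.
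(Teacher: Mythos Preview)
Your proof is correct. Both your argument and the paper's rest on the same two ingredients: the pointwise orthogonality $\Delta u\cdot(x\cdot\nabla u)=0$ (from $A(u)(\nabla u,\nabla u)\perp T_u\mathcal N$ and the antisymmetry of $d\omega$) and the Pohozaev multiplier $x\cdot\nabla u$ with the divergence identity you wrote.

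The difference is in how the singularity at the origin is handled. The paper introduces a cutoff $X_n=(1-\chi)(n\cdot)X$, integrates by parts on the full disc, and shows that the extra commutator terms $n\nabla\chi(n\cdot)\cdot(\dots)$ tend to zero by dominated convergence (using $\nabla u\in L^2$); this directly yields $F(1)=0$, and scaling gives the general $r$. You instead integrate on the annulus $\{\rho<|x|<R\}$, where no cutoff is needed, obtain $F\equiv c$, and then use finite energy through the $L^1$ argument $|a-b|=|c|/r\notin L^1(0,1)$ to force $c=0$. Your route is a bit more elementary: it avoids tracking the cutoff error terms and makes the role of finite energy very transparent. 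The paper's route, on the other hand, follows the template of Laurain--Rivi\`ere and generalizes readily to other multipliers or domains where the ``constant on $(0,1]$'' trick is unavailable. One small phrasing point: in your last step it is cleaner to say $|a-b|\le a+b\in L^1(0,1)$ contradicts $|a-b|=|c|/r$, rather than ``$\int_0^1 a=\pm\infty$ or $\int_0^1 b=\pm\infty$''.
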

\bnp
By scaling, we suffices to prove the result for $r=1$.
The key property is the orthogonality when $f$ is as in \eqref{systconformembed}, which holds pointwise on $B(0,1) \setminus \{ 0 \}$
\begin{align}
\label{orthoPoho}
0=\left<\partial_{x}u,f(u,\nabla u)\right>_{\R^M}=\left<\partial_{y}u,f(u,\nabla u)\right>_{\R^M}.
\end{align}
Indeed, $A(u)(\nabla u,\nabla u) \perp T_{u}\mathcal{N}$ and $\partial_x u, \partial_y u \in T_{u}\mathcal{N}$ at each point of $B(0,1) \setminus \{ 0 \}$. For the $H$, it is a consequence of \eqref{formH}, which gives
\[ \left<\partial_{x}u,H(u)(\partial_x u,\partial_y u) u\right>_{\R^M}=d\widetilde{\omega}_{u(x)} (\partial_x u,\partial_x u,\partial_y u)=0. \]
The same holds for $\partial_y u$.

Define now the vector fields
\[ X=x\partial_{x}+y\partial_{y} \quad \text{and} \quad X_{n}=(1-\chi)(n \cdot)X, \]
where $\chi\in C^{\infty}_{c}(\R^{2})$ equals $1$ near zero. We claim that, for any function $w\in \mc C^{2}(B(0,1)\setminus \{0\},\R)$ with finite energy, we have
\begin{align}
\label{egalPoho}
\underset{n\to +\infty}{\lim} \int_{B(0,1)}\Delta  w X_{n}\cdot \nabla w ~ dx = &\int_{\partial B(0,1)} |\partial_{r}w|^{2} ~d\sigma-\frac{ 1}{2}\int_{\partial B(0,1)}|\nabla w|^{2} ~d\sigma
\end{align}
Let us assume that is holds for now, and complete the proof. Due to \eqref{orthoPoho},
\[ \sum_{i=1}^{M}\Delta u_{i}\partial_{x}u_{i}=\sum_{i=1}^{M}\Delta u_{i}\partial_{y}u_{i}=0 \quad \text{on } B(0,1) \setminus \{ 0 \}, \]
and so, as any singularity in $0$ is voided by $X_n$,
\[ \sum_{i=1}^{M}\Delta u_{i}X_{n}\cdot \nabla u^{i} \quad \text{on } B(0,1). \] 
We integrate on $B(0,1)$: using \eqref{egalPoho} with $w=u_{i}$ for each $i$, we can let $n \to +\infty$ to get
\[ 0 = \int_{\partial B(0,1)} |\partial_{r} u|^{2} ~d\sigma-\frac{ 1}{2}\int_{\partial B(0,1)}|\nabla u|^{2} ~d\sigma. \]
Finally remark that $|\nabla u|^{2}=|\partial_{r}u|^{2}+|\partial_{\theta}u|^{2}$ on $\partial B(0,1)$, so that we obtained \eqref{eq:Poho} for $r=1$, as desired.
 
It remains to prove \eqref{egalPoho}. We obtain by integration by parts on $B(0,1)$:
\begin{align}
\int_{B(0,1)}\Delta  w X_{n}\cdot \nabla w ~dx =\int_{\partial B(0,1)} \partial_{n}w X_{n}\cdot \nabla w ~d\sigma-\int_{B(0,1)}\nabla  w\cdot \nabla(X_{n}\cdot \nabla w) ~dx.\end{align}
This leads to compute the following two terms
\begin{align}
\nabla  w\cdot \nabla(X_{n}\cdot \nabla w)& =d(X_{n}\cdot \nabla w)(\nabla w)= D_{\nabla w}X_{n}\cdot \nabla w+X_{n}\cdot D_{\nabla w}\nabla w \\
& =D_{\nabla w}X_{n}\cdot \nabla w+ \text{Hess} (w)(X_{n},\nabla w),\\
\text{div} \left(X_{n}\frac{ |\nabla w|^{2}}{2}\right) & =X_{n}\cdot \nabla \frac{ |\nabla w|^{2}}{2}+ \text{div}(X_{n})\frac{ |\nabla w|^{2}}{2} \\
& = \text{Hess} (w)(X_{n},\nabla w)+ \text{div}(X_{n})\frac{ |\nabla w|^{2}}{2}.
\end{align}
For our specific choice of $X_{n}$, we have 
\begin{gather*}
D_{\nabla w}X_{n}\cdot \nabla w=(1-\chi)(n\cdot) |\nabla w|^{2} - n (\nabla \chi(n\cdot)\cdot\nabla w)(X\cdot \nabla w) \\ 
\text{and} \quad  \text{div}(X_{n})=2(1-\chi)(n\cdot) + n\nabla \chi(\cdot)\cdot X.
\end{gather*}
So, we obtain
\begin{align}
\MoveEqLeft \int_{B(0,1)}\Delta  w X_{n}\cdot \nabla w ~dx =\int_{\partial B(0,1)} \partial_{n}w X_{n}\cdot \nabla w~d\sigma-\int_{B(0,1)} \text{div} \left(X_{n}\frac{ |\nabla w|^{2}}{2}\right)~dx\\
& +\int_{B(0,1)} \left[ \left( \frac{n}{2}\nabla \chi(n\cdot)\cdot X\right)|\nabla w|^{2}+n (\nabla \chi(n\cdot)\cdot\nabla w)(X\cdot \nabla w) \right]~dx.\end{align}
Since $\nabla w\in L^2(B(0,1))$ and $n|X| |\nabla \chi(n\cdot)|$ is uniformly bounded, we get by dominated convergence that the last integral converges to zero. For the first two terms of the right hand side, we use the definition of $X_n$ and integration by parts, and we get that for each $n$,
\begin{multline*}
\int_{\partial B(0,1)} \partial_{n}w X_{n}\cdot \nabla w ~d\sigma - \int_{B(0,1)} \text{div} \left(X_{n}\frac{ |\nabla w|^{2}}{2}\right) ~dx\\
=\int_{\partial B(0,1)} |\partial_{r}w|^{2} ~d\sigma~ -\frac{ 1}{2}\int_{\partial B(0,1)}|\nabla w|^{2} ~d\sigma.
\end{multline*}
which gives our claim \eqref{egalPoho}.
\enp

We can now give the proof of removable singularity. The proof follows \cite[Theorem 3.6]{SU:81} which was performed for Harmonic maps.

\bnp[Proof of Theorem \ref{thmsackuhlextensconf}]
By scaling, since $u$ is of finite energy, we can assume without loss of generality that $u$ is defined and a solution of \eqref{systconformembed} on $B(0,2) \setminus \{ 0 \}$ and furthermore satisfies the smallness condition
\[ \delta^2 := \int_{B(0,2)}|\nabla u(x)|^{2}dx\le \min(1/5,\e), \]
where $\e$ is given by Theorem \ref{thm:W1inftyH}.

\emph{Step 1.} Let $q$ be a radial function of the form $a\log(|x|)+b$ on each annulus of the form $2^{-m} \le |x| < 2^{-m+1}$ ($m \in \m N$) so that $q(2^{-m})=\frac{1}{2\pi}\int_{\m S^{1}}u(2^{-m}\theta)d\theta$. We claim that
\begin{align}
\label{difquH}
\forall x\in B(0,1)\setminus \{0\},\quad |q(x)-u(x)|\le C\nor{\nabla u}{L^{2}(B(0,{2}))}.
\end{align}
Indeed, for $2^{-m}\le|x|\le 2^{-m+1}$, we have, since $q$ is monotonous on this interval as a variable of $r = |x|$,
\begin{align*}
|q(x)-u(x)| & \le |q(x)-q(2^{-m+1})|+|q(2^{-m+1})-u(x)| \\
& \le |q(2^{-m})-q(2^{-m+1})|+|q(2^{-m+1})-u(x)|.
\end{align*}
Note that using a finite suitable covering of the annulus, Theorem \ref{thm:W1inftyH} also gives uniformly for $0<r\le 1$
\begin{align}
\label{gradinftyann} 
\sup_{|x|=r}|\nabla u(x)|^{2}\le Cr^{-2}\int_{B(0,2r)\setminus B(0,r/2)}|\nabla u(x)|^{2}dx.
\end{align} 
So, we get, for all $m\in \N^*$,
\begin{align*}
\max \left\{|u(x)-u(y)|; 2^{-m}\le|x|,|y| \le 2^{-m+1}\right\} &\le 2^{-m+1}\max_{2^{-m}\le|x| \le 2^{-m+1}} |\nabla u(x)| \\
& \le C\nor{\nabla u}{L^{2}(B(0,2))}.
\end{align*}
In particular, taking $y=2^{-m+1}\theta$ and integrating in $\theta \in\m S^{1}$, we get
\[ |q(2^{-m+1})-u(x)|\le C\nor{\nabla u}{L^{2}(B(0,2))}. \]
Similarly, taking $y=2^{-m+1}\theta$, $x=2^{-m}\theta$ and integrating in $\theta \in\m S^{1}$, we get
\[ |q(2^{-m})-q(2^{-m+1})|\le C\nor{\nabla u}{L^{2}(B(0,2))}. \]
Summing up the above two bounds, we obtained \eqref{difquH}. 

\bigskip

\emph{Step 2.} 
For all $r \in (0,1]$, there hold
\begin{align} \label{est:H1_bord_reg_HM}
\left(1-2 \delta \right)\int_{B(0,r)} |\nabla u|^{2} ~dx \le r\nor{\nabla u}{L^{2}(\partial B(0,r))}^{2}.
\end{align}
By dilation, (and as $\delta = \| \nabla u \|_{B(0,2)} \ge \| \nabla u \|_{B(0,2r)}$ for $r \le 1$), it suffices to prove it for $r=1$. 

Using Lemma \ref{lmPoho} (observe that $u$ is smooth on $B(0,1) \setminus \{ 0 \}$ due to Theorem \ref{th:Hel91}), we get for any $0<r\le1$, 
\begin{align}
\int_{\m S^{1}} |\partial_{\theta}u(r\theta)|^{2}~d \theta =r^{2}\int_{\m S^{1}} |\partial_{r}u(r\theta)|^{2}~d \theta,
\end{align}
and after integrating in $r$, 
\begin{align}
%\label{e:utheta}
\int_{B(0,1)} \frac{|\partial_{\theta}u(x)|^{2}}{|x|^2}~dx=\int_{B(0,1)} |\partial_{r}u(x)|^{2} ~dx =\frac{1}{2}\int_{B(0,1)} |\nabla u(x)|^{2} ~dx.
\end{align}
Also, since $q$ is radial, 
\begin{align}
%\label{e:uthetabis}
\int_{B(0,1)} \frac{|\partial_{\theta}u(x)|^{2}}{|x|^2}~dx\le \int_{B(0,1)}|\nabla q(x)-\nabla u(x)|^{2}dx.
\end{align}
Hence we obtained
\begin{align} \label{est:Gu_q-u} 
\frac{1}{2} \int_{B(0,r)} |\nabla u|^{2} ~dx \le \int_{B(0,1)}|\nabla q(x)-\nabla u(x)|^{2}dx.
\end{align}
To bound the right hand term, we decompose dyadically:
\begin{align*}
\int_{B(0,1)}|\nabla q(x)-\nabla u(x)|^{2}dx= \sum_{m\in \N^{*}}\int_{B(0,2^{-m+1})\setminus B(0,2^{-m})}|\nabla q(x)-\nabla u(x)|^{2}dx.
\end{align*}
For fixed $m\in \N^*$, integration by parts give
\begin{align*}
\MoveEqLeft \int_{B(0,2^{-m+1})\setminus B(0,2^{-m})}|\nabla q(x)-\nabla u(x)|^{2}dx \\
& = -\int_{B(0,2^{-m+1})\setminus B(0,2^{-m})}\Delta  (q- u)\cdot (q-u)dx\\
& \qquad + 2^{-m+1}\int_{\m S^{1}}(q-u)(2^{-m+1}\theta)\cdot \frac{\partial (q-u)}{\partial r^+}(2^{-m+1}\theta) ~d\theta \\ 
& \qquad -2^{-m} \int_{\m S^{1}}(q-u)(2^{-m}\theta)\cdot \frac{\partial (q-u)}{\partial r^-}(2^{-m}\theta) ~d\theta.
\end{align*}
Let us precise that $\frac{\partial q}{\partial r}(r)$ is piecewise smooth, so that $\frac{\partial q}{\partial r^{\pm}}$ means the respective left and right derivative; $u$ is regular outside of $0$, so we can write $\frac{\partial u}{\partial r}$ instead of $\frac{\partial u}{\partial r^{\pm}}$. Also $\frac{\partial q}{\partial r^{\pm}}(2^{-m}\theta)$ is constant in $\theta$ because $q$ has radial symmetry, and so that from the definition of $q(2^{-m})$,
\begin{align*}
\int_{\m S^{1}}(q-u)(2^{-m}\theta)\cdot \frac{\partial q}{\partial r^{\pm}}(2^{-m}\theta) ~d\theta=0.
\end{align*}
Now, using \eqref{difquH} and \eqref{gradinftyann}, we have the estimate
\begin{multline*}
\left|2^{-m+1}\int_{\m S^{1}}(q-u)(2^{-m+1}\theta)\cdot \frac{\partial u}{\partial r}(2^{-m+1}\theta)~d\theta\right| \\
\le C \nor{\nabla u}{L^{2}(B(0,2))}\nor{\nabla u}{L^{2}(B(0,2^{-m+2})\setminus B(0,2^{-m}))} \to 0 \quad \text{as } m \to +\infty.
\end{multline*}
Hence, summing up the telescopic series and recalling that $\Delta q(x)=0$ on each annulus $2^{-m+1} >  |x| > 2^{-m}$, and the equation on $u$, we infer 
\begin{align*}
\MoveEqLeft \int_{B(0,1)}|\nabla q(x)-\nabla u(x)|^{2}dx
 = \sum_{m\in \N^{*}} \int_{B(0,2^{-m+1})\setminus B(0,2^{-m})}\Delta   u \cdot (q-u)dx \\
& \hspace{48mm} + \int_{\m S^{1}}(q-u)(\theta)\cdot \frac{\partial (q-u)}{\partial r^+}(\theta) ~d\theta \\
& = \int_{B(0,1} \Delta u \cdot (q-u)dx +  \int_{\m S^{1}}(q-u)(\theta)\cdot \frac{\partial (q-u)}{\partial r^+}(\theta) ~d\theta \\
&=  \int_{B(0,1)} f(u,\nabla u)\cdot (q-u)dx - \int_{\m S^{1}}(q-u)(\theta)\cdot \frac{\partial u(\theta)}{\partial r} ~d \theta.
\end{align*}
Using again the estimate \eqref{difquH} and that $f$ is quadratic in $\nabla u$, and $A$ and $H$ are (smooth and so)  bounded on the compact manifold $\q N$ we can estimate
\begin{align*}
\left| \int_{B(0,1)}f(u,\nabla u)\cdot (q-u)dx\right| &\le C( \| A \|_{L^\infty}, \| H \|_{L^\infty}) \nor{\nabla u}{L^{2}(B(0,1))}^{2}\nor{q-u}{L^{\infty}(B(0,1))} \\
& \le C\nor{\nabla u}{L^{2}(B(0,1))}^{2}\nor{\nabla u}{L^{2}(B(0,2))},\\
\left| \int_{\m S^{1}}(q-u)(\theta)\cdot \frac{\partial u(\theta)}{\partial r}\right|~d\theta&\le \nor{q-u}{L^{2}(\m S^{1})}\nor{\frac{\partial u}{\partial r}}{L^{2}(\m S^{1})}.
\end{align*}
So, inserting in \eqref{est:Gu_q-u}, we arrive at
\begin{align*}
\frac{1}{2}\int_{B(0,1)} |\nabla u|^{2} & \le \int_{B(0,1)}|\nabla q(x)-\nabla u(x)|^{2}dx \\ 
& \le C\nor{\nabla u}{L^{2}(B(0,1))}^{2}\nor{\nabla u}{L^{2}(B(0,2))}+\nor{q-u}{L^{2}(\m S^{1})}\nor{\frac{\partial u}{\partial r}}{L^{2}(\m S^{1})}
\end{align*}
Recall that $\nor{\nabla u}{L^{2}(B(0,2))} = \delta$ and, again due to Lemma \ref{lmPoho}, $\nor{\frac{\partial u}{\partial r}}{L^{2}(\m S^{1})}^2 = \frac{1}{2} \nor{\nabla u}{L^{2}(\m S^{1})}^2$ so that equivalently, this writes
\begin{align}
\left(\frac{1}{2}-\delta\right)\int_{B(0,1)} |\nabla u|^{2} ~dx\le \frac{1}{\sqrt{2}}\nor{q-u}{L^{2}(\m S^{1})}\nor{\nabla u}{L^{2}(\m S^{1})}.
\end{align}
Since on $\m S^{1}$, $q$ is the average of $u$, we have, by Poincaré-Wirtinger inequality (and Lemma \ref{lmPoho}),
\begin{align}
\nor{q-u}{L^{2}(\m S^{1})}\le \nor{\partial_{\theta} u}{L^{2}(\m S^{1})} = \frac{1}{\sqrt{2}}\nor{\nabla u}{L^{2}(\m S^{1})}.
\end{align}
So, we get
\begin{align}
\left(1-2\delta\right)\int_{B(0,1)} |\nabla u(x)|^{2}~dx\le \nor{\nabla u}{L^{2}(\m S^{1})}^{2},
\end{align}
as desired.

\bigskip

\emph{Step 3.}

The inequality \eqref{est:H1_bord_reg_HM} writes 
\[ (1-2\delta)\int_0^r g(s)ds\le r g(r) \quad \text{with} \quad g(r)=\nor{\nabla u}{L^{2}(\partial B(0,r))}^{2}. \]
This differential inequality in $r$ integrates to yield
\begin{align}
\label{e:intBr}
\forall r \in (0,1], \quad \int_{B(0,r)} |\nabla u|^{2}~dx\le r^{1-2\delta}\int_{B(0,1)} |\nabla u|^{2}~dx.
\end{align}
Applying Theorem \ref{thm:W1inftyH} on balls $B(x_0, |x_0|)\subset B(0,2 |x_0|)$ and some translation of \eqref{e:intBr},
\begin{align}
\forall x_0 \in B(0,1/2), \quad |\nabla u|^{2}(x_{0})\le |x_0|^{-2}\int_{B(x_{0},|x_0|)} |\nabla u|^{2}\le C|x_0|^{-1-2\delta}\int_{B(0,2)} |\nabla u|^{2}dx.
\end{align}
As we choose $\delta < 1/2$, then $-\frac{1}{2}-\delta > -1$, and $u$ has a finite limit at $0$ (see the proof of Lemma \ref{lm:H1HM}). We can therefore extend $u$ to $B(0,1)$ with $u\in \mc C^0(B(0,1))$.

Also, so there exists $q >2$ such that $(-\frac{1}{2}-\delta)q >-2$. Therefore, $\nabla u\in L^{q}(B(0,1/2))$. We are then in a position to apply Lemma \ref{lmsol0pointe}, which assures that $u$ is smooth.
\enp

\begin{cor}
\label{corscalinvers}
Let $u$ be a weak solution of \eqref{systconformembed} on $\R^{2}\setminus B(0,1)$ with finite energy. Then, there exists $u_{\infty}\in \mathcal{N}\subset \R^{M}$ so that $u(x)\underset{|x|\to +\infty}{\longrightarrow} u_{\infty}$. 

In particular, for any $\e>0$, there exists $R\ge 1$ so that for any $r \ge R$, denoting $u_{r}(x)=u\left(rx \right)$, $u_r$ satisfies \eqref{systconformembed} on $\R^{2}\setminus B(0,1)$ and
\begin{align*}
\nor{\nabla u_{r}}{L^{2}(\R^{2}\setminus B(0,1))}+\nor{u_{r}-u_{\infty}}{L^{\infty}(\R^{2}\setminus B(0,1))} \le \e.
\end{align*}
\end{cor}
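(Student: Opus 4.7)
The plan is to reduce the problem to the interior situation at $0$ via the conformal inversion $x \mapsto x/|x|^2$, and then to invoke the removable singularity Theorem~\ref{thmsackuhlextensconf}. Let $\widetilde u(x) := u\!\left(x/|x|^2\right)$, defined on $B(0,1)\setminus\{0\}$. Since $d=2$, the Dirichlet energy is conformally invariant, so $\widetilde u$ has finite energy on $B(0,1)\setminus\{0\}$. The system \eqref{systconformembed} is conformally invariant in dimension $2$ as well: the principal part $\Delta$ transforms with the conformal factor which cancels that of the right-hand side (the second fundamental form term is quadratic in $\nabla u$ and the $H$-system term involves $u_x\wedge u_y$, both being transformed exactly by the Jacobian of the conformal map). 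So $\widetilde u$ is a weak finite-energy solution of \eqref{systconformembed} on $B(0,1)\setminus\{0\}$.

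By Theorem \ref{thmsackuhlextensconf}, $\widetilde u$ extends to a smooth function $\widetilde u \in \mc C^\infty(B(0,1),\mathcal{N})$. We set
\[ u_\infty := \widetilde u(0) \in \mathcal{N}. \]
Continuity of $\widetilde u$ at $0$ translates back, via $y = x/|x|^2$, into
\[ u(y) = \widetilde u(y/|y|^2) \longrightarrow \widetilde u(0) = u_\infty \qquad \text{as } |y|\to +\infty, \]
which proves the first claim.

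For the quantitative estimate, note that $u_r$ satisfies \eqref{systconformembed} on $\R^2 \setminus B(0,1)$ by scale-invariance of the equation in $d=2$. The conformal invariance of the Dirichlet integral gives the change of variable identity
\[ \nor{\nabla u_r}{L^2(\R^2\setminus B(0,1))}^2 = \nor{\nabla u}{L^2(\R^2\setminus B(0,r))}^2 \xrightarrow[r\to+\infty]{} 0, \]
since $|\nabla u|\in L^2(\R^2\setminus B(0,1))$. For the $L^\infty$ piece, the extended $\widetilde u$ is continuous at $0$ with value $u_\infty$, so
\[ \nor{u_r - u_\infty}{L^\infty(\R^2\setminus B(0,1))} = \sup_{|x|\ge 1}|\widetilde u(x/(r|x|^2))-\widetilde u(0)| = \sup_{|z|\le 1/r}|\widetilde u(z)-\widetilde u(0)|, \]
which tends to $0$ as $r\to+\infty$. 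Choosing $R$ large enough that both quantities are at most $\e/2$ yields the conclusion.

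The only step that is not purely routine is the conformal invariance of equation \eqref{systconformembed} under $x\mapsto x/|x|^2$ in dimension $2$; for the harmonic-map part this is classical, and for the $H$-system term it follows from the fact that $\partial_x u\wedge \partial_y u$ transforms as a $2$-form. Once this is checked, everything else is an immediate consequence of the removable singularity statement already proved and the scaling/conformal identities.
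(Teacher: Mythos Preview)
Your proof is correct and follows essentially the same approach as the paper: conformal inversion to $B(0,1)\setminus\{0\}$, application of the removable singularity Theorem~\ref{thmsackuhlextensconf} to define $u_\infty=\widetilde u(0)$, and then the scaling identity for the Dirichlet energy together with continuity of $\widetilde u$ at $0$ for the quantitative part. Your write-up is in fact slightly more explicit than the paper's, notably in spelling out the $L^\infty$ bound via $u_r(x)=\widetilde u(x/(r|x|^2))$.
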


\bnp
Define $\widetilde{u}(x)=u\left(\frac{x}{|x|^{2}}\right)$ which is also a solution with finite energy on $B(0,1)\setminus \{0\}$. Theorem \ref{thmsackuhlextensconf} implies that $\widetilde{u}$ can be extended to a smooth function on $B(0,1)$. In particular, denoting $u_{\infty} = \tilde u(0) \in \q N$,
$u(x) = \tilde u(x/|x|^2) \to u_\infty$  as $x \to +\infty$. 
 The second result is then direct once we check that $u_{r}$ is also a solution of \eqref{systconformembed} on $\{|x|\ge  1/r\}$ with 
\begin{align*}
\int_{|x|\ge 1}|\nabla u_{r}(x)|^{2}dx= \int_{|x|\ge 1} r^2 |\nabla u(rx)|^{2}dx=\int_{|y|\ge r}|\nabla u(y)|^{2}dy.
\end{align*}
So, it can be made arbitrary small. 
\enp

\begin{prop}[Regularity and trace]\label{propelipticregtraceHarmon}
Let $u_{\infty}\in \q N$. There exists $\e_{0}>0$ and $C>0$ such that the following holds. Let $u\in H^{1}(B(0,12))$ be solution of \eqref{formconf} taking values some chart around $u_{\infty}$, so that 
\[ \nor{\nabla u}{L^{2}(B(0,12))}+\nor{u-u_{\infty}}{L^{\infty}(B(0, 12))} =: \e \le \e_0. \]
Then, we have the estimate
\[ \nor{\nabla u}{L^{\infty}(B(0,3/2))}\le C\e. \] 
Moreover, for any $s<4$, there hold $u_{\left|\mathbb{S}^{1}\right.}\in H^{s}(\mathbb{S}^{1})$ and $ \nor{u_{\left|\mathbb{S}^{1}\right.}-u_{\infty}}{H^{s}(\mathbb{S}^{1})}\le C_{s}\e$.
 \end{prop}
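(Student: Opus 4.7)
The first statement is a direct consequence of Theorem~\ref{thm:W1inftyH} (the small-energy $W^{1,\infty}$ bound for solutions of \eqref{systconformembed}). Applying it to the ball $B(0,12)$ (with $r=12$) immediately yields
\[ \nor{\nabla u}{L^{\infty}(B(0,3/2))}^{2} \le C \cdot 12^{-2} \int_{B(0,12)} |\nabla u|^{2} \le C \e^{2}. \]
More importantly, applying it on translated balls $B(x_{0}, \rho) \subset B(0,12)$ for $x_{0} \in B(0,R_{0})$ with some $R_{0} \in (3/2, 12)$ and fixed $\rho > 0$ small enough, one obtains $\| \nabla u \|_{L^{\infty}(B(0,R_{0}))} \le C\e$ on a ball strictly larger than $B(0,3/2)$, which provides the room needed for the bootstrap below.

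For the second statement, the strategy is to bootstrap interior regularity using equation \eqref{formconf}, namely $\Delta u = f(u,\nabla u)$ with $f$ quadratic in $\nabla u$ and analytic in its arguments (the Christoffel symbols $\Gamma$ and the coefficients $H$). At each step I work on a slightly smaller ball, and choose a finite nested sequence $B(0,3/2) \subset B(0, R_{k}) \subset \cdots \subset B(0,R_{0})$. Since $|u-u_{\infty}| \le \e$ and $|\nabla u| \lesssim \e$ on $B(0,R_{0})$, and $\Gamma, H$ are smooth analytic functions of $u$ (bounded on compact subsets of the chart), the nonlinearity satisfies $\| f(u,\nabla u) \|_{L^{\infty}(B(0,R_{0}))} \le C \e^{2}$. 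Interior $W^{2,p}$ elliptic regularity (for any $1 < p < \infty$) then gives $\| u - u_{\infty} \|_{W^{2,p}(B(0,R_{1}))} \lesssim \e^{2} + \e \lesssim \e$. By Sobolev embedding in dimension $2$, taking $p$ large, $\| u - u_{\infty} \|_{C^{1,\alpha}(B(0,R_{1}))} \lesssim \e$ for any $\alpha < 1$. Iterating via Schauder estimates: since $\Gamma, H$ are smooth and $u \in C^{k,\alpha}$, composition gives $f(u,\nabla u) \in C^{k-1,\alpha}$ with norm $\lesssim \e^{2}$, and then $u - u_{\infty} \in C^{k+1,\alpha}(B(0,R_{k+1}))$ with norm $\lesssim \e$. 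A finite number of iterations produces $\| u - u_{\infty} \|_{C^{k,\alpha}(B(0,R))} \le C_{k}\e$ for any prescribed $k$ and some $R > 3/2$.

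The trace estimate follows: fixing $k$ with $k + \alpha > s + 1/2$, the continuous embedding $C^{k,\alpha}(B(0,R)) \hookrightarrow H^{s+1/2}(B(0,R))$ (valid on bounded domains) together with the standard trace $H^{s+1/2}(B(0,R)) \to H^{s}(\partial B(0,1))$ yields
\[ \| u|_{\m S^{1}} - u_{\infty} \|_{H^{s}(\m S^{1})} \le C_{s} \| u - u_{\infty} \|_{H^{s+1/2}(B(0,R))} \le C_{s} \e. \]

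The only genuine difficulty is the careful tracking of the linear dependence on $\e$ at each bootstrap step (one must ensure that the quadratic nature of $f$ always provides an $\e^{2}$ factor, which, combined with the linear $\e$-control of $\| u - u_{\infty} \|_{L^{\infty}}$, keeps the resulting estimate linear in $\e$). This is routine but essential; once this is set up, the rest is a standard elliptic bootstrap in the interior.
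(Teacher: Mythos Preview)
Your proof is correct and follows essentially the same route as the paper: both invoke Theorem~\ref{thm:W1inftyH} to obtain the $L^\infty$ gradient bound, then bootstrap interior elliptic regularity using that the nonlinearity is quadratic in $\nabla u$, and finish with a trace estimate. The only cosmetic difference is that the paper iterates in the $W^{k,p}$ scale (Calder\'on--Zygmund) and stops at $W^{4,p}$, which is exactly what is needed for $s<4$, whereas you switch to Schauder ($C^{k,\alpha}$) iteration after the first step---this would in fact give the trace bound for arbitrary $s$, so you prove slightly more than stated.
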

 
\bnp
%It is geometrically clear \todo{a prouver}that there exists $N\in \N$ and $k\in\N^{*}$ so that $B(0,3/2)$ can be covered by a finite union of annular sets of the form $A_{j}=\{x\in \R^{2}; |x-x_{j}|\in (2^{-k},2^{-k+1})\subset B(0,2)$ for $x_{j}\in B(0,3/2)$, $j=1,\dots,N$.

%Lemma \ref{lmSULip} or 
As $u$ is a solution of \eqref{formconf} with values in some fixed coordinate charts around $u_{\infty}$, by considering some embedding $\mathcal{N}\subset \R^M$, we can identify $u$ with a solution of \eqref{systconformembed} and we use this representation from now on; the result by Rivière \cite{R:07} ensures that $u\in \mc C^{\infty}(B(0,12),\mathcal{N})$.
Due to Theorem \ref{thm:W1inftyH},  
\begin{align*}
\sup_{B(0,3/2)}|\nabla u|^{2}\le C \int_{B(0,12)}|\nabla u(x)|^{2}dx\le C \e^{2}.
\end{align*} 
Using the equation \eqref{systconformembed}, it implies
\begin{align*}
\sup_{\{3/4\le r\le 5/4\}}|\Delta u|\le C\e^{2}.
\end{align*} 
Let $\chi$ be  a cutoff function supported in $\{3/4\le |x|\le 5/4\}$ and equal to $1$ on $\{7/8\le |x|\le 9/8\}$, and denote $\widetilde{u}=(u-u_{\infty})\chi$. For any $1<p<+\infty$, we have 
\begin{align*}
\MoveEqLeft \nor{\Delta \widetilde{u} }{L^{p}(\R^{2)}}\le C\nor{\Delta \widetilde{u} }{L^{\infty}(3/4\le r\le 5/4)}\\
&\le C\nor{\Delta u }{L^{\infty}( 3/4\le r\le 5/4)}+\nor{\nabla u}{L^{\infty}(3/4\le r\le 5/4)}+\nor{u-u_{\infty} }{L^{\infty}(3/4\le r\le 5/4)} \\
& \le C\e.
\end{align*} 
By Calder\'on-Zygmund estimates \cite[Theorem 9.11]{GTbook}, we infer
\begin{align}\nor{\widetilde{u}-u_{\infty} }{W^{2,p}(7/8\le |x|\le 9/8)}\le \nor{\widetilde{u}-u_{\infty} }{L^p(3/4\le r\le 5/4)}+\nor{\Delta \widetilde{u}}{L^p(3/4\le r\le 5/4)} \le C_{p}\e.
\end{align}
Since $u=\widetilde{u}+u_{\infty}$ on $\{7/8\le |x|\le 9/8\})$, we can iterate by checking the equation satisfied by $\nabla u$ and then $\nabla^2 u$ to get 
\begin{align}\nor{u-u_{\infty} }{W^{4,p}(31/32\le |x|\le 33/32)}\le  C_{p}\e.
\end{align}

%Taking $p>2$ and $\gamma=1-\frac{2}{p}$, Morrey inequalities give $\nor{\nabla u}{C^{0,\gamma}(\{7/8le |x|le 9/8\}))}=\nor{\nabla \widetilde{u}}{C^{0,\gamma}(\{7/8le |x|le 9/8\}))}\le C_{\gamma}\nor{\widetilde{u}-u_{\infty} }{W^{2,p}(\{7/8le |x|le 9/8\})}\le C_{\gamma}\e$ for any $0\le \gamma<1$. 

For any $7/2<s<4$, choosing $p$ so that $2 < p <\frac{1}{4-s}$, then $s < 4 - 1/p$ and by trace estimates:
\begin{align*}
\nor{u_{\left|\mathbb{S}^{1}\right.}-u_{\infty}}{H^{s}(\mathbb{S}^{1})} &\le C_{s}\nor{u_{\left|\mathbb{S}^{1}\right.}-u_{\infty}}{W^{4-1/p,p}(\mathbb{S}^{1})} \\
& \le C_s \nor{u-u_{\infty} }{W^{4,p}(31/32\le |x|\le 33/32)}\le  C_{s} \e. \qedhere
\end{align*}

\enp

\begin{prop}
\label{propUCPDiskHarmon}
There exists $\e>0$ (depending on $\mathcal{N}$ and $\omega$) such that the following holds. Let $u$ and $v$ be two smooth solutions of \eqref{formconf} in $B(0,1)$ so that $u=v$ on the unit sphere $\m S^{1}$ and are small in the sense that  
\begin{align*}
\nor{\nabla u}{L^{\infty}(B(0,1))}+\nor{\nabla v}{L^{\infty}(B(0,1))}\le \e.
\end{align*}
Then, $u=v$ in $B(0,1)$.  
\end{prop}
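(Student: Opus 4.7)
The plan is a standard energy estimate on the difference $w := u - v$, exploiting the fact that the nonlinearity $f(u,\nabla u)$ in \eqref{formconf} is quadratic in $\nabla u$ with coefficients smooth in $u$, and combining this with the Poincaré inequality (which is available since $w$ vanishes on $\partial B(0,1)$).

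First I would expand the pointwise difference. Writing
\[
f_i(u,\nabla u) - f_i(v,\nabla v) = \bigl[f_i(u,\nabla u) - f_i(v,\nabla u)\bigr] + \bigl[f_i(v,\nabla u) - f_i(v,\nabla v)\bigr],
\]
and using that $u$ and $v$ stay in a fixed coordinate chart on which $\Gamma_{jk}^i$ and $H_{jk}^i$ are smooth, the first bracket is controlled by $C|w||\nabla u|^2$, while, by bilinearity in the gradient, the second bracket is controlled by $C(|\nabla u|+|\nabla v|)|\nabla w|$. Under the smallness hypothesis $\| \nabla u\|_{L^\infty} + \|\nabla v\|_{L^\infty} \le \e$, this gives the pointwise bound
\[
|\Delta w(x)| \le C\e^2 |w(x)| + C\e |\nabla w(x)|, \qquad x \in B(0,1),
\]
for a constant $C$ depending only on $\q N$ and $\omega$.

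Next I would multiply by $w$ and integrate by parts on $B(0,1)$. Since $w=0$ on $\partial B(0,1)$, there is no boundary term, and
\[
\int_{B(0,1)} |\nabla w|^2\, dx = -\int_{B(0,1)} w \, \Delta w\, dx \le C\e^2 \|w\|_{L^2}^2 + C\e \|w\|_{L^2} \|\nabla w\|_{L^2}.
\]
Invoking the Poincaré inequality $\|w\|_{L^2(B(0,1))} \le C_P \|\nabla w\|_{L^2(B(0,1))}$ (valid because $w|_{\partial B(0,1)} = 0$), both terms on the right are bounded by $C'\e\,\|\nabla w\|_{L^2}^2$ (absorbing the $\e^2$ term into the $\e$ one since $\e \le 1$). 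Hence for $\e$ small enough that $C'\e < 1$ we obtain $\nabla w \equiv 0$, which combined with the vanishing trace yields $w \equiv 0$.

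There is no real obstacle here: the argument only needs the quadratic gradient structure of \eqref{formconf}, the fact that $\Gamma, H$ are locally Lipschitz in $u$, and Poincaré's inequality on the unit disk. The smallness of $\nabla u$ and $\nabla v$ in $L^\infty$ is comfortably strong (we only use it in $L^2$ via Hölder to dominate the quadratic terms), and the constant $\e$ can be taken to depend solely on the Poincaré constant and the $C^1$-norms of the local coefficients $\Gamma_{jk}^i, H_{jk}^i$ on a neighbourhood of the image of $u$ and $v$.
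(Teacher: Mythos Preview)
Your proof is correct and follows essentially the same route as the paper's: subtract the equations, use the quadratic-in-gradient structure with Lipschitz coefficients to bound $|\Delta w|$ pointwise by $C\e^2|w|+C\e|\nabla w|$, then multiply by $w$, integrate by parts using the zero Dirichlet trace, and close with Poincar\'e. The paper writes out the decomposition of $\Delta w$ term by term rather than abstractly, but the estimate and the conclusion are identical.
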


\bnp
Let $w=u-v$, then $w$ satisfies
\begin{align}
\label{HarmonicR}
\Delta w & = -[\Gamma(u)-\Gamma(v)] (\nabla u,\nabla u)-[\Gamma(v) (\nabla w,\nabla u+\nabla v)\\
& \qquad +[H(u)-H(v)] (\partial_x u,\partial_y  u)+H(v) (\partial_x  w, \partial_y u)+H(v) (\partial_x  v,\partial_y  w).
\end{align}
where we have used the bilinearity of $\Gamma$ and $H$ and the symmetry of $\Gamma$. 
Taking scalar product in $\m R^N$ of \eqref{HarmonicR} with $w$, integrating and then performing an integration by parts, we get, using the Dirichlet boundary condition for $w$ and that $\| \Gamma \|_{\mc C^1(\q N)}, \| H \|_{\mc C^1(\q N)} \le C_{\mathcal{N},\omega}$ are bounded (since the target manifold $\mathcal{N}$ is compact)
\begin{align}
\MoveEqLeft \int_{|x|\le 1}|\nabla w|^{2}dx =\int_{|x|\le 1}[\Gamma(u)-\Gamma(v)] (\nabla u,\nabla u)\cdot w~dx \\
& \quad +\int_{|x|\le 1}\Gamma(v) (\nabla w,\nabla u+\nabla v)\cdot w~dx - \int_{|x|\le 1}[H(u)-H(v)] ({\partial_x} u,\partial_y  u)\cdot w~dx \\
& \quad - \int_{|x|\le 1}H(v) ({\partial_x}  w, {\partial_y}  u)\cdot w -\int_{|x|\le 1}H(v) ({\partial_x} v,{\partial_y}  w)\cdot w~dx\\
\label{est:Harmonic_diff} &\le C\int_{|x|\le 1}|\nabla u|^{2} |w|^{2}~dx+C\int_{|x|\le 1}|\nabla w| \left(|\nabla u|+|\nabla v|\right) |w|~dx\\
&\le C\e^{2}\int_{|x|\le 1} |w|^{2}~dx+C\e \int_{|x|\le 1}|\nabla w|^{2}+ |w|^{2}~dx \\
& \le  \tilde{C} \e\int_{|x|\le 1}|\nabla w|^{2}dx.
\end{align}
(we used the Poincar\'e inequality on the bounded set $B(0,1)$, which holds due to the Dirichlet boundary for $w$). For  $ \tilde{C} \e<1$, this gives $w=0$.
\enp

%\begin{cor}
%\label{propUCPDiskHarmoninfty}
%\todo{ATTENTION}
%\todo{je crois que l'ancienne version etait fausse et on ne s'en sert pas}

%There exists $\e>0$ (only depending on $\mathcal{N}$ the compact target manifold) so that if $u$ and $v$ are two smooth solutions of \eqref{formconf} in $\R^{2}\setminus B(0,1))$ in dimension $2$ so that $u=v$ on the unit sphere $\m S^{1}$ and we have the assumptions  
%\begin{align*}
%\nor{\nabla u}{L^{\red{2}}(\R^{2}\setminus B(0,1))}+\nor{\nabla v}{L^{\red{2}}(\R^{2}\setminus B(0,1))}\le \e
%\end{align*}
%Then, $u=v$ in $\R^{2}\setminus B(0,1))$.  
%\end{cor}
%\bnp
%We consider $\widetilde{u}(x)=u\left(\frac{x}{|x|^{2}}\right)$ (resp. $\widetilde{v}$) which is also a harmonic map with the same bound for the energy on $B(0,1)\setminus \{0\}$. Thanks to Theorem \ref{thmsackuhlextensconf}, it extends to a smooth solution in $B(0,1)$.
%\enp

\subsection{Some results about Harmonic Maps in dimension \texorpdfstring{$d\ge 3$}{d>=3}}
\label{s:app:HM3}
In this section, we gather some already known facts about Harmonic Maps in $\R^{d}$ (for the initial manifold). We refer to \cite{HeleinBook,HeleinSurvey, Schoen:84} for books or survey on this very studied topic. 

One important tool will be the following $\e$-regularity result of Schoen-Uhlenbeck \cite[Theorem 2.2]{Schoen:84}.
\begin{thm}[Theorem 2.2 of \cite{Schoen:84}]\label{thmSchoen} Suppose $u\in \mc C^{2}(B(0,r),\mathcal{N})$ is a harmonic map. 
There exists $\e>0$ and $C>0$, depending only on $d$, $\mathcal{N}$ such that if
\begin{align*}
r^{2-d}\int_{B(0,r)}|\nabla u|^2\le \e
\end{align*}
then $u$ satisfies the inequality 
\begin{align*}
\sup_{B(0,r/2)}|\nabla u|^2\le Cr^{-d}\int_{B(0,r)}|\nabla u|^2.
\end{align*}
\end{thm}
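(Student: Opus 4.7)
The statement is the classical $\varepsilon$-regularity of Schoen--Uhlenbeck; the authors quote it from \cite{Schoen:84}, so the plan is to reconstruct the standard proof. The first move is to reduce to the case $r=1$ by the scaling $u(x) \mapsto u(rx)$, which preserves the harmonic map equation and turns the assumption into $\int_{B(0,1)}|\nabla u|^2 \le \varepsilon$ and the conclusion into $\sup_{B(0,1/2)} |\nabla u|^2 \le C \int_{B(0,1)}|\nabla u|^2$.

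The key analytic tool is the Bochner formula for harmonic maps into a compact Riemannian manifold $\mathcal{N}$:
\begin{equation*}
\tfrac{1}{2}\Delta |\nabla u|^2 = |\mathrm{Hess}\, u|^2 + \langle \mathrm{Ric}_{\mathbb{R}^d}(\nabla u), \nabla u\rangle - \sum_{\alpha,\beta} \langle R_{\mathcal{N}}(u_\alpha,u_\beta)u_\alpha, u_\beta\rangle.
\end{equation*}
Since $\mathcal{N}$ is compact, its sectional curvature is bounded, so this yields the pointwise subharmonic-type inequality $-\Delta |\nabla u|^2 \le C_{\mathcal{N}} |\nabla u|^4$. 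Setting $\phi := |\nabla u|^2$, we then have $-\Delta \phi \le C\phi^2$ in $B(0,1)$.

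The next step is to turn this nonlinear subharmonic inequality into an $L^\infty$ bound when the energy is small, via Moser iteration. Pick a smooth cut-off $\eta$ supported in a ball $B(0,\rho_1) \subset B(0,1)$ equal to $1$ on $B(0,\rho_2)$ for $\rho_2 < \rho_1$. Multiplying $-\Delta\phi \le C\phi^2$ by $\eta^2 \phi^{p-1}$ (for $p \ge 1$) and integrating by parts, one obtains, after using Cauchy--Schwarz to absorb $|\nabla \eta|$ terms, an inequality of the form
\begin{equation*}
\int \eta^2 |\nabla \phi^{p/2}|^2 \lesssim p \int |\nabla \eta|^2 \phi^p + p\int \eta^2 \phi^{p+1}.
\end{equation*}
Combining this with the Sobolev embedding $H^1 \hookrightarrow L^{2d/(d-2)}$ and the small-energy assumption $\int |\nabla u|^2 = \int \phi \le \varepsilon$ (used via H\"older to control the ``bad'' term $\int \eta^2 \phi^{p+1}$ on the right by $\varepsilon^{2/d}\|\eta\phi^{p/2}\|_{L^{2d/(d-2)}}^2$), one can absorb the cubic-type term into the left-hand side \emph{provided $\varepsilon$ is small enough}, yielding a reverse-H\"older inequality at level $p \to p\cdot \frac{d}{d-2}$ between nested balls. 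Iterating this on a sequence of shrinking radii $\rho_k \downarrow 1/2$ produces the $L^\infty$ bound $\sup_{B(0,1/2)}\phi \le C \int_{B(0,1)}\phi$, which is the desired conclusion.

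The main obstacle is the standard one in Moser iteration for harmonic maps: the nonlinearity $\phi^2$ is \emph{critical} with respect to the scaling in dimension $d$ of the assumption $\int |\nabla u|^2 \le \varepsilon$. The smallness of $\varepsilon$ (relative to the absolute constants from the Sobolev embedding and from the curvature bound of $\mathcal{N}$) is precisely what allows the absorption step to succeed; without it, the iteration would blow up. An alternative route, closer to the original Schoen--Uhlenbeck argument, would be to use the blow-up/maximum principle trick with $\psi(s) = (1-s)^2 \sup_{B(0,s)}|\nabla u|^2$ and derive a contradiction at the maximizing point by rescaling and applying De Giorgi--Nash--Moser to $-\Delta \phi + C\phi^2 \le 0$; this would bypass the iteration but has the same small-energy bottleneck.
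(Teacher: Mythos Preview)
The paper does not prove this theorem; it is quoted verbatim from \cite{Schoen:84} as an input, so there is no ``paper's own proof'' to compare against. Your task was thus to reconstruct a proof, and your outline via the Bochner inequality $-\Delta\phi\le C_{\mathcal N}\phi^2$ with $\phi=|\nabla u|^2$ is the right starting point.

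There is, however, a genuine gap in the absorption step of your Moser iteration. You claim that the bad term $\int\eta^2\phi^{p+1}$ can be controlled by $\e^{2/d}\|\eta\phi^{p/2}\|_{L^{2^*}}^2$ using only $\int\phi\le\e$. But the H\"older split $\int\eta^2\phi\cdot\phi^p\le\|\phi\|_{L^{d/2}}\|\eta\phi^{p/2}\|_{L^{2^*}}^2$ requires smallness of $\phi$ in $L^{d/2}$, not $L^1$; for $d\ge3$ these are not the same, and your exponent $\e^{2/d}$ is not justified by any interpolation available at this stage (you do not yet have an $L^\infty$ or higher-$L^p$ bound to interpolate against). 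So the iteration cannot start as written.

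There are two standard fixes, and you already named one. The first is to invoke the monotonicity formula for $\mc C^2$ harmonic maps, which gives $\rho^{2-d}\int_{B(x_0,\rho)}|\nabla u|^2\le C\e$ uniformly for $x_0\in B(0,r/2)$ and $\rho<r/2$; this Morrey-type control is exactly what replaces the missing $L^{d/2}$ smallness and lets the iteration proceed. The second is the point-picking argument you relegate to an ``alternative route'': maximize $(r-|x|)^2|\nabla u|^2$ at some $x_0$, rescale so that $|\nabla v|^2\le4$ with $|\nabla v(0)|^2=1$, observe that then $-\Delta|\nabla v|^2\le 4C_{\mathcal N}|\nabla v|^2$ is a \emph{linear} subsolution inequality, and apply the ordinary mean value inequality to reach a contradiction with the small-energy hypothesis. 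This is in fact Schoen's actual proof, and it bypasses the bootstrapping issue entirely. Your instinct to mention it was correct; it should have been the main line rather than a parenthetical.
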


We begin by a result about the decay of regular solution in the energy space. The proof relies on the regularity result of \cite{SU:83} for small solutions and a scaling argument. Some similar results also appear in \cite{ABLV22}.

\bnp[Proof of Lemma \ref{lmregHMHd}]
 For $x_{0}\in \R^{d}$ and $R=|x_{0}|\ge 2$, we denote $\tilde{u}(x)=u(x_{0}+Rx/2)$ which is still solution of the Harmonic map on $B(0,1)$ . We compute 
\begin{align*} 
\int_{B(0,1)}|\nabla \tilde{u}(x)|^{2}dx & = (R/2)^{2}\int_{B(0,1)}|\nabla u(x_{0}+Rx/2)|^{2}dx \\
& = (R/2)^{2-d}\int_{B(x_{0},R/2)}|\nabla u(y)|^{2}dy.
 \end{align*}
Since $d\ge 3$, this becomes small for large $R$, so that we can apply Theorem \ref{thmSchoen} to $\tilde{u}$ and $r=1$ to get
\begin{align*}
|\nabla \tilde{u}(0)|^{2} & \le {C}\int_{B(0,1)}|\nabla \tilde{u}(x)|^{2}dx \le C R^{2-d}\int_{B(x_{0},R/2)}|\nabla u(y)|^{2}dy \\
& \le C R^{2-d} \int_{B(0,3R/2)\setminus B(0,R/2)}|\nabla u(y)|^{2}dy.
\end{align*}
Since $|\nabla \tilde{u}(0)|=R|\nabla u(x_{0})|$, We get
\begin{align*}
|\nabla u(x_{0})|^{2}\le C R^{-d}\int_{\R^{d}\setminus B(0,R/2)}|\nabla u(y)|^{2}dy,
\end{align*}
which is the first item once $R_{0}$ is chosen large enough.

Concerning the second point, the $L^{2}$ part is immediate when $R_{0}$ is taken large enough, the $L^{\infty}$ part is a consequence of the first item, while the $L^{d}$ part is obtained by interpolation.
\enp

\bnp[Proof of Lemma \ref{lm:H1HM}]
We first fix a direction $e_{1}=(1,0,\cdots, 0)\in \R^{d}$ and prove that $u(ne_{1})$ is convergent in $\q N \subset \R^{M}$. Lemma \ref{lmregHMHd} and the fundamental Theorem of calculus give for $n$ large enough $\left|u((n+1)e_{1})-u(ne_{1})\right|= \frac{o(1)}{n^{d/2}}$. In particular, since $d/2>1$, the series is convergent and $u(ne_{1})$ is convergent to some $u_{\infty}\in \mathcal{N}\subset \R^{M}$. Now, for any $n\in \N$ and $x\in \R^{d}$ with $|x|\in [n,n+1]$, there exists a path $\gamma\subset \R^{d}\setminus B(0,n)$ piecewise affine and of length $|\gamma|\le C_{d}n$ so that $\gamma(0)=ne_{1}$ and $\gamma(1)=x$. The fundamental Theorem of Calculus gives $\left|u(x)-u(ne_{1})\right|\lesssim n \sup_{s\in [0,1]}|\nabla u(\gamma(s))|= o(n^{1-d/2})$ after having used again Lemma \ref{lmregHMHd}. Since $1-d/2<0$, we get the expected convergence. Note also that the proof gives more precisely
\begin{align}
\label{e:cvgHMinfty}
\nor{u-u_{\infty}}{L^{\infty}(B(0,n+1)\setminus B(0,n))}= o(n^{1-d/2}) + o(1)\sum_{k\ge n}k^{-d/2} = o(n^{1-d/2}).
\end{align}

Now, we prove $u-u_{\infty}\in \dot{H}^{1}(\R^{d}\setminus B(0,1))$, that is $u-u_{\infty}$ can be approximated for the energy norm by a sequence of functions in $\mc C^{\infty}_{c}(\R^{d})$. 

First consider $u_{n}=\chi\left(\frac{x}{n}\right)(u-u_{\infty})$ where $\chi\in \mc C^{\infty}_{c}(B(0,2);[0,1])$ equals to $1$ on $B(0,1)$. By assumption, $u_n \in \mc C^2_c(\m R^d)$ and for
\begin{align*}
v_{n}& :=u-u_{\infty}-u_{n}= (u-u_{\infty})\left(1- \chi\left(\frac{x}{n}\right)\right),\\
\nabla v_{n}& =\nabla u\left(1- \chi\left(\frac{x}{n}\right)\right)-\frac{1}{n} (u-u_{\infty})(\nabla\chi)\left(\frac{x}{n}\right),
\end{align*}
we can bound, using estimate \eqref{e:cvgHMinfty},
\begin{align*}
\int_{\R^{d}} |\nabla v_{n}|^{2} dx&\le \int_{\R^{d}\setminus B(0,n)} |\nabla u|^{2 }dx+\nor{u-u_{\infty}}{L^{\infty}(B(0,2n)\setminus B(0,n))}^{2}\frac{1}{n^{2}} \int_{\R^{d}}\left|\nabla\chi\left(\frac{x}{n}\right)\right|^{2}\\
&\le \int_{\R^{d}\setminus B(0,n)} |\nabla u|^{2 }dx+o(n^{2-d})n^{d-2} \to 0 \quad \text{as } n \to +\infty.
\end{align*}
Once $u-u_{\infty}$ has been approximated by some $\mc C^{2}$ compactly supported functions, it is easy to approximate it by some smooth compactly supported functions by standard approximation process.
\enp
We will use the following result of uniqueness. Some uniqueness result appear in bounded domain in \cite{Struwe:98} for small data in some more refined norms.

\bnp[Proof of Proposition \ref{propUCPRdHarmon}]
The beginning of the proof is a similar computation as in Proposition \ref{propUCPDiskHarmon} except that since we are on an unbounded set, we need to interpret the integration by parts as the weak formulation of Definition \ref{defsoluextHM}. Also, we consider the equation for the embedded formulation. Let $w=u-v\in \dot{H}^{1}(\R^{d}\setminus B(0,1))$ with $w=0$ on $\m S^{d-1}$, that is $w\in \dot{H}^{1}_0(\R^{d}\setminus B(0,1))$ 
In particular, there exists a sequence of $w_{n} \in \mc C^{\infty}_{c}(\R^{d}\setminus B(0,1))$ so that $\nor{\nabla (w-w_{n})}{L^{2}}\to 0$. 
Using that $w$ solves \eqref{HarmonicR}, we arrive as in \eqref{est:Harmonic_diff} to
\begin{align*}
\int_{\{|x|\ge 1\}}\nabla w\cdot \nabla w_{n}dx
& \le C\int_{\{|x|\ge 1\}}|\nabla u|^{2} |w||w_{n}|~dx \\
& \qquad +C\int_{\{|x|\ge 1\}}|\nabla w| \left(|\nabla u|+|\nabla v|\right) |w_{n}|~dx.
\end{align*}
Using H\"older inequality for the exponents $\frac{2}{d}+\frac{1}{2^*}+\frac{1}{2^*}=1$ and $\frac{1}{2}+\frac{1}{d}+\frac{1}{2^*}=1$,  together with the Sobolev embedding \eqref{Soblocal}, we obtain for $n$ large enough
\begin{align*}
\MoveEqLeft \int_{\{|x|\ge 1\}}\nabla w\cdot \nabla w_{n}dx\\
&\le C\nor{\nabla u}{L^{d}(\{|x|\ge 1\})}^{2}\nor{w}{L^{2^*}(\{|x|\ge 1\})}\nor{w_{n}}{L^{2^*}(\{|x|\ge 1\})} \\
& +C\nor{\nabla w}{L^{2}(\{|x|\ge 1\})}\left(\nor{\nabla u}{L^{d}(\{|x|\ge 1\})}+\nor{\nabla v}{L^{d}(\{|x|\ge 1\})}\right)\nor{w_{n}}{L^{2^*}(\{|x|\ge 1\})}\\
& \le \tilde{C}(\e^{2}+\e)\nor{\nabla w}{L^{2}(\{|x|\ge 1\})}\nor{\nabla w_n}{L^{2}(\{|x|\ge 1\})}.
%&\le C\left(\nor{U}{\dot{H}^{1}}+\nor{V}{\dot{H}^{1}}\right) \nor{R}{\dot{H}^{1}}\nor{R}{L^{\infty}}le C\left(\nor{u}{\dot{H}^{1}}+\nor{v}{\dot{H}^{1}}\right)^{2} \nor{R}{\dot{H}^{1}}^{2}
\end{align*}
Taking the limit $n \to +\infty$, this gives 
\[ \nor{\nabla w}{L^{2}(\{|x|\ge 1\})}^2 \le \tilde{C}(\e^{2}+\e)\nor{\nabla w}{L^{2}(\{|x|\ge 1\})}^2. \]
For $\e$ so small that $\tilde{C}(\e^{2}+\e)<1/2$, we get $\nabla w=0$ and therefore $w=0$ as expected.
\enp
\bibliographystyle{alpha} 
\bibliography{biblio}

\begin{thebibliography}{CKLS15b}

\bibitem[ABLV23]{ABLV22}
Stan Alama, Lia Bronsard, Xavier Lamy, and Raghavendra Venkatraman.
\newblock Far-field expansions for harmonic maps and the electrostatics analogy
  in nematic suspensions.
\newblock {\em J. Nonlinear Sci.}, 33(3):No. 39, 27, 2023.

\bibitem[BBC75]{BBC:75}
Philippe Benilan, Haim Brezis, and Michael~G. Crandall.
\newblock A semilinear equation in {$L^{1}(R^{N})$}.
\newblock {\em Ann. Scuola Norm. Sup. Pisa Cl. Sci. (4)}, 2(4):523--555, 1975.

\bibitem[BG80]{BG:80}
Hans-J\"urgen Borchers and Wolf-Dieter Garber.
\newblock Analyticity of solutions of the {$O(N)$} nonlinear {$\sigma $}-model.
\newblock {\em Comm. Math. Phys.}, 71(3):299--309, 1980.

\bibitem[BGT05]{InventionesBGT}
Nicolas Burq, Patrick G\'erard, and Nikolay Tzvetkov.
\newblock {Bilinear eigenfunction estimates and the nonlinear Schr\"odinger
  equation on surfaces}.
\newblock {\em Invent. math}, 159:187--223, 2005.

\bibitem[BK05]{BK:05}
Jean Bourgain and Carlos~E. Kenig.
\newblock On localization in the continuous {A}nderson-{B}ernoulli model in
  higher dimension.
\newblock {\em Invent. Math.}, 161(2):389--426, 2005.

\bibitem[BVV91]{VV:91}
Marie-Fran\c{c}oise Bidaut-V\'{e}ron and Laurent V\'{e}ron.
\newblock Nonlinear elliptic equations on compact {R}iemannian manifolds and
  asymptotics of {E}mden equations.
\newblock {\em Invent. Math.}, 106(3):489--539, 1991.

\bibitem[CKLS15a]{CKLS:15I}
R.~C\^{o}te, C.~E. Kenig, A.~Lawrie, and W.~Schlag.
\newblock Characterization of large energy solutions of the equivariant wave
  map problem: {I}.
\newblock {\em Amer. J. Math.}, 137(1):139--207, 2015.

\bibitem[CKLS15b]{CKLS:15II}
R.~C\^{o}te, C.~E. Kenig, A.~Lawrie, and W.~Schlag.
\newblock Characterization of large energy solutions of the equivariant wave
  map problem: {II}.
\newblock {\em Amer. J. Math.}, 137(1):209--250, 2015.

\bibitem[C{ô}15]{C:15}
R.~C{ô}te.
\newblock On the soliton resolution for equivariant wave maps to the sphere.
\newblock {\em Comm. Pure Appl. Math.}, 68(11):1946--2004, 2015.

\bibitem[Dav14]{B:14}
Blair Davey.
\newblock Some quantitative unique continuation results for eigenfunctions of
  the magnetic {S}chr\"{o}dinger operator.
\newblock {\em Comm. Partial Differential Equations}, 39(5):876--945, 2014.

\bibitem[DKM13]{DuyKMClassif}
Thomas Duyckaerts, Carlos Kenig, and Frank Merle.
\newblock Classification of radial solutions of the focusing, energy-critical
  wave equation.
\newblock {\em Camb. J. Math.}, 1(1):75--144, 2013.

\bibitem[DKM23]{DKM:23}
Thomas Duyckaerts, Carlos Kenig, and Frank Merle.
\newblock Soliton resolution for the radial critical wave equation in all odd
  space dimensions.
\newblock {\em Acta Math.}, 230(1):1--92, 2023.

\bibitem[Fri58]{Friedman:58}
Avner Friedman.
\newblock On the regularity of the solutions of nonlinear elliptic and
  parabolic systems of partial differential equations.
\newblock {\em J. Math. Mech.}, 7:43--59, 1958.

\bibitem[GHL04]{GallotHulinLaf}
Sylvestre Gallot, Dominique Hulin, and Jacques Lafontaine.
\newblock {\em Riemannian geometry}.
\newblock Universitext. Springer-Verlag, Berlin, third edition, 2004.

\bibitem[GJ71]{GJ:71}
Robert~E. Greene and Howard Jacobowitz.
\newblock Analytic isometric embeddings.
\newblock {\em Ann. of Math. (2)}, 93:189--204, 1971.

\bibitem[GM12]{GM:book:12}
Mariano Giaquinta and Luca Martinazzi.
\newblock {\em An introduction to the regularity theory for elliptic systems,
  harmonic maps and minimal graphs}, volume~11 of {\em Appunti. Scuola Normale
  Superiore di Pisa (Nuova Serie) [Lecture Notes. Scuola Normale Superiore di
  Pisa (New Series)]}.
\newblock Edizioni della Normale, Pisa, second edition, 2012.

\bibitem[GT01]{GTbook}
David Gilbarg and Neil~S. Trudinger.
\newblock {\em Elliptic partial differential equations of second order}.
\newblock Classics in Mathematics. Springer-Verlag, Berlin, 2001.
\newblock Reprint of the 1998 edition.

\bibitem[H\"90]{HSeveralBook}
Lars H\"{o}rmander.
\newblock {\em An introduction to complex analysis in several variables},
  volume~7 of {\em North-Holland Mathematical Library}.
\newblock North-Holland Publishing Co., Amsterdam, third edition, 1990.

\bibitem[H\'02]{HeleinBook}
Fr\'{e}d\'{e}ric H\'{e}lein.
\newblock {\em Harmonic maps, conservation laws and moving frames}, volume 150
  of {\em Cambridge Tracts in Mathematics}.
\newblock Cambridge University Press, Cambridge, second edition, 2002.
\newblock Translated from the 1996 French original, With a foreword by James
  Eells.

\bibitem[H{\'e}l91]{Helein:91}
Fr{\'e}d{\'e}ric H{\'e}lein.
\newblock R\'{e}gularit\'{e} des applications faiblement harmoniques entre une
  surface et une vari\'{e}t\'{e} riemannienne.
\newblock {\em C. R. Acad. Sci. Paris S\'{e}r. I Math.}, 312(8):591--596, 1991.

\bibitem[HW08]{HeleinSurvey}
Fr\'{e}d\'{e}ric H\'{e}lein and John~C. Wood.
\newblock Harmonic maps.
\newblock In {\em Handbook of global analysis}, pages 417--491, 1213. Elsevier
  Sci. B. V., Amsterdam, 2008.

\bibitem[Jac72]{J:72}
Howard Jacobowitz.
\newblock Implicit function theorems and isometric embeddings.
\newblock {\em Ann. of Math. (2)}, 95:191--225, 1972.

\bibitem[Kla86]{K:86}
S.~Klainerman.
\newblock The null condition and global existence to nonlinear wave equations.
\newblock In {\em Nonlinear systems of partial differential equations in
  applied mathematics, {P}art 1 ({S}anta {F}e, {N}.{M}., 1984)}, volume~23 of
  {\em Lectures in Appl. Math.}, pages 293--326. Amer. Math. Soc., Providence,
  RI, 1986.

\bibitem[KS07]{KS:07}
Paschalis Karageorgis and Walter~A. Strauss.
\newblock Instability of steady states for nonlinear wave and heat equations.
\newblock {\em J. Differential Equations}, 241(1):184--205, 2007.

\bibitem[Lau17]{Laur:XEDP}
Paul Laurain.
\newblock Analyse des probl\`emes conform\'{e}ment invariants.
\newblock In {\em S\'{e}minaire {L}aurent {S}chwartz---\'{E}quations aux
  d\'{e}riv\'{e}es partielles et applications. {A}nn\'{e}e 2016--2017}, pages
  Exp. No. XII, 26. Ed. \'{E}c. Polytech., Palaiseau, 2017.

\bibitem[LM72]{LionsMagenes1}
J.-L. Lions and E.~Magenes.
\newblock {\em Non-homogeneous boundary value problems and applications. {V}ol.
  {I}}.
\newblock Springer-Verlag, New York-Heidelberg, 1972.
\newblock Translated from the French by P. Kenneth, Die Grundlehren der
  mathematischen Wissenschaften, Band 181.

\bibitem[LR14]{LR:14}
Paul Laurain and Tristan Rivi\`ere.
\newblock Angular energy quantization for linear elliptic systems with
  antisymmetric potentials and applications.
\newblock {\em Anal. PDE}, 7(1):1--41, 2014.

\bibitem[LRLR22]{LLR_Book1}
J\'{e}r\^{o}me Le~Rousseau, Gilles Lebeau, and Luc Robbiano.
\newblock {\em Elliptic {C}arleman estimates and applications to stabilization
  and controllability. {V}ol. {I}. {D}irichlet boundary conditions on
  {E}uclidean space}, volume~97 of {\em Progress in Nonlinear Differential
  Equations and their Applications}.
\newblock Birkh\"{a}user/Springer, Cham, [2022] \copyright 2022.
\newblock PNLDE Subseries in Control.

\bibitem[LW08]{LW:08}
Fanghua Lin and Changyou Wang.
\newblock {\em The analysis of harmonic maps and their heat flows}.
\newblock World Scientific Publishing Co. Pte. Ltd., Hackensack, NJ, 2008.

\bibitem[Nas66]{NashAnalytic}
J.~Nash.
\newblock Analyticity of the solutions of implicit function problems with
  analytic data.
\newblock {\em Ann. of Math. (2)}, 84:345--355, 1966.

\bibitem[Ren08]{Render:08}
Hermann Render.
\newblock Real {B}argmann spaces, {F}ischer decompositions, and sets of
  uniqueness for polyharmonic functions.
\newblock {\em Duke Math. J.}, 142(2):313--352, 2008.

\bibitem[Riv95]{R:95}
Tristan Rivi\`ere.
\newblock Everywhere discontinuous harmonic maps into spheres.
\newblock {\em Acta Math.}, 175(2):197--226, 1995.

\bibitem[Riv07]{R:07}
Tristan Rivi\`ere.
\newblock Conservation laws for conformally invariant variational problems.
\newblock {\em Invent. Math.}, 168(1):1--22, 2007.

\bibitem[Riv12]{R:cours}
Tristan Rivi\`ere.
\newblock Conformally invariant variational problems.
\newblock {\em
  \url{https://people.math.ethz.ch/~riviere/papers/conformal-course.pdf}},
  2012.

\bibitem[Sch84]{Schoen:84}
Richard~M. Schoen.
\newblock Analytic aspects of the harmonic map problem.
\newblock In {\em Seminar on nonlinear partial differential equations
  ({B}erkeley, {C}alif., 1983)}, volume~2 of {\em Math. Sci. Res. Inst. Publ.},
  pages 321--358. Springer, New York, 1984.

\bibitem[Sic74]{S:74}
J\'{o}zef Siciak.
\newblock Holomorphic continuation of harmonic functions.
\newblock {\em Ann. Polon. Math.}, 29:67--73, 1974.

\bibitem[Sog93]{SoggeBookFIO}
Christopher~D. Sogge.
\newblock {\em Fourier integrals in classical analysis}, volume 105 of {\em
  Cambridge Tracts in Mathematics}.
\newblock Cambridge University Press, Cambridge, 1993.

\bibitem[ST13]{ST:13}
Ben Sharp and Peter Topping.
\newblock Decay estimates for {R}ivi\`ere's equation, with applications to
  regularity and compactness.
\newblock {\em Trans. Amer. Math. Soc.}, 365(5):2317--2339, 2013.

\bibitem[Str98]{Struwe:98}
Michael Struwe.
\newblock Uniqueness of harmonic maps with small energy.
\newblock {\em Manuscripta Math.}, 96(4):463--486, 1998.

\bibitem[SU81]{SU:81}
J.~Sacks and K.~Uhlenbeck.
\newblock The existence of minimal immersions of {$2$}-spheres.
\newblock {\em Ann. of Math. (2)}, 113(1):1--24, 1981.

\bibitem[SU82]{SU:83}
Richard Schoen and Karen Uhlenbeck.
\newblock A regularity theory for harmonic maps.
\newblock {\em J. Differential Geometry}, 17(2):307--335, 1982.

\bibitem[SW71]{StWeiBookFourierEucl}
Elias~M. Stein and Guido Weiss.
\newblock {\em Introduction to {F}ourier analysis on {E}uclidean spaces}.
\newblock Princeton Mathematical Series, No. 32. Princeton University Press,
  Princeton, N.J., 1971.

\bibitem[Tay97]{T:bookIII}
Michael~E. Taylor.
\newblock {\em Partial differential equations. {III}}, volume 117 of {\em
  Applied Mathematical Sciences}.
\newblock Springer-Verlag, New York, 1997.
\newblock Nonlinear equations, Corrected reprint of the 1996 original.

\bibitem[Tru68]{Trud:68}
Neil~S. Trudinger.
\newblock Remarks concerning the conformal deformation of {R}iemannian
  structures on compact manifolds.
\newblock {\em Ann. Scuola Norm. Sup. Pisa Cl. Sci. (3)}, 22:265--274, 1968.

\bibitem[V\'81]{V:81}
Laurent V\'{e}ron.
\newblock Comportement asymptotique des solutions d'\'{e}quations elliptiques
  semi-lin\'{e}aires dans {${\bf R}^{N}$}.
\newblock {\em Ann. Mat. Pura Appl. (4)}, 127:25--50, 1981.

\end{thebibliography}

\bigskip
\bigskip

\textsc{Rapha\"el C\^ote}\\
Universit\'e de Strasbourg \& University of Strasbourg Institute for Advanced Study\\
CNRS, IRMA UMR 7501,\\
7 rue Ren\'e Descartes,\\
67084 Strasbourg, France\\
{\tt raphael.cote@unistra.fr}\\

\bigskip

\textsc{Camille Laurent}\\
CNRS UMR 7598 \& Sorbonne Universit\'es UPMC Univ Paris 06,\\
Laboratoire Jacques-Louis Lions,\\
F-75005, Paris, France\\
{\tt camille.laurent@sorbonne-universite.fr}\\
\vspace*{0.5cm}
\end{document}